\newcommand*{\addFileDependency}[1]{% argument=file name and extension
\typeout{(#1)}% latexmk will find this if $recorder=0
% however, in that case, it will ignore #1 if it is a .aux or 
% .pdf file etc and it exists! If it doesn't exist, it will appear 
% in the list of dependents regardless)
%
% Write the following if you want it to appear in \listfiles 
% --- although not really necessary and latexmk doesn't use this
%
\@addtofilelist{#1}
%
% latexmk will find this message if #1 doesn't exist (yet)
\IfFileExists{#1}{}{\typeout{No file #1.}}
}\makeatother
\newcommand*{\B}[1]{\ifmmode\bm{#1}\else\textbf{#1}\fi}
\newcommand{\be} {\begin{eqnarray*}}
\newcommand{\ee} {\end{eqnarray*}}
\newcommand{\argmin}{\mathop{\rm argmin~}}
\newcommand{\dd}{{\rm d}}
\newcommand{\wht}{\widehat}
\newcommand{\wt}{\widetilde}
\newcommand{\KL}{D_{\mbox{\scriptsize \rm KL}} }
\newcommand{\ri}{\textrm{(i)}}
\newcommand{\rii}{\textrm{(ii)}}
\newcommand{\riii}{\textrm{(iii)}}
\DeclareMathOperator{\osc}{Osc}
\DeclareMathOperator{\eot}{EOT}
\DeclareMathOperator{\W}{W}
\DeclareMathOperator{\LSI}{LSI}
\DeclareMathOperator{\rhs}{RHS}
\DeclareMathOperator{\lhs}{LHS}
\DeclareMathOperator{\vol}{Vol}
\DeclareMathOperator{\proj}{Proj}
\def\m{\mathcal}
\def\ms{\mathscr}
\def\mb{\mathbb}
\def\mx{\mbox}
\newcommand{\matnorm}[1]{{\left\vert\kern-0.25ex\left\vert\kern-0.25ex\left\vert #1 
    \right\vert\kern-0.25ex\right\vert\kern-0.25ex\right\vert}}
\newcommand{\norm}[1]{{\left\vert\kern-0.25ex\left\vert #1 
    \right\vert\kern-0.25ex\right\vert}}
\newtheorem{theorem}{Theorem}
\newtheorem{lemma}[theorem]{Lemma}
\newtheorem{proposition}[theorem]{Proposition}
\newtheorem{rem}{Remark}
\newtheorem{definition}{Definition}
\newcommand{\ry}[1]{\textcolor{blue}{#1}}
\newcommand{\an}[1]{\textcolor{purple}{#1}}
\title{Learning Density Evolution from Snapshot Data}
\author[1]{Rentian Yao\thanks{rentian2@math.ubc.ca}}
\author[2,3]{Atsushi Nitanda\thanks{atsushi\_nitanda@cfar.a-star.edu.sg}}
\author[4]{Xiaohui Chen\thanks{xiaohuic@usc.edu}}
\author[5]{Yun Yang\thanks{yy84@umd.edu}}
\affil[1]{Department of Mathematics, University of British Columbia}
\affil[2]{CFAR and IHPC, Agency for Science, Technology and Research (A$\star$STAR)}
\affil[3]{College of Computing and Data Science, Nanyang Technological University}
\affil[4]{Department of Mathematics,
University of Southern California}
\affil[5]{Department of Mathematics, University of Maryland}
\begin{document}
\maketitle

\begin{abstract}
Motivated by learning dynamical structures from static snapshot data, this paper presents a distribution-on-scalar regression approach for estimating the density evolution of a stochastic process from its noisy temporal point clouds. We propose an entropy-regularized nonparametric maximum likelihood estimator (E-NPMLE), which leverages the entropic optimal transport as a smoothing regularizer for the density flow. We show that the E-NPMLE has almost dimension-free statistical rates of convergence to the ground truth distributions, which exhibit a striking phase transition phenomenon in terms of the number of snapshots and per-snapshot sample size. To efficiently compute the E-NPMLE, we design a novel particle-based and grid-free coordinate KL divergence gradient descent (CKLGD) algorithm and prove its polynomial iteration complexity. Moreover, we provide numerical evidence on synthetic data to support our theoretical findings. This work contributes to the theoretical understanding and practical computation of estimating density evolution from noisy observations in arbitrary dimensions.

%This paper presents a new framework for the estimation problem of the density evolution of a stochastic process from noisy snapshot data. We introduce an entropy-regularized nonparametric maximum likelihood estimation (E-NPMLE) and design a particle-based and grid-free coordinate KL divergence gradient descent (CKLGD) algorithm 

%\xc{To revise. Xiaohui TO DO.} This paper tackles the problem of learning density evolution from noisy snapshot data, which can also be framed as a univariate regression problem with a temporal predictor and a distributional response. We analyze the Kullback–Leibler (KL) divergence regularized nonparametric maximum likelihood estimator from a non-asymptotic perspective, clarifying the trade-off between the number of snapshots and the sample size within each snapshot for achieving accurate estimation. Additionally, we propose a novel grid-free algorithm, termed Coordinate KL Divergence Gradient Descent (CKLGD), which attains a specified level of accuracy with only a polynomial number of total iterations. This work contributes to both the theoretical understanding and practical computation of estimating continuous-time density evolution from discrete noisy observations.
\end{abstract}

%\tableofcontents
%%%%%%%%%%%%%%%%%%%%%%%%%%%%%%%%%%%%%%%%
%%%%%%%%%%%%%%%%%%%%%%%%%%%%%%%%%%%%%%%%
%%%%%%%%%%%%%%%%%%%%%%%%%%%%%%%%%%%%%%%%
\section{Introduction}
% motivation of studying this problem

Learning dynamical structures from multiple snapshot data has received increasing attention in various scientific fields such as bioinformatics and social networks~\citep{schiebinger2019optimal,mackey2021can,leskovec2007graph}. For instance, in single-cell RNA sequencing data analysis~\citep{klein2015droplet, macosko2015highly}, the trajectory inference problem aims to reconstruct the evolution of gene expression in cells using static snapshot data, where each snapshot consists of (often high-dimensional) gene expression profiles captured at a single time point representing a population of cells in various states. In this paper, our primary goal is to provide a general statistical and computational framework for simultaneous inference of many marginal distributions of a stochastic process from noisy snapshot data. 

%Learning dynamical structures from snapshot data has received increasing attention in various scientific fields such as bioinformatics and social networks~\citep{schiebinger2019optimal,mackey2021can,leskovec2007graph}. For instance, in bioinformatics, the trajectory inference problem~\citep{schiebinger2019optimal}  aims to reconstruct the evolution of gene expression in cells using static snapshot data. Each snapshot consists of gene expression profiles captured at a single time point, representing a population of cells in various states. These snapshot data are collected through a recently developed technique called single-cell RNA sequencing~\citep{klein2015droplet, macosko2015highly}. Beyond trajectory inference, applications of learning from snapshot data also arise in other scientific domains, such as inferring the evolution of social networks~\citep{leskovec2007graph} and analyzing ecological communities~\citep{mackey2021can}. In this paper, we focus on inferring the evolution of marginal distributions of a stochastic process from noisy snapshot data. 

% setting
We begin with the problem setup. Let $Z := \{Z_t: t\in[0, T]\}$ be a stochastic process evolving from $t=0$ to $t=T$ on a state space $\m X \subset \mb{R}^d$ and $R_t^\ast$ denote the marginal distribution of $Z_t$. Consider $m$ \emph{fixed} time points $0\leq t_1 < \dots < t_m\leq T$. At each time point $t_j$, we have an independent sample of random $N$-point cloud from the snapshot distribution $R_{t_j}^\ast$, i.e., $Z_{t_j,i} \sim R_{t_j}^\ast$ are independent for all $j \in [m] := \{1, \dots, m\}$ and $i \in [N]\coloneqq\{1, \dots, N\}$. In reality, we allow measurement errors when the snapshot data $\{X_{t_j}^i: i\in[N]\}$ are observed using the following standard statistical model %yielding $N$ i.i.d. noisy realizations
\begin{equation}
\label{eqn:data_gen_mech}
X_{t_j}^i = Z_{t_j, i} + \sigma_{j, i},\quad i \in[N],
\end{equation}
where $\{\sigma_{j, i}: j\in[m], i\in[N]\}$ are $Nm$ i.i.d. mean-zero Gaussian noise with density $\m K_\sigma$ on $\m X$ and variance $\sigma^2 > 0$. In the single-cell RNA application, one can think of $Z_{t_j, i}$ is the $i$-th realization of the biological process $Z$ at time $t_j$ such that $Z_{t_j, i}$ and $Z_{t_{j'}, i}$ represent distinct realizations at different time points, resulting in a total $Nm$ samples derived from realizations of $Z$. Our primary goal of this paper is to recover the evolution dynamics of the distributions $R_{t_1}^\ast, \dots, R_{t_m}^\ast$ from their associated noisy temporal marginal snapshots $\big\{\wht\mu_{t_j} = \frac{1}{N}\sum_{i=1}^N\delta_{X_{t_j}^i}: j\in[m]\big\}$. For this purpose, there are two central questions: (i) Can we build a statistically efficient estimator with sample complexity that recovers certain conventional nonparametric density estimation approaches and meanwhile sheds light on the experimental design of $(m, N)$ given limited total sample availability? (ii) How can we design a tractable algorithm to compute this estimator with guaranteed iteration complexity?
%\ry{(RY: should we use the word ``minimax optimality" here? We didn't prove the optimality.)}

%\xc{(I feel this is not intuitive. We need $Z$ as a process, but also independent across time... This should be moved later for theory?) Importantly, $\{Z_{t_j, i}: j\in[m], i\in[N]\}$ are \emph{all} independent---that is, $Z_{t_j, i}$ and $Z_{t_{j'}, i}$ represent distinct realizations at different time points, resulting in a total $Nm$ realizations of the process $Z$.}
%\ry{(RY: This assumption guarantees the independence of all observations $\wht\mu_{t_1}, \dots, \wht\mu_{t_m}$.)}
%\xc{In this case, we cannot cover the SDE model? Where does the temporal smoothness come in? How can we ensure the path space has a distribution $R^*$, and its closeness to $W^\tau$ as in Theorem 4?}

To address question (i), we observe that model~\eqref{eqn:data_gen_mech} can be naturally treated as a nonparametric distribution-on-scalar regression problem, where the response variable takes value in the space of probability distributions $\m P(\m X)$ on the state space $\m X$ with a temporal predictor. Inspired by the maximum likelihood approach for (classical) nonparametric regression problems~\citep{kiefer1956consistency}, we propose the following \emph{entropy-regularized nonparametric maximum likelihood estimator} (E-NPMLE) to estimate the discretized density flow map $t \mapsto R^*_t$ at time points $\{t_1,\dots,t_m\}$:
\begin{equation}\label{eqn: E-NPMLE}
    (\wht{R}_{t_1}, \dots, \wht{R}_{t_m}) = \argmin_{(\rho_1,\dots, \rho_m) \in \ms P(\m X)^{\otimes m}} \m F_{N, m}(\rho_1,\dots, \rho_m),
\end{equation}
where the objective functional is defined as
\begin{align}\label{eqn: obj_func_traj}
\m F_{N, m}(\rho_1,\dots, \rho_m) := -\sum_{j=1}^m\frac{t_{j+1} - t_j}{N\lambda}\sum_{i=1}^N\log\big[\m K_\sigma\ast \rho_j(X_{t_j}^i)\big] + \sum_{j=1}^{m-1}\frac{\eot_{\tau^j}^{c_j}(\rho_j, \rho_{j+1})}{t_{j+1}-t_j} + \tau\sum_{j=1}^m\int\rho_j\log\rho_j.
\end{align}
Here in~\eqref{eqn: obj_func_traj}, $\tau > 0$ is a fixed temperature parameter, $\lambda > 0$ is the coefficient of regularization, $\tau^j = (t_{j+1}-t_j)\tau$, $\eot^{c_j}_{\tau^j}(\mu, \nu)$ is the entropic optimal transport (EOT) cost between two probability densities $\mu$ and $\nu$ over the state space $\m X$ with the cost function $c_j(x, x') = -\tau^j\log\m K_{\tau^j}(x-x')$, and $\m K_\sigma\ast\mu$ denotes the convolution of the distribution $\mu$ with a Gaussian distribution with variance $\sigma^2$. We highlight that all three terms in~\eqref{eqn: obj_func_traj} have statistically meaningful interpretations for estimating $R_{t_1}^\ast, \dots, R_{t_m}^\ast$. Specifically, the first term in~\eqref{eqn: obj_func_traj} is the negative log-likelihood that quantifies the closeness between $R_{t_j}$ and $\wht\mu_{t_j}$ based on independent noisy samples~\citep{koenker2014convex, polyanskiy2020self, soloff2024multivariate, yan2024learning, yao2024minimizing, saha2020nonparametric, zhang2009generalized, aragam2024model}. The second term imposes a piecewise penalty based on the entropic optimal transport (cf. Section~\ref{subsec: EOT}), thus ensuring the smoothness of the estimated marginal distributions along time, while the third term (i.e., total negative self-entropy) ensures that all estimated marginal distributions have density functions and are non-degenerate to the observed point clouds $X^i_{t_j}$.

Our E-NPMLE perspective provides a general statistical framework for estimating the (marginal) density flow that is related to recent progress in trajectory inference when the trajectory data $X_t$ process follows a noisy contaminated stochastic differential equation (SDE) with a gradient drift vector field and a constant diffusion parameter~\citep{lavenant2024toward,chizat2022trajectory}. For more detailed comparisons, please refer to Section~\ref{subsec:related_works}.

To tackle question (ii), we first note that computation of $\wht{R}_{t_1}, \dots, \wht{R}_{t_m}$ is a challenging optimization problem because the objective functional $\m F_{N, m}$ in~\eqref{eqn: obj_func_traj} is not a (jointly) geodesically convex in a product Wasserstein space. Thus, various existing convergence results in discretization of the Wasserstein gradient flow, no matter explicit or implicit, are no longer applicable to yield an algorithmically efficient solution to our current problem~\citep{chizat2022trajectory,yao2024wasserstein,ZhuChen2025_convergence-nonconvex}. In this work, we design a new algorithm by fully harnessing the joint convexity of $\m F_{N, m}$ in the linear structure (cf. ahead Definition~\ref{def:convex}). Our key idea is to combine the gradient descent in the linear geometry with respect to the relative entropy structure in~\eqref{eqn: obj_func_traj} (see also the related minimum entropy estimator in~\eqref{eqn: obj_func} below) and estimate the (exponential of) gradient for the next iterate %\ry{(RY: estimate the new iterate, which equals to the exponential of gradient in linear geometry)} 
by locally and iteratively sampling in the Wasserstein geometry. It turns out that the judicious integration of the two optimization geometries leads to a much faster convergence rate of our proposed algorithm than the existing mean-field Langevin dynamics.

%and third terms in~\eqref{eqn: obj_func_traj} correspond to the piecewise relative entropy regularization based on the theory of optimal transport, which ensures the smoothness of the estimated marginal distribution $\wht R_t$ with respect to time~\citep[Proposition 2.12,][]{lavenant2024toward}. 

%This problem can be treated as a nonparametric regression problem on the space of probability distributions $\m P(\m X)$ over the state space, with a temporal predictor. Recall that in the Euclidean space, the nonparametric regression problem aims to estimate a function $f^\ast$ from a group of observations $\big\{(x_j, y_j): j\in[m]\big\}$ generated by $y_j = f^\ast(x_j) + \sigma_j$, where $\sigma_j$ are i.i.d. Gaussian random variables. In our setting, the goal is to estimate the mapping $t\mapsto R_t^\ast$ given a group of noisy observations $\big\{(t_j, \wht\mu_{t_j}): j\in[m]\big\}$.

%\xc{(I don't understand this part.) Unlike the usual nonparametric regression problem, where noise arises only from  Gaussian random variables $\sigma_j$, the noise in estimating $\{R^\ast_t: t\in[0, T]\}$ originates from two sources: (i) Gaussian noise $\sigma_{j, i}$ associated with each observation $X_{t_j}^i$, and (ii) the fact that only a finite number of samples from the distribution can be observed at each time point $t_j$.}

%%%%%%%%%%%%%%%%%%%%%%%%%%%%%%%%%%%%%%%%
\subsection{Our contributions}
The main contributions of this paper are to propose a novel nonparametric approach for learning the evolution of probability density functions from snapshot data and to equip the method with a convex algorithm in a proper optimization geometry to find the estimator. Theoretically, we demonstrate superior statistical and algorithmic convergence rates compared to the existing literature, which we elaborate on in the following paragraphs.

%to analyze this estimator through its statistical properties and numerical computation.

Statistically, we determine the \emph{almost dimension-free} non-asymptotic rates of convergence for the E-NPMLE estimated marginal distributions to the ground truth distributions in the full regime of scaling behavior $(m, N)$, which exhibits a striking \emph{phase transition} phenomenon in terms of snapshot/sample frequency. Statistical rates and their transitions are summarized in Table~\ref{tab:statistical_rates}, which is a consequence of Theorem~\ref{thm: stat_rate} and Theorem~\ref{coro: cts_stat_rate}. To interpret the rates, we may regard $m$ and $N$ as the temporal and spatial resolutions, respectively.

\begin{table}[ht]
    \begin{center}
\begin{tabular}{|c|c|c|}
\hline
& Low frequency ($m\lesssim N$) & High frequency ($m\gtrsim N$) \\
\hline
Fixed design (Theorem~\ref{thm: stat_rate}) & $\frac{(\log N)^{\frac{d+1}{2}}}{\sqrt{N}}$  &  $\frac{(\log m)^{\frac{d+1}{2}}}{N^{1/3}m^{1/6}}$\\
\hline
Density flow map (Theorem~\ref{coro: cts_stat_rate}) & $\frac{(\log m)^{\frac{d+1}{2}}}{\sqrt{m}}$ & $\frac{(\log m)^\frac{d+1}{2}}{N^{1/3}m^{1/6}}$\\
\hline
\end{tabular}
\caption{Statistical rates of E-NPMLE in different regimes of snapshot/sample frequency. Fixed design refers to the regression problem at $t_1, \dots, t_m$ and density flow map refers to $t \mapsto R_t$.}
%\ry{(RY: as Yun said last time, maybe we should not use the word ``marginal''?)}}
\label{tab:statistical_rates}
\end{center}
\end{table}

Consider first the low frequency setting $m \lesssim N$. When the performance is evaluated at the observed time points $t_1, \dots, t_m$, the estimation error of the E-NPMLE is solely determined by the per-snapshot sample size $N$. Thus, estimating marginal distributions at $t_1,\dots,t_m$ with snapshot observations can be treated as estimating $m$ marginal distributions separately. In particular, our rate $N^{-1/2} (\log N)^{\frac{d+1}{2}}$ matches the known convergence rate of (unregularized) NPMLE in the classical case $m = 1$ for estimating one marginal distribution based on all samples at a single time point~\citep{saha2020nonparametric}. On the other hand, if one wants to estimate all marginal distributions in $[0,T]$ (i.e., the whole density flow trajectory), our rate becomes $m^{-1/2} (\log m)^{\frac{d+1}{2}}$ reflects the statistical bottleneck due to the lack of snapshots. In the high frequency regime $m\gtrsim N$, both the temporal and spatial resolutions will affect the statistical rate of the estimator in the same way for observed densities at $t_1,\dots,t_m$ (i.e., fixed design) and all densities for $t \in [0, T]$ (i.e., density flow map).

For tasks such as reconstruction of the continuous-time density flow map $t \mapsto R^\ast_t$ in real-world applications such as single-cell data analysis~\citep{klein2015droplet, macosko2015highly}, our rate $O(\max\{m^{-\frac{1}{2}}, m^{-\frac{1}{6}}N^{-\frac{1}{3}}\})$ (up to poly-log factor) in the bottom row of Table~\ref{tab:statistical_rates} is particularly relevant in scenarios with limited sample availability, a common constraint in practice due to fixed total sample budgets. Under such constraints, our statistical findings provide practical guidance for experimental design---recommending that the number of snapshots $m$ and the sample size per snapshot $N$ be of the same order to achieve the rate $O((mN)^{-\frac{1}{4}})$, which depends on the total sample size $mN$. Notably, this rate matches the optimal rate of nonparametric estimation of a one-dimensional $1/2$-H\"{o}lder smooth function, which corresponds to the regularity of the realized trajectory from an SDE and is therefore generally unimprovable.

%\begin{rem}[sample frequency and phase transition]
%Phase transition appears when the number of snapshots $m$ has the same order as the sample size per snapshot $N$.
%In the low-frequency regime where $m \lesssim N$, estimating finite marginal distributions with observations can be treated as estimating $m$ marginal distributions separately, due to the lack of snapshots, which is also the bottleneck of estimating all marginal distributions.
%\end{rem}

Computationally, we propose a particle-based algorithm called \emph{coordinate KL divergence gradient descent} (CKLGD). By leveraging the convexity of the objective functional, our algorithm achieves an algorithmic convergence rate of $O(\frac{\log k}{\sqrt{k}})$ at the $k$-th iteration. This rate substantially improves upon the $O(\frac{\log\log k}{\log k})$ rate for the mean-field Langevin algorithm, where gradient flow is applied within the framework of Wasserstein geometry~\citep{chizat2022trajectory}. As a consequence, our CKLGD algorithm has a \emph{polynomial} iteration complexity, %$O(\varepsilon^{-7} \log(1/\varepsilon))$, \ry{(RY: shall we emphasize this $\varepsilon^{-7}$? Afraid that the reviewers would ask if it is optimal...)} 
in sharp contrast with the exponential iteration complexity by using the mean-field Langevin algorithm. Furthermore, our specially tailored  particle-based algorithm demonstrates better efficiency compared to the implicit KL divergence gradient descent algorithm~\citep{yao2024minimizing}, which relies on normalizing flows to approximate transport maps between two consecutive iterates. 

To prove the algorithmic convergence, we invent a new technique for analyzing the accumulation of KL-type numerical error across iterations. Previous approaches either characterize numerical error using the $L^2$-norm of the first variation~\citep{yao2024minimizing, cheng2024convergence} or introduce an additional entropy term to control numerical error during iterations~\citep{nitanda2021particle, oko2022particle}. However, neither approach addresses the KL type numerical error that arises during the sampling procedure used to numerically compute each iterate, particularly given that the objective functional loses convexity without the self-entropy term~\citep[Proposition 3.2,][]{chizat2022trajectory}. Our innovative analysis employs an interpolation between the numerical solution and the ideal (exact) solution of the subproblem in each iteration. This approach effectively reduces the error from KL divergence to the Fisher–Rao distance by using the convexity of the objective functional in the algorithmic analysis.

\subsection{Related works}\label{subsec:related_works}

\paragraph{Learning density evolution.}
\cite{lu2022learning} modeled the evolution of probability density functions by mapping from a fixed reference measure using a temporal normalizing flow~\citep{both2019temporal}. This normalizing flow model is trained by minimizing the negative log-likelihood function of all observations.
\citet{lavenant2024toward} employed a similar $M$-estimator to ours based on the minimum entropy principle, where their focus was on \emph{trajectory inference}, namely estimating the distribution of the stochastic process $Z = \{Z_t : t \in [0, T]\}$ in the path space $\Omega := \m C([0, T]; \m X)$ (i.e., continuous mappings from the time interval $[0, T]$ to the state space $\m X$) under the assumption that the process $Z$ follows a particular class of SDEs. More precisely in our setting, they considered the special case with $\sigma = 0$ (noiseless setting) and 
\begin{equation}\label{eqn:Z_SDE}
    \dd Z_t = \nabla \Psi(t, Z_t) \; \dd t + \tau \; \dd B_t,
\end{equation}
where $\Psi : [0, T] \times \m X \to \mb{R}$ is an unknown potential, $B_t$ is the standard reversible Brownian motion on $\m X$, and $\tau > 0$ is a constant temperature parameter. Let $R^\ast\in\m P(\Omega)$ be the distribution of the process $Z$, where $\m P(\Omega)$ denotes the set of all probability distributions over the path space $\Omega$. Despite the noiseless setting,~\citet{chizat2022trajectory} (cf.~also~\citet{lavenant2024toward}) propose the following minimum entropy estimator, where a Gaussian convolution is applied to each marginal $R_{t_j}$ for computational reasons, %\textcolor{red}{comment: do they consider noiseless setting, and then introduce a $\sigma$ for the sake of kernel smoothing? good to clarify here}
\begin{align}\label{eqn: obj_func}
\wht R = \argmin_{R\in\ms P(\Omega)} 
\Big\{ -\sum_{j=1}^m\frac{t_{j+1} - t_j}{N}\sum_{i=1}^N \log \big[\m K_\sigma\ast R_{t_j}(X_{t_j}^i)\big] +  \lambda\tau\KL(R\,\|\, W^\tau) \Big\},
\end{align}
where $W^\tau\in\ms P(\Omega)$ is the distribution of the process $\{\tau B_t\}$ and $\KL$ is the KL divergence of $R$ relative to $W^\tau$. The objective functional~\eqref{eqn: obj_func_traj} is the reduced formulation of~\eqref{eqn: obj_func} into a multivariate Wasserstein functional, and therefore when $Z$ is of the form~\eqref{eqn:Z_SDE}, the minimizer $\wht R$ in the path space can be reconstructed from $\wht R_{t_1}, \dots, \wht R_{t_m}$ in~\eqref{eqn: obj_func_traj}; see Proposition~\ref{prop:representer} for the connection between the E-NPMLE and minimum entropy estimator. In contrast, \emph{current work targets the simultaneous inference of the many densities in the noisy setting ($\sigma > 0$) for a more general class of stochastic processes without such restrictions.} In particular, our E-NPMLE formulation in~\eqref{eqn: E-NPMLE} and~\eqref{eqn: obj_func} maintains statistical validity for estimating the (marginal) density flow of an SDE with a curl-free component in the drift vector field. However, recovering its path-space distribution is impossible due to identifiability issues, as adding any divergence-free component to the drift vector field changes the path measure while preserving the marginals. Moreover, while their results addressed the estimation consistency, our finite-sample analysis derives an explicit rate of convergence and provides much deeper statistical insights and guidance of the experiment design in different $(m, N)$ regimes. For precise statements and implications, we refer to Theorem~\ref{thm: stat_rate} and the follow-up discussions in Section~\ref{subsec:stat_prop}.

Under the assumption that $Z$ follows an SDE, another approach is to model the velocity vector field using neural networks~\citep{neklyudov2023action, sha2024reconstructing, shen2024learning, chen2021solving, yeo2020generative, tong2020trajectorynet}.
Alternatively, instead of regressing all snapshots on the probability space, another line of research focuses on interpolating observed probability distributions. We reference several works in this direction~\citep{chewi2021fast, chen2018measure, schiebinger2019optimal, botvinick2023generative}.

\paragraph{Computation of the regularized E-NPMLE estimator.}
\cite{chizat2022trajectory} provided a grid-free mean-field Langevin algorithm for computing an $M$-estimator similar to that of~\citet{lavenant2024toward} since the computation of the latter must be restricted to grid points. The key observation of~\citet{chizat2022trajectory} was that the objective functional in~\eqref{eqn: E-NPMLE} is the sum of a total negative self-entropy and a Fr\'echet smooth functional (for the rest two terms) such that the mean-field Langevin sampling can be deployed to approximate the Wasserstein gradient flow of $\m F_{N,m}(\wht R_{t_1}, \dots, \wht R_{t_m})$. Nonetheless, since the smooth functional is not geodesically convex in the Wasserstein space, simulated annealing on the sampling step size has to be incorporated to yield a notably slow logarithmic convergence rate of $O(\frac{\log\log k}{\log k})$ in the $k$-th iteration---an algorithmic rate that has been demonstrated to be fundamentally unimprovable in the worst-case scenario for more general geodesically nonconvex optimization problems~\citep{chizat2022mean} (see also the discussion below in a general context).

\begin{comment}
    can be converted to minimizing a jointly linearly convex functional on the probability space, and this functional is not jointly convex along (Wasserstein) geodesics. Based on this observation, a grid-free algorithm was proposed by~\citet{chizat2022trajectory} by discretizing the mean-field Langevin dynamics (MFLD). An annealing term is incorporated to address the non-convexity along geodesics of the objective functional. However, such annealing scheme for optimization yields a notably suboptimal algorithmic convergence rate of $O(\frac{\log\log k}{\log k})$ in the $k$-th iteration---a rate that has been demonstrated to be fundamentally unimprovable in the worst-case scenario~\citep{chizat2022mean}.
\end{comment}

\paragraph{Optimization algorithms in Wasserstein space.}
Previous algorithmic approaches for minimizing univariate and multivariate functionals over the space of probability distributions typically relied on discretizing Wasserstein gradient flows~\citep{chizat2022trajectory, yao2024wasserstein, ZhuChen2025_convergence-nonconvex, yao2022meana}. However, when the objective functional lacks (joint) convexity along geodesics, directly discretizing its corresponding Wasserstein gradient flow usually fails to produce an explicit algorithmic convergence rate. To address the lack of convexity along geodesics such as $\m F_{N,m}$ in~\eqref{eqn: obj_func_traj},~\citet{chizat2022mean} proposed an annealing scheme, while~\citet{nitanda2022convex} introduced a non-vanishing $\ell^2$ regularization to ensure a uniform log-Sobolev inequality. The former approach reduces the algorithmic convergence rate to $O(\frac{\log\log k}{\log k})$ at the $k$-th iteration, which is proven to be unimprovable in the worst-case scenario~\citep{chizat2022mean} as we discussed above, while the latter introduces an irreducible bias to the objective functional. Neither method fully exploits the joint linear convexity of the objective functional.

For training a mean-field two-layer neural network, \citet{nitanda2021particle} introduced the particle dual averaging algorithm to minimize a regularized $\ell^2$-loss, achieving an algorithmic convergence rate of $O(k^{-1})$. A stochastic variant of this algorithm was later proposed by~\citet{oko2022particle} to optimize the same objective functional.
\citet{chizat2022convergence} and~\citet{aubin2022mirror} analyzed the convergence rate of mirror descent for minimizing linearly convex functionals on the probability space under certain smoothness assumptions. \citet{yao2024minimizing} proposed the implicit KL divergence proximal descent algorithm, which achieves a convergence rate of $O(k^{-1})$ without requiring smoothness assumptions. However, their algorithm relies on normalizing flows for implementation, which can lead to inefficiencies during training.

%%%%%%%%%%%%%%%%%%%%%%%%%%%%%%%%%%%%%%%%

%%%%%%%%%%%%%%%%%%%%%%%%%%%%%%%%%%%%%%%%
\begin{comment}
    \subsection{Notations}
Let $\ms P(\m X)$ and $\ms P^r(\m X)$ be the set of all probability distributions and the set of absolutely continuous probability distributions on the state space $\m X$. 
Let $\Omega$ denote the path space, i.e. the set of continuous function mapping from $[0, 1]$ to $\m X$.
Let $\m K_\sigma$ be the Gaussian kernel, i.e. $\m K_\sigma\ast\mu$ is the convolution of the distribution $\mu$ with a Gaussian distribution with variance $\sigma^2$. 
Let $\frac{\delta\m F}{\delta\rho}$ be the first variation of $\m F$, which will be introduced in Section~\ref{sec: exact CKLGD}.
\end{comment}

\subsection{Organization of the paper}

The remainder of the paper is organized as follows. In Section~\ref{subsec:stat_prop}, we first provide a finite-sample statistical analysis of the proposed E-NPMLE procedure in the fixed design setting and then extend to the estimation problem of the continuous-time density flow map. In Section~\ref{sec: algo}, we present the CKLGD algorithm and its convergence results for minimizing general linearly convex functionals in the space of probability distributions. In Section~\ref{sec: main_results}, we tailor the general-purpose CKLGD algorithm to solve the E-NPMLE objective, resulting an inexact CKLGD algorithm with a polynomial iteration complexity guarantee. Section~\ref{sec: numerical_results} demonstrates the practical utility of our approach through a simulation study. Section~\ref{sec: proof} highlights the key innovations and technical challenges in the proofs of the main results. Finally, we summarize our work in Section~\ref{sec: summary}. Detailed proofs for all theoretical results are provided in the Appendix.
%%%%%%%%%%%%%%%%%%%%%%%%%%%%%%%%%%%%%%%%
%%%%%%%%%%%%%%%%%%%%%%%%%%%%%%%%%%%%%%%%
%%%%%%%%%%%%%%%%%%%%%%%%%%%%%%%%%%%%%%%%
\section{Statistical Convergence of E-NPMLE}\label{subsec:stat_prop}
In this section, we will determine the statistical sample complexity of the E-NPMLE on both fixed design and the density flow map. We first briefly review the background of the entropic optimal transport problem.
%In practice, only a limited number of observations can be afforded. Intuitively, an estimator based on just a few snapshots will exhibit poor statistical performance, as it fails to capture the population's temporal evolution in the state space $\m X$. Consequently, it is crucial to carefully balance the number of snapshots and samples within each snapshot.

\subsection{Background: entropic optimal transport}\label{subsec: EOT}
The entropic optimal transport (EOT) cost between two absolutely continuous probability distributions $\mu$ and $\nu$ over the space $\m X$ (denoted as $\mu, \nu \in \ms P^r(\m X)$) with cost function $c$ is
\begin{align}\label{eqn: eot}
\eot_{\varepsilon}^c(\mu, \nu) := \min_{\gamma\in\Pi(\mu, \nu)} \int_{\m X\times\m X} c(x, x')\,\dd\gamma(x, x') + \varepsilon\KL(\gamma\,\|\,\mu\otimes\nu),
\end{align}
where $\Pi(\mu, \nu)$ denotes the set of all probability distributions over $\m X\times \m X$ with marginal distributions $\mu$ and $\nu$, and $\varepsilon > 0$ is the coefficient of the entropic regularization. When $\varepsilon = 0$, the EOT cost degenerates to the Wasserstein distance between $\mu$ and $\nu$ with cost function $c$. It is known that there exists a unique optimal coupling $\gamma^\ast$ that solves the EOT problem~\eqref{eqn: eot}. Moreover, the solution $\gamma^\ast$ satisfies
\begin{align*}
\frac{\dd\gamma^\ast}{\dd \mu\otimes\nu}(x, x') = e^{\frac{\varphi(x) + \psi(x') - c(x, x')}{\tau}},
\end{align*}
where $\varphi\in L^1(\mu)$ and $\psi\in L^1(\nu)$ are the Schr\"{o}dinger potential functions satisfying the following first-order optimality condition of~\eqref{eqn: eot}, also known as the Schr\"{o}dinger system
\begin{align}\label{eqn: schrodinger}
\varphi(x) = -\tau\log\Big(\int_{\m X}e^{\frac{\psi(x') - c(x, x')}{\tau}}\,\dd\nu(x')\Big)
\quad\mx{and}\quad
\psi(x') = -\tau\log\Big(\int_{\m X}e^{\frac{\varphi(x) - c(x, x')}{\tau}}\,\dd\mu(x)\Big).
\end{align}
Though the explicit solution of~\eqref{eqn: schrodinger} is typically intractable, numerous efficient algorithms for numerically computing the solution have been developed. The most notable include the Sinkhorn algorithm~\citep{cuturi2013sinkhorn} or the iterative proportional fitting algorithm~\citep{kullback1968probability, ruschendorf1995convergence}. For more details of entropic optimal transport, we refer to the review papers~\citet{leonard2014survey} and \citet{chen2021stochastic}.

\subsection{Statistical rates}\label{subsec:stat_rates}

In this section, we provide a non-asymptotic convergence analysis of the E-NPMLE in the full regime of $(m, N)$, suggesting an explicit choice of the number of snapshots to fully leverage all available samples. Recall that $m$ is the number of snapshots and $N$ is the sample size per snapshot. Our results offer valuable guidance for researchers in designing and conducting experimental studies. Let $d^2_{\rm H}(p, q) = \int(\sqrt{p} - \sqrt{q})^2\,\dd x$ denote the squared Hellinger distance between two probability distributions. Without loss of generality, we consider the unit time interval with $T = 1$.

%The previous theoretical work on \textcolor{red}{gWOT} by~\citet{lavenant2024toward} only established the consistency of the estimator, leaving the number of snapshot required to fully utilize all available samples unspecified. Our theorem addresses this gap by providing a non-asymptotic convergence analysis of the estimator, indicating an explicit lower bound for the number of snapshot necessary to fully leverage all available samples. This contribution offers valuable guidance for researchers in designing and conducting experimental studies. Before stating the statistical result, recall that $m$ is the number of snapshot and $N$ is the number of samples in each snapshot. Let $d^2_{\rm H}(p, q) = \int(\sqrt{p} - \sqrt{q})^2\,\dd x$ denote the square of Hellinger distance between two probability distributions.

%\textcolor{blue}{Almost dimension-free statistical rate.}

%\textcolor{blue}{Constants depend on $R^*$? Can remove with some function class?}

%\textcolor{red}{Asymptotic path measure consistency in narrow or weak* convergence? Theorem 5 + $\hat{R}$ definition. Justify second (global) penalty in $\hat{R}$ definition in terms of statistical rate. Without this term, rate becomes worse.}

Our first main result presents an almost dimension-free statistical rate of convergence for E-NPMLE estimated marginals to the ground true marginal distributions on the fixed time points $t_1, \dots, t_m$.

\begin{theorem}[Statistical rate of convergence: fixed design]\label{thm: stat_rate}
Assume there is a constant $E > 0$ such that $E^{-1} \leq \tau\KL(R^\ast\,\|\,W^\tau) \leq E$, and the time step satisfies $\Delta_m := \max_j \{t_{j+1}-t_j\} = O(m^{-1})$.
Let $C_\delta, C_\lambda > 0$ be two sufficiently large constants, and
\begin{align*}
\delta_{N, m} = C_\delta \min\Big\{\frac{1}{N^{1/2}}, \frac{1}{N^{1/3}m^{1/6}}\Big\}\big(\log\max\{m, N\}\big)^{\frac{d+1}{2}}
\quad\mx{and}\quad
\lambda_{N, m} = C_\lambda \delta_{N, m}^2.
\end{align*}
Then, with the choice of $\lambda=\lambda_{N,m}$, it holds with probability at least $1 - 2e^{-\frac{N\delta_{N, m}^2}{2\Delta_m}}$ that
\begin{align}\label{eqn: finite_marginal_converge}
\sum_{j=1}^m (t_{j+1} - t_j)d_{\rm H}^2\big(\m K_\sigma\ast R_{t_j}^\ast, \m K_\sigma\ast \wht R_{t_j}\big) \lesssim \delta_{N, m}^2.
\end{align}
\end{theorem}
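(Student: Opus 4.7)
The plan is to treat E-NPMLE as a penalized $M$-estimator and run the classical basic-inequality $+$ empirical-process argument, but with two ingredients tailored to the setting: Gaussian convolution gives nearly dimension-free metric entropy for the likelihood class, and Proposition~\ref{prop:representer} turns the EOT$\,+\,$self-entropy penalty into a path-space relative entropy whose value at the truth is controlled by the assumption $\tau\KL(R^\ast\|W^\tau)\le E$. Concretely, start from the optimality inequality $\m F_{N,m}(\wht R_{t_1},\dots,\wht R_{t_m})\le \m F_{N,m}(R^\ast_{t_1},\dots,R^\ast_{t_m})$. Rearranging gives
\begin{equation*}
\sum_{j=1}^m \frac{t_{j+1}-t_j}{N\lambda}\sum_{i=1}^N \log\frac{\m K_\sigma\ast R^\ast_{t_j}(X^i_{t_j})}{\m K_\sigma\ast \wht R_{t_j}(X^i_{t_j})} \;\le\; \frac{1}{\lambda}\bigl[\m R(R^\ast)-\m R(\wht R)\bigr],
\end{equation*}
where $\m R$ denotes the EOT$\,+\,$entropy regularizer. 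Split the left side into its population mean plus the centered empirical process; the mean dominates $2\sum_j(t_{j+1}-t_j)\,d_{\mathrm H}^2(\m K_\sigma\ast R^\ast_{t_j},\m K_\sigma\ast \wht R_{t_j})$ via the standard inequality $-\mathbb E\log(p/q)\ge 2d_{\mathrm H}^2(p,q)$.

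For the empirical process term I would use local Rademacher / entropy techniques on the function class $\m G=\{\tfrac12\log(\m K_\sigma\ast\rho):\rho\in\ms P(\m X)\}$. Because $\m K_\sigma$ is a fixed Gaussian, elements of $\m G$ are uniformly Lipschitz and bounded (up to logarithmic boundary effects), and their log-bracketing entropy at scale $\epsilon$ is of order $(\log(1/\epsilon))^{d+1}$, the same bound used in the NPMLE analysis of \citet{saha2020nonparametric}. A Dudley/Talagrand bound then yields a stochastic deviation of order $N^{-1/2}(\log N)^{(d+1)/2}$ times the local Hellinger modulus, uniformly over the $m$ snapshots (via a union bound, which only costs $\log m$ inside the polylogarithmic factor). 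This is where the $(\log\max\{m,N\})^{(d+1)/2}$ in $\delta_{N,m}$ enters, and where the fixed-design weighting $t_{j+1}-t_j\le \Delta_m$ is what controls the exponential tail $\exp(-N\delta_{N,m}^2/2\Delta_m)$ in the theorem statement.

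For the bias side, I would invoke Proposition~\ref{prop:representer} to rewrite $\m R(\rho_1,\dots,\rho_m)$, up to an additive constant independent of $\rho_j$, as $\tau\KL(\bar R_{t_1,\dots,t_m}\|W^\tau_{t_1,\dots,t_m})$ for the reciprocal-class reconstruction $\bar R$. Choosing $(\rho_1,\dots,\rho_m)=(R^\ast_{t_1},\dots,R^\ast_{t_m})$ and applying the data-processing inequality for KL under the marginalization $R^\ast\mapsto (R^\ast_{t_1},\dots,R^\ast_{t_m})$ gives $\m R(R^\ast_{t_1},\dots,R^\ast_{t_m})\le \tau\KL(R^\ast\|W^\tau)\le E$. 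Meanwhile, $\m R(\wht R)\ge 0$ up to the lower bound from $E^{-1}\le\tau\KL(R^\ast\|W^\tau)$, so the regularizer gap on the right is $O(1/\lambda)$. Substituting $\lambda=\lambda_{N,m}=C_\lambda\delta_{N,m}^2$ yields a quadratic-type inequality of the form $d_{\mathrm H}^2\lesssim \delta_{N,m}^2 + \delta_{N,m}\cdot d_{\mathrm H}$, which rearranges to the claimed bound.

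The main obstacle is reproducing the phase transition: the $\min\{N^{-1/2},\,N^{-1/3}m^{-1/6}\}$ behavior of $\delta_{N,m}$. The $N^{-1/2}$ branch comes straightforwardly from applying the single-snapshot NPMLE analysis to each $t_j$ and averaging. The $N^{-1/3}m^{-1/6}$ branch only emerges once the empirical process deviation is balanced against a discretization/coupling bias introduced by the EOT term whose coefficient scales like $1/(t_{j+1}-t_j)=\Theta(m)$; the large penalty couples consecutive snapshots and effectively pools information across time, but it also injects a bias of order $\Delta_m\cdot\|\nabla\log\rho\|^2\asymp m^{-1}$ per step. Carrying out the joint variance/bias optimization for the weighted sum $\sum_j(t_{j+1}-t_j)d_{\mathrm H}^2$ — in particular, tracking where the Sinkhorn-type divergence between $(\m K_\sigma\ast R_{t_j}^\ast)$ and its EOT projection enters — is the delicate part, and is what forces the nonstandard exponents $1/3$ and $1/6$. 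Once that trade-off is executed, the choice $\lambda_{N,m}\asymp \delta_{N,m}^2$ is the scale that equates the regularization bias $O(\lambda)$ and the squared modulus from the empirical process, completing the proof.
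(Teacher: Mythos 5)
Your high-level framework --- optimality of E-NPMLE gives a basic inequality, split into population mean plus centered empirical process, bound the latter by local entropy methods, and close a quadratic inequality with $\lambda\asymp\delta_{N,m}^2$ --- matches the paper's proof, and so does your use of Proposition~\ref{prop:representer} to turn the EOT$+$entropy penalty into $\lambda\tau\KL(\cdot\|W^\tau)$. The $N^{-1/2}$ branch and the near-dimension-free $(\log)^{d+1}$ entropy of the Gaussian-smoothed class are also correct. But there are two genuine gaps.

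First, your explanation of the phase transition is wrong. You attribute the $N^{-1/3}m^{-1/6}$ branch to a discretization/coupling \emph{bias} of order $\Delta_m\|\nabla\log\rho\|^2\asymp m^{-1}$ injected by the EOT penalty, and propose a variance/bias optimization. No such bias term appears in the paper's proof of Theorem~\ref{thm: stat_rate}; discretization bias only enters Theorem~\ref{coro: cts_stat_rate} (the continuous density-flow map), not the fixed-design theorem. The actual source of the phase transition is the covering number: Proposition~\ref{prop: covering_num} shows
\begin{equation*}
\log N\big(\eta,\{\m K_\sigma\ast R_\cdot(\cdot)\},\|\cdot\|_{L^\infty(K\times\m X)}\big)\lesssim \min\{\eta^{-2},|K|\}\cdot\big(\log\tfrac1\eta\big)^{d+1},
\end{equation*}
i.e.\ once the prescribed $L^\infty$ accuracy is coarse enough that fewer than $|K|=m$ time slices are needed (or equivalently, $\delta\sqrt m$ is bounded), the time dimension effectively drops out and you recover the single-snapshot entropy. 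The two branches of $\delta_{N,m}$ come entirely from evaluating the Dudley integral of the $\psi_1$-chaining bound in the two regimes $\delta\sqrt m\le 2\sqrt2$ vs.\ $\delta\sqrt m>2\sqrt 2$ (Step 3 of Appendix~\ref{app: pf_stat_conv_discrete}); no bias trade-off is optimized.

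Second, your argument assumes the uniform deviation bound holds over all candidates $\rho$, but the entropy estimate is only valid on the class of path measures with $\tau\KL(R\|W^\tau)\le 2E$, i.e.\ the set $\m GR(\infty)$ in~\eqref{eqn:GR}. Your quadratic inequality would therefore only close after first showing $\wht R$ lands in (or can be compared against) that class. The paper handles this with a case split: if $\tau\KL(\wht R\|W^\tau)\le 2\tau\KL(R^\ast\|W^\tau)$ the argument goes through directly; otherwise a convex interpolant $\wt R=(1-\varepsilon)\wht R+\varepsilon R^\ast$ with a carefully chosen $\varepsilon$ is substituted to derive a contradiction under the event $\ms A$. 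Without this step (and without the mixture-modified ratio $g_R=\sqrt{(\m K_\sigma\ast R_t+\m K_\sigma\ast R^\ast_t)/(2\m K_\sigma\ast R^\ast_t)}$, which keeps the centered process bounded and enables the Jensen step that converts the raw log-likelihood into a Hellinger comparison), your basic-inequality argument is not self-contained.
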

%\begin{rem}[Convergence rate regarding the total sample size]
%Recall that $Nm$ is the total sample size. By choosing $m = O(N^4)$, we derive an almost dimension-free convergence rate $O\big(\frac{(\log Nm)^{\frac{d+1}{2}}}{(Nm)^{1/5}}\big)$ for the integration of squared Hellinger distance regarding the total sample size. This rate is slower than estimating a $\frac{1}{2}$-H\"{o}lder smooth function in nonparametric regression, showing the effect of balancing the number of sample frequency and the sample size in each snapshot.
%\end{rem}

%\textcolor{red}{comment: what is the impact of $\tau$ on the results (dependence on it)? How do we set this parameter in practice? What is its practical interpretation? Is $\tau$ generally bounded away from zero? what if $\tau=\eta_n$ also goes to zero? Good to add a discussion about these.}
\begin{rem}[Choice of $\tau$]
Previously,~\citet{lavenant2024toward} and~\citet{chizat2022trajectory} choose $\tau$ same as the temperature coefficient in the SDE~\eqref{eqn:Z_SDE} to estimate the path-space distribution of the SDE. In practice, when $\tau$ is unknown, they recommend using a plug-in type minimum entropy estimator by replacing $\tau$ in~\eqref{eqn: obj_func} with an estimated value.
As our goal is to estimate the marginal distributions $R_{t_1}^\ast, \dots, R_{t_m}^\ast$ from noisy snapshots, we simply assume that $\tau$ is known in our theoretical analysis.
%\textcolor{red}{maybe just "As our goal is to estimate the marginal distributions $R_{t_1}^\ast, \dots, R_{t_m}^\ast$ from noisy snapshots, we simply assume that $\tau$ is known in our theoretical analysis."} In fact, it is straightforward to verify that if $\KL(R^\ast\,\|\,W^\tau) < \infty$ holds for some $\tau > 0$, then it holds for any $\tau > 0$, which satisfies the assumption in the theorem.\textcolor{red}{this is not true. SDE with different diffusion parameter $\tau$ are mutually singular. This means $\KL(R^\ast\,\|\,W^\tau)$ can only be finite for a single $\tau$.}
\end{rem}

\begin{rem}[Extreme case $m=1$: connection with unregularized NPMLE]
When only $m=1$ snapshot is available, the problem reduces to estimating the marginal distribution of all samples at a single time point, which aligns with the definition of the (unregularized) NPMLE problem~\citep{kiefer1956consistency}. In this setting, our result implies the convergence rate
\begin{align*}
d_{\rm H}(\m K_\sigma\ast R_{t_1}^\ast, \m K_\sigma\ast \wht R_{t_1}) \lesssim \delta_{N, 1} = O\Big(\frac{(\log N)^{\frac{d+1}{2}}}{\sqrt{N}}\Big),
\end{align*}
which precisely matches the existing statistical convergence rate of NPMLE derived in Corollary 2.2 by~\citet{saha2020nonparametric} with the same poly-log factors.
%\textcolor{red}{comment:the log terms also matches? if so, make this clear, otherwise, say up to log terms.}
\end{rem}

\begin{rem}[Extreme case $N=1$]
When only $N=1$ sample can be observed at each time point, it is still possible to estimate all the $m$ marginal distributions with the statistical rate $O\big(\frac{(\log m)^\frac{d+1}{2}}{m^{1/6}}\big)$, due to the smoothness of the marginal density evolution guaranteed by the regularization terms in~\eqref{eqn: obj_func_traj}.
\end{rem}

Next, we consider the problem of extending the estimated marginal distributions $(\wht{R}_{t_1}, \dots, \wht{R}_{t_m})$ to the whole density flow map on $t \in [0,1]$. Following~\cite{chizat2022trajectory} in the trajectory inference problem, our density flow estimator is defined as
\begin{equation}\label{eqn:density_flow_estimator}
    \wht R(\cdot) = \int_{\m X^{\otimes m}} W^\tau(\cdot\,|\,X_{t_1} = x_1, \dots, X_{t_m} = x_m)\,\dd R_{t_1,\dots, t_m}(x_1, \dots, x_m),
\end{equation}
where $R_{t_1,\dots, t_m}(\dd x_1, \dots, \dd x_m) = \gamma_{1, 2}(\dd x_1, \dd x_2)\gamma_{2, 3}(\dd x_3\,|\,x_2)\cdots \gamma_{m-1, m}(\dd x_m\,|\,x_{m-1})$ and $\gamma_{j, j+1}$ is the optimal coupling of the entropic optimal transport problem $\eot_{\tau^j}^{c_j}(\wht{R}_{t_{j}}, \wht{R}_{t_{j+1}})$. In practice, $\wht R$ in~\eqref{eqn:density_flow_estimator} can be computed through a simulation-based method by first sampling from the couplings $\gamma_{1,2}, \dots, \gamma_{m-1, m}$ and then simulating the Brownian bridges of these samples with sufficiently small time steps. Our density flow map estimator~\eqref{eqn:density_flow_estimator} is the same as the reduced formulation of the minimum entropy estimator~\citep{chizat2022trajectory} when the data process $\{X_t\} = \{Z_t\}$ in the noiseless setting $\sigma = 0$ has an SDE of form~\eqref{eqn:Z_SDE} with a gradient drift vector field and constant diffusion coefficient. The following proposition, due to Theorem 3.1 by~\cite{chizat2022trajectory} and Proposition 3.2 by~\cite{lavenant2024toward}, characterizes the precise relationship that converts a minimization problem on $\ms P(\Omega)$ to a minimization problem on $\ms P(\m X)^{\otimes m}$.

\begin{proposition}[Connection between E-NPMLE and minimum entropy estimator]\label{prop:representer}
%In the setting that $\{Z_t\}$ is of form~\eqref{eqn:Z_SDE} and $\sigma = 0$ in~\eqref{eqn:data_gen_mech}, 
The density flow map estimator $\wht{R}$ constructed in~\eqref{eqn:density_flow_estimator} is a solution of~\eqref{eqn: obj_func}, and vice versa.
\end{proposition}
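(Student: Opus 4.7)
I would follow the standard Schr\"odinger-bridge representer argument used in~\citep{chizat2022trajectory,lavenant2024toward}. Since the data-fidelity term in~\eqref{eqn: obj_func} depends on $R\in\ms P(\Omega)$ only through its marginals $(R_{t_1},\dots,R_{t_m})$, I would minimize $\tau\KL(R\,\|\,W^\tau)$ in two stages: first over all $R$ with prescribed marginals, and then over those marginals themselves. Dividing~\eqref{eqn: obj_func} by $\lambda$ and matching the inner minimum to the regularization part of $\m F_{N,m}$ up to an $R$-independent constant gives the desired equivalence, with minimizers linked through~\eqref{eqn:density_flow_estimator}.

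\textbf{Inner minimization (bridges).} I would disintegrate both $R$ and $W^\tau$ over their skeleton values $(x_1,\dots,x_m)\in\m X^{\otimes m}$. The chain rule for relative entropy gives
\begin{align*}
\KL(R\,\|\,W^\tau) = \KL\bigl(R_{t_1,\dots,t_m}\,\big\|\,W^\tau_{t_1,\dots,t_m}\bigr) + \mb E_{R_{t_1,\dots,t_m}}\!\bigl[\KL\bigl(R(\cdot\,|\,x_1,\dots,x_m)\,\big\|\,W^\tau(\cdot\,|\,x_1,\dots,x_m)\bigr)\bigr].
\end{align*}
The bridge (second) term is nonnegative and vanishes iff the conditional bridges of $R$ coincide with those of $W^\tau$, which immediately yields the representer formula~\eqref{eqn:density_flow_estimator} and reduces the inner problem to the skeleton KL.

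\textbf{Outer minimization (skeleton to EOT).} Brownian motion is Markov, so $W^\tau_{t_1,\dots,t_m}(dx) = W^\tau_{t_1}(dx_1)\prod_j\m K_{\tau^j}(x_{j+1}-x_j)\,dx_{j+1}$, and since a Schr\"odinger problem with a Markov reference admits a Markov optimizer, it suffices to consider skeletons of the form $R_{t_1,\dots,t_m}(dx) = R_{t_1}(dx_1)\prod_j r_j(dx_{j+1}\,|\,x_j)$. A direct chain-rule computation then rewrites
\begin{align*}
\KL\bigl(R_{t_1,\dots,t_m}\,\big\|\,W^\tau_{t_1,\dots,t_m}\bigr) = \KL(R_{t_1}\,\|\,W^\tau_{t_1}) + \sum_{j=1}^{m-1}\Bigl[\KL\bigl(\gamma_{j,j+1}\,\big\|\,\m K_{\tau^j}(x'-x)\,dx\,dx'\bigr) - \!\int\! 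R_{t_j}\log R_{t_j}\Bigr],
\end{align*}
where $\gamma_{j,j+1}$ is the bivariate law of $(X_{t_j},X_{t_{j+1}})$. The tailored cost $c_j = -\tau^j\log\m K_{\tau^j}$ produces the algebraic identity
\begin{align*}
\int\! c_j\,d\gamma + \tau^j\KL(\gamma\,\|\,R_{t_j}\otimes R_{t_{j+1}}) = \tau^j\KL(\gamma\,\|\,\m K_{\tau^j}) - \tau^j\!\int\! R_{t_j}\log R_{t_j} - \tau^j\!\int\! R_{t_{j+1}}\log R_{t_{j+1}},
\end{align*}
so minimizing over couplings $\gamma$ of $(R_{t_j},R_{t_{j+1}})$ yields $\min_\gamma\KL(\gamma\,\|\,\m K_{\tau^j}) = \eot_{\tau^j}^{c_j}(R_{t_j},R_{t_{j+1}})/\tau^j + \int R_{t_j}\log R_{t_j} + \int R_{t_{j+1}}\log R_{t_{j+1}}$. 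Substituting back, multiplying by $\tau$, and using $\tau/\tau^j = 1/(t_{j+1}-t_j)$ collapses everything into the regularization part of $\m F_{N,m}$.

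\textbf{Main obstacle.} The most error-prone step is endpoint bookkeeping. The entropy term $-\int R_{t_j}\log R_{t_j}$ from the Markov chain-rule cancels exactly one of the two EOT entropy corrections, leaving $\int R_{t_{j+1}}\log R_{t_{j+1}}$, which telescopes to $\sum_{j=2}^m\int R_{t_j}\log R_{t_j}$ after summing $j=1,\dots,m-1$. The missing $j=1$ self-entropy must come from $\tau\KL(R_{t_1}\,\|\,W^\tau_{t_1})$, and this works cleanly only because $W^\tau$ is the reversible Brownian motion on the compact state space $\m X$, so $W^\tau_{t_1}$ is the uniform stationary measure and $\tau\KL(R_{t_1}\,\|\,W^\tau_{t_1}) = \tau\int R_{t_1}\log R_{t_1} + \tau\log|\m X|$. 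Verifying that the prefactors $\tau^j = (t_{j+1}-t_j)\tau$ and the $1/(t_{j+1}-t_j)$ weights in~\eqref{eqn: obj_func_traj} cancel consistently across the telescoping is the other routine-but-fiddly check.
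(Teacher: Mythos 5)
Your argument is correct and follows exactly the Schr\"odinger-bridge decomposition that the paper relies on: the paper does not prove Proposition~\ref{prop:representer} itself but cites Theorem~3.1 of \citet{chizat2022trajectory} and Proposition~3.2 of \citet{lavenant2024toward}, and your two-stage minimization (chain rule over bridges, then Markov skeleton to EOT, with the $c_j = -\tau^j\log\m K_{\tau^j}$ identity and the $W^\tau_{t_1}=\mathrm{Unif}(\m X)$ endpoint absorbing the missing self-entropy) is a faithful reconstruction of that argument, including the correct book-keeping of the telescoping entropies.
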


We shall highlight that, when $Z$ is not an SDE of form~\eqref{eqn:Z_SDE}, Proposition~\ref{prop:representer} has an \emph{implicit bias} property, implying that the minimum entropy estimator still yields a Markovian random process with the same marginal distributions as the solution of E-NPMLE. On the other hand, even in the case when $Z$ is indeed an SDE, our inexact CKLGD is a different algorithm from the mean-field Langevin algorithm~\citep{chizat2022trajectory} for solving the same estimator.

%\ry{(RY: CKLGD and MFLD are algorithms for computing the same estimator, but these two algorithms are different.)}

The following result presents the statistical convergence rate of the estimated density flow map $t\mapsto \wht R_t$ towards the ground-truth map $t\mapsto R_t^\ast$ on the time interval $[0, 1]$.

\begin{theorem}[Statistical rate of convergence: density flow map]\label{coro: cts_stat_rate}
Under the same assumptions as in Theorem~\ref{thm: stat_rate}, it holds that
\begin{align}\label{eqn: estim+discrete}
\begin{aligned}
\int_0^1 d_{\rm H}^2(\m K_\sigma\ast R_t^\ast, \m K_\sigma\ast\wht R_t)\,\dd t
&\lesssim \sum_{j=1}^m(t_{j+1}-t_j)d_{\rm H}^2(\m K_\sigma\ast R_{t_j}^\ast, \m K_\sigma\ast \wht R_{t_j})\\
&\qquad\qquad\qquad+ \bigg[1 + \sqrt{m\int_0^1 d_{\rm H}^2(\m K_\sigma\ast R_t^\ast, \m K_\sigma\ast\wht R_t)\,\dd t}\bigg]\Delta_m.
\end{aligned}
\end{align}
When $\Delta_m = O(m^{-1})$, the above inequality implies that
\begin{align}\label{eqn: cts_marginal_converge}
\int_0^1 d_{\rm H}^2(\m K_\sigma\ast R_t^\ast, \m K_\sigma\ast \wht R_t)\,\dd t 
\lesssim \max\{\delta_{N, m}^2, m^{-1}\} 
\lesssim \max\Big\{\frac{1}{m}, \frac{1}{N^{2/3}m^{1/3}}\Big\}\big(\log m\big)^{d+1}
\end{align}
holds with probability at least $1 - 2e^{-\frac{N\delta_{N, m}^2}{2\Delta_m}}$.
\end{theorem}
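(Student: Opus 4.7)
The plan is two-staged: first establish the discretization inequality~\eqref{eqn: estim+discrete}, and then close a self-referential bound in $A:=\int_0^1 d_{\rm H}^2(\m K_\sigma\ast R_t^\ast,\m K_\sigma\ast\wht R_t)\,\dd t$ and invoke Theorem~\ref{thm: stat_rate}.

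For the first stage I would decompose $[0,1]=\bigcup_{j=1}^{m-1}[t_j,t_{j+1}]$ and, on each subinterval, insert the intermediate marginals $\m K_\sigma\ast R_{t_j}^\ast$ and $\m K_\sigma\ast\wht R_{t_j}$ into a Hellinger triangle inequality,
\[
d_{\rm H}(\m K_\sigma\ast R_t^\ast,\m K_\sigma\ast\wht R_t) \leq d_{\rm H}(\m K_\sigma\ast R_t^\ast,\m K_\sigma\ast R_{t_j}^\ast) + d_{\rm H}(\m K_\sigma\ast R_{t_j}^\ast,\m K_\sigma\ast\wht R_{t_j}) + d_{\rm H}(\m K_\sigma\ast\wht R_{t_j},\m K_\sigma\ast\wht R_t).
\]
Squaring, integrating over $[t_j,t_{j+1}]$ and summing over $j$, the middle term contributes exactly the first sum on the RHS of~\eqref{eqn: estim+discrete}. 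The remaining two ``interpolation'' terms are handled separately. For the true marginal flow, the hypothesis $\tau\KL(R^\ast\,\|\,W^\tau)\leq E$ controls the time-integrated Fisher information of $t\mapsto R_t^\ast$ via a Benamou--Brenier/energy-dissipation identity, giving $\sum_j\int_{t_j}^{t_{j+1}} d_{\rm H}^2(\m K_\sigma\ast R_t^\ast,\m K_\sigma\ast R_{t_j}^\ast)\,\dd t \lesssim \Delta_m$ after using that Gaussian smoothing contracts Hellinger distance. For the estimator, the representation~\eqref{eqn:density_flow_estimator} shows that on each $[t_j,t_{j+1}]$ the marginal $\wht R_t$ is a mixture of Brownian-bridge marginals driven by the EOT coupling $\gamma_{j,j+1}$, whose conditional Gaussian variance is at most $\tau(t-t_j)(t_{j+1}-t)/(t_{j+1}-t_j)\leq\tau\Delta_m$; a direct heat-kernel calculation then gives a coupling-free bound $d_{\rm H}^2(\m K_\sigma\ast\wht R_t,\m K_\sigma\ast\wht R_{t_j})\lesssim t-t_j$. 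To obtain the sharper cross-form $\Delta_m\sqrt{mA}$ appearing in~\eqref{eqn: estim+discrete} (rather than the cruder $\sqrt{A\Delta_m}$), I would combine a reverse-triangle pairing $|d_{\rm H}^2(R_t^\ast,\wht R_t)-d_{\rm H}^2(R_{t_j}^\ast,\wht R_{t_j})|\lesssim (d_{\rm H}(R_t^\ast,\wht R_t)+d_{\rm H}(R_{t_j}^\ast,\wht R_{t_j}))\cdot(d_{\rm H}(R_t^\ast,R_{t_j}^\ast)+d_{\rm H}(\wht R_t,\wht R_{t_j}))$ with two applications of Cauchy--Schwarz that pair $\sqrt{A}$ against the telescoped bridge-variance factor, producing the factor $\Delta_m\sqrt{m}$.

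For the second stage, write $B=\sum_j(t_{j+1}-t_j)d_{\rm H}^2(\m K_\sigma\ast R_{t_j}^\ast,\m K_\sigma\ast\wht R_{t_j})$, so that~\eqref{eqn: estim+discrete} reads $A\lesssim B+\Delta_m+\Delta_m\sqrt{mA}$. Under $\Delta_m\lesssim 1/m$, the cross term satisfies $\Delta_m\sqrt{mA}\lesssim\sqrt{A/m}\leq\tfrac12 A+\tfrac{1}{2m}$ by AM-GM, which can be absorbed into the left-hand side to yield $A\lesssim B+1/m$. Theorem~\ref{thm: stat_rate} controls $B\lesssim\delta_{N,m}^2$ on the stated high-probability event, so $A\lesssim\max\{\delta_{N,m}^2,1/m\}$. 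Finally I would verify that this maximum simplifies to $\max\{1/m,(N^{2/3}m^{1/3})^{-1}\}(\log m)^{d+1}$ by a case split: in the low-frequency regime $m\lesssim N$ one has $\delta_{N,m}^2\asymp (\log N)^{d+1}/N\lesssim 1/m\cdot(\log m)^{d+1}$ (so the max is $1/m$-type), and in the high-frequency regime $m\gtrsim N$ the inequality $(N^{2/3}m^{1/3})^{-1}\gtrsim 1/m$ holds (so $\delta_{N,m}^2$ dominates), giving~\eqref{eqn: cts_marginal_converge}.

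The hardest part will be the coupling-free bridge estimate on $d_{\rm H}^2(\m K_\sigma\ast\wht R_t,\m K_\sigma\ast\wht R_{t_j})$ and, relatedly, the precise Cauchy--Schwarz bookkeeping that produces exactly the form $\Delta_m\sqrt{mA}$: because $\gamma_{j,j+1}$ is a complicated data-dependent EOT coupling, any bound that tried to use properties of $\gamma_{j,j+1}$ itself would be unwieldy, so it is essential to exploit that the Brownian-bridge kernel already carries a Gaussian convolution of variance $O(\tau\Delta_m)$ and lets Hellinger be controlled purely in terms of $\Delta_m$, $\sigma$ and $\tau$. Matching this local bound to the global target $A$ through the right Cauchy--Schwarz pairing is what makes the subsequent AM-GM absorption in the second stage tight enough to deliver the final $\max\{m^{-1},(N^{2/3}m^{1/3})^{-1}\}$ rate.
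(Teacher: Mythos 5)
Your two-stage plan has the right skeleton and the second stage (AM--GM absorption of $\Delta_m\sqrt{mA}$, then the case split on $m\lessgtr N$ to get $\max\{m^{-1},N^{-2/3}m^{-1/3}\}(\log m)^{d+1}$) matches the paper exactly; the ``reverse-triangle pairing'' $|d_{\rm H}^2-d_{\rm H}^2|=|d_{\rm H}-d_{\rm H}|\cdot|d_{\rm H}+d_{\rm H}|$ plus Cauchy--Schwarz is also, modulo using Hellinger rather than $L^\infty$ perturbations, the same mechanism as the paper's Lemma~\ref{lem: diff_H2}. But there is a genuine gap in the way you propose to control the interpolation error $d_{\rm H}^2(\m K_\sigma\ast\wht R_t,\m K_\sigma\ast\wht R_{t_j})$, and flagging it as ``the hardest part'' is right for the wrong reason.

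The bridge calculation you sketch cannot be ``coupling-free''. Conditional on endpoints $(x_j,x_{j+1})\sim\gamma_{j,j+1}$, the Brownian--bridge marginal at time $t$ is a Gaussian with variance $\tau(t-t_j)(t_{j+1}-t)/(t_{j+1}-t_j)\le\tau\Delta_m$ --- the piece you handle --- \emph{and} mean $x_j+\frac{t-t_j}{t_{j+1}-t_j}(x_{j+1}-x_j)$, the piece you ignore. As $t\uparrow t_{j+1}$ that mean slides all the way to $x_{j+1}$, so $\m K_\sigma\ast\wht R_t\to\m K_\sigma\ast\wht R_{t_{j+1}}$ and $d_{\rm H}(\m K_\sigma\ast\wht R_t,\m K_\sigma\ast\wht R_{t_j})\to d_{\rm H}(\m K_\sigma\ast\wht R_{t_{j+1}},\m K_\sigma\ast\wht R_{t_j})$, which is not $O(\sqrt{t-t_j})$ for free --- it depends entirely on how far the data-dependent coupling $\gamma_{j,j+1}$ transports mass. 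No heat-kernel estimate on the bridge variance can see that; a bound of the claimed form is logically equivalent to already knowing $d_{\rm H}^2(\m K_\sigma\ast\wht R_{t_{j+1}},\m K_\sigma\ast\wht R_{t_j})\lesssim t_{j+1}-t_j$, which is precisely the kind of time regularity you would be trying to prove.

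The missing idea is that the estimator's flow has the \emph{same} source of regularity as the true flow: on the high-probability event that Case 2 of the proof of Theorem~\ref{thm: stat_rate} rules out, one has $\tau\KL(\wht R\,\|\,W^\tau)\le 2\tau\KL(R^\ast\,\|\,W^\tau)\le 2E$, so $\wht R$ also lies in the class of path measures with bounded relative entropy to $W^\tau$. The paper then invokes~\citet[Proposition 2.12]{lavenant2024toward} to obtain the uniform $L^\infty$ $1/2$-H\"older bound $|\m K_\sigma\ast\wht R_t(x)-\m K_\sigma\ast\wht R_{t'}(x)|\le C_{\rm Hol}\sqrt{|t-t'|}$ for $\wht R$ in exactly the same way it does for $R^\ast$ --- no bespoke bridge analysis, no dependence on the structure of $\gamma_{j,j+1}$. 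You already granted the entropy $\Rightarrow$ Benamou--Brenier time regularity implication for $R^\ast$; you just need to apply it to $\wht R$ too. Once you have that $L^\infty$ bound, plug it into an estimate of the form of Lemma~\ref{lem: diff_H2} (or your $|d_{\rm H}^2-d_{\rm H}^2|$ factorisation, together with the two-sided bound $C_\sigma^{-1}\le\m K_\sigma\ast R_t\le C_\sigma$), Cauchy--Schwarz against $\sqrt{A}$, and close as you planned.

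One smaller point: your second-stage manipulation implicitly uses $\sum_j(t_{j+1}-t_j)^2\le m\Delta_m^2$ (or $\le\Delta_m$), which is exactly how the $\Delta_m\sqrt{mA}$ cross term arises in the paper; and you should remember to absorb the boundary piece $\int_0^{t_1}$ into the $\Delta_m$ term as the paper does, since your triangle decomposition starts at $t_1$.
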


\begin{rem}[Time discretization error]
The upper bound~\eqref{eqn: estim+discrete} demonstrates the impact of estimation error and time discretization error when estimating the density flow map $t\mapsto R_t^\ast$.
Generally, using Riemannian sum to approximate the corresponding Riemannian integral of a $1/2$-H\"{o}lder smooth function will cause discretization error of order $O(m^{-\frac{1}{2}})$. Fortunately, the leading term in our problem is also proportional to the integral of the squared Hellinger distance, improving the order of time discretization error to $O(m^{-1})$. We refer to the proof in Appendix~\ref{app: coro_pf} and Lemma~\ref{lem: diff_H2} for more details.
\end{rem}

\begin{rem}[Optimality of the rate]
To estimate the marginal distribution of the entire process, it is easy to verify that the presented rate achieves the minimal value $O\big(\frac{(\log m)^\frac{d+1}{2}}{(Nm)^\frac{1}{4}}\big)$ with respect to the total sample size $n = Nm$.
We conjecture that this rate is optimal due to the connection of our problem with nonparametric regression.
Specifically, in nonparametric regression, the optimal statistical rate for estimating an $\alpha$-H\"{o}lder smooth function in $d$-dimensional Euclidean space is $n^{-\frac{\alpha}{2\alpha+d}}$. In our problem, it is shown that $\m K_\sigma\ast R_t(x)$ is $1/2$-H\"{o}lder smooth with respect to time $t$ (one-dimensional) but infinitely smooth with respect to the state $x$. Therefore, the optimal rate should be $n^{-\frac{1/2}{2\cdot 1/2+1}} = n^{-\frac{1}{4}}$, up to logarithmic factors.
\end{rem}
%%%%%%%%%%%%%%%%%%%%%%%%%%%%%%%%%%%%%%%%

%%%%%%%%%%%%%%%%%%%%%%%%%%%%%%%%%%%%%%%%
\section{Coordinate KL Divergence Gradient Descent Algorithm}\label{sec: algo}
In this section, we will first provide a brief overview of the KL divergence gradient flow in Section~\ref{subsec:convex_fun}. With this background knowledge, in Section~\ref{sec: exact CKLGD}, we will present a general-purpose \emph{coordinate KL divergence gradient descent} (CKLGD) algorithm for minimizing jointly linearly convex multivariate functionals on the probability space and establish its convergence. 
%%%%%%%%%%%%%%%%%%%%%%%%%%%%%%%%%%%%%%%%
%%%%%%%%%%%%%%%%%%%%%%%%%%%%%%%%%%%%%%%%

\subsection{Background: linearly convex functionals in Wasserstein space}\label{subsec:convex_fun}
Let $\m F:\ms P^r(\m X)^{\otimes m}\to\mb R$ be a lower semi-continuous multivariate functional. The \emph{first variation} of $\m F$ at $(\rho_1, \dots, \rho_m)$ with respect to the $j$-th coordinate is defined as a map $\frac{\delta\m F}{\delta\rho_j}(\rho):\m X\to\mb R$, such that for any perturbation $\chi_j = \rho_j' - \rho_j$ with $\rho_j'\in\ms P^r(\m X)$, the directional derivative satisfies
\begin{align*}
\frac{\dd}{\dd\varepsilon}\m F(\rho_1, \dots, \rho_j + \varepsilon\chi_j,\dots \rho_m)\bigg|_{\varepsilon = 0} = \int_{\m X}\frac{\delta\m F}{\delta\rho_j}(\rho)\,\dd\chi_j.
\end{align*}
The first variation $\frac{\delta\m F}{\delta\rho_j}$ can be treated as the generalization of gradient on the Euclidean space to the space of probability distributions.

\begin{definition}[Linear convexity]\label{def:convex}
    A multivariate functional $\m F:\ms P^r(\m X)^{\otimes m}\to\mb R$ is said to be \textbf{jointly linearly convex} if $\forall \rho_1, \dots, \rho_m, \rho_1', \dots, \rho_m' \in \ms P^r(\m X)$, we have
\begin{align}\label{eqn: convex}
\m F(\rho_1', \dots, \rho_m') \geq \m F(\rho_1, \dots, \rho_m) + \sum_{j=1}^m\int_{\m X}\frac{\delta\m F}{\delta\rho_j}(\rho)(x_j)\,\dd(\rho_j' - \rho_j). 
\end{align}
\end{definition}
We emphasize that the above definition is distinct from the well-known geodesic convexity in the Wasserstein space. In fact, linear convexity and geodesic convexity are not directly comparable, e.g., see Remark 1 in~\citet{yao2024minimizing}.
%\textcolor{blue}{(emphasize: 1. linear convexity, not geodesically convex, Wasserstein gradient flow does not respect the general linear convexity structure. 2. special case: mean-field neural network adds non-vanishing $\ell^2$ regularization term (or more generally, loss function is strongly convex in parameters in $\ell^2$ linear convexity) to enforce uniform LSI.)}

The key idea of our algorithm is to discretize the KL divergence gradient flow~\citep{yao2024minimizing} coordinately for minimizing the jointly linearly convex functional $\m F:\ms P^r(\m X)^{\otimes m}\to\mb R$. The following result presents the advantage of using the (continuous-time) KL divergence gradient flow to minimize a jointly linearly convex functional on the probability space---the functional value converges to its minimum at a polynomial rate.

\begin{proposition}\label{prop:KLflow}
For a multivariate functional $\m F:\ms P^r(\m X)^{\otimes m}\to\mb R$, its KL divergence gradient flow $\rho(t) = \big(\rho_1(t), \dots, \rho_m(t)\big)$ is defined by the following ordinary differential equation (ODE) system
\begin{align}\label{eqn: KLflow}
\frac{\dd}{\dd t}\log\rho_j(t) = -\frac{\delta\m F}{\delta\rho_j}\big(\rho(t)\big) + \int_{\m X}\frac{\delta\m F}{\delta\rho_j}\big(\rho(t)\big)\,\dd\rho_j(t).
\end{align}
If $\m F$ is jointly linearly convex with global minimum $\m F^\ast$ and minimizer $\rho^\ast$, for any $T > 0$ it holds that
\begin{align}\label{eqn: continuous_rate}
\min_{0\leq t\leq T} \m F\big(\rho(t)\big) - \m F^\ast \leq \frac{1}{T}\KL\big(\rho^\ast\,\big\|\,\rho(0)\big).
\end{align}
\end{proposition}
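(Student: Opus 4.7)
The plan is to use the classical energy/Lyapunov approach: track how the KL divergence from the minimizer $\rho^\ast$ to the current iterate $\rho(t)$ evolves along the gradient flow, show its time derivative is bounded above by $-(\m F(\rho(t))-\m F^\ast)$ by invoking joint linear convexity, and then integrate in time to get the $1/T$ rate.

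First I would compute $\frac{\dd}{\dd t}\KL(\rho_j^\ast\|\rho_j(t))$ for each coordinate $j\in[m]$. Writing $\KL(\rho_j^\ast\|\rho_j(t)) = \int \rho_j^\ast \log\rho_j^\ast\,\dd x - \int \rho_j^\ast \log\rho_j(t)\,\dd x$, only the second term is time-dependent, and since $\rho_j^\ast$ is fixed,
\begin{align*}
\frac{\dd}{\dd t}\KL(\rho_j^\ast\|\rho_j(t)) = -\int \rho_j^\ast \,\partial_t\log\rho_j(t)\,\dd x.
\end{align*}
Substituting the defining ODE~\eqref{eqn: KLflow} and using $\int\rho_j^\ast\,\dd x = 1$, the additive normalization term cancels against $\int\frac{\delta\m F}{\delta\rho_j}(\rho(t))\,\dd\rho_j(t)$, leaving
\begin{align*}
\frac{\dd}{\dd t}\KL(\rho_j^\ast\|\rho_j(t)) = \int_{\m X}\frac{\delta\m F}{\delta\rho_j}(\rho(t))\,\dd(\rho_j^\ast - \rho_j(t)).
\end{align*}

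Next I would sum over $j$ and apply the joint linear convexity inequality~\eqref{eqn: convex} at the pair $(\rho(t), \rho^\ast)$:
\begin{align*}
\sum_{j=1}^m \int_{\m X}\frac{\delta\m F}{\delta\rho_j}(\rho(t))\,\dd(\rho_j^\ast - \rho_j(t)) \;\leq\; \m F(\rho^\ast) - \m F(\rho(t)) \;=\; -\bigl(\m F(\rho(t)) - \m F^\ast\bigr).
\end{align*}
Interpreting $\KL(\rho^\ast\|\rho(t)) := \sum_{j=1}^m \KL(\rho_j^\ast\|\rho_j(t))$ (the natural tensorization of KL divergence on product measures), this gives the differential inequality $\frac{\dd}{\dd t}\KL(\rho^\ast\|\rho(t)) \leq -(\m F(\rho(t)) - \m F^\ast)$.

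Integrating from $0$ to $T$ and using nonnegativity of KL yields $\int_0^T(\m F(\rho(t))-\m F^\ast)\,\dd t \leq \KL(\rho^\ast\|\rho(0)) - \KL(\rho^\ast\|\rho(T)) \leq \KL(\rho^\ast\|\rho(0))$. Since $\m F(\rho(t))-\m F^\ast\geq 0$, the left side is bounded below by $T\cdot\min_{0\leq t\leq T}(\m F(\rho(t))-\m F^\ast)$, delivering~\eqref{eqn: continuous_rate}. The main obstacle is not conceptual but rather justifying the formal manipulations: one needs enough regularity of $t\mapsto\rho(t)$ and of $\frac{\delta\m F}{\delta\rho_j}$ so that differentiation under the integral and the exchange of derivative and sum are valid, and one needs $\KL(\rho^\ast\|\rho(0))<\infty$ for the bound to be meaningful (otherwise the statement is vacuous). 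Under the lower semi-continuity hypothesis stated for $\m F$, together with mild regularity that ensures~\eqref{eqn: KLflow} admits a well-defined solution keeping $\rho_j(t)\in\ms P^r(\m X)$, these technical steps should go through routinely.
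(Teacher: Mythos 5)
Your proof is correct and follows the standard Lyapunov argument that the paper itself alludes to when it says Proposition~\ref{prop:KLflow} "simply extends Theorem 1 by \citet{yao2024minimizing} from the univariate case to the multivariate case": differentiate the tensorized KL divergence $\KL(\rho^\ast\|\rho(t)) = \sum_j \KL(\rho_j^\ast\|\rho_j(t))$ along the flow, use the normalization $\int\rho_j^\ast = 1$ to cancel the mean-subtraction term, invoke jointly linear convexity to obtain $\frac{\dd}{\dd t}\KL(\rho^\ast\|\rho(t)) \leq -(\m F(\rho(t))-\m F^\ast)$, and integrate. This is the same route the paper intends.
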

Proposition~\ref{prop:KLflow} simply extends Theorem 1 by~\cite{yao2024minimizing} from univariate case to multivariate case. We thus omit the proof.

%%%%%%%%%%%%%%%%%%%%%%%%%%%%%%%%%%%%%%%%
%%%%%%%%%%%%%%%%%%%%%%%%%%%%%%%%%%%%%%%%
\subsection{CKLGD for general convex functionals}\label{sec: exact CKLGD}

Given a generic multivariate functional $\m F:\ms P^r(\m X)^{\otimes m}\to\mb R$ with joint linear convexity in the sense of~\eqref{eqn: convex}, we aim to derive a practical algorithm to minimize $\m F$. In view of Proposition~\ref{prop:KLflow}, a natural approach is to consider the discretization of the continuous KL divergence gradient flow dynamics~\eqref{eqn: KLflow}. Here, we choose an explicit discretization scheme for its computational tractability and efficiency which are important considerations in minimizing the regularized E-NPMLE functional in~\eqref{eqn: obj_func_traj} (cf. Section~\ref{sec: inexact CKLGD}). Specifically, in the $k$-th iteration, we update all coordinates in parallel by discretizing the ODE~\eqref{eqn: KLflow} as
\begin{align*}
\frac{1}{\eta_k}\big[\log\rho_j^k - \log\rho_j^{k-1}\big] = C_{j,k} - \frac{\delta{\m F}}{\delta\rho_j}(\rho_j^{k-1}), \quad j\in[m]
\end{align*}
where $\eta_k$ is the step size of discretization, and $C_{j, k}$ is the normalizing constant ensuring that $\rho_j^k$ is a probability distribution. This discretization is also equivalent to solving the following minimization problem:
\begin{align}\label{eqn: CKLGD}
\rho_j^k = \argmin_{\rho_j\in\ms P^r(\m X)} \int_{\m X}\frac{\delta\m F}{\delta\rho_j}(\rho^{k-1})\,\dd(\rho_j - \rho_j^{k-1}) + \frac{1}{\eta_k}\KL(\rho_j\,\|\,\rho_j^{k-1}), \quad j\in[m].
\end{align}
%\textcolor{blue}{linearization. OK for our algo because we run MFLD locally.}
The first term in~\eqref{eqn: CKLGD} corresponds to a linearization around the previous iterate $\rho^{k-1}$, while the second term penalizes the difference between $\rho^k$ and $\rho^{k-1}$, preventing the new iterate $\rho^{k}$ from deviating too far from $\rho^{k-1}$.
Algorithm~\ref{alg: coordinate_KL} provides the pseudocode for our CKLGD algorithm, where~\eqref{eqn: CKLGD} is used as its subproblem to define the current iterate.
\begin{algorithm}[ht]
\caption{Coordinate KL divergence gradient descent (CKLGD) algorithm}\label{alg: coordinate_KL}
\begin{algorithmic}
\Require objective functional $\m F$; initialization $\rho^0 = (\rho_1^0, \dots, \rho_m^0)$; number of iterations $K$; a sequence of step sizes $\{\eta_k: k\in[K]\}$.
\Ensure solution $\rho^K$ in the $K$-th iteration.
\For{$k \gets 1$ to $K$}
  \For{$j\gets 1$ to $m$}
    \State $\rho_j^k = \argmin_{\rho_j\in\ms P(\m X)} \int_{\m X}\frac{\delta\m F}{\delta\rho_j}(\rho^{k-1})\,\dd(\rho_j - \rho_j^{k-1}) + \frac{1}{\eta_k}\KL(\rho_j\,\|\,\rho_j^{k-1})$
  \EndFor
\EndFor
\end{algorithmic}
\end{algorithm}

Ideally, when the objective functional possesses certain weak notion of smoothness, the linearization and discretization errors can be effectively controlled and will not impact the algorithmic convergence rate when the step size is not too large. The following theorem formalizes this idea and establishes a convergence rate of $O(\frac{\log k}{\sqrt{k}})$ for the CKLGD algorithm when minimizing a jointly linearly convex objective functional $\m F$ with uniformly bounded first variation (i.e., a Lipschitz condition in the Fr\'echet sense). This convergence result parallels classical optimization results for minimizing non-smooth convex functions in the Euclidean space using mirror gradient descent.
\begin{theorem}[Convergence rate of CKLGD for minimizing non-smooth convex functionals]\label{thm: algo_convergence_general}
Assume $\m F$ is jointly linearly convex, and the solution of~\eqref{eqn: CKLGD} exists for every $k\in[K]$ and $j\in[m]$.  
If $\m F$ has the uniformly bounded first variation, i.e.,
%\textcolor{red}{justify this ``smoothness" assumption, also related to implicit scheme (no need this assumption)}
%\ry{Still need a weaker one, similar to parallel updated coordinate GD}
\begin{align}\label{eqn:bounded_FV}
\sup_{\rho\in\ms P^r(\m X)^{\otimes m}}\Big\|\frac{\delta\m F}{\delta\rho_j}(\rho)\Big\|_{L^\infty(\m X)}\leq L_j
\end{align}
for some constants $L_j \geq 0$, then for any $\rho\in\ms P_2^r(\m X)^{\otimes m}$ we have
\begin{align}\label{eqn: algo_converge}
\min_{0\leq k\leq K-1}\m F(\rho^k) - \m F(\rho) \leq \frac{\KL(\rho\,\|\,\rho^0)}{\eta_1 + \dots + \eta_K} + \frac{\sum_{k=1}^K\eta_k^2\sum_{j=1}^m L_j^2}{2(\eta_1 + \dots + \eta_K)}.
\end{align}
\begin{comment}
If there exists a multivariate functional $\m G:\ms P^r(\m X)^{\otimes m}\to\mb R$, such that
\begin{align*}
\m F(\rho_1, \dots, \rho_m) = \m G(\rho_1, \dots, \rho_m) + \tau\sum_{j=1}^m \int\rho_j\log\rho_j,
\end{align*}
where $\m G:\ms P^r_2(\m X)^{\otimes m}\to\mb R$ satisfying xxx \textcolor{red}{to be edited}
\end{comment}
\end{theorem}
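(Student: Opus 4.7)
The plan is to emulate the classical mirror-descent analysis from Euclidean optimization, with KL divergence playing the role of the Bregman divergence and with joint linear convexity replacing standard convexity. The argument splits into three short steps that parallel the proof of the $O(1/\sqrt{k})$ rate for subgradient mirror descent. First I would write down the first-order optimality condition for the subproblem~\eqref{eqn: CKLGD}: the minimizer $\rho_j^k$ satisfies $\log(\rho_j^k/\rho_j^{k-1}) = -\eta_k \frac{\delta\m F}{\delta\rho_j}(\rho^{k-1}) + c_{j,k}$ for a normalizing constant $c_{j,k}$. Testing against the signed measure $\rho_j - \rho_j^k$ (which has zero mass, so $c_{j,k}$ drops out) and invoking the elementary three-point identity $\int \log(\rho_j^{k-1}/\rho_j^k)\,d(\rho_j - \rho_j^k) = \KL(\rho_j\|\rho_j^k) - \KL(\rho_j\|\rho_j^{k-1}) + \KL(\rho_j^k\|\rho_j^{k-1})$ gives the key per-coordinate relation
\begin{align*}
\eta_k \int \frac{\delta\m F}{\delta\rho_j}(\rho^{k-1})\,d(\rho_j^k - \rho_j) = \KL(\rho_j\|\rho_j^{k-1}) - \KL(\rho_j\|\rho_j^k) - \KL(\rho_j^k\|\rho_j^{k-1}).
\end{align*}

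Second, I would apply joint linear convexity~\eqref{eqn: convex} to bound $\m F(\rho^{k-1}) - \m F(\rho) \leq \sum_{j=1}^m \int \frac{\delta\m F}{\delta\rho_j}(\rho^{k-1})\,d(\rho_j^{k-1} - \rho_j)$, and then split the test measure as $\rho_j^{k-1} - \rho_j = (\rho_j^{k-1}-\rho_j^k) + (\rho_j^k - \rho_j)$. The second piece is rewritten via Step~1, while the first piece is a \emph{linearization error} that I would bound using the $L^\infty$ bound~\eqref{eqn:bounded_FV}, Pinsker's inequality, and Young's inequality with a carefully tuned coefficient:
\begin{align*}
\int \frac{\delta\m F}{\delta\rho_j}(\rho^{k-1})\,d(\rho_j^{k-1} - \rho_j^k) \leq L_j \|\rho_j^{k-1} - \rho_j^k\|_{\mathrm{TV}} \leq L_j \sqrt{2\KL(\rho_j^k\|\rho_j^{k-1})} \leq \frac{\eta_k L_j^2}{2} + \frac{\KL(\rho_j^k\|\rho_j^{k-1})}{\eta_k}.
\end{align*}
The resulting $+\KL(\rho_j^k\|\rho_j^{k-1})/\eta_k$ term is designed to cancel exactly the $-\KL(\rho_j^k\|\rho_j^{k-1})/\eta_k$ term produced in Step~1, leaving the clean per-iteration descent inequality $\eta_k[\m F(\rho^{k-1}) - \m F(\rho)] \leq \sum_{j=1}^m [\KL(\rho_j\|\rho_j^{k-1}) - \KL(\rho_j\|\rho_j^k)] + \frac{\eta_k^2}{2}\sum_{j=1}^m L_j^2$.

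Finally, I would sum this inequality over $k = 1, \dots, K$. The KL-difference terms telescope coordinatewise into $\sum_{j=1}^m [\KL(\rho_j\|\rho_j^0) - \KL(\rho_j\|\rho_j^K)] \leq \KL(\rho\|\rho^0)$ (interpreting the multivariate KL as the sum over coordinates, consistent with product measures), and the quadratic residual accumulates to $\frac{1}{2}\sum_{k=1}^K \eta_k^2 \sum_{j=1}^m L_j^2$. Lower-bounding the left-hand side by $\min_{0 \leq k \leq K-1}[\m F(\rho^k) - \m F(\rho)] \cdot \sum_{k=1}^K \eta_k$ and dividing through yields~\eqref{eqn: algo_converge}. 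The main obstacle I anticipate is the coefficient choice in the Pinsker--Young step: picking any coefficient other than $\eta_k$ would leave an uncancelled positive KL residue that cannot be telescoped, breaking the argument; getting this calibration right is what makes the non-smooth, geodesically nonconvex functional amenable to a clean mirror-descent style rate.
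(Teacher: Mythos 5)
Your proof is correct and follows essentially the same mirror-descent argument as the paper: first-order optimality of the subproblem, the KL three-point identity, Pinsker's inequality, and Young's inequality to cancel the $\KL(\rho_j^k\,\|\,\rho_j^{k-1})$ term. The only cosmetic difference is that you split the test measure $\rho_j^{k-1}-\rho_j$ up front into a linearization piece and a progress piece, whereas the paper substitutes the optimality condition directly and then decomposes the resulting $\KL(\rho_j^{k-1}\,\|\,\rho_j^k)$ into a symmetrized KL (bounded via $L_j$) minus the reversed KL; both routes produce the identical per-iteration inequality and telescope to~\eqref{eqn: algo_converge}.
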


We remark that the first term in~\eqref{eqn: algo_converge} arises from the inherent properties of using KL divergence gradient descent to minimize a convex functional, while the second term in~\eqref{eqn: algo_converge} represents the discretization error, which can be further reduced if $\m F$ possesses stronger smoothness beyond having a bounded first variation. Note that the second term involves a quadratic bias that is due to linearization in the explicit scheme~\eqref{eqn: CKLGD}. Minimizing the bound of~\eqref{eqn: algo_converge} with the step size $\eta_k = k^{-1/2}$, the rate of convergence for CKLGD becomes
\begin{equation}\label{eqn:CKLGD_rate}
    \min_{0\leq k\leq K-1}\m F(\rho^k) - \m F(\rho) = O\Big(\frac{\log K}{\sqrt{K}}\Big),
\end{equation}
which aligns with classical convergence results for minimizing non-smooth convex functions using subgradient descent or mirror descent in the Euclidean space~\citep[Theorem 3.1, Theorem 3.5,][]{lan2020first}. 
%Comparing~\eqref{eqn:CKLGD_rate} with the convergence rate $O(K^{-1})$ for the continuous dynamics in Proposition~\ref{prop:KLflow}, we find that the algorithmic convergence for the explicit discretization scheme deteriorates by a factor of $\log{K}$ \an{$\sqrt{K \log{K}}$? Also, I don't think it is necessary to compare the rate with the continuous-time case, since establishing a natural correspondence between discrete and continuous time $K \leftrightarrow T$ is nontrivial. For example, in continuous time, we can arbitrarily accelerate the convergence speed simply by changing the time scale $t\to \phi(t)$, maintaining the trajectory.}. 
While the implicit discretization scheme can improve the algorithmic convergence rate to $O(K^{-1})$ as shown by~\citet{yao2024minimizing} under a univariate setting,
%get rid of the quadratic bias and typically preserve the continuous-time convergence rate, 
computing each iterate is however often much more challenging~\citep{yao2024minimizing}.

%Thus for non convergent $\sum_{k=1}^K \eta_k \to \infty$, implicit scheme can work and reduce iteration complexity. Two terms in the bound have different meanings in designing step size.

%\textcolor{orange}{A note from RY: I didn't add the proof when $\m F = \m G + \int\rho\log\rho$ (I write some and have commented them), as it also requires additional quadratic term to provide a lower bound for $\int\rho^k\log\rho^k$. I add a remark after the proof of the main theorem, and also provide the sketch of the proof of the main algorithmic convergence theorem.}

\begin{rem}%[uniformly bounded first variation]
The assumption of uniformly bounded first variation~\eqref{eqn:bounded_FV} is analogous to the uniform Lipschitz condition in convex optimization in the Euclidean space, which is commonly made when the objective function lacks stronger smoothness. In our proof, we only require the first variation to be uniformly bounded at all iterates $\rho^k$ for $k=1 \dots, K$. In later sections, we will see that if there exists a functional $\m G$ such that $\m F(\rho) = \m G(\rho) +\tau\int\rho\log\rho$, only the uniformly bounded first variation of $\m G$ is required.
\end{rem}

%%%%%%%%%%%%%%%%%%%%%%%%%%%%%%%%%%%%%%%%
%%%%%%%%%%%%%%%%%%%%%%%%%%%%%%%%%%%%%%%%
%%%%%%%%%%%%%%%%%%%%%%%%%%%%%%%%%%%%%%%%
\section{Computing E-NPMLE via Inexact CKLGD Algorithm}\label{sec: main_results}
In Section~\ref{sec: inexact CKLGD}, we first tailor the CKLGD algorithm to compute the E-NPMLE estimator $\wht{R}_{t_1}, \dots, \wht{R}_{t_m}$ defined by~\eqref{eqn: E-NPMLE} and~\eqref{eqn: obj_func_traj}. The algorithm is called the inexact CKLGD algorithm because each subproblem~\eqref{eqn: CKLGD} can be efficiently approximated using sampling methods, leveraging the special structure of the objective functional $\mathcal F$ in our context. We will derive the algorithmic convergence rate in Section~\ref{sec: algo_converge}.

\subsection{Solving E-NPMLE via inexact CKLGD}\label{sec: inexact CKLGD}
In this section, we tailor the CKLGD algorithm to compute the E-NPMLE by minimizing the objective functional in~\eqref{eqn: obj_func_traj} while carefully accounting for the computational error arising in approximately solving the subproblem~\eqref{eqn: CKLGD} in each iteration. 
\begin{comment}
    To proceed, define the reduced objective functional 
\textcolor{blue}{Connection to regression loss function with noise, }
\textcolor{red}{change loss motivation (to avoid confusion with deconvolution estimation)} \ry{didn't get this comment}
\begin{align}\label{eqn: obj_func_traj}
\m F_{N, m}(\rho_1,\dots, \rho_m) = -\sum_{j=1}^m\frac{t_{j+1} - t_j}{N\lambda}\sum_{i=1}^N\log\big[\m K_\sigma\ast \rho_j(X_{t_j}^i)\big] + \sum_{j=1}^{m-1}\frac{\eot_{\tau^j}^{c_j}(\rho_j, \rho_{j+1})}{t_{j+1}-t_j} + \tau\sum_{j=1}^m\int\rho_j\log\rho_j,
\end{align}
where $\tau^j = (t_{j+1}-t_j)\tau$ and $c_j(x, x') = -\tau^j\log\m K_{\tau^j}(x-x')$.
\end{comment}
%The following proposition establishes the connection between the estimator~\eqref{eqn: obj_func}, a probability distribution on the path space, and the minimizer of the reduced functional $\m F_{N, m}$.
%\textcolor{blue}{Chizat et al. added an extra $\epsilon$ entropy term, but the objective functional is still not geodesically convex. Key point: we only have $\ell^2$ convexity, no geodesical convexity. We need to use KL to respect this geometry.} 
Unlike MFLD, which suffers from slow convergence, our CKLGD algorithm better aligns with the joint linear convexity of $\mathcal{F}_{N, m}$ by discretizing the KL divergence gradient flow.

%\textcolor{red}{comment: the same remark as below has appeared multiple times earlier, maybe briefly mention to avoid repetition.} Observe that the reduced objective functional $\m F_{N, m}$ is jointly linearly convex. However, the joint linear convexity is not fully exploited in the MFLD algorithm. \citet{chizat2022trajectory} minimized the reduced functional $\m F_{N, m}$ by adding an extra entropy term to ensure the log-Sobolev inequality and directly discretizing the corresponding MFLD, resulting in a biased algorithm. To mitigate this bias, an annealing scheme was applied, where the coefficient of the extra entropy term decreases rapidly to zero. However, this approach leads to a logarithmically slow algorithmic convergence rate in the worst-case scenario. In contrast, our (inexact) CKLGD algorithm discretizes the continuous-time KL divergence gradient flow in a way that aligns more closely with the linear convexity than the Wasserstein gradient flow.

Now, we describe our inexact CKLGD to solve the E-NPMLE problem. With the explicit expression of the first variation of $\m F_{N, m}$, the solution $\rho^k = (\rho_1^k, \dots, \rho_m^k)$ of the subproblem~\eqref{eqn: CKLGD} to minimize $\m F_{N, m}$ using CKLGD satisfies
\begin{align}
&\qquad \qquad \qquad \rho_j^k(y_j) \propto \big[\rho_j^{k-1}(y_j)\big]^{1-\tau\eta_k}\exp\Big\{-V_j\big(y_j; \rho^{k-1}\big)\Big\},\label{eqn: CKLGD_traj}\\
&\mx{where}\qquad V_j\big(y_j; \rho^{k-1}\big) := -\frac{t_{j+1}-t_j}{N\lambda}\sum_{i=1}^N\frac{\m K_\sigma(X_{t_j}^i - y_j)}{\m K_\sigma\ast\rho_{j}^{k-1}(X_{t_j}^i)} + \frac{\varphi_{j, j+1}^{k-1}(y_j)}{t_{j+1}-t_j} +  \frac{\psi_{j, j-1}^{k-1}(y_j)}{t_j - t_{j-1}} \label{eqn: Vj_def}.
\end{align}
Here, $\varphi_{j, j+1}^{k-1}$ and $\psi_{j+1, j}^{k-1}$ are the Schr\"{o}dinger potential functions introduced in Section~\ref{subsec: EOT} for solving the entropic optimal transport problem $\eot_{{\tau^j}}^{c_j}(\rho_j^{k-1}, \rho_{j+1}^{k-1})$.

Computing the density function of $\rho_j^k$ in~\eqref{eqn: CKLGD_traj} is challenging in practice due to the computationally intractable normalizing constant. An alternative approach is to sample from the distribution $\rho_j^k$. Assuming $\rho_j^0$ is the uniform distribution over $\m X$, iterative application of the updating formula~\eqref{eqn: CKLGD_traj} yields
\begin{align}\label{eqn: exact_sampling}
\rho_j^k(y_j) \propto \exp\bigg\{-\sum_{l=1}^k\Big[\eta_l\prod_{l < l' \leq k}(1-\tau\eta_{l'})\Big]V_j\big(y_j; \rho^{l-1}\big)\bigg\}.
\end{align}
To sample from such a distribution, the unadjusted Langevin algorithm~\citep[ULA,][]{dalalyan2017theoretical, wibisono2018sampling} is a popular choice. It is well established that ULA is \emph{biased} and converges exponentially fast when the target measure is log-concave or satisfies a log-Sobolev inequality~\citep{dalalyan2017theoretical, chewi2024analysis, vempala2019rapid}. In particular, given that only a finite number of sampling iterations of ULA can be performed, and the fact that ULA introduces bias even with an infinite number of iterations, the algorithm may sample from a different probability distribution $\wht\rho_j^k$ that is close to $\wt\rho_j^k$ in the sense that $\KL(\wht\rho_j^k\,\|\,\wt\rho_j^k)$ is small.
Accordingly, we modify the updating formula~\eqref{eqn: exact_sampling} by incorporating an extra quadratic term, which ensures the log-Sobolev inequality, leading to contraction in each iteration. To be precise, let $\wht\rho^1, \dots, \wht\rho^{k-1}\in\ms P^r(\m X)^{\otimes m}$ be the iterates derived from the previous $(k-1)$ iterations. In the $k$-th iteration,
%theoretically, we can calculate the $k$-th iteration of the $j$-th block by solving the following optimization problem,
%\begin{align}\label{eqn: quad_anneal_CKLGD}
%\wt\rho_j^k = \argmin_{\rho_j\in\ms P(\m X)} \int_{\m X}\frac{\delta\m F_{N,m}}{\delta\rho_j}(\wht\rho^{k-1})\,\dd(\rho_j - \wht\rho_j^{k-1}) + 
%\underbrace{\alpha_k\int_{\m X}\|y_j\|^2\,\dd\rho_j}_{\text{quadratic annealing}}
%+ \frac{1}{\eta_k}\KL(\rho_j\,\|\,\wt\rho_j^{k-1}), \quad j\in[m],
%\end{align}
%which is equivalent to
we aim to sample from the distribution
\begin{align}
&\wt\rho_j^k(y_j) \propto \exp\bigg\{-\sum_{l=1}^k\Big[\eta_l\prod_{l<l'\leq k}(1-\tau\eta_{l'})\Big]\big[V_j\big(y_j; \wht\rho^{l-1}\big) + \alpha_l\|y_j\|^2\big]\bigg\}\label{eqn: explicit_quad_anneal_update},
%&\mx{where}\qquad \wht V_j^{l-1}(y_j) := -\frac{t_{j+1}-t_j}{N\lambda}\sum_{i=1}^N\frac{\m K_\sigma(X_{t_j}^i - y_j)}{K_\sigma\ast\rho_{t_j}^{l-1}(X_{t_j}^i)} + \frac{\wht\varphi_{j, j+1}^{l-1}(y_j)}{t_{j+1}-t_j} +  \frac{\wht\psi_{j, j-1}^{l-1}(y_j)}{t_j - t_{j-1}}.\notag
\end{align}
where $\{\alpha_l \geq 0: l\in\mb Z_+\}$ are the coefficients of the quadratic terms. 
The introduced quadratic term in~\eqref{eqn: explicit_quad_anneal_update} can reduce the inner iteration sampling complexity while maintaining the same accuracy compared with directly sampling from $\rho_j^k$ in~\eqref{eqn: exact_sampling}. As will be demonstrated shortly (Theorem~\ref{thm: traj_CKLGD} and Remark~\ref{rmk:extra_quad_term}), this quadratic term does not compromise the convergence of outer iterations for minimizing the objective functional $\m F_{N,m}$ when the coefficients $\{\alpha_l\}_{l\in\mb Z_+}$ are appropriately selected.

\begin{figure}[ht]
\centering
\scalebox{1}{\subfloat[Mean-field Langevin dynamics.]{\includegraphics[width=.5\columnwidth]{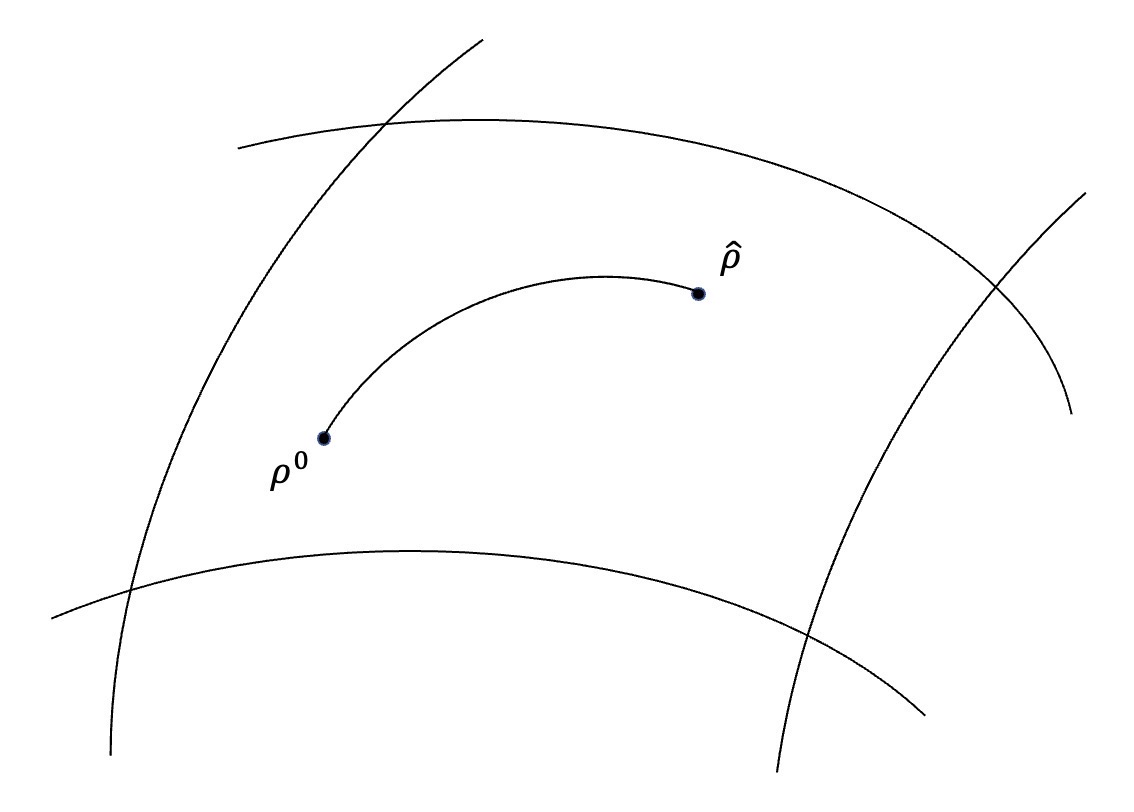}\label{fig: MFLD}}
\subfloat[Inexact coordinate KL divergence gradient descent.]{\includegraphics[width=.5\columnwidth]{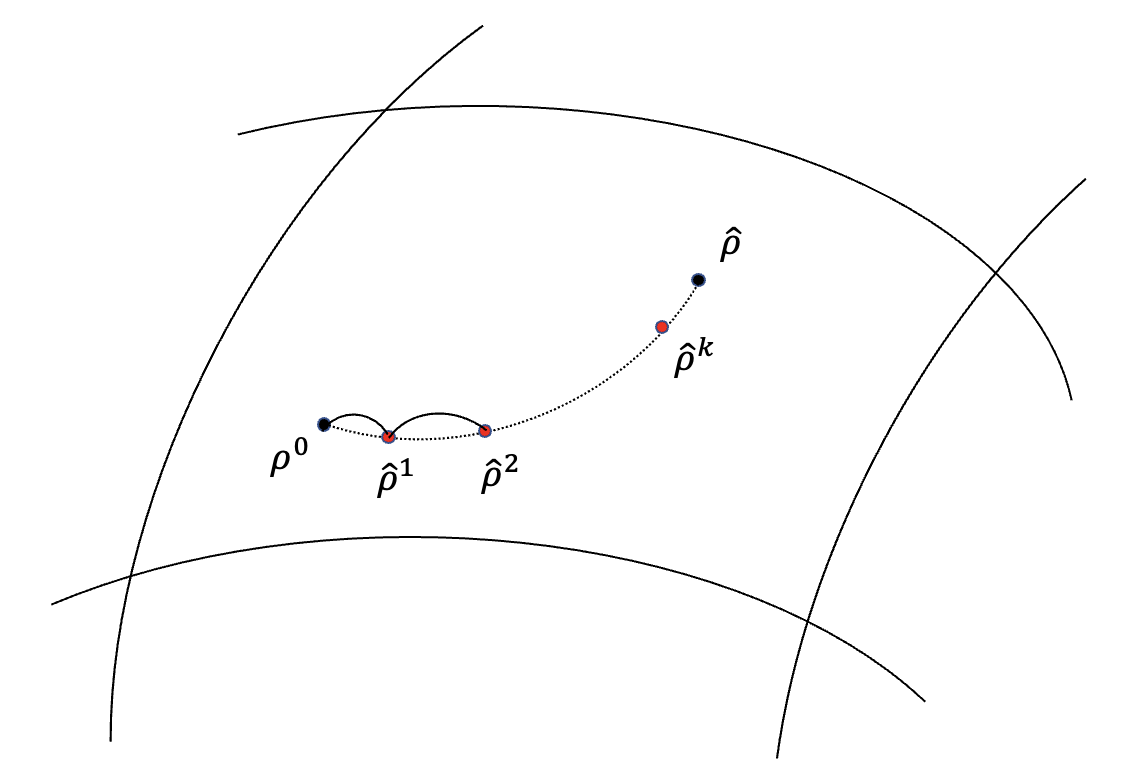}\label{fig: CKLGD}}}
\caption{High-level comparison of using the MFLD algorithm and the inexact CKLGD algorithm to minimize $\m F_{N,m}$. All solid lines represent mean-field Langevin dynamics and dotted lines represent KL divergence gradient flow. (a) The MFLD algorithm directly applies the mean-field Langevin dynamics (solid line) to compute the global minimum of $\m F_{N, m}$. Due to the nonconvexity along geodesics, MFLD with annealing requires $O(e^{\frac{C}{\varepsilon}})$ total iterations to achieve $\varepsilon$-accuracy. (b) Inexact CKLGD discretizes the KL divergence gradient flow (dotted line) and uses MFLD (solid line) to compute each iterate. Inexact CKLGD only requires polynomial total iterations to achieve $\varepsilon$-accuracy (Remark~\ref{rmk:total_iteration_complexity}).}
\label{fig: algo_illustrate}
\end{figure}
%\an{To complete the complexity analysis of algorithm, we have to consider finite-particle approximation errors for ULA. Perhaps such analysis seems challenging since particle approximation influences the denominator of $V_j$. That said, I think it is acceptable to ignore this error in the submission. We can briefly mention this point as potential future work in Summary.}
In a nutshell, our method can be interpreted as a hybridization of CKLGD and the ULA (without annealing), in contrast to MFLD which simply applies the ULA (with annealing) to minimize the reduced objective functional $\m F_{N, m}$. Figure~\ref{fig: algo_illustrate} illustrates the main difference between these two algorithms. The corresponding pseudocode is presented in Algorithm~\ref{alg: inexact_CKLGD}, which we refer as \emph{inexact} CKLGD due to the potential numerical errors that may arise during the inner loop sampling procedure.  
%The convergence analysis of inexact CKLGD for trajectory inference will be postponed to the next section.
\begin{algorithm}[ht]
\caption{Inexact CKLGD for minimizing the reduced objective functional $\m F_{N, m}$}\label{alg: inexact_CKLGD}
\begin{algorithmic}
\Require observations $\{X_{t_j}^i: i\in[N], j\in[m]\}$; number of particles $B$; number of iterations $K$; number of iterations for sampling $\{n_{k}: k\in[K]\}$; a sequence of step sizes $\{\eta_k: k\in[K]\}$; a sequence of annealing coefficients $\{\alpha_k: k\in[K]\}$; a sequence of learning rate for sampling $\{h_k: k\in[K]\}$. 
%\an{$\eta_k$ is already used for $\eta_k=(t_{j+1}-t_j)\tau$.}\ry{addressed}
\Ensure A set of particles $\{Y_{j, b}^K: j\in[m], b\in[B]\}$ followed the distribution $\wht\rho^K = (\wht\rho_1^K, \dots, \wht\rho_m^K)$.
\For{$j\gets 1$ to $m$}
  \State Uniformly sample $Y_{j, 1}^0, \dots, Y_{j, B}^0$ in $\m X$. \Comment{can also choose other initial distributions}
\EndFor
\For{$k \gets 1$ to $K$}
  \State Take $\wht\rho_j^{k-1} := \frac{1}{B}\sum_{b=1}^B\delta_{Y_{j, b}^{k-1}}$ and $\wht\rho^{k-1} = (\wht\rho_1^{k-1}, \dots, \wht\rho_m^{k-1})$.
  \For{$j\gets 1$ to $m$}
    \State Take $Z_{j, b}^0 = Y_{j, b}^{k-1}$ for all $b\in[B]$.
    \For{$s\gets 1$ to $n_k$} \Comment{ULA for inner sampling}
      \State $\!Z_{j, b}^{s} = Z_{j, b}^{s-1} - h_k\sum_{l=1}^k\big[\eta_l\prod_{l<l'\leq k}(1-\tau\eta_{l'})\big]\big[\nabla V_j(Z_{j, b}^{s-1}; \wht\rho^{l-1}) + 2\alpha_l Z_{j, b}^{s-1}\big] + \m N(0, 2h_k I_d)$, $\forall\,b\in[B]$. 
      %\textcolor{blue}{has LSI due to $\ell^2$ regularization, so exponentially convergence locally $\implies$ poly iteration complexity. Combining the piecewise $\ell^2$ fast convergence and global ``constant-step-size" without annealing Wasserstein geometry. So Chizat et al.'s all is a special case with linearization.}.
    \EndFor
    \State Take $Y_{j, b}^{k} = Z_{j, b}^{n_k}$ for all $b\in[B]$.
  \EndFor
\EndFor
\end{algorithmic}
\end{algorithm}

%%%%%%%%%%%%%%%%%%%%%%%%%%%%%%%%%%%%%%%%
%%%%%%%%%%%%%%%%%%%%%%%%%%%%%%%%%%%%%%%%

\subsection{Algorithmic convergence of inexact CKLGD}\label{sec: algo_converge}
We will first examine the convergence of the inexact CKLGD algorithm. The following result demonstrates how the step size, the coefficient of the quadratic terms in~\eqref{eqn: explicit_quad_anneal_update}, and the sampling error within each outer iteration influence the algorithmic convergence rate. 
\begin{theorem}[Convergence rate of inexact CKLGD]\label{thm: traj_CKLGD}
Assume that the step size of CKLGD $\{\eta_k\}_{k=1}^\infty$ and the coefficients of the extra quadratic terms $\{\alpha_k\}_{k=1}^\infty$
are positive and satisfy
\begin{itemize}
\item $\eta_k$ is decreasing to $0$ and $\sum_k\eta_k = \infty$;
\item $\lim_{k\to\infty}\alpha_k = \lim_{k\to\infty}\frac{\alpha_{k-1}-\alpha_k}{\eta_k\alpha_k} = 0$;
\item $\{\alpha_ke^{\tau(\eta_1 + \dots + \eta_k)}\}_{k=1}^\infty$ converges to $\infty$ and is increasing when $k$ is large enough.
\end{itemize}
Let $\{\delta_k\}_{k=1}^\infty$ be the tolerance of numerical error such that $\KL(\wht\rho^k\,\|\,\wt\rho^k) \leq \delta_k$. %\textcolor{blue}{Inner loop complexity. Example? $\delta_k = k^{-\frac{3}{2}}$ is the optimal inexact error by balancing the optimization upper error bound.} \textcolor{red}{$\lesssim$ does not depend on $\eta_k ,\alpha_k$, may depend on $m, \sigma$ (double check) Issue on $\sigma$??}. \ry{the constant depends on $\sigma$ at the rate $r_\sigma^2 e^{cr_\sigma}$, where $r_\sigma = \frac{\sup_x \m K_\sigma(x)}{\inf_x \m K_\sigma(x)}$ and $c$ is a constant independent of $\sigma$.} \xc{no dependence of $\lesssim$ on $m$, right?} \ry{$O(m)$ order. Note that exact CKLGD in Theorem 4 has the same dependence on $m$.}
Then, we have
\begin{align*}
\min_{1\leq k\leq K}\m F_{N, m}(\wht\rho^k) - \m F_{N, m}(\rho) \lesssim \bigg[\sum_{k=1}^K\eta_{k+1}\bigg]^{-1}\bigg[\sum_{k=2}^K\frac{\eta_{k+1}(\alpha_k-\alpha_{k+1})}{\alpha_{k+1}} + \sum_{k=1}^K \eta_{k+1}^2 + \sum_{k=1}^K\eta_{k+1}\sqrt{\frac{\delta_k}{\alpha_k}} + \sum_{k=1}^{K+1}\alpha_k\eta_k\bigg],
\end{align*}
where the constant of $\lesssim$ does not depend on iteration number $K$.
\end{theorem}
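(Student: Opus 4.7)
The plan is to extend the convergence analysis of exact CKLGD (Theorem~\ref{thm: algo_convergence_general}) by simultaneously tracking two new error sources specific to Algorithm~\ref{alg: inexact_CKLGD}: the annealing penalty $\alpha_k\|y\|^2$ baked into the sampling target $\wt\rho^k$, and the finite-ULA sampling error $\KL(\wht\rho^k\|\wt\rho^k)\le \delta_k$. I would first derive a one-step descent inequality at each outer iteration. Since $\m F_{N,m}$ decomposes as a $V_j$-type smooth part plus $\tau$-self-entropy, the first variation at $\wht\rho^{k-1}$ equals $V_j(\cdot;\wht\rho^{k-1})+\tau\log\wht\rho_j^{k-1}$ up to an additive constant, and the exact subproblem solution $\wt\rho^{k+1}$ (given $\wht\rho^k$) satisfies the Euler--Lagrange identity $\log(\wt\rho_j^{k+1}/\wht\rho_j^k) = -\eta_{k+1}[V_j(\cdot;\wht\rho^k)+\tau\log\wht\rho_j^k+\alpha_{k+1}\|\cdot\|^2]+c$. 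Combining this with joint linear convexity of $\m F_{N,m}$ at $\wht\rho^k$ tested against $\rho-\wht\rho^k$ yields the three-point inequality
\begin{align*}
\eta_{k+1}\big[\m F_{N,m}(\wht\rho^k)-\m F_{N,m}(\rho)\big] \le \KL(\rho\|\wht\rho^k) - \KL(\rho\|\wt\rho^{k+1}) + \eta_{k+1}\alpha_{k+1}\!\!\int\!\|y\|^2\,d(\rho-\wt\rho^{k+1}) + O(\eta_{k+1}^2),
\end{align*}
where the $O(\eta_{k+1}^2)$ captures the linearization/discretization error and uses uniform boundedness of $V_j$ (inherited from smoothness of $\m K_\sigma$ and the Schr\"odinger potentials of the EOT subproblems).

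The crucial second step is to convert $\KL(\rho\|\wt\rho^{k+1})$ into $\KL(\rho\|\wht\rho^{k+1})$ so that the inequality telescopes across iterations. This is where I would execute the paper's new interpolation technique: rather than bounding the raw KL difference (which is not triangle-friendly and would degrade the rate), I construct an interpolation $\gamma_s$ from $\wht\rho^{k+1}$ to $\wt\rho^{k+1}$, differentiate the relevant potential along $s$, and use joint linear convexity of $\m F_{N,m}$ to pass from a quadratic-in-perturbation bound to a \emph{linear-in-perturbation} bound whose integrated length reduces to a Fisher--Rao (Hellinger-type) distance. The $\alpha_{k+1}\|y\|^2$ term renders $\wt\rho^{k+1}$ strongly log-concave with constant $\gtrsim \alpha_{k+1}$, so Otto--Villani/Talagrand gives $W_2(\wht\rho^{k+1},\wt\rho^{k+1})\lesssim\sqrt{\delta_{k+1}/\alpha_{k+1}}$; feeding this into the convexity-based linear estimate produces the $\eta_{k+1}\sqrt{\delta_k/\alpha_k}$ contribution in the final bound.

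The third step handles the annealing bias and telescoping. The bias $\eta_{k+1}\alpha_{k+1}\int\|y\|^2\,d(\rho-\wt\rho^{k+1})$ splits into a direct annealing contribution summing to $\sum_k\alpha_k\eta_k$ and a mismatch term from $\alpha_k$ varying across iterations, which after regrouping becomes $\sum_k \eta_{k+1}(\alpha_k-\alpha_{k+1})/\alpha_{k+1}$. The assumed conditions are used precisely here: $\eta_k\downarrow 0$ and $\sum\eta_k=\infty$ control the averaging denominator; $\alpha_k\to 0$ kills the direct annealing bias; $(\alpha_{k-1}-\alpha_k)/(\eta_k\alpha_k)\to 0$ kills the mismatch term; and eventual monotonicity of $\alpha_k e^{\tau(\eta_1+\cdots+\eta_k)}$ lets us absorb the boundary $\KL(\rho\|\wht\rho^0)$ term arising from telescoping. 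Summing the one-step inequality over $k=1,\dots,K$, replacing $\m F_{N,m}(\wht\rho^k)$ by $\min_k \m F_{N,m}(\wht\rho^k)$, and dividing by $\sum_k\eta_{k+1}$ produces the claimed bound with the four error terms in the stated order.

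The main obstacle is the KL-to-Fisher--Rao reduction in the second step. A naive triangle-style bound on $\KL(\rho\|\wht\rho^{k+1})-\KL(\rho\|\wt\rho^{k+1})$ only yields an $O(\delta_k)$-linear contribution that accumulates catastrophically over $K$ iterations; the convexity-driven interpolation is essential to improve this to $\sqrt{\delta_k/\alpha_k}$, which in turn makes the inner ULA sample budget $n_k$ polynomially large (rather than exponentially) while still driving the overall optimization gap to zero. Verifying that the interpolation, convexity step, and Talagrand inequality compose correctly under the stated $(\eta_k,\alpha_k)$ schedule is the central technical burden.
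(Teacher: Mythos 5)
Your plan rests on a one-step Euler--Lagrange identity that does not describe Algorithm~\ref{alg: inexact_CKLGD}. You write $\log(\wt\rho_j^{k+1}/\wht\rho_j^k) = -\eta_{k+1}\bigl[V_j(\cdot;\wht\rho^k)+\tau\log\wht\rho_j^k+\alpha_{k+1}\|\cdot\|^2\bigr]+c$, but factoring~\eqref{eqn: explicit_quad_anneal_update} gives $\wt\rho_j^{k+1}\propto(\wt\rho_j^k)^{1-\tau\eta_{k+1}}\exp\bigl\{-\eta_{k+1}[V_j(\cdot;\wht\rho^k)+\alpha_{k+1}\|\cdot\|^2]\bigr\}$: the recursion base is the previous \emph{ideal} target $\wt\rho_j^k$, not the numerical iterate $\wht\rho_j^k$. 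The algorithm is a dual-averaging scheme whose subproblem minimizes the cumulative functional $U_k$ of~\eqref{eqn: def_Uk}, not a one-step mirror-descent step anchored at $\wht\rho_j^k$. Because you start from the wrong base, you then have to convert $\KL(\rho\,\|\,\wt\rho^{k+1})$ into $\KL(\rho\,\|\,\wht\rho^{k+1})$ in order to telescope, and this is where the proposal breaks down: that conversion is both unnecessary (the actual telescoping lives entirely in the $\wt\rho^k$-ladder via $U_k$, and $\KL(\rho\,\|\,\wht\rho^k)$ never appears) and unachievable by the interpolation idea, since $\KL(\rho\,\|\,\wt\rho^{k+1})-\KL(\rho\,\|\,\wht\rho^{k+1})=\int\rho\log(\wht\rho^{k+1}/\wt\rho^{k+1})$ is a cross term against a third measure $\rho$ with no control from $\KL(\wht\rho^{k+1}\,\|\,\wt\rho^{k+1})\le\delta_{k+1}$ alone; Lemma~\ref{prop: FR_dist} bounds sums of $\sqrt{\KL}$ along a path, which is a different object.

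In the paper the Fisher--Rao interpolation is applied to the functional-value gap $\m F_{N,m}(\wht\rho^k)-\m F_{N,m}(\wt\rho^k)$, where it can exploit the structural split of $\m F_{N,m}$ into a $B_j$-bounded part and the $\tau$-entropy: along an interpolation $\nu_0=\wht\rho^k,\dots,\nu_{r+1}=\wt\rho^k$ the bounded part contributes $B_j\sum_s\sqrt{2\KL(\nu_{s+1}\|\nu_s)}\to 2B_j\sqrt{\delta_k}$ while the entropy pieces telescope exactly to $H(\wht\rho^k)-H(\wt\rho^k)$ plus $\sum_s\KL(\nu_{s+1}\|\nu_s)$, and the latter infimizes to zero. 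You also omit the essential control of the entropy difference: because the first variation of $\m F_{N,m}$ is unbounded through $\tau\log\rho_j$, the proof must bound $H(\wht\rho^k)-H(\wt\rho^{k+1})$ via Lemma~\ref{lem: diff_entropy}, which in turn relies on the $\alpha_k\|y\|^2$ term to give the lower bound $H(\wt\rho^k)\ge C_3\log\alpha_k$ (Lemma~\ref{lem: lower_bd_entropy}); this is precisely where the $\sum_k\eta_{k+1}\log(\alpha_k/\alpha_{k+1})\approx\sum_k\eta_{k+1}(\alpha_k-\alpha_{k+1})/\alpha_{k+1}$ term originates, not from any ``annealing mismatch'' bookkeeping as you suggest. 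The final error terms you name are the right ones, but the path you sketch would not reach them.
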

\begin{rem}[Outer iteration complexity]
When we select $\eta_k = \alpha_k = k^{-\frac{1}{2}}$ and $\delta_k = k^{-\frac{3}{2}}$, we dereive the same convergence rate $O(\frac{\log K}{\sqrt{K}})$ as the one in CKLPD as demonstrated in Theorem~\ref{thm: algo_convergence_general}. This rate indicates that a carefully chosen coefficient of the quadratic terms $\{\alpha_k\}_{k=1}^\infty$ will not compromise the convergence rate when the numerical error $\KL(\wht\rho^k\,\|\,\wt\rho^k) \leq \delta_k$ is well controlled. 
\end{rem}

\begin{rem}[Total iteration complexity using ULA]\label{rmk:total_iteration_complexity}
By applying Theorem 2 from~\cite{vempala2019rapid}, we demonstrate that $O\big(\frac{1}{\alpha_k^2\delta_k}\log\frac{1}{\delta_k}\big)$ inner iterations of sampling are required in the $k$-th iteration by using ULA with the step size $h_k = O(\alpha_k\delta_k)$ to achieve $\delta_k$ accuracy (see Lemma~\ref{lem: rho_LSI} and Lemma~\ref{lem: L-smooth-rho} in Appendix). 
When we select $\eta_k=\alpha_k=k^{-\frac{1}{2}}$ and $\delta_k = k^{-\frac{3}{2}}$, the inner iteration complexity for approximating the solution in~\eqref{eqn: explicit_quad_anneal_update} via sampling is $O(k^{\frac{5}{2}}\log k)$ in the $k$-th iteration. Combining this inner iteration complexity with the outer iteration complexity reveals that at most $O\big(\frac{1}{\varepsilon^7}\log\frac{1}{\varepsilon}\big)$ number of total iterations are required to achieve $\varepsilon$-accuracy. This iteration complexity is significantly smaller than the $O(e^{\frac{C}{\varepsilon}})$ complexity by using the MFLD algorithm~\citep{chizat2022trajectory}.
\end{rem}

\begin{rem}[Choice of sampling algorithms]
We adpot ULA for sampling from a target distribution primarily mainly because of its simplicity. More efficient sampling algorithms, such as the Metropolis-adjusted Langevin algorithm~\citep{bou2013nonasymptotic}, could be employed to reduce the inner iteration complexity of sampling, potentially yielding a smaller total iteration complexity.
\end{rem}

\begin{rem}[Extra quadratic term]\label{rmk:extra_quad_term}
The extra quadratic terms introduced in the algorithm serve two purposes. First, these terms ensure that the target distribution $\wt\rho_j^k$ in the $k$-th iteration satisfies the log-Sobolev inequality, enabling sampling from $\wt\rho_j^k$ with $\delta_k$-accuracy using ULA within a polynomial number of iterations.
Second, the quadratic terms provide a lower bound to $H(\wt\rho^k)$, which is needed for controlling the difference $|H(\wht\rho^k) - H(\wt\rho^{k+1})|$ in our analysis (see Lemma~\ref{lem: diff_entropy} and its proof).
\end{rem}

\begin{rem}[Uniformly bounded first variation]
Due to the presence of the negative self-entropy term in the reduced objective functional $\m F_{N, m}$, its first variation $\frac{\delta\m F_{N,m}}{\delta\rho_j}$ cannot be uniformly bounded. Therefore, Theorem~\ref{thm: algo_convergence_general} for optimizing a generic jointly linearly convex functional is not directly applicable due to the violation of condition~\eqref{eqn:bounded_FV}. Our proof of Theorem~\ref{thm: traj_CKLGD} for the inexact CKLGD circumvents this condition by leveraging two key observations: (1) the first variation, excluding this entropy term, is uniformly bounded, and (2) the entropy term can be suitably controlled thanks to the additional quadratic term (see the previous remark). A sketch of the proof will be provided in Section~\ref{sec: sketch_algo}.
\end{rem}

%%%%%%%%%%%%%%%%%%%%%%%%%%%%%%%%%%%%%%%%
%%%%%%%%%%%%%%%%%%%%%%%%%%%%%%%%%%%%%%%%
%%%%%%%%%%%%%%%%%%%%%%%%%%%%%%%%%%%%%%%%
\section{Simulation Results}\label{sec: numerical_results}
%In this section, we will compare our inexact CKLGD algorithm with the MFLD algorithm using annealing to compute the gWOT estimator through numerical simulations. Subsequently, we will apply the inexact CKLGD algorithm to predict the evolution of gene expression in cells from the induced Pluripotent Stem Cells (iPSCs) reprogramming dataset~\citep{schiebinger2019optimal}.
%\subsection{Simulation studies}
We demonstrate the advantages of using the inexact CKLGD algorithm to minimize the reduced objective functional $\m F_{N, m}$ defined in~\eqref{eqn: obj_func_traj} through a simulation study. Consider an SDE 
\begin{align}\label{eqn: SDE}
\dd Z_t = \nabla\Psi(t, Z_t)\,\dd t + \frac{1}{\sqrt{
2}}\,\dd W_t
\end{align}
evolving in the state space $\m X = \mb R^2$ with the potential function $\Psi(t, x) = 0.5(x_1-1.5)^2(x_1 + 1.5)^2 + 10(x_2+t)^2$. We assume the SDE evolves from $t = 0$ to $t = 1.25$. We select $m = 8$ time points with equal separation. At each time point $t_j$, $N = 64$ samples are uniformly drawn from the marginal distributions of the SDE at time $t_j$. The observations at $t_j$ consist of these samples with additional Gaussian noise of variance $\sigma^2 = 0.25$.

\begin{figure}[ht]
    \centering
    \includegraphics[width=0.75\linewidth]{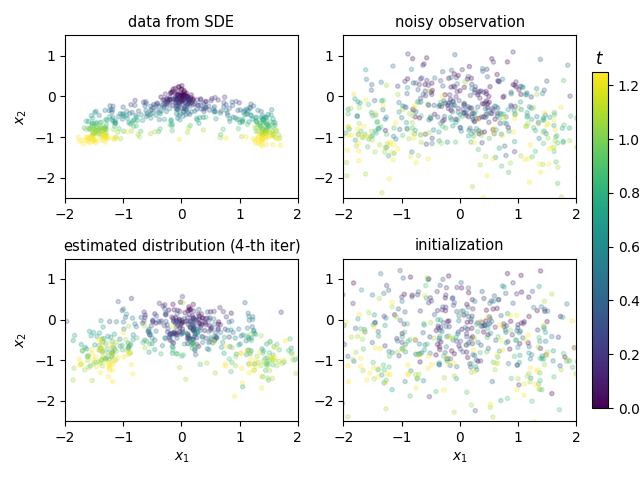}
    \caption{Scatter plot of the noiseless data generated from the SDE~\eqref{eqn: SDE} (upper left), noisy observations (upper right), initialization of the CKLGD algorithm and the baseline MFLD algorithm (lower right), and the estimated marginal distributions derived by applying the CKLGD algorithm (lower left).}
    \label{fig: scatter_plot}
\end{figure}
Figure~\ref{fig: scatter_plot} illustrates the scatter plots of data sampled from the SDE~\eqref{eqn: SDE}, the noisy observations, algorithm initialization, and the distribution estimated by the inexact CKLPD algorithm.
Specifically, the \textbf{upper left} figure displays all samples from the underlying SDE~\eqref{eqn: SDE} at different time points, starting with $Z_0\sim\m N(0, 0.01)$. Points with different colors represent samples from distinct time points. In the final time point $t_8 = 1.25$, the samples are distributed around two modes located at $(-1.5, -1.25)$ and $(1.5, -1.25)$. This bimodal phenomenon stems directly from the potential function $\Psi$, where these two points represent the function's minima at $t = 1.25$.
The \textbf{upper right} figure shows the noisy observations created by adding Gaussian noise with variance $\sigma^2 = 0.25$ to the data sampled from the SDE. As illustrated in the introduction, such Gaussian noise represents measurement uncertainty during data collection.
The \textbf{lower right} figure presents the initialization for both our inexact CKLGD algorithm and the mean-field Langevin dynamics (MFLD) algorithm proposed by~\citet{chizat2022trajectory}, serving as a comparative baseline for computing the E-NPMLE estimator defined through~\eqref{eqn: E-NPMLE} and~\eqref{eqn: obj_func_traj}. Both algorithms are initialized from the same group of particles generated by adding Gaussian noise to the noisy observations.
The \textbf{lower left} figure shows the estimated distribution at all time points after running the inexact CKLGD algorithm with 4 outer iterations, where each outer iteration comprises 500 inner sampling iterations to approximate the distribution $\wt\rho^k$ in~\eqref{eqn: explicit_quad_anneal_update}. As seen in the figure, the estimated distributions have the same pattern as the data derived from the SDE.

\begin{figure}[ht]
    \centering
    \includegraphics[width=0.8\linewidth]{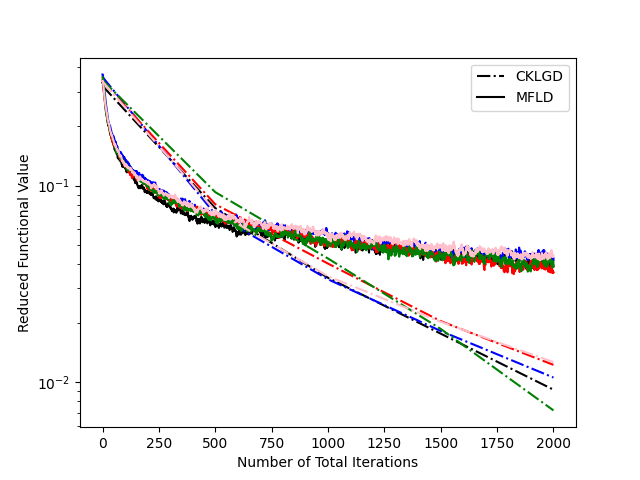}
    \caption{Reduced objective functional value $\m F_{N, m}(\rho) - \m F_{N,m}(\wht \rho)$ in the log scale versus the total number of iterations. The experiment is conducted five times independently with different observations and initializations. The MFLD algorithm exhibits a slower decay rate of reduced objective functional values (solid lines) compared to our CKLGD algorithm (dotted lines), which can be attributed to the presence of the annealing term.}
    \label{fig: CKLGDvsMFLD}
\end{figure}
Figure~\ref{fig: CKLGDvsMFLD} presents the loss of the reduced functional $\m F_{N, m}(\rho) - \m F_{N, m}(\wht\rho)$ versus the total number of iterations. Since the global minima $\wht\rho$ of $\m F_{N, m}$ is unknown, we use the solution obtained by running the inexact CKLGD algorithm for 8 outer iterations as a proxy for the global minima. We emphasize that we simply connect the loss with \emph{straight lines} at iterations 500, 1000, 1500, and 2000 in the CKLGD, corresponding to the 1st, 2nd, 3rd, and 4th outer iterations, and the loss between these points does \emph{not} reflect the true reduced functional values.
As illustrated in the figure, the loss decay rate of the MFLD algorithm (solid curves) decelerates after approximately 500 iterations due to the annealing term. We conducted five independent experiments with varying observations and initializations, with each color representing a distinct experiment.

\section{Sketch of Proofs}\label{sec: proof}
In this section, we summarize the main ideas of the proofs of the statistical convergence in Theorem~\ref{thm: stat_rate} and the algorithmic convergence of inexact CKLGD in Theorem~\ref{thm: traj_CKLGD}, and highlight the technical difficulties and contributions. Detailed proofs are provided in Appendix.

%%%%%%%%%%%%%%%%%%%%%%%%%%%%%%%%%%%%%%%%
\subsection{Analysis of statistical convergence rate}
The proof is involved and can be decomposed into several steps. We summarize the main idea of the proof, while leaving the details of each step in the appendix.

\paragraph{Notation.}
We begin with introducing several useful notations. Recall that  $\m P(\Omega)$ denotes the set of all probability distributions over the path space $\Omega=\m C([0,T];\m X)$. For any $R\in\ms P(\Omega)$, define
\begin{align*}
g_R(t, x) := \sqrt{\frac{\m K_\sigma\ast R_t(x) + \m K_\sigma\ast R_t^\ast(x)}{2\m K_\sigma\ast R_t^\ast(x)}}.
\end{align*}
Let $C_\sigma > 0$ be a constant such that $C_\sigma^{-1} \leq \m K_\sigma R_t(x) \leq C_\sigma$ uniformly holds for all $t\in[0, 1]$ and $x\in\m X$. The existence of this $C_\sigma$ is guaranteed by~\citep[Proposition B.4,][]{lavenant2024toward}. Then, it is easy to check that
\begin{align*}
\frac{1}{\sqrt{2}} \leq g_{R}(t, x) \leq \sqrt{\frac{C_\sigma^2+1}{2}},\quad\forall\, (t, x)\in[0, 1]\times\m X.
\end{align*}
Also, note that for any $R$,
\begin{align*}
\sum_{j=1}^m\sum_{i=1}^N\frac{t_{j+1}-t_j}{N} \mb E\log g_{R}(t_j, X_{t_j}^i)
= -\sum_{j=1}^m (t_{j+1} - t_j)\KL\Big(\m K_\sigma\ast R_{t_j}^\ast\,\Big\|\,\frac{\m K_\sigma\ast R_{t_j}^\ast + \m K_\sigma\ast R_{t_j}}{2}\Big).
\end{align*}
For any function $f:[0, 1] \times \m X\to\mb R$, define the $L_m^2$-norm by
\begin{align*}
\|f\|_{L_m^2}^2 \coloneqq \sum_{j=1}^m (t_{j+1}-t_j)\|f(t_j, \cdot)\|_{L^2(\m K_\sigma\ast R_{t_j}^\ast)}^2.
\end{align*}
With this definition, we can apply the Hellinger-KL inequality~\citep[Equation 14.57b,][]{wainwright2019high} to obtain
\begin{align*}
\|g_{R} - g_{R^\ast}\|_{L_m^2}^2 = \sum_{j=1}^m(t_{j+1}-t_j)d_{\rm H}^2\Big(\m K_\sigma\ast R_{t_j}^\ast\,\Big\|\,\frac{\m K_\sigma\ast R_{t_j}^\ast + \m K_\sigma\ast R_{t_j}}{2}\Big)
\leq -\sum_{j=1}^m\sum_{i=1}^N\frac{t_{j+1}-t_j}{N} \mb E\log g_{R}(t_j, X_{t_j}^i).
\end{align*}
Furthermore, define the subset
\begin{align}\label{eqn:GR}
\m GR(r)\coloneqq \big\{R\in\ms P(\Omega): \|g_R - g_{R^\ast}\|_{L_m^2} \leq r
\,\,\mx{and}\,\,
\tau\KL(R\,\|\,W^\tau) \leq 2E\big\}.
\end{align}
Due to the fact that $\|g_R - g_{R'}\|_{L_m^2} \leq \sqrt{2}$ holds for all $R, R'\in\ms P(\Omega)$, we know $\m GR(\infty) = \m GR(\sqrt{2})$.

\paragraph{Proof of Theorem~\ref{thm: stat_rate}.}
By the optimality of $\wht R$, one can prove a modified basic inequality (see \emph{step 1} in Appendix~\ref{app: pf_stat_conv_discrete}):
\begin{align*}
-\sum_{j=1}^m\frac{t_{j+1}-t_j}{N}\sum_{i=1}^N\log g_{\wht R}(t_j, X_{t_j}^i)
\leq \frac{\lambda\tau}{4}\big[\KL(R^\ast\,\|\,W^\tau) - \KL(\wht R\,\|\,W^\tau)\big].
\end{align*}
We expect that the left-hand side of the above inequality is close to its expected value when $N$ and $m$ are large enough. This idea can be rigorously summarized by a uniform laws of large number in the following lemma. A proof of this lemma is deferred to Appendix~\ref{app: pf_of_emp_proc}. We highlight that the proof is highly nontrivial, with additional discussion provided at the end of this subsection.

\begin{lemma}\label{lem: tail bound}
Let $C_{\rm HP} := 12 + 34.5\log\frac{C_\sigma^2+1}{2}$ and define the event
\begin{align*}
\ms A \coloneqq\bigg\{\sup_{R\in\m GR(\infty)} \frac{\big|\sum_{j=1}^m\sum_{i=1}^N\frac{t_{j+1}-t_j}{N}[\log g_R(t_j, X_{t_j}^i) - \mb E\log g_R(t_j, X_{t_j}^i)]\big|}{\delta_{N, m} + \|g_R - g_{R^\ast}\|_{L_m^2}} \leq C_{\rm HP}\delta_{N, m}\bigg\}.
\end{align*}
Then, we have
\begin{align*}
\mb P(\ms A) \geq 1 - 2e^{-\frac{N\delta_{N, m}^2}{2\Delta_m}}.
\end{align*}
\end{lemma}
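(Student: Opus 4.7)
The plan is to prove this uniform ratio-type concentration inequality via the classical \emph{peeling} (slicing) device, combined with Talagrand's concentration inequality for bounded empirical processes on each slice and a bracketing entropy bound for the class $\{\log g_R : R \in \m{GR}(\infty)\}$ in the $L_m^2$ metric. First I would decompose the class into dyadic shells $\m S_k \coloneqq \{R \in \m{GR}(\infty) : 2^{k-1}\delta_{N,m} < \|g_R - g_{R^\ast}\|_{L_m^2} \leq 2^k \delta_{N,m}\}$ for $k \geq 1$ and $\m S_0 \coloneqq \m{GR}(\delta_{N,m})$, so that the complement of $\ms A$ is contained in $\bigcup_{k \geq 0}\{\sup_{R \in \m S_k} |Z_N(R)| > C_{\rm HP}\,\delta_{N,m} \cdot 2^k \delta_{N,m}\}$, where $Z_N(R)$ denotes the centered weighted empirical sum in the numerator. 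On the $k$-th shell the $L_m^2$ diameter is controlled by $2^k \delta_{N,m}$, which is exactly what is needed to upgrade a per-shell bound to the ratio form.

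Next, on a fixed shell I would invoke Talagrand's functional Bernstein inequality. The summands $\log g_R(t_j, X_{t_j}^i)$ are uniformly bounded in the interval $[-\tfrac{1}{2}\log 2,\,\tfrac{1}{2}\log\tfrac{C_\sigma^2+1}{2}]$ by the two-sided bound on $g_R$, with weights $(t_{j+1}-t_j)/N$ summing to at most $\Delta_m/N$ times $m$. Moreover, since $\log$ is Lipschitz on this bounded range, the variance of a single summand is controlled by $\|g_R - g_{R^\ast}\|_{L_m^2}^2$ up to a constant. Feeding these ingredients into Talagrand's inequality yields, with probability at least $1 - 2\exp(-N(2^k\delta_{N,m})^2/(2\Delta_m))$,
\begin{equation*}
\sup_{R \in \m S_k} |Z_N(R)| \;\lesssim\; \mb E \sup_{R \in \m S_k} |Z_N(R)| \;+\; \Big(\log\tfrac{C_\sigma^2+1}{2}\Big)\sqrt{\tfrac{\Delta_m (2^k\delta_{N,m})^2}{N}}.
\end{equation*}

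The main obstacle is the bound on the expected supremum. This I would handle using a chaining/Dudley bound together with a bracketing entropy estimate for the class $\{\log g_R : R \in \m{GR}(\infty)\}$ under the $L_m^2$ metric. Two structural facts drive an almost dimension-free estimate: (i) the Gaussian convolution $\m K_\sigma$ forces $\m K_\sigma \ast R_t$ to be smooth and bounded away from $0$ and $\infty$ by $C_\sigma$, and (ii) the KL constraint $\tau\KL(R\|W^\tau) \leq 2E$ inherent in the definition of $\m{GR}$ restricts the admissible marginals. Together, these allow one to adapt the bracketing entropy technique for Gaussian location mixtures (as used in the NPMLE analyses of~\citet{saha2020nonparametric} and~\citet{zhang2009generalized}) to obtain a bracketing metric entropy bound whose dependence on dimension $d$ enters only through polylog factors $(\log(1/\varepsilon))^{d+1}$ rather than polynomially. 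Dudley's entropy integral then yields
\begin{equation*}
\mb E \sup_{R \in \m S_k} |Z_N(R)| \;\lesssim\; (2^k \delta_{N,m})\cdot \sqrt{\tfrac{\Delta_m}{N}}\cdot \big(\log\max\{m,N\}\big)^{(d+1)/2},
\end{equation*}
which matches $\delta_{N,m}\cdot 2^k\delta_{N,m}$ given the defining relation $\delta_{N,m}^2 \asymp (\Delta_m/N)(\log\max\{m,N\})^{d+1}$ implied by $\Delta_m = O(1/m)$ and the form of $\delta_{N,m}$.

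Finally, a union bound over $k \geq 0$ produces $\mb P(\ms A^c) \leq \sum_{k \geq 0} 2\exp(-N(2^k\delta_{N,m})^2/(2\Delta_m))$, and a standard geometric argument collapses this to $2\exp(-N\delta_{N,m}^2/(2\Delta_m))$ provided $C_\delta$ is chosen large enough so that $N\delta_{N,m}^2/\Delta_m$ exceeds a universal constant. The explicit numerical constant $C_{\rm HP} = 12 + 34.5\log\frac{C_\sigma^2+1}{2}$ arises from carefully tracking the Bernstein/Talagrand constants together with the sup-norm bound on $\log g_R$. I anticipate that the bracketing entropy step is the technical heart of the argument, and I would likely isolate it as a separate lemma that constructs brackets slice-wise in $t_j$ using the Gaussian-smoothing-induced smoothness of $\m K_\sigma * R_{t_j}$, then assembles them in the $L_m^2$ norm via tensorization of the KL constraint across marginals.
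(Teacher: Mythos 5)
Your high-level architecture---dyadic peeling, Talagrand's concentration on each shell, and a chaining bound for the expected supremum---matches the paper's Step 1 and Step 3. The variance calculation via the Lipschitz property of $\log$ on the bounded range of $g_R$, and the union bound collapsing to a single exponential provided $N\delta_{N,m}^2/\Delta_m$ is bounded below, are also in line with the paper's argument. However, the technical heart of the proof---the bound on $\mb E\sup_{R\in\m S_k}|Z_N(R)|$---contains a genuine gap.

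First, your proposed entropy estimate omits the crucial factor $\min\{\eta^{-2},m\}$. The paper's Proposition~\ref{prop: covering_num} establishes a covering number of order $\min\{\eta^{-2},m\}\cdot(\log\eta^{-1})^{d+1}$ for the class of Gaussian-convolved time-marginals (indexed by both space and the $m$ time points), not the pure polylog $(\log\eta^{-1})^{d+1}$ that you posit. This $\min$ is precisely what produces the two regimes of $\delta_{N,m}$: the relation $\delta_{N,m}^2\asymp\frac{\Delta_m}{N}(\log\max\{m,N\})^{d+1}$ you invoke to ``match the defining relation'' only holds when $m\lesssim N$; in the high-frequency regime $m\gtrsim N$, the relevant scaling is $\delta_{N,m}\asymp N^{-1/3}m^{-1/6}(\log m)^{(d+1)/2}$, which your expected-supremum bound cannot reproduce. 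Consequently, the peeling step would fail in that regime because the per-shell bound you derive does not balance against $C_{\rm HP}\,\delta_{N,m}\cdot 2^k\delta_{N,m}$. Second, a standard Dudley entropy integral for the symmetrized process requires conditioning on the data, which yields control only in a sample-dependent $L^2$ norm; one would then need to show equivalence to the population $L_m^2$ norm. The paper explicitly bypasses this by establishing that the unconditional increments $Y_R-Y_{R'}$ are sub-exponential with a Bernstein-type moment generating function controlled jointly by $\|g_R-g_{R'}\|_{L_m^2}$ and $\|g_R-g_{R'}\|_{L_m^\infty}$, then runs an Orlicz-$\psi_1$ chaining that produces \emph{two} entropy integrals, $\sqrt{\Delta_m/N}\int N_2(s)\,\dd s+\int\sqrt{N_2(s)}\,\dd s$, not one. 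The extra linear-in-entropy term is what interacts with the $\min\{\eta^{-2},m\}$ factor to generate the high-frequency rate. You would also need to address that the class $\{g_R:R\in\m{GR}(\infty)\}$ is not star-shaped (noted just after Lemma~\ref{lem: tail bound} in the paper), so the slicing argument must be justified by showing the chaining upper bound $J_{N,m}(r)/r$ is itself non-increasing in $r$, rather than taking it for granted.
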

Now, let us prove the result with Lemma~\ref{lem: tail bound} by considering two different cases (see \emph{step 2} in Appendix~\ref{app: pf_stat_conv_discrete}).

\medskip
\noindent {\bf Case 1:} When $\tau\KL(\wht R\,\|\,W^\tau) \leq 2\tau\KL(R^\ast\,\|\,W^\tau)$, we have $\wht R\in\m GR(\infty)$ and therefore
\begin{align*}
C_{\rm HP}\delta_{N, m}\big(\delta_{N, m} + \|g_{\wht R} - g_{R^\ast}\|_{L_m^2}\big)
&\geq \sum_{j=1}^m \frac{t_{j+1}-t_j}{N}\sum_{i=1}^N\big[\log g_{\wht R}(t_j, X_{t_j}^i) - \mb E\log g_{\wht R}(t_j, X_{t_j}^i)\big]\\
&\geq -\frac{\lambda\tau}{4}\KL(R^\ast\,\|\,W^\tau) + \|g_{\wht R} - g_{R^\ast}\|_{L_m^2}^2.
\end{align*}
Using the facts $\lambda_{N, m} = C_\lambda\delta_{N, m}^2$ and $\tau\KL(R^\ast\,\|\,W^\tau) \leq E$, the above inequality implies
\begin{align*}
\|g_{\wht R} - g_{R^\ast}\|_{L_m^2} \leq \bigg(C_{\rm HP} + \sqrt{C_{\rm HP}} + \frac{\sqrt{C_\lambda E}}{2}\bigg)\delta_{N, m}.
\end{align*}

\medskip
\noindent {\bf Case 2:}
When $\tau\KL(\wht R\,\|\,W^\tau) > 2\tau\KL(R^\ast\,\|\,W^\tau)$, by taking $\varepsilon = \frac{\tau\KL(\wht R\,\|\,W^\tau) - \frac{3}{2}\tau\KL( R^\ast\,\|\,W^\tau)}{\tau\KL(\wht R\,\|\,W^\tau) - \tau\KL(R^\ast\,\|\,W^\tau)}\in(0, 1)$ and letting $\wt R = (1-\varepsilon)\wht R + \varepsilon R^\ast \in\ms P(\Omega)$, one can show that
\begin{align*}
\KL(\wt R\,\|\,W^\tau)
\leq \frac{3}{2}\KL(R^\ast\,\|\,W^\tau)
\quad\mx{and}\quad
\sum_{j=1}^m \sum_{i=1}^N\frac{t_{j+1}-t_j}{N}\log g_{\wt R}(t_j, X_{t_j}^i) 
\geq \frac{\lambda}{8}\tau\KL(R^\ast\,\|\,W^\tau).
\end{align*}
Therefore, $\wt R\in\m GR(\infty)$ (recall the definition of $\m GR$ in equation~\eqref{eqn:GR}) and we have
\begin{align*}
C_{\rm HP}\delta_{N, m}\big(\delta_{N, m} + \|g_{\wt R} - g_{R^\ast}\|_{L_m^2}\big)
&\geq \sum_{j=1}^m \frac{t_{j+1}-t_j}{N}\sum_{i=1}^N\big[\log g_{\wt R}(t_j, X_{t_j}^i) - \mb E\log g_{\wt R}(t_j, X_{t_j}^i)\big]\\
&\geq \frac{\lambda}{8}\tau\KL(R^\ast\,\|\,W^\tau) + \|g_{\wt R} - g_{R^\ast}\|_{L_m^2}^2\\
&\geq \frac{C_\lambda E^{-1}}{8}\delta_{N, m}^2 + \|g_{\wt R} - g_{R^\ast}\|_{L_m^2}^2.
\end{align*}
However, if a sufficiently large constant $C_\lambda$ is chosen so that $C_\lambda > (8C_{\rm HP} + 2C_{\rm HP}^2)E$, then one can obtain by using the AM–GM inequality that
\begin{align*}
\frac{C_\lambda E^{-1}}{8}\delta_{N,m}^2 + \|g_{\wt R} - g_{R^\ast}\|_{L_m^2}^2 
> C_{\rm HP}\delta_{N, m}\big(\delta_{N, m} + \|g_{\wt R} - g_{R^\ast}\|_{L_m^2}\big),
\end{align*}
which is a contradiction.
Therefore, this second case of $\tau\KL(\wht R\,\|\,W^\tau) > 2\tau\KL(R^\ast\,\|\,W^\tau)$ cannot hold under event $\ms A$ under the condition of the theorem.

To summarize, we have shown that
\begin{align*}
\mb P\bigg(\|g_{\wht R} - g_{R^\ast}\|_{L_m^2} \leq \Big(C_{\rm HP} + \sqrt{C_{\rm HP}} + \frac{\sqrt{C_\lambda E}}{2}\Big)\delta_{N, m}\bigg)
\geq \mb P(\ms A) \geq 1 - 2e^{-\frac{N\delta_{N,m}^2}{2\Delta_m}}.
\end{align*}
Finally, our desired bound~\eqref{eqn: finite_marginal_converge} follows from applying Lemma~\ref{lem: Hellinger} in Appendix~\ref{sec: tech_lem}. 
%The convergence rate~\eqref{eqn: cts_marginal_converge} follows from applying Lemma~\ref{lem: diff_H2} (see \emph{step 3} in the proof).

%%%%%%%%%%%%%%%%%%%%%%%%%%%%%%%%%%%%%%%%
\paragraph{Discussion of Lemma~\ref{lem: tail bound}.}
We highlight several key techniques used in the proof of the finite-sample uniform law of large numbers in Lemma~\ref{lem: tail bound}.

Firstly, in most existing literature, the desired function class is often assumed to be ``star-shaped", meaning that if both a function and the ground-truth function belong to this function class, their convex combination also belongs to the same function class. This assumption directly implies that $\mb ES_{N,m}(r) / r$ is non-increasing. However, we note that this assumption does not hold in our problem.  In general, there exists no $R'\in\ms P(\Omega)$ such that $g_{R'} = \frac{g_{R} + g_{R^\ast}}{2}$, as an additional scaling factor is required. To address this issue, we first upper bound $\mb ES_{N, m}(r)$ with respect to $r$ using a chaining argument~\citep{geer2000empirical, wainwright2019high} from empirical process theory and then demonstrate that this upper bound, when divided by $r$, is non-increasing.

Secondly, when using the chaining technique to control $\mb ES_{N, m}(r)$, traditional approaches typically condition on all samples, resulting in a sub-Gaussian conditioned empirical process due to the Rademacher random variables. However, this standard approach leads to convergence of the estimator with respect to a sample-based norm, requiring additional analysis to establish its equivalence to a sample-independent norm. Instead, we find that the empirical process in our context is sub-exponential and therefore choose to apply the chaining technique simultaneously with respect to both $L_m^2$-norm and $L_m^\infty$-norm, following the approaches of~\citet{baraud2010bernstein} and~\citet{yao2022mean}. This method effectively captures the local sub-Gaussian behavior of sub-exponential random variables, leading to a sharper convergence rate.

Lastly, in order to derive the phase transition phenomenon in Theorem~\ref{thm: stat_rate}, a careful estimation of the covering number for the involved function class is required. For our specific context, where the function $R$ depends on both spatial and temporal inputs, a key insight we utilized is that considering the covering for both the state space $\m X$ and time space $[0, 1]$ becomes advantageous only when there are sufficiently many time points.  Therefore, we employ two distinct approaches to control the covering number. For example, in cases with limited time points, such as when $m=1$, it is more effective to focus on covering only the state space and then take the union across all time points. For a detailed analysis of the covering number, we refer readers to Proposition~\ref{prop: covering_num} and its proof.
%%%%%%%%%%%%%%%%%%%%%%%%%%%%%%%%%%%%%%%%

\subsection{Analysis of algorithmic convergence rate}\label{sec: sketch_algo}

%The full proof of the above theorem is provided in Appendix~\ref{sec: pf_algo_converge}. Here, we will only highlight the key technique crucial to our proof.
Next, we provide a proof sketch for Theorem~\ref{thm: traj_CKLGD}. At each iteration $k$, the only available information about $\wht\rho^k$ is that $\KL(\wht\rho^k\,\|\,\wt\rho^k) \leq \delta_k$.  Consequently, we express the difference in the reduced objective functional as
\begin{align*}
\m F_{N, m}(\wht\rho^k) - \m F_{N, m}(\rho)
= \big[\m F_{N, m}(\wht\rho^k) - \m F_{N, m}(\wt\rho^k)\big] + \big[\m F_{N, m}(\wt\rho^k) - \m F_{N, m}(\rho)\big].
\end{align*}
Here, the two terms correspond to the approximation error and the optimization error, respectively. Throughout the proof, we use the shorthand notation of $H(\rho) = \int\rho\log\rho$.

\paragraph{Control the approximation error.}
We develop a novel technique to control the approximation error. Notably, directly applying the joint linear convexity of 
$\m F_{N, m}$ results in a term $\KL(\wt\rho^k\,\|\,\wht\rho^k)$ in the upper bound, which cannot be directly controlled. Some existing works address this issue by either adding an additional regularization term to the objective functional~\citep{nitanda2021particle, oko2022particle} or adopting alternative measures of numerical error that are not suitable for our context~\citep{yao2024wasserstein, cheng2024convergence}. In this work, we directly tackle the approximation error without introducing any extra regularization term or switching to a different error measure, by employing a divide-and-conquer approach.
%Existing results typically address this issue by either: (i) adding an additional regularization term to compensate for sampling error~\citep{nitanda2021particle, oko2022particle}, or (ii) considering different types of numerical error, such as the $L^2$-norm of the first variation of the objective functional~\citep{yao2024wasserstein, cheng2024convergence}. However, these approaches do not align with our approach of using sampling to approximate the distribution $\wt\rho^k$ in~\eqref{eqn: explicit_quad_anneal_update}. 
%Moreover, their proofs directly apply the convexity of the objective functional at $\wht\rho^k$ and the ideal iterate $\wt\rho^k$, which introduces an additional term $\KL(\wt\rho^k\,\|\,\wht\rho^k)$ that cannot be controlled, given that we only have $\KL(\wht\rho^k\,\|\,\wt\rho^k) \leq \delta_k$. \an{Particle dual averaging does not use $\KL(\wt\rho^k\,\|\,\wht\rho^k)$. Please check it again.}\ry{addressed?}
Specifically, a key innovation in our proof is the application of the joint linear convexity of  $\m F_{N, m}$ along an interpolation between $\wt\rho^k$ and $\wht\rho^k$. Specifically, we construct a sequence of probability distributions $\mu_0 ,\dots, \mu_{r+1}$ where $\mu_{0} = \wht\rho^k$ and $\mu_{r+1} = \wt\rho^k$, with $r\in\mb Z_+$ as a positive integer to be determined later. By leveraging the convexity of $\m F_{N, m}$ along these interpolations, we transform the error term $\KL(\wt\rho^k\,\|\,\wht\rho^k)$ into the summation $\sum_{s=0}^r\KL(\mu_{s+1}\,\|\,\mu_s)$. The following result establishes control over this sum of KL divergences after taking the supremum over $r\in\mb Z_+$ and the interpolations $\mu_0, \dots, \mu_{r+1}$. A proof of the lemma is provided in Appendix~\ref{sec: tech_lem}.

\begin{lemma}\label{prop: FR_dist}
For any $\rho, \rho'\in\ms P^r(\m X)$, we have
\begin{enumerate}
\item[(1)] $d(\rho, \rho') := \arccos(\int_{\m X}\sqrt{\rho\rho'}\,\dd x)$ is a distance on $\ms P^r(\m X)$ and satisfies $d(\rho, \rho') \leq \sqrt{\KL(\rho\,\|\,\rho')}$;

\item[(2)] If the density functions of $\rho$ and $\rho'$ are positive and continuous, then
\begin{align*}
\inf_{r, \mu_0, \dots, \mu_{r+1}}\Big\{\sum_{s=0}^r \KL(\mu_{s+1}\,\|\,\mu_{s}): \mu_{r+1} = \rho', \mu_{0}=\rho, \mu_s\in\ms P^r(\m X)\Big\} = 0;
\end{align*}

\item[(3)] With the same assumptions as in (2), we have
\begin{align*}
\inf_{r, \mu_0, \dots, \mu_{r+1}}\Big\{\sum_{s=0}^r \sqrt{\KL(\mu_{s+1}\,\|\,\mu_{s})}: \mu_{r+1} = \rho', \mu_{0}=\rho, \mu_s\in\ms P^r(\m X)\Big\} \leq \sqrt{2}d(\rho, \rho').
\end{align*}
\end{enumerate}
\end{lemma}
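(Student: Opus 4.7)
My plan is to exploit the classical isometric embedding $\iota : \rho \mapsto \sqrt{\rho}$, which maps $\ms P^r(\m X)$ onto the non-negative orthant of the unit sphere $S_+ \subset L^2(\m X)$, so that $d(\rho,\rho')$ becomes precisely the spherical (angular) distance $\arccos\langle\sqrt\rho,\sqrt{\rho'}\rangle_{L^2}$. Since spherical distance is the Riemannian geodesic distance on the sphere, the metric axioms follow immediately, and the bound $d \leq \pi/2$ holds because $\sqrt{\rho\rho'} \geq 0$. For the KL bound in part (1), I will apply Jensen's inequality to the concavity of $\log$: writing $\cos d(\rho,\rho') = \int \rho\cdot\sqrt{\rho'/\rho}\,\dd x$, one obtains
\[ \log\cos d(\rho,\rho') \,\geq\, \tfrac{1}{2}\!\int\!\rho\log(\rho'/\rho)\,\dd x \,=\, -\tfrac{1}{2}\KL(\rho\|\rho'), \]
equivalently $-2\log\cos d(\rho,\rho') \leq \KL(\rho\|\rho')$. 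It then suffices to verify the elementary scalar inequality $d^2 \leq -2\log\cos d$ on $[0,\pi/2)$, which follows since $f(d) := -2\log\cos d - d^2$ satisfies $f(0) = 0$ and $f'(d) = 2(\tan d - d) \geq 0$.

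For part (2), I will use the linear interpolation $\mu_s = (1 - s/(r+1))\rho + (s/(r+1))\rho'$ for $s = 0,\dots,r+1$. Under the positivity and continuity assumption on $\rho,\rho'$ (with $\m X$ compact), every $\mu_s$ is bounded above and below by positive constants uniformly in $s$, so $\KL(\mu_{s+1}\|\mu_s)$ is well-defined and admits a second-order Taylor expansion in the increment $\Delta_s := \mu_{s+1} - \mu_s = (\rho'-\rho)/(r+1)$. Using the vanishing first moment $\int \Delta_s\,\dd x = 0$ to cancel the linear term, I obtain
\[ \KL(\mu_{s+1}\|\mu_s) \,=\, \frac{1}{2(r+1)^2}\int\frac{(\rho'-\rho)^2}{\mu_s}\,\dd x \,+\, O\!\big((r+1)^{-3}\big), \]
uniformly in $s$. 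Summing the $r+1$ bounds produces a total of order $O((r+1)^{-1})$, which tends to zero as $r \to \infty$, giving the infimum value $0$.

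Part (3) is the main technical hurdle. I will interpolate along the Fisher-Rao great-circle geodesic
\[ \sqrt{\mu_\theta} \,=\, \frac{\sin((1-\theta)\alpha)}{\sin\alpha}\sqrt{\rho} \,+\, \frac{\sin(\theta\alpha)}{\sin\alpha}\sqrt{\rho'}, \qquad \theta \in [0,1], \]
with $\alpha = d(\rho,\rho')$ (and handle $\alpha = 0$ trivially), and discretize $\theta$ into $r+1$ equal steps of angular length $\beta = \alpha/(r+1)$. The central quantitative identity to establish is the uniform local expansion $\KL(\mu_{s+1}\|\mu_s) = 2\beta^2 + o(\beta^2)$ as $\beta \to 0$, where the leading coefficient $2$ reflects that the Fisher-Rao Riemannian metric equals four times the spherical $L^2$ metric, combined with $\KL = \tfrac{1}{2}g_{\rm FR} + O(|\cdot|^3)$ locally. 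Taking square roots and summing then yields
\[ \sum_{s=0}^r\sqrt{\KL(\mu_{s+1}\|\mu_s)} \,=\, (r+1)\big(\sqrt 2\,\beta + o(\beta)\big) \,=\, \sqrt{2}\,\alpha + o(1), \]
and letting $r \to \infty$ gives the infimum bound $\leq \sqrt{2}\,d(\rho,\rho')$. The principal difficulty will be controlling the $o(\beta^2)$ remainder \emph{uniformly} across all segments along the geodesic; this will follow from the positivity and continuity of $\rho,\rho'$, which ensures that $\mu_\theta$ remains uniformly bounded away from $0$ and $\infty$ along the entire curve, so that the third-order Taylor remainder in the log-expansion is uniformly controlled.
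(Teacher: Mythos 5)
Your proposal is correct and takes essentially the same route as the paper: both identify $d$ as the angular distance under the square-root embedding $\rho\mapsto\sqrt\rho$ into the $L^2$ unit sphere, prove the KL bound in part~(1) via the scalar inequality $(\arccos x)^2 \leq -2\log x$ combined with Jensen, and prove part~(3) by discretizing the great-circle (Fisher--Rao) geodesic $\sqrt{\mu_\theta}$ and establishing a uniform local expansion $\KL(\mu_{s+1}\,\|\,\mu_s) \leq 2\beta^2 + O(\beta^3)$, where uniformity follows since positivity and continuity of $\rho,\rho'$ on the compact state space keep $\mu_\theta$ uniformly bounded away from $0$ and $\infty$ along the arc (the coefficients in the geodesic interpolation are nonnegative and sum to at least one). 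The only departures are cosmetic: you invoke the general Riemannian triangle inequality on the sphere where the paper gives an explicit Lagrange-multiplier verification, and for part~(2) you use linear interpolation where the paper reuses the same geodesic; both are valid and yield the same $O((r+1)^{-1})$ total.
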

With the above lemma, one can show that
\begin{align*}
\m F_{N, m}(\wht\rho^k) - \m F_{N,m}(\wt\rho^k) \leq 2\sqrt{B_1^2 + \cdots + B_m^2}\cdot\delta_k^{\frac{1}{2}} + \tau\big[H(\wht\rho^k) - H(\wt\rho^k)\big],
\end{align*}
where $B_1, \dots, B_m \geq 0$ are some universal constants defined in Lemma~\ref{lem: uniform_bound_Vj}. We refer to \emph{step 1} in the proof provided in Appendix~\ref{app: pf_algo_conv} for more details.

\paragraph{Control the optimization error.}
We can directly apply the convexity of $\m F_{N, m}$ to derive
\begin{align*}
\m F_{N, m}(\wt\rho^k) - \m F_{N, m}(\rho)
&\leq \sum_{j=1}^m \int V_j(y_j; \wt\rho^k) + \tau\log\wt\rho_j^k(y_j)\,\dd[\wt\rho_j^k - \rho_j].
\end{align*}
By adopting a stability argument (Lemma~\ref{lem: stable_Vj}), one can show that the integration of $V_j(\cdot; \wt\rho^k)$ is closed to the integration of $V_j(\cdot; \wht\rho_j^k)$. The key is to control the integration of $\log\wt\rho_j^k$, which is also the main difference from the proof of Theorem~\ref{thm: algo_convergence_general}, which relies on the additional condition~\eqref{eqn:bounded_FV}. Note that we may use the definition~\eqref{eqn: explicit_quad_anneal_update} to rewrite $\wt\rho^k$ as the minimization of a functional $U_k$ defined by
\begin{align}\label{eqn: def_Uk}
U_k(\rho) \coloneqq \sum_{j=1}^m\int_{\m X}\sum_{l=1}^k\Big[\eta_l\prod_{l < l'\leq k}(1-\tau\eta_{l'})\Big]\big[V_j(y_j; \wht\rho^{l-1}) + \alpha_l\|y_j\|^2\big]\,\dd\rho_j + H(\rho).
\end{align}
This definition establishes a connection between $\log\wt\rho_j^k$ and $U_k(\wt\rho_j^k)$, where $U_k$ follows a recursive definition
\begin{align*}
U_k(\rho)
&= (1-\tau\eta_k)U_{k-1}(\rho) + \tau\eta_k H(\rho) + \eta_k\sum_{j=1}^m\int_{\m X}\big[V_j(y_j;\wht\rho^{k-1}) + \alpha_k\|y_j\|^2\big]\,\dd\rho_j.
\end{align*}
With these facts, we can provide an upper bound for $\sum_{k=1}^K\eta_{k+1}\big[\m F_{N, m}(\wht\rho^k) - \m F_{N,m}(\rho)\big]$. Since the exact form of this upper bound is somewhat complex, we refer to \emph{Step 2} in the proof provided in Appendix~\ref{app: pf_algo_conv} for further details.

\paragraph{Derive the algorithmic convergence rate.} Finally, in \emph{step 3} of the proof provided in Appendix~\ref{app: pf_algo_conv}, we can combine the upper bound of two terms and derive the claimed result presented in Theorem~\ref{thm: traj_CKLGD}.
%%%%%%%%%%%%%%%%%%%%%%%%%%%%%%%%%%%%%%%%

%%%%%%%%%%%%%%%%%%%%%%%%%%%%%%%%%%%%%%%%
%%%%%%%%%%%%%%%%%%%%%%%%%%%%%%%%%%%%%%%%
\section{Summary}\label{sec: summary}
In this paper, we studied the problem of estimating the probability density evolution for a stochastic process using noisy snapshot data. Our focus is on analyzing both statistical and computational aspects of the proposed E-NPMLE.

Statistically, we conduct a non-asymptotic analysis of the estimator and reveal a phase transition phenomenon that depends on the snapshot/sample frequency. Our result demonstrates the importance of balancing the number of snapshots versus the sample size per snapshot given a fixed total sample size budget constraint. We believe that these findings provide valuable guidance for experimental design in real-world applications for learning dynamical structures from static distributional data.

Computationally, we introduced a novel CKLGD algorithm, derived from an explicit discretization of the KL divergence gradient flow. We demonstrate that this algorithm achieves a polynomial convergence rate. In the algorithmic convergence analysis, we develop a new technique for analyzing the impact of the KL-type sampling error by interpolating two distributions along the Fisher--Rao geodesics. This approach also establishes a connection between Fisher--Rao distance and KL divergence through a variational approach, which may be of independent interest in probability theory.

\bibliographystyle{plainnat}
\bibliography{ref}

\begin{thebibliography}{68}
\providecommand{\natexlab}[1]{#1}
\providecommand{\url}[1]{\texttt{#1}}
\expandafter\ifx\csname urlstyle\endcsname\relax
  \providecommand{\doi}[1]{doi: #1}\else
  \providecommand{\doi}{doi: \begingroup \urlstyle{rm}\Url}\fi

\bibitem[Aragam and Yang(2024)]{aragam2024model}
Bryon Aragam and Ruiyi Yang.
\newblock Model-free estimation of latent structure via multiscale nonparametric maximum likelihood.
\newblock \emph{arXiv preprint arXiv:2410.22248}, 2024.

\bibitem[Aubin-Frankowski et~al.(2022)Aubin-Frankowski, Korba, and L{\'e}ger]{aubin2022mirror}
Pierre-Cyril Aubin-Frankowski, Anna Korba, and Flavien L{\'e}ger.
\newblock {Mirror descent with relative smoothness in measure spaces, with application to Sinkhorn and EM}.
\newblock \emph{Advances in Neural Information Processing Systems}, 35:\penalty0 17263--17275, 2022.

\bibitem[Baraud(2010)]{baraud2010bernstein}
Yannick Baraud.
\newblock {A Bernstein-type inequality for suprema of random processes with applications to model selection in non-Gaussian regression}.
\newblock \emph{Bernoulli}, pages 1064--1085, 2010.

\bibitem[Berg(1976)]{berg1976potential}
Christian Berg.
\newblock Potential theory on the infinite dimensional torus.
\newblock \emph{Inventiones mathematicae}, 32\penalty0 (1):\penalty0 49--100, 1976.

\bibitem[Both and Kusters(2019)]{both2019temporal}
Gert-Jan Both and Remy Kusters.
\newblock Temporal normalizing flows.
\newblock \emph{arXiv preprint arXiv:1912.09092}, 2019.

\bibitem[Botvinick-Greenhouse et~al.(2023)Botvinick-Greenhouse, Yang, and Maulik]{botvinick2023generative}
Jonah Botvinick-Greenhouse, Yunan Yang, and Romit Maulik.
\newblock Generative modeling of time-dependent densities via optimal transport and projection pursuit.
\newblock \emph{Chaos: An Interdisciplinary Journal of Nonlinear Science}, 33\penalty0 (10), 2023.

\bibitem[Bou-Rabee and Hairer(2013)]{bou2013nonasymptotic}
Nawaf Bou-Rabee and Martin Hairer.
\newblock {Nonasymptotic mixing of the MALA algorithm}.
\newblock \emph{IMA Journal of Numerical Analysis}, 33\penalty0 (1):\penalty0 80--110, 2013.

\bibitem[Carlier et~al.(2022)Carlier, Chizat, and Laborde]{carlier2022lipschitz}
Guillaume Carlier, L{\'e}na{\"\i}c Chizat, and Maxime Laborde.
\newblock {Lipschitz continuity of the Schr{\"o}dinger map in entropic optimal transport}.
\newblock 2022.

\bibitem[Chen et~al.(2021{\natexlab{a}})Chen, Yang, Duan, and Karniadakis]{chen2021solving}
Xiaoli Chen, Liu Yang, Jinqiao Duan, and George~Em Karniadakis.
\newblock {Solving Inverse Stochastic Problems from Discrete Particle Observations Using the Fokker--Planck Equation and Physics-Informed Neural Networks}.
\newblock \emph{SIAM Journal on Scientific Computing}, 43\penalty0 (3):\penalty0 B811--B830, 2021{\natexlab{a}}.

\bibitem[Chen et~al.(2018)Chen, Conforti, and Georgiou]{chen2018measure}
Yongxin Chen, Giovanni Conforti, and Tryphon~T Georgiou.
\newblock Measure-valued spline curves: An optimal transport viewpoint.
\newblock \emph{SIAM Journal on Mathematical Analysis}, 50\penalty0 (6):\penalty0 5947--5968, 2018.

\bibitem[Chen et~al.(2021{\natexlab{b}})Chen, Georgiou, and Pavon]{chen2021stochastic}
Yongxin Chen, Tryphon~T Georgiou, and Michele Pavon.
\newblock {Stochastic control liaisons: Richard Sinkhorn meets Gaspard Monge on a Schr\"{o}dinger bridge}.
\newblock \emph{Siam Review}, 63\penalty0 (2):\penalty0 249--313, 2021{\natexlab{b}}.

\bibitem[Cheng et~al.(2024)Cheng, Lu, Tan, and Xie]{cheng2024convergence}
Xiuyuan Cheng, Jianfeng Lu, Yixin Tan, and Yao Xie.
\newblock {Convergence of flow-based generative models via proximal gradient descent in Wasserstein space}.
\newblock \emph{IEEE Transactions on Information Theory}, 2024.

\bibitem[Chewi et~al.(2021)Chewi, Clancy, Le~Gouic, Rigollet, Stepaniants, and Stromme]{chewi2021fast}
Sinho Chewi, Julien Clancy, Thibaut Le~Gouic, Philippe Rigollet, George Stepaniants, and Austin Stromme.
\newblock {Fast and smooth interpolation on Wasserstein space}.
\newblock In \emph{International Conference on Artificial Intelligence and Statistics}, pages 3061--3069. PMLR, 2021.

\bibitem[Chewi et~al.(2024)Chewi, Erdogdu, Li, Shen, and Zhang]{chewi2024analysis}
Sinho Chewi, Murat~A Erdogdu, Mufan Li, Ruoqi Shen, and Matthew~S Zhang.
\newblock {Analysis of Langevin Monte Carlo from Poincar\'e to log-Sobolev}.
\newblock \emph{Foundations of Computational Mathematics}, pages 1--51, 2024.

\bibitem[Chizat(2022{\natexlab{a}})]{chizat2022convergence}
L{\'e}na{\"\i}c Chizat.
\newblock Convergence rates of gradient methods for convex optimization in the space of measures.
\newblock \emph{Open Journal of Mathematical Optimization}, 3:\penalty0 1--19, 2022{\natexlab{a}}.

\bibitem[Chizat(2022{\natexlab{b}})]{chizat2022mean}
L{\'e}na{\"\i}c Chizat.
\newblock {Mean-Field Langevin Dynamics: Exponential Convergence and Annealing}.
\newblock \emph{Transactions on Machine Learning Research}, 2022{\natexlab{b}}.

\bibitem[Chizat et~al.(2022)Chizat, Zhang, Heitz, and Schiebinger]{chizat2022trajectory}
L{\'e}na{\"\i}c Chizat, Stephen Zhang, Matthieu Heitz, and Geoffrey Schiebinger.
\newblock {Trajectory inference via mean-field Langevin in path space}.
\newblock \emph{Advances in Neural Information Processing Systems}, 35:\penalty0 16731--16742, 2022.

\bibitem[Cuturi(2013)]{cuturi2013sinkhorn}
Marco Cuturi.
\newblock Sinkhorn distances: Lightspeed computation of optimal transport.
\newblock \emph{Advances in neural information processing systems}, 26, 2013.

\bibitem[Dalalyan(2017)]{dalalyan2017theoretical}
Arnak~S Dalalyan.
\newblock Theoretical guarantees for approximate sampling from smooth and log-concave densities.
\newblock \emph{Journal of the Royal Statistical Society Series B: Statistical Methodology}, 79\penalty0 (3):\penalty0 651--676, 2017.

\bibitem[Fikhtengol'ts(2014)]{fikhtengol2014fundamentals}
Grigorii~Mikhailovich Fikhtengol'ts.
\newblock \emph{{The Fundamentals of Mathematical Analysis}}.
\newblock Elsevier, 2014.

\bibitem[Holbrook et~al.(2020)Holbrook, Lan, Streets, and Shahbaba]{holbrook2020nonparametric}
Andrew Holbrook, Shiwei Lan, Jeffrey Streets, and Babak Shahbaba.
\newblock {Nonparametric Fisher geometry with application to density estimation}.
\newblock In \emph{Conference on Uncertainty in Artificial Intelligence}, pages 101--110. PMLR, 2020.

\bibitem[Holley and Stroock(1986)]{holley1986logarithmic}
Richard Holley and Daniel~W Stroock.
\newblock {Logarithmic Sobolev inequalities and stochastic Ising models}.
\newblock 1986.

\bibitem[Karimi et~al.(2016)Karimi, Nutini, and Schmidt]{karimi2016linear}
Hamed Karimi, Julie Nutini, and Mark Schmidt.
\newblock {Linear convergence of gradient and proximal-gradient methods under the Polyak-{\L}ojasiewicz condition}.
\newblock In \emph{Machine Learning and Knowledge Discovery in Databases: European Conference, ECML PKDD 2016, Riva del Garda, Italy, September 19-23, 2016, Proceedings, Part I 16}, pages 795--811. Springer, 2016.

\bibitem[Kiefer and Wolfowitz(1956)]{kiefer1956consistency}
Jack Kiefer and Jacob Wolfowitz.
\newblock Consistency of the maximum likelihood estimator in the presence of infinitely many incidental parameters.
\newblock \emph{The Annals of Mathematical Statistics}, pages 887--906, 1956.

\bibitem[Klein et~al.(2015)Klein, Mazutis, Akartuna, Tallapragada, Veres, Li, Peshkin, Weitz, and Kirschner]{klein2015droplet}
Allon~M Klein, Linas Mazutis, Ilke Akartuna, Naren Tallapragada, Adrian Veres, Victor Li, Leonid Peshkin, David~A Weitz, and Marc~W Kirschner.
\newblock Droplet barcoding for single-cell transcriptomics applied to embryonic stem cells.
\newblock \emph{Cell}, 161\penalty0 (5):\penalty0 1187--1201, 2015.

\bibitem[Koenker and Mizera(2014)]{koenker2014convex}
Roger Koenker and Ivan Mizera.
\newblock {Convex optimization, shape constraints, compound decisions, and empirical Bayes rules}.
\newblock \emph{Journal of the American Statistical Association}, 109\penalty0 (506):\penalty0 674--685, 2014.

\bibitem[Kosorok(2008)]{kosorok2008introduction}
Michael~R Kosorok.
\newblock \emph{{Introduction to Empirical Processes and Semiparametric Inference}}, volume~61.
\newblock Springer, 2008.

\bibitem[Kullback(1968)]{kullback1968probability}
Solomon Kullback.
\newblock Probability densities with given marginals.
\newblock \emph{The Annals of Mathematical Statistics}, 39\penalty0 (4):\penalty0 1236--1243, 1968.

\bibitem[Lan(2020)]{lan2020first}
Guanghui Lan.
\newblock \emph{{First-order and Stochastic Optimization Methods for Machine Learning}}, volume~1.
\newblock Springer, 2020.

\bibitem[Lavenant et~al.(2024)Lavenant, Zhang, Kim, Schiebinger, et~al.]{lavenant2024toward}
Hugo Lavenant, Stephen Zhang, Young-Heon Kim, Geoffrey Schiebinger, et~al.
\newblock Toward a mathematical theory of trajectory inference.
\newblock \emph{The Annals of Applied Probability}, 34\penalty0 (1A):\penalty0 428--500, 2024.

\bibitem[Ledoux and Talagrand(2013)]{ledoux2013probability}
Michel Ledoux and Michel Talagrand.
\newblock \emph{Probability in Banach Spaces: Isoperimetry and Processes}.
\newblock Springer Science \& Business Media, 2013.

\bibitem[L{\'e}onard(2014)]{leonard2014survey}
Christian L{\'e}onard.
\newblock {A survey of the Schr{\"o}dinger problem and some of its connections with optimal transport}.
\newblock \emph{Discrete and Continuous Dynamical Systems-Series A}, 34\penalty0 (4):\penalty0 1533--1574, 2014.

\bibitem[Leskovec et~al.(2007)Leskovec, Kleinberg, and Faloutsos]{leskovec2007graph}
Jure Leskovec, Jon Kleinberg, and Christos Faloutsos.
\newblock Graph evolution: Densification and shrinking diameters.
\newblock \emph{ACM transactions on Knowledge Discovery from Data (TKDD)}, 1\penalty0 (1):\penalty0 2--es, 2007.

\bibitem[Lu et~al.(2022)Lu, Maulik, Gao, Dietrich, Kevrekidis, and Duan]{lu2022learning}
Yubin Lu, Romit Maulik, Ting Gao, Felix Dietrich, Ioannis~G Kevrekidis, and Jinqiao Duan.
\newblock Learning the temporal evolution of multivariate densities via normalizing flows.
\newblock \emph{Chaos: An Interdisciplinary Journal of Nonlinear Science}, 32\penalty0 (3), 2022.

\bibitem[Mackey and Tyran-Kami{\'n}ska(2021)]{mackey2021can}
Michael~C Mackey and Marta Tyran-Kami{\'n}ska.
\newblock How can we describe density evolution under delayed dynamics?
\newblock \emph{Chaos: An Interdisciplinary Journal of Nonlinear Science}, 31\penalty0 (4), 2021.

\bibitem[Macosko et~al.(2015)Macosko, Basu, Satija, Nemesh, Shekhar, Goldman, Tirosh, Bialas, Kamitaki, Martersteck, et~al.]{macosko2015highly}
Evan~Z Macosko, Anindita Basu, Rahul Satija, James Nemesh, Karthik Shekhar, Melissa Goldman, Itay Tirosh, Allison~R Bialas, Nolan Kamitaki, Emily~M Martersteck, et~al.
\newblock Highly parallel genome-wide expression profiling of individual cells using nanoliter droplets.
\newblock \emph{Cell}, 161\penalty0 (5):\penalty0 1202--1214, 2015.

\bibitem[Massart(2000)]{massart2000constants}
Pascal Massart.
\newblock {About the constants in Talagrand's concentration inequalities for empirical processes}.
\newblock \emph{The Annals of Probability}, 28\penalty0 (2):\penalty0 863--884, 2000.

\bibitem[Neklyudov et~al.(2023)Neklyudov, Brekelmans, Severo, and Makhzani]{neklyudov2023action}
Kirill Neklyudov, Rob Brekelmans, Daniel Severo, and Alireza Makhzani.
\newblock Action matching: Learning stochastic dynamics from samples.
\newblock In \emph{International conference on machine learning}, pages 25858--25889. PMLR, 2023.

\bibitem[Nitanda et~al.(2021)Nitanda, Wu, and Suzuki]{nitanda2021particle}
Atsushi Nitanda, Denny Wu, and Taiji Suzuki.
\newblock Particle dual averaging: Optimization of mean field neural network with global convergence rate analysis.
\newblock \emph{Advances in Neural Information Processing Systems}, 34:\penalty0 19608--19621, 2021.

\bibitem[Nitanda et~al.(2022)Nitanda, Wu, and Suzuki]{nitanda2022convex}
Atsushi Nitanda, Denny Wu, and Taiji Suzuki.
\newblock {Convex analysis of the mean field Langevin dynamics}.
\newblock In \emph{International Conference on Artificial Intelligence and Statistics}, pages 9741--9757. PMLR, 2022.

\bibitem[Oko et~al.(2022)Oko, Suzuki, Nitanda, and Wu]{oko2022particle}
Kazusato Oko, Taiji Suzuki, Atsushi Nitanda, and Denny Wu.
\newblock Particle stochastic dual coordinate ascent: Exponential convergent algorithm for mean field neural network optimization.
\newblock In \emph{International Conference on Learning Representations}, 2022.

\bibitem[Ollivier et~al.(2014)Ollivier, Pajot, and Villani]{ollivier2014optimal}
Yann Ollivier, Herv{\'e} Pajot, and C{\'e}dric Villani.
\newblock \emph{{Optimal Transport: Theory and Applications}}, volume 413.
\newblock Cambridge University Press, 2014.

\bibitem[Otto and Villani(2000)]{otto2000generalization}
Felix Otto and C{\'e}dric Villani.
\newblock {Generalization of an inequality by Talagrand and links with the logarithmic Sobolev inequality}.
\newblock \emph{Journal of Functional Analysis}, 173\penalty0 (2):\penalty0 361--400, 2000.

\bibitem[Polyanskiy and Wu(2020)]{polyanskiy2020self}
Yury Polyanskiy and Yihong Wu.
\newblock Self-regularizing property of nonparametric maximum likelihood estimator in mixture models.
\newblock \emph{arXiv preprint arXiv:2008.08244}, 2020.

\bibitem[Ruschendorf(1995)]{ruschendorf1995convergence}
Ludger Ruschendorf.
\newblock Convergence of the iterative proportional fitting procedure.
\newblock \emph{The Annals of Statistics}, pages 1160--1174, 1995.

\bibitem[Saha and Guntuboyina(2020)]{saha2020nonparametric}
Sujayam Saha and Adityanand Guntuboyina.
\newblock {On the nonparametric maximum likelihood estimator for Gaussian location mixture densities with application to Gaussian denoising}.
\newblock \emph{The Annals of Statistics}, 48\penalty0 (2):\penalty0 738--762, 2020.

\bibitem[Schiebinger et~al.(2019)Schiebinger, Shu, Tabaka, Cleary, Subramanian, Solomon, Gould, Liu, Lin, Berube, et~al.]{schiebinger2019optimal}
Geoffrey Schiebinger, Jian Shu, Marcin Tabaka, Brian Cleary, Vidya Subramanian, Aryeh Solomon, Joshua Gould, Siyan Liu, Stacie Lin, Peter Berube, et~al.
\newblock Optimal-transport analysis of single-cell gene expression identifies developmental trajectories in reprogramming.
\newblock \emph{Cell}, 176\penalty0 (4):\penalty0 928--943, 2019.

\bibitem[Sha et~al.(2024)Sha, Qiu, Zhou, and Nie]{sha2024reconstructing}
Yutong Sha, Yuchi Qiu, Peijie Zhou, and Qing Nie.
\newblock Reconstructing growth and dynamic trajectories from single-cell transcriptomics data.
\newblock \emph{Nature Machine Intelligence}, 6\penalty0 (1):\penalty0 25--39, 2024.

\bibitem[Shen et~al.(2024)Shen, Berlinghieri, and Broderick]{shen2024learning}
Yunyi Shen, Renato Berlinghieri, and Tamara Broderick.
\newblock Learning a vector field from snapshots of unidentified particles rather than particle trajectories.
\newblock \emph{ICLR Workshop on AI4DifferentialEquations In Science}, 2024.

\bibitem[Soloff et~al.(2024)Soloff, Guntuboyina, and Sen]{soloff2024multivariate}
Jake~A Soloff, Adityanand Guntuboyina, and Bodhisattva Sen.
\newblock Multivariate, heteroscedastic empirical bayes via nonparametric maximum likelihood.
\newblock \emph{Journal of the Royal Statistical Society Series B: Statistical Methodology}, page qkae040, 2024.

\bibitem[Tong et~al.(2020)Tong, Huang, Wolf, Van~Dijk, and Krishnaswamy]{tong2020trajectorynet}
Alexander Tong, Jessie Huang, Guy Wolf, David Van~Dijk, and Smita Krishnaswamy.
\newblock Trajectorynet: A dynamic optimal transport network for modeling cellular dynamics.
\newblock In \emph{International conference on machine learning}, pages 9526--9536. PMLR, 2020.

\bibitem[van~de Geer(2000)]{geer2000empirical}
Sara~A van~de Geer.
\newblock \emph{Empirical Processes in M-estimation}, volume~6.
\newblock Cambridge University Press, 2000.

\bibitem[van~der Vaart and Wellner(2013)]{wellner2013weak}
Aad~W van~der Vaart and Jon~A Wellner.
\newblock \emph{{Weak Convergence and Empirical Processes With Applications to Statistics}}.
\newblock Springer Science \& Business Media, 2013.

\bibitem[van Handel(2014)]{van2014probability}
Ramon van Handel.
\newblock {Probability in High Dimension}.
\newblock \emph{Lecture Notes (Princeton University)}, 2\penalty0 (3):\penalty0 2--3, 2014.

\bibitem[Vempala and Wibisono(2019)]{vempala2019rapid}
Santosh Vempala and Andre Wibisono.
\newblock {Rapid convergence of the unadjusted Langevin algorithm: Isoperimetry suffices}.
\newblock \emph{Advances in neural information processing systems}, 32, 2019.

\bibitem[Vershynin(2018)]{vershynin2018high}
Roman Vershynin.
\newblock \emph{{High-Dimensional Probability: An Introduction with Applications in Data Science}}, volume~47.
\newblock Cambridge University Press, 2018.

\bibitem[Villani(2009)]{villani2009optimal}
C{\'e}dric Villani.
\newblock \emph{Optimal Transport: Old and New}, volume 338.
\newblock Springer, 2009.

\bibitem[Villani(2021)]{villani2021topics}
C{\'e}dric Villani.
\newblock \emph{{Topics in Optimal Transportation}}, volume~58.
\newblock American Mathematical Soc., 2021.

\bibitem[Wainwright(2019)]{wainwright2019high}
Martin~J Wainwright.
\newblock \emph{{High-Dimensional Statistics: A Non-Asymptotic Viewpoint}}, volume~48.
\newblock Cambridge University Press, 2019.

\bibitem[Wibisono(2018)]{wibisono2018sampling}
Andre Wibisono.
\newblock {Sampling as optimization in the space of measures: The Langevin dynamics as a composite optimization problem}.
\newblock In \emph{Conference on Learning Theory}, pages 2093--3027. PMLR, 2018.

\bibitem[Yan et~al.(2024)Yan, Wang, and Rigollet]{yan2024learning}
Yuling Yan, Kaizheng Wang, and Philippe Rigollet.
\newblock {Learning Gaussian mixtures using the Wasserstein--Fisher--Rao gradient flow}.
\newblock \emph{The Annals of Statistics}, 52\penalty0 (4):\penalty0 1774--1795, 2024.

\bibitem[Yao and Yang(2022)]{yao2022meana}
Rentian Yao and Yun Yang.
\newblock Mean-field variational inference via wasserstein gradient flow.
\newblock \emph{arXiv preprint arXiv:2207.08074}, 2022.

\bibitem[Yao et~al.(2022)Yao, Chen, and Yang]{yao2022mean}
Rentian Yao, Xiaohui Chen, and Yun Yang.
\newblock Mean-field nonparametric estimation of interacting particle systems.
\newblock In \emph{Conference on Learning Theory}, pages 2242--2275. PMLR, 2022.

\bibitem[Yao et~al.(2024{\natexlab{a}})Yao, Chen, and Yang]{yao2024wasserstein}
Rentian Yao, Xiaohui Chen, and Yun Yang.
\newblock Wasserstein proximal coordinate gradient algorithms.
\newblock \emph{Journal of Machine Learning Research}, 25\penalty0 (269):\penalty0 1--66, May 2024{\natexlab{a}}.

\bibitem[Yao et~al.(2024{\natexlab{b}})Yao, Huang, and Yang]{yao2024minimizing}
Rentian Yao, Linjun Huang, and Yun Yang.
\newblock {Minimizing Convex Functionals over Space of Probability Measures via KL Divergence Gradient Flow}.
\newblock In \emph{International Conference on Artificial Intelligence and Statistics}, pages 2530--2538. PMLR, 2024{\natexlab{b}}.

\bibitem[Yeo et~al.(2020)Yeo, Saksena, and Gifford]{yeo2020generative}
Grace~HT Yeo, Sachit~D Saksena, and David~K Gifford.
\newblock Generative modeling of single-cell population time series for inferring cell differentiation landscapes.
\newblock \emph{BioRxiv}, pages 2020--08, 2020.

\bibitem[Zhang(2009)]{zhang2009generalized}
Cun-Hui Zhang.
\newblock Generalized maximum likelihood estimation of normal mixture densities.
\newblock \emph{Statistica Sinica}, pages 1297--1318, 2009.

\bibitem[Zhu and Chen(2025)]{ZhuChen2025_convergence-nonconvex}
Shuailong Zhu and Xiaohui Chen.
\newblock Convergence analysis of the wasserstein proximal algorithm beyond geodesic convexity, January 2025.
\newblock URL \url{https://arxiv.org/abs/2501.14993}.

\end{thebibliography}
%%%%%%%%%%%%%%%%%%%%%%%%%%%%%%%%%%%%%%%%
%%%%%%%%%%%%%%%%%%%%%%%%%%%%%%%%%%%%%%%%
%%%%%%%%%%%%%%%%%%%%%%%%%%%%%%%%%%%%%%%%
\newpage
\appendix
\begin{center}
{\bf\Large Supplementary Materials: Appendix}
\end{center}
\numberwithin{equation}{section}

\newtheorem{theoremA}{Theorem}
\renewcommand{\thetheoremA}{A\arabic{theoremA}}
\newtheorem{lemmaA}[theoremA]{Lemma}
\renewcommand{\thelemmaA}{A\arabic{lemmaA}}

This appendix provides technical details of the theoretical results presented in the main paper. The appendix is structured as follows. Appendix~\ref{app: bg} offers background knowledge on optimal transport and empirical process theory, which are essential for proving our theoretical results. Appendix~\ref{app: stat_rate} includes the proof of the finite sample analysis and its related results. Appendix~\ref{sec: pf_algo_converge} presents the proofs of algorithmic convergence results in Theorems~\ref{thm: algo_convergence_general} and~\ref{thm: traj_CKLGD}. All technical details are deferred to Appendix~\ref{sec: tech_lem}. In the appendix, we use the notation $H(\rho) := \int\rho\log\rho$.

\section{Additional Background}\label{app: bg}
\subsection{Optimal Transport and sampling}\label{app: OT}
When applying inexact CKLGD to compute the estimator~\eqref{eqn: obj_func}, additional sampling procedure are applied to approximate the $\wt\rho_j^k$ in~\eqref{eqn: explicit_quad_anneal_update} in each iteration, as shown in Algorithm~\ref{alg: inexact_CKLGD}. Therefore, to derive the total number of iterations in Algorithm~\ref{alg: inexact_CKLGD}, it is also important to see how many (inner) iterations are required during the sampling steps. This section aims to provide some background knowledge that is helpful for analyzing the inner iteration complexity.

Sampling from a distribution is strongly related to optimizing a functional on the probability space~\citep{wibisono2018sampling}. It is also known that the unadjusted Langevin algorithm (ULA) for sampling converges to a neighborhood of the target distribution exponentially fast, when the target distribution is strongly log-concave. This is analogous to using gradient descent algorithm to minimize a strongly convex function. In the Euclidean space, such strong convexity can be relaxed to the well-known Polyak--Lojasiewicz (PL) inequality without hurting the exponential convergence rate. In the literature of sampling and optimization on the probability space, the following inequality, known as log-Sobolev inequality (LSI), plays the same role as PL inequality in the Euclidean space optimization. For more details of LSI, we refer to~\citep{ollivier2014optimal, villani2021topics, villani2009optimal}.

\begin{definition}
A probability distribution $\mu$ satisfies $\LSI(\Lambda)$ if
\begin{align*}
    \int_{\m X}\Big\|\nabla\log\frac{\dd\nu}{\dd\mu}\Big\|^2\,\dd\nu \geq 2\Lambda \KL(\nu\,\|\,\mu)
\end{align*}
holds for all $\nu\in\ms P_2^r(\m X)$.
\end{definition}
It is known that a log-concave probability distribution naturally satisfies LSI. The following provides a sufficient condition of a distribution satisfying LSI, that is the perturbation of a distribution satisfying LSI still satisfies LSI, but with a different constant.

\begin{proposition}[\citet{holley1986logarithmic}]\label{prop: perturb_LSI}
Let $\mu\in\ms P_2^r(\m X)$ be a probability distribution satisfying $\LSI(\Lambda)$. For a bounded function $H$, let $\mu_H\propto \mu e^{-H}$ be a new probability distribution. Then, $\mu_H$ satisfies $\LSI(\Lambda_H)$ with $\Lambda_H = \Lambda e^{-4\|H\|_{L^\infty}}$.
\end{proposition}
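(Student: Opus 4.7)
The plan is to give a proof along the classical Holley--Stroock lines, leveraging the pointwise two-sided bound on the Radon--Nikodym derivative $d\mu_H/d\mu$ that follows from $H$ being bounded. First, I would translate the LSI hypothesis to its entropic form: setting $f = d\nu/d\mu$ with $\int f\,d\mu = 1$, the inequality $\int \|\nabla\log(d\nu/d\mu)\|^2\,d\nu \geq 2\Lambda\,\KL(\nu\,\|\,\mu)$ is equivalent to
\begin{equation*}
\mathrm{Ent}_\mu(f) := \int f\log f\,d\mu \leq \frac{1}{2\Lambda}\int \frac{|\nabla f|^2}{f}\,d\mu,
\end{equation*}
and similarly for $\mu_H$. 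The goal is then to compare $\mathrm{Ent}_{\mu_H}(g)$ and $\int |\nabla g|^2/g\,d\mu_H$ to their $\mu$-analogues for the same density $g = d\nu/d\mu_H$.

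Next, I would exploit the normalization: writing $Z = \int e^{-H}\,d\mu$, the bound $\|H\|_\infty < \infty$ gives $Z \in [e^{-\|H\|_\infty}, e^{\|H\|_\infty}]$, hence
\begin{equation*}
e^{-2\|H\|_\infty} \leq \frac{d\mu_H}{d\mu} = \frac{e^{-H}}{Z} \leq e^{2\|H\|_\infty}.
\end{equation*}
The crucial step, where I expect the one moment of care, is controlling the entropy ratio. For this I would use the variational identity
\begin{equation*}
\mathrm{Ent}_\mu(g) = \inf_{c > 0}\int \bigl(g\log g - g\log c - g + c\bigr)\,d\mu,
\end{equation*}
whose integrand is pointwise nonnegative. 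Replacing $d\mu$ by $d\mu_H$ and using the upper bound on $d\mu_H/d\mu$ gives $\mathrm{Ent}_{\mu_H}(g) \leq e^{2\|H\|_\infty}\,\mathrm{Ent}_\mu(g)$. Symmetrically, because $|\nabla g|^2/g \geq 0$, the lower bound on $d\mu_H/d\mu$ yields $\int |\nabla g|^2/g\,d\mu_H \geq e^{-2\|H\|_\infty}\int |\nabla g|^2/g\,d\mu$.

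Finally, chaining these two one-sided comparisons with the LSI for $\mu$ gives
\begin{equation*}
\mathrm{Ent}_{\mu_H}(g) \leq e^{2\|H\|_\infty}\,\mathrm{Ent}_\mu(g) \leq \frac{e^{2\|H\|_\infty}}{2\Lambda}\int \frac{|\nabla g|^2}{g}\,d\mu \leq \frac{e^{4\|H\|_\infty}}{2\Lambda}\int \frac{|\nabla g|^2}{g}\,d\mu_H,
\end{equation*}
which is precisely the LSI for $\mu_H$ with constant $\Lambda_H = \Lambda e^{-4\|H\|_\infty}$. The main conceptual obstacle is recognizing that one cannot directly compare $\mathrm{Ent}_{\mu_H}$ to $\mathrm{Ent}_\mu$ by a naive substitution, since the normalizing constants in the two definitions of relative entropy differ; the variational formula above circumvents this by replacing the two different centering constants (the two masses) with a single infimum over $c$, whose pointwise nonnegative integrand is then amenable to a direct two-sided comparison of the underlying measures.
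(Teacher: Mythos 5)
Your argument is correct and is precisely the classical Holley--Stroock perturbation proof: the two-sided bound $e^{-2\|H\|_\infty}\le d\mu_H/d\mu\le e^{2\|H\|_\infty}$, the variational formula $\mathrm{Ent}_\mu(g)=\inf_{c>0}\int(g\log g-g\log c-g+c)\,d\mu$ with pointwise nonnegative integrand, and the chaining through the homogeneous form of the LSI for $\mu$ together give $\Lambda_H=\Lambda e^{-4\|H\|_\infty}$. The paper states this proposition only as a citation to Holley and Stroock (1986) and does not supply a proof, so there is nothing internal to compare against; your argument faithfully reproduces the one in the cited reference.
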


In the Euclidean space, PL inequality implies that the objective function has a quadratic growth property~\citep{karimi2016linear}. On the Wasserstein space, LSI also implies the quadratic growth property of the objective functional.
\begin{proposition}[Talagrand's transportation inequality~\citep{otto2000generalization}]\label{prop: talagrand}
If $\mu\in\ms P_2^r(\m X)$ satisfies $\LSI(\Lambda)$, then for any $\nu\in\ms P_2^r(\m X)$, it holds that
\begin{align*}
\KL(\nu\,\|\,\mu) \geq \Lambda\W_2^2(\nu, \mu).
\end{align*}
\end{proposition}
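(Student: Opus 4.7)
The plan is to prove Talagrand's transportation inequality via the Otto--Villani approach, exploiting the interpretation of heat/Langevin diffusion as the Wasserstein gradient flow of the relative entropy. Concretely, I would introduce the Langevin semigroup $(P_t)_{t\ge 0}$ with invariant measure $\mu$ and set $\nu_t := \nu P_t$, so that $\nu_0 = \nu$ and $\nu_t \to \mu$ as $t\to\infty$ under mild regularity assumptions.

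First, I would record two standard identities along this evolution. (i) The entropy dissipation identity: $\frac{d}{dt}\KL(\nu_t\,\|\,\mu) = -I(\nu_t\,\|\,\mu)$, where $I(\nu\,\|\,\mu) = \int \|\nabla\log(\dd\nu/\dd\mu)\|^2\,\dd\nu$ is the relative Fisher information. (ii) The Otto-calculus interpretation of the same PDE as the $\W_2$-gradient flow of $\KL(\cdot\,\|\,\mu)$, which yields the metric-derivative identity $|\dot\nu_t|_{\W_2} = \sqrt{I(\nu_t\,\|\,\mu)}$ and hence the length bound
\begin{equation*}
\W_2(\nu,\mu) \;=\; \W_2(\nu_0,\nu_\infty) \;\le\; \int_0^\infty \sqrt{I(\nu_t\,\|\,\mu)}\,\dd t.
\end{equation*}

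Next, I would feed in the LSI assumption. From $I(\nu_t\,\|\,\mu) \ge 2\Lambda\,\KL(\nu_t\,\|\,\mu)$, one obtains the chain-rule inequality
\begin{equation*}
-\frac{\dd}{\dd t}\sqrt{\KL(\nu_t\,\|\,\mu)} \;=\; \frac{I(\nu_t\,\|\,\mu)}{2\sqrt{\KL(\nu_t\,\|\,\mu)}} \;\ge\; \sqrt{\tfrac{\Lambda}{2}}\,\sqrt{I(\nu_t\,\|\,\mu)},
\end{equation*}
using $\sqrt{\KL(\nu_t\,\|\,\mu)} \le \sqrt{I(\nu_t\,\|\,\mu)/(2\Lambda)}$. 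Integrating in $t\in[0,\infty)$ and using $\KL(\nu_t\,\|\,\mu) \to 0$ then combining with the length bound above gives
\begin{equation*}
\sqrt{\KL(\nu\,\|\,\mu)} \;\ge\; \sqrt{\tfrac{\Lambda}{2}}\,\W_2(\nu,\mu),
\end{equation*}
which is Talagrand's inequality up to the multiplicative constant stated in the proposition.

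The main obstacle is technical rather than conceptual: rigorously justifying the Otto-calculus length bound $\W_2(\nu,\mu)\le \int_0^\infty\!\sqrt{I(\nu_t\,\|\,\mu)}\,\dd t$ without smoothness assumptions on $\nu$. I would handle this by a standard regularization argument --- first prove the inequality for $\nu$ with smooth, compactly supported, strictly positive density (where Otto's formal calculus can be made rigorous via the continuity equation and a Benamou--Brenier type representation of $\W_2$), then extend to arbitrary $\nu\in\ms P_2^r(\m X)$ by approximating with convolutions $\nu\ast\m K_\varepsilon$ and passing to the limit using lower semicontinuity of $\W_2^2$ and of $\KL(\cdot\,\|\,\mu)$. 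A secondary issue is ensuring $\KL(\nu_t\,\|\,\mu)\to 0$ and $\int_0^\infty \sqrt{I(\nu_t\,\|\,\mu)}\,\dd t<\infty$, which both follow from the exponential decay $\KL(\nu_t\,\|\,\mu) \le e^{-2\Lambda t}\KL(\nu\,\|\,\mu)$ implied by LSI together with the entropy-dissipation identity.
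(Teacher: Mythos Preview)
The paper does not prove this proposition; it is stated as a background result with a citation to \citet{otto2000generalization}, and your proposal is precisely the Otto--Villani argument from that reference, so it is the canonical proof.

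One remark on the constant: your derivation correctly yields $\KL(\nu\,\|\,\mu) \geq \tfrac{\Lambda}{2}\,\W_2^2(\nu,\mu)$, which is the standard Otto--Villani conclusion under the paper's LSI normalization $I(\nu\,\|\,\mu)\ge 2\Lambda\,\KL(\nu\,\|\,\mu)$. The proposition as stated in the paper is missing this factor of $1/2$; the only downstream use (bounding $\W_2(\wht\rho_j^k,\wt\rho_j^k)$ in the proof of Lemma~\ref{lem: stable_Vj}) is insensitive to this, since only the scaling in $\alpha_k$ and $\delta_k$ matters there. Your acknowledgment ``up to the multiplicative constant stated in the proposition'' is therefore exactly right.
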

%%%%%%%%%%%%%%%%%%%%%%%%%%%%%%%%%%%%%%%%
%%%%%%%%%%%%%%%%%%%%%%%%%%%%%%%%%%%%%%%%
\subsection{Empirical process theory}
In this subsection, we collection some results in the field of empirical process that are helpful to the proof of the statistical results. More details are referred to the monographs~\citep{wainwright2019high, vershynin2018high, van2014probability, wellner2013weak, geer2000empirical}.

The following definition of Orlicz norm characterizes the tail of a random variable. Generally, the sample mean of a group of i.i.d. random variables with finite Orlicz norm is closed to the population mean.
\begin{definition}[Orlicz norm]
For $\alpha \geq 1$, define the function $\psi_\alpha(x) = e^{x^\alpha} - 1$. Then for any random variable $X$, the Orlicz norm is defined as
\begin{align*}
\|X\|_{\psi_\alpha} \coloneqq \inf\{c > 0: \mb E\psi(|X|/c) \leq 1\}.
\end{align*}
Here, the infimum of an empty set is defined as $+\infty$.
\end{definition}

The following proposition provide an upper bound of the $L^1$-norm of random variable via its $\psi_1$-norm.
\begin{proposition}
For any random variable $X$, it holds that
\begin{align}\label{eqn: E-psi-bound}
\mb E|X| \leq \|X\|_{\psi_1}.
\end{align}   
\end{proposition}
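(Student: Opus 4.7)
The plan is to exploit the elementary pointwise inequality $t \leq e^t - 1$ valid for all $t \geq 0$, which follows directly from the power-series expansion $e^t - 1 = \sum_{k \geq 1} t^k/k!$ whose terms are all nonnegative when $t \geq 0$, so in particular $e^t - 1 \geq t$. Applied to $t = |X|/c$ for an arbitrary $c > 0$, this gives the almost-sure bound $|X|/c \leq \psi_1(|X|/c)$.

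First I would dispose of the trivial case $\|X\|_{\psi_1} = +\infty$, where the claimed inequality holds vacuously. Otherwise, set $c := \|X\|_{\psi_1} \in (0, \infty)$. By the definition of infimum, choose any sequence $c_n \downarrow c$ with $\mb E[\psi_1(|X|/c_n)] \leq 1$. Since $\psi_1$ is continuous and nondecreasing on $[0, \infty)$, the sequence $\psi_1(|X|/c_n)$ converges pointwise monotonically upward to $\psi_1(|X|/c)$, so by the monotone convergence theorem $\mb E[\psi_1(|X|/c)] \leq 1$. (This step shows the infimum in the definition of $\|\cdot\|_{\psi_1}$ is attained when finite.)

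Now apply the pointwise inequality above with this choice of $c$ and take expectations to obtain
\begin{equation*}
\frac{\mb E|X|}{c} = \mb E\!\left[\frac{|X|}{c}\right] \leq \mb E\!\left[e^{|X|/c} - 1\right] = \mb E[\psi_1(|X|/c)] \leq 1,
\end{equation*}
which rearranges to $\mb E|X| \leq c = \|X\|_{\psi_1}$, as required.

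The argument is essentially a one-liner once the pointwise inequality $t \leq e^t - 1$ is in hand; there is no real obstacle. The only point worth being slightly careful about is the attainment of the infimum, which is handled by the monotone convergence argument above. If one prefers to avoid showing attainment, an equivalent route is to apply the same chain of inequalities at each $c_n$ in the approximating sequence, obtaining $\mb E|X| \leq c_n$, and then let $n \to \infty$ to conclude.
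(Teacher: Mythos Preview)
Your proof is correct. The paper itself states this proposition without proof (it is listed as a background fact in the appendix), so there is no paper proof to compare against; your argument via the pointwise inequality $t \leq e^{t}-1$ together with the monotone-convergence step to secure attainment of the infimum is the standard and complete justification.
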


Another key concept in non-asymptotic analysis is covering number, which is useful when applying the so-called \emph{chaining} technique. By applying the chaining technique, we can approximate an arbitrary function in certain function spaces with a finite number of functions with controllable errors.
\begin{definition}[covering number]
Let $(S, d_S)$ be a metric space and $A\subset S$ be a subset of $S$. A \emph{$\eta$-covering} of $A$ is a subset $\{a_1, \dots, a_n\}\subset A$, such that for each element $a\in A$, there exists $j\in[n]$ such that $d_S(a, a_j) \leq \eta$. The \emph{$\eta$-covering number} $N(\eta, A, d_S)$ is the cardinality of the smallest $\eta$-covering.
\end{definition}
%%%%%%%%%%%%%%%%%%%%%%%%%%%%%%%%%%%%%%%%
%%%%%%%%%%%%%%%%%%%%%%%%%%%%%%%%%%%%%%%%
%%%%%%%%%%%%%%%%%%%%%%%%%%%%%%%%%%%%%%%%
\section{Proof of Statistical Guarantee}\label{app: stat_rate}
The goal of this section is to prove Theorem~\ref{thm: stat_rate}, which provides a finite sample analysis of the E-NPMLE estimator defined in~\eqref{eqn: E-NPMLE} and~\eqref{eqn: obj_func_traj}. The whole proof consists of several steps.
First, in Appendix~\ref{app: pf_stat_conv_discrete}, we prove the main statistical result, Theorem~\ref{thm: stat_rate}, by using Lemma~\ref{lem: tail bound} which provides a high probability bound of the empirical process. Next, in Appendix~\ref{app: covering}, we provide an estimate of the covering number of the space of Gaussian convoluted path-space distribution. Then in Appendix~\ref{app: pf_of_emp_proc}, we use the estimate of the covering number to prove the key Lemma~\ref{lem: tail bound}. At the end of this section in Appendix~\ref{app: coro_pf}, we prove Theorem~\ref{coro: cts_stat_rate} for estimating the density flow map $t\mapsto R_t^\ast$.
%%%%%%%%%%%%%%%%%%%%%%%%%%%%%%%%%%%%%%%%
%%%%%%%%%%%%%%%%%%%%%%%%%%%%%%%%%%%%%%%%
\subsection{Proof of Theorem~\ref{thm: stat_rate}}\label{app: pf_stat_conv_discrete}
\underline{Step 0: notations.}
For any $R\in\ms P(\Omega)$ and $t\in[0, 1]$, define
\begin{align*}
g_R(t, x) := \sqrt{\frac{\m K_\sigma\ast R_t(x) + \m K_\sigma\ast R_t^\ast(x)}{2\m K_\sigma\ast R_t^\ast(x)}} \geq \frac{1}{\sqrt{2}}.
\end{align*}
Then, we have $g_R(t, x) \geq 1/\sqrt{2}$. For any $R, R'\in\ms P(\Omega)$, it is easy to check that
\begin{align}\label{eqn: connectH}
\begin{aligned}
d_{\rm H}^2\Big(\frac{\m K_\sigma R_{t_j} + \m K_\sigma R_{t_j}^\ast}{2}, \frac{\m K_\sigma R_{t_j}' + \m K_\sigma R_{t_j}^\ast}{2}\Big) &= \|g_R(t_j, \cdot) - g_{R'}(t_j, \cdot)\|_{L^2(\m K_\sigma R^\ast_{t_j})}^2\\
&= \mb E\big[g_R(t_j, X_{t_j}) - g_{R'}(t_j, X_{t_j})\big]^2.
\end{aligned}
\end{align}
Furthermore, we define the $\|\cdot\|_{L_m^2}$-norm by
\begin{align}\label{eqn: newL2norm}
\|g_R - g_{R'}\|_{L^2_m}^2 := \sum_{j=1}^m(t_{j+1}-t_j)\big\|g_{R}(t_j,\cdot) - g_{R'}(t_j, \cdot)\big\|_{L^2(\m K_\sigma\ast R_{t_j}^\ast)}^2,
\end{align}
and $\|\cdot\|_{L_m^\infty}$-norm by
\begin{align*}
\|g_R - g_{R'}\|_{L_m^\infty} := \sup_{t\in\{t_1, \dots, t_m\}, x\in\m X} |g_R(t, x) - g_{R'}(t, x)|.
\end{align*}
It is easy to check that
\begin{align*}
\|g_R - g_{R'}\|_{L_m^\infty} \leq \sup_{t\in\{t_1, \dots, t_m\}, x\in\m X}\frac{|\m K_\sigma\ast R_t(x) - \m K_\sigma\ast R_t'(x)|}{\sqrt{2}} \leq \sqrt{2}C_\sigma.
\end{align*}

\noindent\underline{Step 1: proof of modified basic inequality.}
By the optimality of $\wht R$, we have
\begin{align*}
\sum_{j=1}^m\frac{t_{j+1}-t_j}{N}\sum_{i=1}^N\log\frac{\m K_\sigma\ast \wht R_{t_j}(X_{t_j}^i)}{\m K_\sigma\ast R^\ast_{t_j}(X_{t_j}^i)}
\geq \lambda\big[\tau\KL(\wht R\,\|\,W^\tau) - \tau\KL(R^\ast\,\|\,W^\tau)\big].
\end{align*}
By Jensen's inequality, we have
\begin{align*}
\log g_R(t_j, X_{t_j}^i) = \frac{1}{2}\log\Big(\frac{1}{2}\cdot\frac{\m K_\sigma\ast R_{t_j}(X_{t_j}^i)}{\m K_\sigma\ast R_t^\ast(X_{t_j}^i)} + \frac{1}{2}\Big) \geq \frac{1}{4}\log\frac{\m K_\sigma\ast R_{t_j}(X_{t_j}^i)}{\m K_\sigma\ast R_{t_j}^\ast(X_{t_j}^i)}.
\end{align*}
Therefore, we have
\begin{align*}
-\sum_{j=1}^m\frac{t_{j+1}-t_j}{N}\sum_{i=1}^N\log g_{\wht R}(t_j, X_{t_j}^i)
&\leq -\sum_{j=1}^m\frac{t_{j+1}-t_j}{N}\sum_{i=1}^N \frac{1}{4}\log\frac{\m K_\sigma\ast\wht R_{t_j}(X_{t_j}^i)}{\m K_\sigma\ast R_{t_j}^\ast(X_{t_j}^i)}\\
&\leq \frac{\lambda\tau}{4}\big[\KL(R^\ast\,\|\,W^\tau) - \KL(\wht R\,\|\,W^\tau)\big].
\end{align*}

\noindent\underline{Step 2: convergence of the M-estimator.} 
Note that for any $R$, we have
\begin{align*}
\sum_{j=1}^m\sum_{i=1}^N\frac{t_{j+1}-t_j}{N}\mb E\log g_R(t_j, X_{t_j}^i)
&= -\sum_{j=1}^m (t_{j+1}-t_j) \KL\Big(\m K_\sigma\ast R_{t_j}^\ast\,\Big\|\, \frac{\m K_\sigma\ast R_{t_j}^\ast + \m K_\sigma\ast R_{t_j}}{2}\Big)\\
&\leq -\sum_{j=1}^m (t_{j+1}-t_j) d_{\rm H}^2\Big(\m K_\sigma\ast R_{t_j}^\ast\,\Big\|\, \frac{\m K_\sigma\ast R_{t_j}^\ast + \m K_\sigma\ast R_{t_j}}{2}\Big)\\
&= -\|g_R - g_{R^\ast}\|_{L_m^2}^2.
\end{align*}
Recall that we have the modified basic inequality
\begin{align*}
-\sum_{j=1}^m\frac{t_{j+1}-t_j}{N}\sum_{i=1}^N\log g_{\wht R}(t_j, X_{t_j}^i)
&\leq \frac{\lambda\tau}{4}\big[\KL(R^\ast\,\|\,W^\tau) - \KL(\wht R\,\|\,W^\tau)\big].
\end{align*}

\underline{\underline{Case 2.1: $\tau\KL(\wht R\,\|\, W^\tau) \leq 2\tau\KL(R^\ast\,\|\,W^\tau)$.}} In this case, we have
\begin{align*}
-\sum_{j=1}^m\frac{t_{j+1}-t_j}{N}\sum_{i=1}^N\log g_{\wht R}(t_j, X_{t_j}^i)
&\leq \frac{\lambda\tau}{4}\KL(\wht R^\ast\,\|\,W^\tau).
\end{align*}
Therefore, we have
\begin{align*}
\sum_{j=1}^m \frac{t_{j+1}-t_j}{N}\sum_{i=1}^N\big[\log g_{\wht R}(t_j, X_{t_j}^i) - \mb E\log g_{\wht R}(t_j, X_{t_j}^i)\big]
\geq -\frac{\lambda\tau}{4}\KL(R^\ast\,\|\,W^\tau) + \|g_{\wht R} - g_{R^\ast}\|_{L_m^2}^2.
\end{align*}
From Lemma~\ref{lem: tail bound}, we know that
\begin{align*}
C_{\rm HP}\delta_{N, m}\big(\delta_{N, m} + \|g_{\wht R} - g_{R^\ast}\|_{L_m^2}\big)
\geq \sum_{j=1}^m \frac{t_{j+1}-t_j}{N}\sum_{i=1}^N\big[\log g_{\wht R}(t_j, X_{t_j}^i) - \mb E\log g_{\wht R}(t_j, X_{t_j}^i)\big]
\end{align*}
holds with probability at least $\mb P(\ms A)$. When, the above inequality holds, we have
\begin{align*}
-\frac{\lambda\tau}{4}\KL(R^\ast\,\|\,W^\tau) + \|g_{\wht R} - g_{R^\ast}\|_{L_m^2}^2
\leq C_{\rm HP}\delta_{N, m}\big(\delta_{N, m} + \|g_{\wht R} - g_{R^\ast}\|_{L_m^2}\big),
\end{align*}
which implies
\begin{align*}
\|g_{\wht R} - g_{R^\ast}\|_{L_m^2} \leq \big(C_{\rm HP} + \sqrt{C_{\rm HP}}\big)\delta_{N, m} + \frac{1}{2}\sqrt{\lambda\tau\KL(R^\ast\,\|\,W^\tau)}.
\end{align*}

\underline{\underline{Case 2.2: $\tau\KL(\wht R\,\|\,W^\tau) \geq 2\tau\KL(R^\ast\,\|\,W^\tau)$.}}
Take $\varepsilon = \frac{\tau\KL(\wht R\,\|\,W^\tau) - \frac{3}{2}\tau\KL( R^\ast\,\|\,W^\tau)}{\tau\KL(\wht R\,\|\,W^\tau) - \tau\KL(R^\ast\,\|\,W^\tau)}\in(0, 1)$, and let $\wt R = (1-\varepsilon)\wht R + \varepsilon R^\ast \in\ms P(\Omega)$. By the convexity of KL divergence, we have
\begin{align*}
\KL(\wt R\,\|\,W^\tau)
&\leq (1-\varepsilon)\KL(\wht R\,\|\,W^\tau) + \varepsilon\KL(R^\ast\,\|\,W^\tau)
= \frac{3}{2}\KL(R^\ast\,\|\,W^\tau).
\end{align*}
Similarly, we have
\begin{align*}
&\quad\,\sum_{j=1}^m\frac{t_{j+1}-t_j}{N\lambda}\sum_{i=1}^N\log\frac{\m K_\sigma\ast \wt R_{t_j}(X_{t_j}^i)}{\m K_\sigma\ast R^\ast_{t_j}(X_{t_j}^i)}\\
&\geq (1-\varepsilon)\sum_{j=1}^m\frac{t_{j+1}-t_j}{N\lambda}\sum_{i=1}^N\log\frac{\m K_\sigma\ast \wht R_{t_j}(X_{t_j}^i)}{\m K_\sigma\ast R^\ast_{t_j}(X_{t_j}^i)} + \varepsilon \sum_{j=1}^m\frac{t_{j+1}-t_j}{N\lambda}\sum_{i=1}^N\log\frac{\m K_\sigma\ast  R^\ast_{t_j}(X_{t_j}^i)}{\m K_\sigma\ast R^\ast_{t_j}(X_{t_j}^i)}\\
&\geq (1-\varepsilon)\lambda\big[\tau\KL(\wht R\,\|\,W^\tau) - \tau\KL(R^\ast\,\|\,W^\tau)\big]\\
&= \frac{\lambda}{2}\tau\KL(R^\ast\,\|\,W^\tau).
\end{align*}
Therefore, we have
\begin{align*}
-\sum_{j=1}^m \frac{t_{j+1}-t_j}{N}\sum_{i=1}^N\log g_{\wt R}(t_j, X_{t_j}^i) 
\leq -\frac{1}{4}\sum_{j=1}^m \frac{t_{j+1}-t_j}{N}\sum_{i=1}^N\log\frac{\m K_\sigma\ast \wt R_{t_j}(X_{t_j}^i)}{\m K_\sigma\ast R_{t_j}^\ast(X_{t_j}^i)} \leq -\frac{\lambda}{8}\tau\KL(R^\ast\,\|\,W^\tau).
\end{align*}
In this case, we have
\begin{align*}
\sum_{j=1}^m \sum_{i=1}^N\frac{t_{j+1}-t_j}{N} \big[\log g_{\wt R}(t_j, X_{t_j}^i) -\mb E\log g_{\wt R}(t_j, X_{t_j}^i)\big] 
\geq \frac{\lambda}{8}\tau\KL(R^\ast\,\|\,W^\tau) + \|g_{\wt R} - g_{R^\ast}\|_{L_m^2}^2.
\end{align*}
Note that when $\lambda \geq \frac{2C_{\rm HP}^2\delta_{N,m}^2 + 8C_{\rm HP}\delta^2}{\tau\KL(R^\ast\,\|\,W^\tau)}$, it always holds that
\begin{align*}
\frac{\lambda}{8}\tau\KL(R^\ast\,\|\,W^\tau) + \|g_{\wt R} - g_{R^\ast}\|_{L_m^2}^2 \geq C_{\rm HP}\delta_{N,m}\big(\|g_{\wt R} - g_{R^\ast}\|_{L_m^2}\big).
\end{align*}
Therefore, we have
\begin{align*}
\sum_{j=1}^m \sum_{i=1}^N\frac{t_{j+1}-t_j}{N} \big[\log g_{\wt R}(t_j, X_{t_j}^i) -\mb E\log g_{\wt R}(t_j, X_{t_j}^i)\big] 
\geq C_{\rm HP}\delta_{N,m}\big(\|g_{\wt R} - g_{R^\ast}\|_{L_m^2}\big),
\end{align*}
which violates $\ms A$. Therefore, this case happens with probability at most $\mb P(\ms A)$.
%\begin{align*}
%1 - \mb P\big[\ms A\big(1.5\tau\KL(R^\ast\,\|\,W^\tau)\big)\big] \leq 1 - \mb P\big[\ms A\big(2\tau\KL(R^\ast\,\|\,W^\tau)\big)\big].
%\end{align*}

To sum up, we have shown that when $\lambda \geq \frac{2C_{\rm HP}^2\delta_{N,m}^2 + 8C_{\rm HP}\delta^2}{\tau\KL(R^\ast\,\|\,W^\tau)}$, it holds
\begin{align*}
\|g_{\wht R} - g_{R^\ast}\|_{L_m^2} \leq \big(C_{\rm HP} + \sqrt{C_{\rm HP}}\big)\delta_{N, m} + \frac{1}{2}\sqrt{\lambda\tau\KL(R^\ast\,\|\,W^\tau)}
\end{align*}
with probability at least $1 - \mb P(\ms A)$. 
Therefore, we have
\begin{align*}
\sqrt{\sum_{j=1}^m(t_{j+1}-t_j)d_{\rm H}^2\big(\m K_\sigma\ast \wht R_{t_j}, \m K_\sigma\ast R_{t_j}^\ast\big)}
\leq (2+\sqrt{2})\big(C_{\rm HP} + \sqrt{C_{\rm HP}}\big)\delta_{N, m} + \frac{2+\sqrt{2}}{2}\sqrt{\lambda\tau\KL(R^\ast\,\|\,W^\tau)}
\end{align*}

\noindent\underline{Step 3: decide the order of the statistical radius $\delta_{N, m}$.}
Recall that $\delta_{N, m}$ satisfies
\begin{align*}
4\sqrt{\frac{2\Delta_m}{N}}\cdot C_{\rm MI}\Big[\sqrt{\frac{\Delta_m}{N}}\cdot\frac{\sqrt{m}}{\delta_{N, m}} + 1\Big]\cdot\min\{\delta_{N,m}\sqrt{m}, 2\sqrt{2}\}\cdot\big[\max\big\{\log \delta_{N, m}^{-1}, \log m\big\}\big]^{d+1} \lesssim \delta_{N,m}^2.
\end{align*}
We will only focus on finding $\delta_{N,m}$ with the possibly smallest order of $N$ and $m$, or equivalently, we want the order of $N$ and $m$ on the both sides match. In the following argument, we use $\approx$ for the meaning of same order.

\underline{\underline{Case 3.1: $\delta_{N, m}\sqrt{m}\leq 2\sqrt{2}$.}}
In this case, we have
\begin{align*}
\lhs
\approx \Big[\frac{m\Delta_m}{N} + \delta_{N,m}\sqrt{\frac{m\Delta_m}{N}}\Big]\Big(\log\frac{1}{\delta_{N,m}}\Big)^{d+1}
\approx \delta_{N,m}^2 = \rhs.
\end{align*}
Therefore, we have
\begin{align*}
\delta_{N,m} \approx \sqrt{\frac{m\Delta_m}{N}}\Big(\log\frac{N}{m\Delta_m}\Big)^{\frac{d+1}{2}}.
\end{align*}
When taking $\Delta_m \approx m^{-1}$, such as when all time points have equal separation, the above result implies $\delta_{N, m} = O\big(\frac{(\log N)^{\frac{d+1}{2}}}{\sqrt{N}}\big)$. Note that this case only happens when $N\gtrsim m$.

\underline{\underline{Case 3.2: $\delta_{N, m}\sqrt{m} > 2\sqrt{2}$.}}
In this case, we have
\begin{align*}
\lhs
\approx \Big[\frac{\Delta_m}{N}\cdot\frac{\sqrt{m}}{\delta_{N,m}} + \sqrt{\frac{\Delta_m}{N}}\Big](\log{m})^{d+1}
\approx \delta_{N,m}^2 = \rhs.
\end{align*}
In this case, we have
\begin{align*}
\delta_{N,m} \approx \max \Big\{\frac{\Delta_m^{1/3}m^{1/6}}{N^{1/3}}, \frac{\Delta_m^{1/4}}{N^{1/4}}\Big\}(\log m)^{\frac{d+1}{2}}.
\end{align*}
Again, when $\Delta_m \approx m^{-1}$, the above result implies
\begin{align*}
\delta_{N, m} \approx \max\Big\{\frac{1}{(Nm)^{\frac{1}{4}}}, \frac{1}{N^{\frac{1}{3}}m^\frac{1}{6}}\Big\}(\log m)^{\frac{d+1}{2}}.
\end{align*}
Note that this only happens when $N\lesssim m$. So, we finally have $\delta_{N, m} = O\big(\frac{(\log m)^{\frac{d+1}{2}}}{N^{\frac{1}{3}}m^{\frac{1}{6}}}\big)$.

%%%%%%%%%%%%%%%%%%%%%%%%%%%%%%%%%%%%%%%%
%%%%%%%%%%%%%%%%%%%%%%%%%%%%%%%%%%%%%%%%
\subsection{Control of covering number}\label{app: covering}
In the following proposition, we derive an upper bound of the covering number of Gaussian convoluted path measures with bounded KL divergence with respect to $W^\tau$. 
\begin{proposition}\label{prop: covering_num}
Let $\m X = \mb T^d = [-\pi, \pi]^d$ and $K$ be a subset of $[0, 1]$, we have
\begin{align*}
\log N\big(\eta,\big\{\m K_\sigma R_\cdot(\cdot): R\in\ms P(\Omega), \tau\KL(R\,\|\,W^\tau)\leq 2E\big\} , \|\cdot\|_{L^\infty(K\times\m X)}\big) \lesssim \min\big\{\eta^{-2}, |K|\big\}\cdot\Big(\log\frac{1}{\eta}\Big)^{d+1}.
\end{align*}
\end{proposition}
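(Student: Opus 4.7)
The plan is to decompose the $L^\infty(K \times \m X)$ cover into a spatial cover in $x$ and a temporal cover in $t$, and then exhibit two different constructions whose log-cardinalities correspond respectively to the two branches $\eta^{-2}$ and $|K|$ of the stated minimum.

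For the spatial ingredient, I would show that for any fixed $t$, the class $\m G := \{\m K_\sigma \ast \nu : \nu \in \ms P(\m X)\}$ satisfies
\begin{equation*}
\log N\big(\eta, \m G, \|\cdot\|_{L^\infty(\m X)}\big) \lesssim (\log(1/\eta))^{d+1}.
\end{equation*}
The key observation is that every derivative $\|\partial^\alpha(\m K_\sigma \ast \nu)\|_{L^\infty} \leq \|\partial^\alpha \m K_\sigma\|_{L^\infty}$ enjoys Gaussian--Hermite growth independently of $\nu$, embedding $\m G$ into a bounded ball of the Gaussian RKHS on $\mb T^d$. The target bound then follows from the classical $L^\infty$-entropy estimate for Gaussian RKHS balls on compact sets; alternatively, one can derive it directly on the torus by truncating the Fourier series $\widehat{\m K_\sigma \ast \nu}(k) = \widehat{\m K}_\sigma(k)\widehat{\nu}(k)$ at $|k| \lesssim \sigma^{-1}\sqrt{\log(1/\eta)}$ (so that the tail contributes at most $\eta/2$) and covering each of the $O((\log 1/\eta)^{d/2})$ retained coefficients inside $[-e^{-\sigma^2|k|^2/2}, e^{-\sigma^2|k|^2/2}]$ at precision $\eta$ divided by the number of retained modes.

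For the temporal ingredient, I would show that the KL constraint $\tau\KL(R \| W^\tau) \leq 2E$ yields a uniform H\"older modulus of continuity
\begin{equation*}
\sup_{x \in \m X} \big|\m K_\sigma \ast R_s(x) - \m K_\sigma \ast R_t(x)\big| \leq L\,|s-t|^{1/2}, \qquad L = L(\sigma, \tau, E, d).
\end{equation*}
Since $\|\nabla \m K_\sigma\|_{L^\infty} \leq C_\sigma/\sigma$, this reduces to controlling $\W_1(R_s, R_t) \leq \sqrt{\mb E_R \|X_s - X_t\|^2}$ by $|s-t|^{1/2}$. The Donsker--Varadhan variational inequality $\int f\,dR \leq \log\int e^f\,dW^\tau + \KL(R\|W^\tau)$, applied with $f = \lambda\|X_s - X_t\|^2$ and $\lambda \asymp (\tau|s-t|)^{-1}$, combined with the Gaussian moment generating function of the Brownian increment $X_s - X_t$ under $W^\tau$ (lifting to $\mb R^d$ to handle the torus wrapping) yields $\mb E_R\|X_s - X_t\|^2 \lesssim (d + \KL)\,|s-t|$, which is bounded by a constant times $|s-t|$ under our KL assumption.

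With these two ingredients the two covers are immediate. For the $|K|$ branch, I apply the spatial cover independently at each of the $|K|$ elements of $K$ (read as cardinality, as in the downstream use in Lemma~\ref{lem: tail bound} with $K = \{t_1,\ldots,t_m\}$) and take a product cover of log-cardinality $|K|\cdot(\log(1/\eta))^{d+1}$. For the $\eta^{-2}$ branch, I discretize $[0,1]$ on a grid of spacing $(\eta/(2L))^2$ so that the H\"older estimate ensures any restriction to the grid loses at most $\eta/2$ in sup-norm, and then apply the spatial cover at precision $\eta/2$ to each of the $O(\eta^{-2})$ grid time points, giving log-cardinality $\lesssim \eta^{-2}(\log(1/\eta))^{d+1}$. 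Taking the minimum of the two proves the proposition. The principal obstacle is the temporal H\"older estimate: the constant $L$ must be uniform over the entire KL ball and must not degenerate as $|s-t|\to 0$, which requires controlling the torus geometry so that the Gaussian MGF bound transfers cleanly from $\mb R^d$; the spatial RKHS entropy is, by contrast, essentially textbook and is the easier half.
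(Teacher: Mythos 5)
Your proposal is correct and arrives at a valid proof, but it replaces both key ingredients of the paper's argument with different machinery. For the per--time-point spatial cover, the paper discretizes probability measures, not their Gaussian convolutions: it matches moments up to degree $2M$, invokes Carath\'eodory's theorem to replace any $\nu$ by a discrete measure with $l=(2M+1)^d+1$ atoms, covers the simplex $\Delta^l$ and the atom locations in $\mb T^d$, and then shows (the content of Lemma~\ref{lem: control_J1}) that the moment-matching error decays like $(C\log M/M)^{(M+1)/2}$, forcing $M\asymp\log\eta^{-1}$ and producing the factor $(\log\eta^{-1})^{d+1}$ through $l\log\eta^{-1}$. Your Fourier-truncation argument is at least as sharp---indeed, retaining the $O((\log\eta^{-1})^{d/2})$ modes with $|k|\lesssim\sigma^{-1}\sqrt{\log\eta^{-1}}$ and discretizing each at precision $\eta$ divided by the mode count gives $(\log\eta^{-1})^{d/2+1}$, an exponent improvement; and it is considerably more transparent because it exploits the pointwise Fourier bound $|\widehat{\m K_\sigma\ast\nu}(k)|\le e^{-\sigma^2|k|^2/2}$ directly rather than routing through a moment problem. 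What the paper's approach buys is independence from spectral structure: the Carath\'eodory construction needs only a quantitative Weierstrass-type bound on how well polynomials approximate $\m K_\sigma$, so it adapts more readily to non-Gaussian or noncompactly-analytic kernels. On the temporal side, the paper simply cites the uniform $1/2$-H\"older modulus for $t\mapsto\m K_\sigma\ast R_t$ from Proposition 2.12 of Lavenant et al.; your Donsker--Varadhan derivation, applying $\int f\,dR\le\log\int e^f\,dW^\tau+\KL(R\|W^\tau)$ with $f=\lambda\|X_s-X_t\|^2$ and $\lambda\asymp(\tau|s-t|)^{-1}$ against the $\chi^2_d$ MGF of the Brownian increment, is a correct self-contained proof of that same fact (the torus lifting you flag is exactly the point handled in the cited reference). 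The final assembly---apply the spatial cover at the $|K|$ observed times or at an $\eta^2$-spaced grid, whichever is smaller, and use the H\"older bound to transfer the grid cover to all of $K$---is identical in both proofs.
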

\begin{proof}
%\textcolor{red}{Is this proof inspired by some work? Slow rate due to space-time effect? Space rate is what? $N^{-1/2}$}
\underline{Step 1: construction of projection map.} Let $M\in\mb Z_+$ be an integer to be decided later, and $T_1 < \dots < T_{N_I}$ be a $r_I$-covering of $K\subset I = [0, 1]$ (not necessarily in $K$). For any $j\in[N_I]$, define the map $I_M^j:\ms P(\Omega)\to\mb R^{(2M+1)^d}$ by
\begin{align*}
I_M^j(R) := \bigg(\int_{\mb T^d}x_1^{k_1}\cdots x_d^{k_d}\,\dd R_{T_j}, 0\leq k_1,\dots, k_d\leq 2M\bigg).
\end{align*}
Then, the set $E_M^j:= \{I_M^j(R): R\in\ms P(\Omega)\}$ is a convex subset in $\mb R^{(2M+1)^d}$; moreover, it is a convex hall of
\begin{align*}
\Big\{(x_1^{k_1}\cdots x_d^{k_d}, 0\leq k_1,\dots, k_d\leq 2M): (x_1, \dots, x_d)\in\mb T^d\Big\} \subset \mb R^{(2M+1)^d}.
\end{align*}
By Caratheodory's theorem, every element in $E_M^j$ is a convex combination of at most $l:= (2M+1)^d + 1$ elements of the above set. Therefore, we know $E_M^j$ is a subset of the set
\begin{align*}
D_M := \bigg\{\bigg(\int_{\mb T^d}x_1^{k_1}\cdots x_m^{k_m}\,&\dd R_{T_j}, 0\leq k_1,\dots, k_d\leq 2M\bigg)\\
&: R_{t_j}\,\,\mx{is a discrete probability measure on $\ms P(\mb T^d)$ with at most $l$ atoms}\bigg\}.
\end{align*}
Then, for every $R\in\ms P(\Omega)$ and $j\in[N_I]$, we can define a discrete probability measure
\begin{align*}
\mu_R^j = \proj_{D_M}^j(R) := \sum_{s=1}^l w_j^s\delta_{x^{j, s}} \in\ms P(\mb T^d),
\end{align*}
with $w_j = (w_j^1, \dots, w_j^l)\in\Delta^l$ (probability simplex) and $x^{j,s} = (x_1^{j, s},\dots, x_d^{j, s})\in\mb T^d$, such that
\begin{align*}
\int_{\mb T^d}x_1^{k_1}\cdots x_d^{k_d}\,\dd R_{T_j} = \int_{\mb T^d}x_1^{k_1}\cdots x_d^{k_d}\,\dd\mu_R^j,\quad\forall\,0\leq k_1,\dots, k_d\leq 2M.
\end{align*}

Now, let $\m A_{\mb T^d} = \{v_1, \dots, v_{N_{\mb T^d}}\}$ be a $r_{\mb T^d}$-covering for $\mb T^d$ and let $\m A_{\Delta^l} = \{\beta_1, \dots, \beta_{N_{\Delta^l}}\}$ be a $r_{\Delta^l}$-covering of the space of $l$-dimensional probability vectors $\Delta^l$ under $\ell^1$-norm. Now, define another two projection maps
\begin{align*}
\proj_{\mb T^d}(x) = \argmin_{v\in\m A_{\mb T^d}}d_{\mb T^d}(x, v),
\quad\mx{and}\quad
\proj_{\Delta^l}(w) = \argmin_{\beta\in\m A_{\Delta^l}}\|\beta-w\|_{\ell^1}.
\end{align*}
So, for any $j\in[N_I]$, there is a sequence of mapping
\begin{align*}
R \mapsto 
\begin{pmatrix}
\mu_R^1\\ \vdots\\ \mu_R^{N_I}
\end{pmatrix}
= \begin{pmatrix}
\sum_{s=1}^l w_1^s\delta_{x^{1,s}}\\
\vdots\\
\sum_{s=1}^l w_{N_I}^s\delta_{x^{N_I, s}}
\end{pmatrix}
\mapsto 
\begin{pmatrix}
\proj_{\Delta^l}(w_1) & \proj_{\mb T^d}(x^{1,1}) & \dots & \proj_{\mb T^d}(x^{1,l})\\
\vdots & \vdots & \ddots & \vdots\\
\proj_{\Delta^l}(w_{N_I}) & \proj_{\mb T^d}(x^{N_I, 1}) & \dots & \proj_{\mb T^d}(x^{N_I, l})
\end{pmatrix}
=: \proj(R).
\end{align*}
Note that, for each row, there exists at most $N_{\Delta^l}\binom{N_{\mb T^d} + l -1}{l}$ different possible values. Therefore, $\proj(R)$ can have at most $\big[N_{\Delta^l}\binom{N_{\mb T^d}+l-1}{l}\big]^{N_I}$ different matrices.

\vspace{0.5em}
\noindent\underline{Step 2: control the $L^\infty$ distance of different path measures with same projection.} Now, for any $R, \wt R\in\ms P(\Omega)$ such that $\tau\KL(R\,\|\,W^\tau)\leq S$, $\tau\KL(\wt R\,\|\,W^\tau)\leq S$, and $\proj(R) = \proj(\wt R)$, let us control $\|\m K_\sigma\ast R_t(x) - \m K_\sigma\ast\wt R_t(x)\|_{L^\infty(K\times\mb T^d)}$. Note that for every $j\in[N_I]$, we have
\begin{align}
&\quad\,\lvert\m K_\sigma\ast R_t(x) - \m K_\sigma\ast\wt R_t(x)\rvert\notag\\
&\leq \lvert\m K_\sigma\ast R_{T_j}(x) - \m K_\sigma\ast\wt R_{T_j}(x)\rvert + \lvert\m K_\sigma\ast R_t(x) - \m K_\sigma\ast R_{T_j}(x)\rvert + \lvert\m K_\sigma\ast\wt R_t(x) - \m K_\sigma\ast\wt R_{T_j}(x)\rvert\notag\\
&\leq \lvert\m K_\sigma\ast R_{T_j}(x) - \m K_\sigma\ast\wt R_{T_j}(x)\rvert + 2C_{\rm Hol}\sqrt{t - T_j} \label{eqn: gauss_conv_diff_bd}
\end{align}
for some constant $C_{\rm Hol} = C_{\rm Hol}(\sigma, \tau, S)$ by~\citep[Proposition 2.12,][]{lavenant2024toward}. Now, assume that
\begin{align*}
\proj(R) = \proj(\wt R) =
\begin{pmatrix}
\beta^1 & v^{1, 1} & \dots & v^{1,l}\\
\vdots & \vdots & \ddots & \vdots\\
\beta^{N_I} & v^{N_I, 1} & \dots & v^{N_I, l}
\end{pmatrix}.
\end{align*}
We can then define the reconstructed probability measure
\begin{align*}
R_j^{\rm rec} := \sum_{s=1}^l \beta^{j,s}\delta_{v^{j, s}},
\end{align*}
where $\beta^j = (\beta^{j, 1}, \dots, \beta^{j, l})\in\Delta^l$. Then, we have
\begin{align*}
\big\lvert \m K_\sigma\ast R_{T_j}(x) - \m K_\sigma\ast \wt R_{T_j}(x)\big\rvert
\leq \big\lvert \m K_\sigma\ast R_{T_j}(x) - \m K_\sigma\ast R^{\rm rec}_{T_j}(x)\big\rvert + \big\lvert \m K_\sigma\ast\wt R_{T_j}(x) - \m K_\sigma\ast \wt R^{\rm rec}_{T_j}(x)\big\rvert.
\end{align*}
To control the first term, we know that the probability $\mu_R^j = \sum_{s=1}^l w_j^s\delta_{x^{j, s}}\in\ms P(\mb R^d)$ satisfies
\begin{align*}
\int_{\mb T^d}x_1^{k_1}\cdots x_d^{k_d}\,\dd R_{T_j} = \int_{\mb T^d}x_1^{k_1}\cdots x_d^{k_d}\,\dd\mu_R^j = \sum_{s=1}^lw_j^s(x_1^{j, s})^{k_1}\dots (x_d^{j, s})^{k_d},
\quad\forall\, 0\leq k_1,\dots, k_d\leq 2M,
\end{align*}
and
\begin{align}\label{eqn: covering_prop}
d_{\mb T^d}(x^{j, s}, v^{j, s}) \leq r_{\mb T^d},
\qquad \|w_j - \beta^{j}\|_{\ell^1}\leq r_{\Delta^l}.
\end{align}
Let $p_\sigma(x) \propto \sum_{k\in\mb Z^d}e^{-\frac{\|x-2\pi k\|^2}{2\sigma^2}}$ be the density function of Gaussian distribution on $\mb T^d$~\citep{berg1976potential}. Note that $p_\sigma(x)$ is well defined on $\mb R^d$ since it is periodic. Therefore, we have
\begin{align*}
\big\lvert\m K_\sigma\ast R_{T_j}(x) - \m K_\sigma\ast R_j^{\rm rec}(x)\big\rvert
&= \bigg\lvert \int_{\mb T^d} \sum_{k\in\mb Z^d}e^{-\frac{\|x - y - 2\pi k\|^2}{2\sigma^2}}\big[R_{T_j}(y) - R_j^{\rm rec}(y)\big]\,\vol(\dd y)\bigg\rvert\\
&\leq \bigg\lvert \int_{\mb T^d} \sum_{k\in\mb Z^d}e^{-\frac{\|x - y - 2\pi k\|^2}{2\sigma^2}}R_{T_j}(y)\,\vol(\dd y) - \sum_{s=1}^l w_j^s\sum_{k\in\mb Z^d}e^{-\frac{\|x - x^{j, s} - 2\pi k\|^2}{2\sigma^2}}\bigg\rvert\\
&\qquad + \bigg\lvert \sum_{s=1}^l w_j^s\sum_{k\in\mb Z^d}e^{-\frac{\|x - x^{j, s} - 2\pi k\|^2}{2\sigma^2}} - \sum_{s=1}^l w_j^s\sum_{k\in\mb Z^d}e^{-\frac{\|x - v^{j, s} - 2\pi k\|^2}{2\sigma^2}}\bigg\rvert\\
&\qquad + \bigg\lvert \sum_{s=1}^l w_j^s\sum_{k\in\mb Z^d}e^{-\frac{\|x - v^{j, s} - 2\pi k\|^2}{2\sigma^2}} - \sum_{s=1}^l \beta^{j, s}\sum_{k\in\mb Z^d}e^{-\frac{\|x - v^{j, s} - 2\pi k\|^2}{2\sigma^2}}\bigg\rvert\\
&=: J_1 + J_2 + J_3.
\end{align*}
To control $J_3$, note that
\begin{align*}
J_3 \leq \sum_{s=1}^l|w_j^s - \beta^{j, s}|\sum_{k\in\mb Z^d}e^{-\frac{\|x - v^{j, s}-2\pi_k\|^2}{2\sigma^2}} 
\stackrel{\ri}{\lesssim}\sum_{s=1}^l|w_j^s - \beta^{j, s}|
\stackrel{\rii}{\leq}r_{\Delta^l}.
\end{align*}
Here, (i) is due to the arguments in Section 3.1 of~\citep{berg1976potential}, and (ii) is due to~\eqref{eqn: covering_prop}.
To control $J_2$, we have
\begin{align*}
J_2 &\leq \sum_{s=1}^l w_j^s\big\lvert p_\sigma(x - x^{j, s}) - p_\sigma(x - v^{j, s})\big\rvert
\leq \sum_{s=1}^l w_j^s \|x^{j, s} - v^{j, s}\|_{\ell^2}\cdot\sup_{x\in\mb R^d}\|\nabla p_\sigma(x)\|\\
&\stackrel{\ri}{\lesssim} \sum_{s=1}^l w_j^s\|x^{j, s} - v^{j, s}\|_{\ell^2}
\stackrel{\rii}{\lesssim} r_{\mb T^d}.
\end{align*}
Here, (i) is because $\nabla p_\sigma(x)$ is periodic on $\mb R^d$ and smooth on the compact space $\mb T^d$, indicating that $\|\nabla p_\sigma(x)\|$ is uniformly bounded; (ii) is due to~\eqref{eqn: covering_prop} again. 
To control $J_1$, we need the following lemma, of which the proof is quite involved and thus is postponed to Appendix~\ref{sec: tech_lem}.
\begin{lemma}\label{lem: control_J1}
There is a constant $C_6 = C_6(d, \sigma) > 0$, such that
\begin{align*}
J_1\leq 2\Big(\frac{C_6e^2\log M}{M+1}\Big)^{\frac{M+1}{2}}.
\end{align*}
\end{lemma}
Now, combining all pieces above yields
\begin{align*}
\big\lvert\m K_\sigma\ast R_{T_j}(x) - \m K_\sigma\ast R_j^{\rm rec}(x)\big\rvert
\lesssim \Big(\frac{C_6e^2\log M}{M+1}\Big)^{\frac{M+1}{2}} + r_{\mb T^d} + r_{\Delta^l}.
\end{align*}
The same upper bound also holds for $\big\lvert\m K_\sigma\ast \wt R_{T_j}(x) - \m K_\sigma\ast R_j^{\rm rec}(x)\big\rvert$. Thus, by~\eqref{eqn: gauss_conv_diff_bd} we have that
\begin{align*}
\lvert\m K_\sigma\ast R_t(x) - \m K_\sigma\ast\wt R_t(x)\rvert
\lesssim \Big(\frac{C_6e^2\log M}{M+1}\Big)^{\frac{M+1}{2}} + r_{\mb T^d} + r_{\Delta^l} + \sqrt{t - T_j}
\end{align*}
holds for all $j\in[N_I]$. Taking the minimum of $j\in[N_i]$ implies
\begin{align*}
\lvert\m K_\sigma\ast R_t(x) - \m K_\sigma\ast\wt R_t(x)\rvert
\lesssim \Big(\frac{C_6e^2\log M}{M+1}\Big)^{\frac{M+1}{2}} + r_{\mb T^d} + r_{\Delta^l} + \min_{j\in[N_I]}\sqrt{t - T_j}.
\end{align*}

\vspace{0.5em}
\noindent\underline{Step 3: Bound covering number.} To derive the upper bound of the $\eta$-covering number, we need
\begin{align*}
\Big(\frac{C_6e^2\log M}{M+1}\Big)^{\frac{M+1}{2}} \lesssim \eta,
\quad r_{\mb T^d}\lesssim \eta,
\quad r_{\Delta^l} \lesssim \eta,
\quad\mx{and}\quad \min_{j\in[N_I]}\sqrt{t - T_j}\lesssim\eta.
\end{align*}
This implies $M = O(\log\eta^{-1})$, and $N_I = O\big(\min\{\eta^{-2}, |K|\}\big)$, where $|K|$ is the cardinality of the set $K\subset[0, 1]$. Note that the $r_{\mb T^d}$-covering number of $\mb T^d$ with respect to $\ell^2$-norm is bounded by
\begin{align*}
\log N_{\mb T^d} \leq d\log\Big(1 + \frac{\pi\sqrt{d}}{r_{\mb T^d}}\Big)
\lesssim d\log\frac{d}{\eta},
\end{align*}
and the $r_{\Delta^l}$-covering number of $\Delta^l$ with respect to $\ell^1$-norm is bounded by
\begin{align*}
\log N_{\Delta^l} 
\stackrel{\ri}{\leq} l\log\Big(1 + \frac{2}{r_{\Delta^l}}\Big)
\lesssim l\log\frac{1}{\eta}
\stackrel{\rii}{\lesssim}\Big(\log\frac{1}{\eta}\Big)^{d+1}.
\end{align*}
Here, (i) is derived by Example 5.8 in~\citep{wainwright2019high}, and (ii) is due to the fact that $l = (2M+1)^d + 1 = O\big([\log\eta^{-1}]^d\big)$. Also note that
\begin{align*}
\log\binom{N_{\mb T^d} + l -1}{l}
\leq \log\Big[\frac{e(N_{\mb T^d} + l - 1)}{l}\Big]^l
\lesssim l\log N_{\mb T^d} \lesssim \Big(\log\frac{1}{\eta}\Big)^{d+1}.
\end{align*}
So, the logarithm number of possible outcomes of $\proj(R)$ is at most
\begin{align*}
N_I\log N_{\Delta^l} + N_I\log\binom{N_{\mb T^d}+l-1}{l} \lesssim \min\Big\{\frac{1}{\eta^2}, |K|\Big\}\cdot\Big(\log\frac{1}{\eta}\Big)^{d+1}.
\end{align*}
\end{proof}
\subsection{Proof of Lemma~\ref{lem: tail bound}}\label{app: pf_of_emp_proc}
\underline{Step 1: control the tail of the process}
\begin{align*}
S_{N, m}(r) := &\sup_{R\in\m GR(r)}\bigg|\sum_{j=1}^m\sum_{i=1}^N \frac{t_{j+1}-t_j}{N}\big[\log g_R(t_j, X_{t_j}^i) - \mb E\log g_R(t_j, X_{t_j}^i)\big]\bigg|,
\end{align*}
where $\m GR(r) \subset\ms P(\Omega)$ consists of all path-space distribution $R$ such that
\begin{align*}
\|g_R - g_{R^\ast}\|_{L^2_m}^2 = \sum_{j=1}^m(t_{j+1}-t_j)d_{\rm H}^2\Big(\frac{\m K_\sigma\ast R_{t_j} + \m K_\sigma\ast R^\ast_{t_j}}{2}, \m K_\sigma\ast R_{t_j}\Big)\leq r^2 \quad\mx{and}\quad \tau\KL(R\,\|\,W^\tau)\leq 2E.
\end{align*}
Note that by mean-value theorem, we have
\begin{align}\label{eqn: LipLoss}
\big|\log g_R(t, x) - \log g_{R'}(t, x)\big| &\leq \frac{|g_R(t, x) - g_{R'}(t, x)|}{\min\{g_R(t, x), g_{R'}(t, x)\}}\leq \sqrt{2}\cdot|g_R(t, x) - g_{R'}(t, x)|.
\end{align}
This inequality implies
\begin{align*}
&\quad\,\,\sup_{R\in\m GR(r)}\sum_{j=1}^m\sum_{i=1}^N\mb V\Big(\frac{t_{j+1}-t_j}{N}\big[\log g_R(t_j, X_{t_j}^i) - \mb E\log g_R(t_j, X_{t_j}^i)\big]\Big)\\
&\leq \sup_{R\in\m GR(r)} \sum_{j=1}^m\sum_{i=1}^N \frac{(t_{j+1}-t_j)^2}{N^2}\mb E\big[\log g_R(t_j, X_{t_j}^i)\big]^2\\
&= \sup_{R\in\m GR(r)} \sum_{j=1}^m\sum_{i=1}^N \frac{(t_{j+1}-t_j)^2}{N^2}\mb E\big[\log g_R(t_j, X_{t_j}^i) - \log g_{R^\ast}(t_j, X_{t_j}^i)\big]^2\\
&\stackrel{\ri}{\leq} \sup_{R\in\m GR(r)}\sum_{j=1}^m\sum_{i=1}^N\frac{(t_{j+1} - t_j)^2}{N^2} 2\mb E\big[g_R(t_j, X_{t_j}^i) - 1\big]^2\\
&\stackrel{\rii}{=} \sup_{R\in\m GR(r)}\sum_{j=1}^m\sum_{i=1}^N\frac{2(t_{j+1} - t_j)^2}{N^2} d_{\rm H}^2\Big(\m K_\sigma\ast R_{t_j}^\ast, \frac{\m K_\sigma\ast R_{t_j}^\ast + \m K_\sigma\ast R_{t_j}}{2}\Big)\\
&\stackrel{\riii}{\leq} \frac{2r^2\Delta_m}{N}.
\end{align*}
Here, (i) is due to the inequality~\eqref{eqn: LipLoss}; (ii) is due to Equation~\eqref{eqn: connectH} and $g_{R^\ast} = 1$; (iii) is by the definition of $\m GR(r)$. 
By~\citep[Proposition B.4,][]{lavenant2024toward}, there exists a constant $C_\sigma > 0$ such that
\begin{align*}
C_\sigma^{-1} \leq \sup_{t\in[0, 1], x\in\m X}\m K_\sigma\ast R_t(x) < C_\sigma
\end{align*}
for every $R\in\ms P(\Omega)$. So, we have
\begin{align*}
&\quad\,\,\sup_{i\in[N], j\in[m], R\in\m GR(r)}\Big|\frac{t_{j+1}-t_j}{N}\big[\log g_R(t_j, X_{t_j}^i) - \mb E\log g_R(t_j, X_{t_j}^i)\big]\Big|\\
&\leq \frac{2\|\Delta_m\|}{N} \sup_{t\in[0, 1], x\in\m X}|\log g_R(t, x)|
= \frac{\|\Delta_m\|}{N}\log\Big(\sup_{t\in[0, 1], x\in\m X}\frac{\m K_\sigma \ast R_t(x)}{2\m K_\sigma\ast R_t^\ast(x)} + \frac{1}{2}\Big)\\
&\leq \frac{\|\Delta_m\|}{N}\log\frac{C_\sigma^2+1}{2}.
\end{align*}
Then by Talagrand's inequality~\citep[Theorem 3,][]{massart2000constants}, we have
\begin{align}\label{eqn: talagrand}
\mb P\bigg(S_{N, m}(r) \geq 2\mb ES_{N, m}(r) + 4r\sqrt{\frac{s\|\Delta_m\|}{N}} + 34.5\frac{\|\Delta_m\|s}{N}\log\frac{C_\sigma^2+1}{2}\bigg) \leq e^{-s}.
\end{align}
\underline{Step 2: control $\mb ES_{N, m}$.} Bounding the expectation follows a standard symmetrization argument. To be precise, let $\overline X_{t_j}^i$ be an i.i.d. copy of $X_{t_j}^i$, i.e. they are independent with the same distribution. We have
\begin{align*}
\mb E S_{N,m}(r) 
&= \mb E_X \sup_{R\in\m GR(r)}\bigg|\sum_{j=1}^m\sum_{i=1}^N \frac{t_{j+1}-t_j}{N}\big[\log g_R(t_j, X_{t_j}^i) - \mb E_{\overline X}\log g_R(t_j, \overline X_{t_j}^i)\big]\bigg|\\
&\leq \mb E_{X, \overline X} \sup_{R\in\m GR(r)}\bigg|\sum_{j=1}^m\sum_{i=1}^N \frac{t_{j+1}-t_j}{N}\big[\log g_R(t_j, X_{t_j}^i) - \log g_R(t_j, \overline X_{t_j}^i)\big]\bigg|\\
&= \mb E_{X,\overline X, \epsilon} \sup_{R\in\m GR(r)}\bigg|\sum_{j=1}^m\sum_{i=1}^N \frac{t_{j+1}-t_j}{N}\epsilon_{ij}\big[\log g_R(t_j, X_{t_j}^i) - \log g_R(t_j, \overline X_{t_j}^i)\big]\bigg|\\
&\leq 2\mb E_{X,\epsilon}\sup_{R\in\m GR(r)}\bigg|\sum_{j=1}^m\sum_{i=1}^N\frac{t_{j+1}-t_j}{N}\epsilon_{ij}\log g_R(t_j, X_{t_j}^i)\bigg|,
\end{align*}
where $\{\epsilon_{ij}: i\in[N], j\in[m]\}$ are i.i.d. Rademacher random variables. Then by a modified version of Ledoux--Talagrand's inequality (Proposition~\ref{prop: Ledoux-Talagrand} and Equation~\ref{eqn: Ledoux-Talagrand}), we have
\begin{align*}
&\quad\,\,\mb E_{X,\epsilon}\sup_{R\in\m GR(r)}\bigg|\sum_{j=1}^m\sum_{i=1}^N\frac{t_{j+1}-t_j}{N}\epsilon_{ij}\log g_R(t_j, X_{t_j}^i)\bigg|\\ 
&= \mb E_{X,\epsilon}\sup_{R\in\m GR(r)}\bigg|\sum_{j=1}^m\sum_{i=1}^N\frac{t_{j+1}-t_j}{N}\epsilon_{ij}\big[\log g_R(t_j, X_{t_j}^i) - \log g_{R^\ast}(t_j, X_{t_j}^i)\big]\bigg| \\
&\leq 2\sqrt{2}\mb E_{X,\epsilon}\sup_{R\in\m GR(r)}\bigg|\sum_{j=1}^m\sum_{i=1}^N\frac{t_{j+1}-t_j}{N}\epsilon_{ij} \big[g_R(t_j, X_{t_j}^i)-g_{R^\ast}(t_j, X_{t_j}^i)\big]\bigg|
\end{align*}
due to the Lipschitz property~\eqref{eqn: LipLoss} and the fact that $g_{R^\ast} = 1$. Therefore, we only need to control the expected value on the right-hand side.
For this purpose, define the process
\begin{align*}
Y_R := \sum_{j=1}^m\sum_{i=1}^N \sqrt{\frac{t_{j+1}-t_j}{N}}\epsilon_{ij}g_R(t_j, X_{t_j}^i) .
\end{align*}
It is clear that $Y_R$ is centered, i.e. $\mb E_{X,\epsilon}Y_R = 0$ for every fixed $R\in\m GR(r)$, and previous argument shows that
\begin{align*}
\mb E S_{N, m}(r) \leq 2\sqrt{\frac{2\Delta_m}{N}}\mb E_{X,\varepsilon}\sup_{R\in\m GR(r)}|Y_R - Y_{R^\ast}|.
\end{align*}
Now, we only need to control the right-hand side, which can be decomposed into several steps.

\underline{Step 2.1: sub-exponential increments of $Y_R$ and Bernstein-type bound.} For every $\lambda > 0$ and $R, R'\in\m GR(r)$, we have
\begin{align*}
\mb E_{X,\epsilon} e^{\lambda(Y_R - Y_{R'})}
&= \mb E_{X,\epsilon} \exp\bigg\{\lambda\sum_{j=1}^m\sum_{i=1}^N\sqrt{\frac{t_{j+1}-t_j}{N}}\epsilon_{ij}\big[g_R(t_j, X_{t_j}^i) - g_{R'}(t_j, X_{t_j}^i)\big]\bigg\}\\
&= \prod_{j=1}^m\prod_{i=1}^N \mb E_{X,\epsilon}\exp\bigg\{\epsilon_{ij}\cdot \lambda\sqrt{\frac{t_{j+1}-t_j}{N}}\big[g_R(t_j, X_{t_j}^i) - g_{R'}(t_j, X_{t_j}^i)\big]\bigg\}\\
&\stackrel{}{\leq}\prod_{j=1}^m\prod_{i=1}^N \mb E_{X}\exp\bigg\{\frac{(t_{j+1}-t_j)\lambda^2}{2N} \big[g_R(t_j, X_{t_j}^i) - g_{R'}(t_j, X_{t_j}^i)\big]^2\bigg\}.
\end{align*}
Here, the last inequality is due to the fact that $\mb Ee^{\epsilon s} \leq e^{\frac{s^2}{2}}$ for every $s\in\mb R$. Note that by Taylor's expansion, we have
\begin{align*}
&\quad\,\mb E_{X}\exp\bigg\{\frac{(t_{j+1}-t_j)\lambda^2}{2N} \big[g_R(t_j, X_{t_j}^i) - g_{R'}(t_j, X_{t_j}^i)\big]^2\bigg\}\\
&= 1 + \sum_{l=1}^\infty \frac{1}{l!}\mb E_X\Big[\frac{(t_{j+1}-t_j)\lambda^2}{2N}\big[g_R(t_j, X_{t_j}^i) - g_{R'}(t_j, X_{t_j}^i)\big]^2\Big]^l\\
&\stackrel{\ri}{\leq} 1 + \sum_{l=1}^\infty \Big[\frac{\lambda^2(t_{j+1}-t_j)}{2N}\Big]^l \|g_R - g_{R'}\|_{L_m^\infty}^{2l-2}\cdot\|g_R(t_j,\cdot) - g_{R'}(t_j,\cdot)\|_{L^2(\m K_\sigma\ast R_{t_j}^\ast)}^2\\
%&=1 + \sum_{l=1}^\infty \Big[\frac{\lambda^2(t_{j+1}-t_j)}{2N}\Big]^l \|g_R - g_{R'}\|_{L_m^\infty}^{2l-2}\cdot d_{\rm H}^2\Big(\frac{R_{t_j}+R_{t_j}^\ast}{2}, \frac{R_{t_j}' + R_{t_j}^\ast}{2}\Big)\\
&\stackrel{\rii}{=} 1 + \frac{\frac{(t_{j+1}-t_j)\lambda^2}{2N}\|g_{R}(t_j,\cdot) - g_{R'}(t_j, \cdot)\|_{L^2(\m K_\sigma\ast R_{t_j}^\ast)}^2}{1 - \frac{(t_{j+1}-t_j)\lambda^2}{2N}\|g_R - g_{R'}\|_{L_m^\infty}^2}\\
&\stackrel{\riii}{\leq} \exp\bigg\{\frac{\frac{(t_{j+1}-t_j)\lambda^2}{2N}\|g_{R}(t_j,\cdot) - g_{R'}(t_j, \cdot)\|_{L^2(\m K_\sigma\ast R_{t_j}^\ast)}^2}{1 - \sqrt{\frac{\Delta_m}{2N}}\lambda\|g_R - g_{R'}\|_{L_m^\infty}}\bigg\}.
\end{align*}
Here, (i) is due to $l! \geq 1$ and
\begin{align*}
|g_R(t_j, \cdot) - g_{R'}(t_j, \cdot)| \leq \sup_{t\in\{t_1, \dots, t_m\}, x\in\m X} |g_R(t, x) - g_{R'}(t, x)| = \|g_R - g_{R'}\|_{L_m^\infty};
\end{align*}
(ii) holds when $\lambda < \|g_{R} - g_{R'}\|_{L_m^\infty}^{-1}\cdot\sqrt{\frac{2N}{\Delta_m}}$; (iii) is due to the fact that $1+s \leq e^s$ for all $s\geq 0$, and that $1 - s \geq 1-\sqrt{s}$ for all $s\in[0, 1]$. Thus, we have shown that
\begin{align*}
\mb E_{X,\epsilon}e^{\lambda(Y_R - Y_{R'})}
&\leq \prod_{j=1}^m\prod_{i=1}^N \exp\bigg\{\frac{\frac{(t_{j+1}-t_j)\lambda^2}{2N}\|g_{R}(t_j,\cdot) - g_{R'}(t_j, \cdot)\|_{L^2(\m K_\sigma\ast R_{t_j}^\ast)}^2}{1 - \sqrt{\frac{\Delta_m}{2N}}\lambda\|g_R - g_{R'}\|_{L_m^\infty}}\bigg\}\\
&= \exp\bigg\{\frac{\lambda^2\|g_R - g_{R'}\|_{L^2_m}^2/2}{1 - \sqrt{\frac{\Delta_m}{2N}}\lambda\|g_R - g_{R'}\|_{L_m^\infty}}\bigg\}
\end{align*}
for all $\lambda < \|g_{R} - g_{R'}\|_{L_m^\infty}^{-1}\cdot\sqrt{\frac{2N}{\Delta_m}}$. Then, by~\citep[Proposition 2.10,][]{wainwright2019high}, we have the Bernstein-type tail bound
\begin{align}\label{eqn: Berstein}
\mb P\big(|Y_R - Y_{R'}| \geq s\big) \leq 2\exp\bigg\{-\frac{1}{2}\cdot\frac{s^2}{\|g_R - g_{R'}\|_{L_m^2}^2 + s \sqrt{\frac{\Delta_m}{2N}}\|g_R - g_{R'}\|_{L_m^\infty}}\bigg\}.
\end{align}

\underline{Step 2.2: $\psi_1$-chaining for maximal inequality.} Let $N_2(s; \m GR(r)) := N_2\big(s, \{g_R - g_{R^\ast}: R\in\m GR(r)\}\big)$ be the cardinality of the smallest set $G\subset\m GR(r)$, such that for every $R\in\m GR(r)$, there exists a $R'\in G$ satisfying
\begin{align*}
\|g_R - g_{R'}\|_{L^2_m} \leq sr
\quad \mx{and}\quad 
\|g_R - g_{R'}\|_{L^\infty_m} \leq s\cdot\sqrt{2}C_\sigma.
\end{align*}
Now, for every $k\in \mb N$, let $G_k\subset\m GR(r)$ such that its cardinality $|G_k| = N_2(2^{-k}, \m GR(r))$; we specify $G_0 = \{R^\ast\}$, which is possible when $\tau\KL(R^\ast\,\|\,W^\tau) \leq u$.
Furthermore, define $\pi_k(R)\in G_k$ such that
\begin{align*}
\|g_R - g_{\pi_k(R)}\|_{L_m^2} \leq 2^{-k}r
\quad\mx{and}\quad
\|g_R - g_{\pi_k(R)}\|_{L_m^\infty} \leq 2^{-k}\cdot \sqrt{2}C_\sigma. 
\end{align*}
Now, for a fixed $K\in\mb Z_+$ to be decided later and $R\in\m GR(r)$, define $R^K = \pi_K(R)$, and $R^{k-1} = \pi_{k-1}(R^k)$ for $k = K, K-1, \dots, 0$. Then, we have
\begin{align*}
\sup_{R\in\m  GR(r, u)}|Y_R - Y_{R^\ast}|
&\leq\sup_{R\in\m GR(r)}|Y_R - Y_{R^K}| + \sup_{R\in\m GR(r)}|Y_{R^K} - Y_{R^0}|\\ 
&\leq \sup_{R\in\m GR(r)}|Y_R - Y_{R^K}| + \sum_{k=1}^K \sup_{R\in\m GR(r)}\big|Y_{R^k} - Y_{R^{k-1}}\big|\\ 
&\leq \sup_{R\in\m GR(r)}|Y_R - Y_{R^K}| + \sum_{k=1}^K \sup_{R\in G_k}\big|Y_R - Y_{\pi_{k-1}(R)}\big|\\
&\leq \sqrt{N}\sum_{j=1}^m\sqrt{t_{j+1}-t_j}\cdot 2^{-K}\sqrt{2}C_\sigma + \sum_{k=1}^K \sup_{R\in G_k}\big|Y_R - Y_{\pi_{k-1}(R)}\big|.
\end{align*}
Since this inequality holds for all $K\in\mb N^\ast$, taking $K\to\infty$ implies
\begin{align*}
\sup_{R\in\m  GR(r, u)}|Y_R - Y_{R^\ast}| \leq \sum_{k=1}^\infty \sup_{R\in G_k}\big|Y_R - Y_{\pi_{k-1}(R)}\big|.
\end{align*}
Therefore, we have
\begin{align*}
&\quad\,\mb E\sup_{R\in\m  GR(r, u)}|Y_R - Y_{R^\ast}| 
\leq \mb E\sum_{k=1}^\infty \sup_{R\in G_k}\big|Y_R - Y_{\pi_{k-1}(R)}\big|\\
&\stackrel{\ri}{\leq} \Big\|\sum_{k=1}^\infty \sup_{R\in G_k}\big|Y_R - Y_{\pi_{k-1}(R)}\big|\Big\|_{\psi_1}
\leq \sum_{k=1}^\infty \Big\|\sup_{R\in G_k}\big|Y_R - Y_{\pi_{k-1}(R)}\big|\Big\|_{\psi_1}\\
&\stackrel{\rii}{\leq} \sum_{k=1}^\infty C_{\psi_1}\bigg[\sqrt{\frac{\Delta_m}{2N}}2^{-(k-1)}\cdot\sqrt{2}C_\sigma\log(1+|G_k|) + 2^{-(k-1)}r\cdot\sqrt{\log(1+|G_k|)}\bigg]\\
&= 4C_{\psi_1}\sum_{k=1}^\infty\int_{2^{-k-1}}^{2^{-k}}\bigg[\sqrt{\frac{\Delta_m}{N}}C_\sigma\log\big[1 + N_2\big(2^{-k};\m GR(r)\big)\big] + r\sqrt{\log\big[1 + N_2\big(2^{-k};\m GR(r)\big)\big]}\bigg]\,\dd s\\
&\leq4C_{\psi_1}\int_{0}^{\frac{1}{2}}\bigg[\sqrt{\frac{\Delta_m}{N}}C_\sigma\log\big[1 + N_2\big(s;\m GR(r)\big)\big] + r\sqrt{\log\big[1 + N_2\big(s;\m GR(r)\big)\big]}\bigg]\,\dd s.
\end{align*}
Here, (i) is due to the inequality~\eqref{eqn: E-psi-bound}; (ii) is by the Bernstein-type bound~\eqref{eqn: Berstein} and~\citep[Lemma 8.3,][]{kosorok2008introduction}.

\underline{Step 2.3: control the covering number $N_2(s; \m GR(r))$.} Note that for every $R, R'\in\m GR(r)$,
\begin{align*}
\|g_R - g_{R'}\|_{L_m^2}^2 
&\leq \sum_{j=1}^m (t_{j+1}-t_j)\|g_R - g_{R'}\|_{L_m^\infty}^2\\
&\leq \|g_R - g_{R'}\|_{L_m^\infty}^2
\leq \frac{1}{2}\big\|\m K_\sigma\ast R - \m K_\sigma\ast R'\big\|_{L_m^\infty}^2.
\end{align*}
Now, let us prove that
\begin{align*}
N_2(s; \m GR(r))\leq N\big(sr/\sqrt{2},\big\{\m K_\sigma\ast R: R\in\ms P(\Omega), \tau\KL(R\,\|\,W^\tau)\leq 2E\big\} , \|\cdot\|_{L_m^\infty}\big).
\end{align*}
In fact, assume $\m K_\sigma\ast R^1, \dots, \m K_\sigma\ast R^{N_1}$ is a $\frac{sr}{\sqrt{2}}$-covering of the set $\big\{\m K_\sigma\ast R: R\in\ms P(\Omega), \tau\KL(R\,\|\,W^\tau)\leq 2E\big\}$ with respect to the $\|\cdot\|_{L_m^\infty}$-norm. Then, for every $R\in\m GR(r)$, there exists $j\in[N_1]$ such that
\begin{align*}
\|g_{R} - g_{R^j}\|_{L_m^2} \leq \|g_R - g_{R^j}\|_{L_m^\infty} \leq \frac{\|\m K_\sigma\ast R - \m K_\sigma\ast R^j\|_{L_m^\infty}}{\sqrt{2}} \leq \frac{sr}{2}.
\end{align*}
Now, let $S\subset\{1, \dots, N_1\}$ be the subset, such that every $j\in S$ if and only if there exists $\wt R^j\in\m GR(r)$ satisfying $\|\m K_\sigma\ast R^j - \m K_\sigma\ast \wt R^j\|_{L_m^\infty} \leq \frac{sr}{\sqrt{2}}$. Clearly, $\{\wt R^j: j\in S\}$ is a $sr$-covering of $\m GR(r)$, since for every $R\in\m GR(r)$
\begin{align*}
\|g_R - g_{\wt R^j}\|_{L_m^\infty} \leq \frac{\|\m K_\sigma R - \m K_\sigma R^j\|_{L_m^\infty}}{\sqrt{2}} + \frac{\|\m K_\sigma \wt R^j - \m K_\sigma R^j\|_{L_m^\infty}}{\sqrt{2}}
\leq sr.
\end{align*}
Therefore, by applying Proposition~\ref{prop: covering_num}, we have
\begin{align*}
N_2\big(s; \m GR(r)\big) 
\lesssim \min\big\{2(sr)^{-2}, m\big\}\cdot\Big(\log\frac{2}{sr}\Big)^{d+1}.
\end{align*}

\underline{Step 2.4: evaluate the upper bound of maximal inequality.} Now, we have
\begin{align}\label{eqn: maximal_ineq}
\begin{aligned}
\mb E\sup_{R\in\m GR(r)}|Y_R - Y_{R^\ast}|
&\lesssim \sqrt{\frac{\Delta_m}{N}}\int_{0}^{\frac{1}{2}}\min\big\{2(sr)^{-2}, m\big\}\cdot\Big(\log\frac{2}{sr}\Big)^{d+1}\,\dd s \\
&\qquad\qquad\qquad + r\int_{0}^{\frac{1}{2}}\sqrt{\min\big\{2(sr)^{-2}, m\big\}\cdot\Big(\log\frac{2}{sr}\Big)^{d+1}}\,\dd s.
\end{aligned}
\end{align}
Now, we bound two integrals separately. 
To begin with, first note that 
\begin{align*}
2(sr)^{-2} \leq m
\,\,\Longleftrightarrow\,\,
s \geq \frac{1}{r}\sqrt{\frac{2}{m}}.
\end{align*}
Furthermore, this threshold is smaller than $\frac{1}{2}$ when
\begin{align*}
\frac{1}{r}\sqrt{\frac{2}{m}} \leq \frac{1}{2}
\,\,\Longleftrightarrow\,\,
r \geq \frac{2\sqrt{2}}{\sqrt{m}}
\end{align*}

\underline{\underline{Case 2.4.1: $r \geq \frac{2\sqrt{2}}{\sqrt{m}}$.}} For the first term in~\eqref{eqn: maximal_ineq}, note that
\begin{align*}
&\quad\,\int_{0}^{\frac{1}{2}}\min\big\{2(sr)^{-2}, m\big\}\cdot\Big(\log\frac{2}{sr}\Big)^{d+1}\,\dd s\\
&=\int_0^{\frac{1}{r}\sqrt{\frac{2}{m}}} m \Big(\log\frac{2}{sr}\Big)^{d+1}\,\dd s + 2\int_{\frac{1}{r}\sqrt{\frac{2}{m}}}^{\frac{1}{2}}\Big(\frac{1}{sr}\Big)^2\Big(\log\frac{2}{sr}\Big)^{d+1}\,\dd s\\
&=: J_{11} + J_{12}.
\end{align*}
By using the change of variable formula with $s = \frac{1}{vr}\sqrt{\frac{2}{m}}$, we have
\begin{align*}
J_{11} &= \frac{\sqrt{2m}}{r}\int_1^\infty \big(\log\sqrt{2m} + \log v\big)^{d+1} v^{-2}\,\dd v\\
&\leq \frac{\sqrt{2m}}{r}\cdot 2^d\int_1^\infty \Big[\big(\log\sqrt{2m}\big)^{d+1} + \big(\log v\big)^{d+1}\Big] v^{-2}\,\dd v\\
&\lesssim \frac{\sqrt{m}(\log m)^{d+1}}{r}.
\end{align*}
In the last line, a finite constant only depending on the integral is omitted. Similarly, using the change of variable formula with $s = \frac{v}{r}\sqrt{\frac{2}{m}}$ yields
\begin{align*}
J_{12} &= \frac{\sqrt{2m}}{r}\int_1^{\frac{r}{2}\sqrt{\frac{m}{2}}} \Big(\log\frac{\sqrt{2m}}{v}\Big)^{d+1}v^{-2}\,\dd v\\
&\leq \frac{\sqrt{2m}}{r}\int_1^\infty \big(\log\sqrt{2m}\big)^{d+1}v^{-2}\,\dd v\\
&\lesssim \frac{\sqrt{m}(\log m)^{d+1}}{r}.
\end{align*}
Now, we have
\begin{align*}
\sqrt{\frac{\Delta_m}{N}}\int_{0}^{\frac{1}{2}}\min\big\{2(sr)^{-2}, m\big\}\cdot\Big(\log\frac{2}{sr}\Big)^{d+1}\,\dd s 
\lesssim \sqrt{\frac{\Delta_m}{N}}\cdot\frac{\sqrt{m}(\log m)^{d+1}}{r}.
\end{align*}
To bound the second term in~\eqref{eqn: maximal_ineq}, we have
\begin{align*}
&\quad\,\int_{0}^{\frac{1}{2}}\sqrt{\min\big\{2(sr)^{-2}, m\big\}\cdot\Big(\log\frac{2}{sr}\Big)^{d+1}}\,\dd s\\
&=\int_0^{\frac{1}{r}\sqrt{\frac{2}{m}}}  \sqrt{m}\Big(\log\frac{2}{sr}\Big)^{\frac{d+1}{2}}\,\dd s + \sqrt{2}\int_{\frac{1}{r}\sqrt{\frac{2}{m}}}^{\frac{1}{2}}\frac{1}{sr}\cdot\Big(\log\frac{2}{sr}\Big)^{\frac{d+1}{2}}\,\dd s\\
&=: J_{21} + J_{22}.
\end{align*}
To estimate $J_{21}$, using the change of variable formula with $s = \frac{1}{vr}\sqrt{\frac{2}{m}}$ yields
\begin{align*}
J_{21} &= \frac{\sqrt{2}}{r}\int_1^{\infty}\big(\log\sqrt{2m}+\log v\big)^{\frac{d+1}{2}}v^{-2}\,\dd v
\lesssim \frac{(\log m)^{\frac{d+1}{2}}}{r}.
\end{align*}
For $J_{22}$, the integral can be calculated explicitly
\begin{align*}
J_{22} = \frac{2\sqrt{2}}{r(d+3)}\Big[\big(\log\sqrt{2m}\big)^{\frac{d+3}{2}} - \big(\log 4r^{-1}\big)^{\frac{d+3}{2}}\Big]
\lesssim \frac{(\log m)^{\frac{d+3}{2}}}{r}.
\end{align*}
Therefore, we have
\begin{align*}
r\int_{0}^{\frac{1}{2}}\sqrt{\min\big\{2(sr)^{-2}, m\big\}\cdot\Big(\log\frac{2}{sr}\Big)^{d+1}}\,\dd s 
\lesssim (\log m)^{\frac{d+3}{2}}.
\end{align*}
Combining with the maximal inequality~\eqref{eqn: maximal_ineq}, we have
\begin{align*}
\mb E\sup_{R\in\m GR(r)}\big|Y_R - Y_{R^\ast}\big| 
&\lesssim \sqrt{\frac{\Delta_m}{N}}\cdot\frac{\sqrt{m}(\log m)^{d+1}}{r} + (\log m)^\frac{d+3}{2}\\
&\leq \Big[\sqrt{\frac{\Delta_m}{N}}\cdot\frac{\sqrt{m}}{r} + 1\Big](\log m)^{d+1}
\end{align*}

\underline{\underline{Case 2.4.2: $r < \frac{2\sqrt{2}}{\sqrt{m}}$.}} In this case, from the inequality~\eqref{eqn: maximal_ineq} we have
\begin{align*}
\mb E\sup_{R\in\m GR(r)}\big|Y_R - Y_{R^\ast}\big| 
&\lesssim m\sqrt{\frac{\Delta_m}{N}}\int_0^\frac{1}{2}\Big(\log\frac{2}{sr}\Big)^{d+1}\,\dd s + r\sqrt{m}\int_0^\frac{1}{2}\Big(\log\frac{2}{sr}\Big)^{\frac{d+1}{2}}\,\dd s\\
&\lesssim m\sqrt{\frac{\Delta_m}{N}}\Big(\log\frac{1}{r}\Big)^{d+1} + \sqrt{m}r\Big(\log\frac{1}{r}\Big)^\frac{d+1}{2}\\
&\leq \Big[\sqrt{\frac{\Delta_m}{N}}\cdot\frac{\sqrt{m}}{r} + 1\Big] \sqrt{m} r\Big(\log\frac{1}{r}\Big)^{d+1},
\end{align*}
where the second last inequality is derived similarly as in the case 3.4.1.

To sum up, we have shown that there is a universal constant $C_{\rm MI} > 0$ such that
\begin{align*}
\mb E\sup_{R\in\m GR(r)}\big|Y_R - Y_{R^\ast}\big| 
\leq C_{\rm MI}\Big[\sqrt{\frac{\Delta_m}{N}}\cdot\frac{\sqrt{m}}{r} + 1\Big]\cdot\min\{r\sqrt{m}, 2\sqrt{2}\}\cdot\big[\max\big\{\log r^{-1}, \log m\big\}\big]^{d+1}.
\end{align*}
Thus, we have
\begin{align}\label{eqn: emp_pro_bound}
\mb E S_{N, m}(r) \leq 4\sqrt{\frac{2\Delta_m}{N}}\cdot C_{\rm MI}\Big[\sqrt{\frac{\Delta_m}{N}}\cdot\frac{\sqrt{m}}{r} + 1\Big]\cdot\min\{r\sqrt{m}, 2\sqrt{2}\}\cdot\big[\max\big\{\log r^{-1}, \log m\big\}\big]^{d+1}.
\end{align}
\underline{Step 3: high probability bound of $S_{N, m}$.} For simplicity, let $J_{N, m}(r)$ be the right-hand side in~\eqref{eqn: emp_pro_bound}. It is obvious that $J_{N, m}(r) / r$ is non-increasing with respect to $r$ for any fixed $u$. Therefore, there exists $\delta_{N, m}$ such that
\begin{align*}
\frac{J_{N,m}(\delta_{N, m})}{\delta_{N, m}} \leq \delta_{N, m}.
\end{align*}
Now, for any $r \geq \delta_{N, m}$, we have
\begin{align*}
\frac{J_{N, m}(r)}{r} \leq \frac{J_{N, m}(\delta_{N, m})}{\delta_{N, m}} \leq \delta_{N, m},
\end{align*}
indicating that $J_{N, m}(r) \leq r\delta_{N, m}$. So, for every $r \geq \delta_{N, m}$, we have
\begin{align*}
\mb P\bigg(S_{N, m}(r) \geq 2r\delta_{N, m} + 4r\sqrt{\frac{s\|\Delta_m\|}{N}} + 34.5\frac{\|\Delta_m\|s}{N}\log\frac{C_\sigma^2+1}{2}\bigg) \leq e^{-s}.
\end{align*}
by Talagrand's inequality~\eqref{eqn: talagrand}.
For simplicity, define $C_{\rm HP} := 12 + 34.5\log\frac{C_\sigma^2+1}{2}$ and the event
\begin{align*}
\ms A \coloneqq\bigg\{\sup_{R\in\m GR(\infty)} \frac{\big|\sum_{j=1}^m\sum_{i=1}^N\frac{t_{j+1}-t_j}{N}[\log g_R(t_j, X_{t_j}^i) - \mb E\log g_R(t_j, X_{t_j}^i)]\big|}{\delta_{N, m} + \|g_R - g_{R^\ast}\|_{L_m^2}} \leq C_{\rm HP}\delta_{N, m}\bigg\}.
\end{align*}
We will prove that $\ms A$ holds with high probability.
To show this, define two events
\begin{align*}
\ms A_1 &\coloneqq \big\{S_{N, m}(\delta_{N, m}) \geq C_{\rm HP}\delta_{N, m}^2\big\}\\
\ms A_2 &\coloneqq \bigg\{\exists\, R\in\m GR(\infty), \,\,\mx{s.t.}\,\,
\|g_R - g_{R^\ast}\|_{L_m^2} \geq \delta_{N, m}
\quad\mx{and}\\
&\qquad\qquad\qquad
\Big|\sum_{j=1}^m\sum_{i=1}^N\frac{t_{j+1}-t_j}{N}[\log g_R(t_j, X_{t_j}^i) - \mb E\log g_R(t_j, X_{t_j}^i)]\Big| \geq C_{\rm HP}\delta_{N, m}\|g_R - g_{R^\ast}\|_{L_m^2}\bigg\}.
\end{align*}
It is obvious that $\ms A^c \subset (\ms A_1 \cup\ms A_2)$. To bound $\mb P(\ms A_1)$, simply taking $r = \delta_{N, m}$ and $s = \frac{N\delta_{N, m}^2}{\Delta_m}$ yields
\begin{align*}
\mb P(\ms A_1) \leq \mb P\Big(S_{N, m}(\delta_{N, m}) \geq \Big[6 + 34.5\log\frac{C_\sigma^2+1}{2}\Big]\delta_{N, m}^2\Big) \leq e^{-\frac{N\delta_{N, m}^2}{\Delta_m}}.
\end{align*}
To bound $\mb P(\ms A_2)$, we use the peeling technique by further decomposing $\ms A_2$ into more events. Let
\begin{align*}
\ms A_{2k} &\coloneqq \bigg\{\exists\, R\in\m GR(\infty), \,\,\mx{s.t.}\,\,
2^{k-1}\delta_{N,m}\leq \|g_R - g_{R^\ast}\|_{L_m^2} \leq 2^k\delta_{N, m}
\quad\mx{and}\\
&\qquad\qquad\qquad
\Big|\sum_{j=1}^m\sum_{i=1}^N\frac{t_{j+1}-t_j}{N}[\log g_R(t_j, X_{t_j}^i) - \mb E\log g_R(t_j, X_{t_j}^i)]\Big| \geq C_{\rm HP}\delta_{N, m}\|g_R - g_{R^\ast}\|_{L_m^2}\bigg\}\\
&\subset \bigg\{\exists\, R\in\m GR(2^k\delta_{N,m}), \,\,\mx{s.t.}\,\,
\Big|\sum_{j=1}^m\sum_{i=1}^N\frac{t_{j+1}-t_j}{N}[\log g_R(t_j, X_{t_j}^i) - \mb E\log g_R(t_j, X_{t_j}^i)]\Big| \geq C_{\rm HP}2^{k-1}\delta_{N, m}^2\bigg\}\\
&= \big\{S_{N, m}(2^k\delta_{N, m}) \geq C_{\rm HP}2^{k-1}\delta_{N, m}^2\big\}.
\end{align*}
Note that for any $R\in\ms P(\Omega)$, it holds that
\begin{align*}
\|g_R - g_{R^\ast}\|_{L_m^2} \leq \|g_R - g_{R^\ast}\|_{L_m^\infty}\leq\sqrt{\frac{C_\sigma^2+1}{2}} - \sqrt{\frac{1}{2}}.
\end{align*}
Therefore, by letting $K = \Big\lceil \log\frac{\sqrt{C_\sigma^2 + 1}-1}{\sqrt{2}\delta_{N, m}}\Big\rceil$, we have $\ms A_2\subset (\ms A_{21}\cup\dots\cup\ms A_{2K})$. Thus,
\begin{align*}
\mb P(\ms A_2) 
&\leq \sum_{k=1}^K \mb P(\ms A_{2k})
\leq \sum_{k=1}^K \mb P\big(S_{N, m}(2^k\delta_{N, m}, u) \geq C_{\rm HP}2^{k-1}\delta_{N, m}^2\big)\\
&\stackrel{\ri}{\leq} K e^{-\frac{N\delta_{N, m}^2}{\Delta_m}}
= e^{-\frac{N\delta_{N,m}^2}{\Delta_m} + \log K} 
\stackrel{\rii}{\leq}e^{-\frac{N\delta_{N, m}^2}{2\Delta_m}}.
\end{align*}
Here (i) is derived by taking $s = \frac{N\delta_{N, m}^2}{\Delta_m}$ and the fact that $12 + 2^{1-k}\cdot34.5\log\frac{C_\sigma^2+1}{2} \leq C_{\rm HP}$ for every $k\geq 1$; (ii) is due to $\log\big(1+\log\frac{\sqrt{C_\sigma^2+1}-1}{\sqrt{2}\delta_{N,m}^2}\big) \leq \frac{N\delta_{N,m}^2}{2\Delta_m}$ when at least one of $m$ and $N$ is large enough.
Combining all the above pieces yields
\begin{align*}
\mb P(\ms A^c) \leq 2e^{-\frac{N\delta_{N, m}^2}{2\Delta_m}}.
\end{align*}
%%%%%%%%%%%%%%%%%%%%%%%%%%%%%%%%%%%%%%%%
%%%%%%%%%%%%%%%%%%%%%%%%%%%%%%%%%%%%%%%%
\subsection{Proof of Theorem~\ref{coro: cts_stat_rate}}\label{app: coro_pf}
%\noindent\underline{Step 3: derive the rate for all marginal distributions.} 
Note that
\begin{align*}
&\quad\,\bigg|\sum_{j=1}^m(t_{j+1}-t_j)d_{\rm H}^2\big(\m K_\sigma\ast \wht R_{t_j}, \m K_\sigma\ast R_{t_j}^\ast\big) - \int_0^1 d_{\rm H}^2\big(\m K_\sigma\ast \wht R_{t}, \m K_\sigma\ast R_{t}^\ast\big)\,\dd t\bigg|\\
&\leq \int_0^{t_1} d_{\rm H}^2\big(\m K_\sigma\ast \wht R_{t}, \m K_\sigma\ast R_{t}^\ast\big)\,\dd t + \sum_{j=1}^m \int_{t_j}^{t_{j+1}}\bigg|d_{\rm H}^2\big(\m K_\sigma\ast \wht R_{t}, \m K_\sigma\ast R_{t}^\ast\big)\,\dd t - d_{\rm H}^2\big(\m K_\sigma\ast \wht R_{t_j}, \m K_\sigma\ast R_{t_j}^\ast\big)\bigg|\,\dd t.
\end{align*}
The first term is upper bounded by $2t_1$.  To bound the second term, when $\tau\KL(\wht R\,\|\,W^\tau) \leq 2\tau\KL(R^\ast\,\|\,W^\tau)$, there are universal constants $C_{\rm Hol} > 0$ such that
\begin{align*}
\big|\m K_\sigma\ast \wht R_t(x) - \m K_\sigma\ast \wht R_{t'}(x)\big| \leq C_{\rm Hol}\sqrt{|t - t'|}
\quad\mx{and}\quad
\big|\m K_\sigma\ast  R_t^\ast(x) - \m K_\sigma\ast  R_{t'}^\ast(x)\big| \leq C_{\rm Hol}\sqrt{|t - t'|}
\end{align*}
for all $t, t'\in[0, 1]$~\citep[Proposition 2.12,][]{lavenant2024toward}. Therefore, applying Lemma~\ref{lem: diff_H2} implies
\begin{align*}
\big|d_{\rm H}^2\big(\m K_\sigma\ast \wht R_{t}, \m K_\sigma\ast R_{t}^\ast\big) &- d_{\rm H}^2\big(\m K_\sigma\ast \wht R_{t'}, \m K_\sigma\ast R_{t'}^\ast\big)\big|\\
& \leq 2\sqrt{C_\sigma \vol(\m X)}d_{\rm H}(\m K_\sigma\ast \wht R_t, \m K_\sigma\ast R_t^\ast)C_{\rm Hol}\sqrt{|t-t'|} + 2C_\sigma\vol(\m X)C_{\rm Hol}^2|t-t'|.
\end{align*}
This implies
\begin{align*}
&\quad\,\bigg|\sum_{j=1}^m(t_{j+1}-t_j)d_{\rm H}^2\big(\m K_\sigma\ast \wht R_{t_j}, \m K_\sigma\ast R_{t_j}^\ast\big) - \int_0^1 d_{\rm H}^2\big(\m K_\sigma\ast \wht R_{t}, \m K_\sigma\ast R_{t}^\ast\big)\,\dd t\bigg|\\
&\leq 2t_1 + \sum_{j=1}^m \int_{t_j}^{t_{j+1}}2\sqrt{C_\sigma \vol(\m X)}d_{\rm H}(\m K_\sigma\ast \wht R_{t}, \m K_\sigma\ast R_{t}^\ast)C_{\rm Hol}\sqrt{|t-t_j|} + 2C_\sigma\vol(\m X)C_{\rm Hol}^2|t-t_j|\,\dd t\\
&=2t_1 + 
2C_{\rm Hol}\sqrt{C_\sigma\vol(\m X)}\sum_{j=1}^m\int_{t_j}^{t_{j+1}}\sqrt{t - t_j}d_{\rm H}(\m K_\sigma\ast \wht R_{t}, \m K_\sigma\ast R_{t}^\ast)\,\dd t + 
C_\sigma\vol(\m X)C_{\rm Hol}^2\sum_{j=1}^m(t_{j+1}-t_j)^2\\
&\leq 2\Delta_m + 2C_{\rm Hol}\sqrt{C_\sigma\vol(\m X)}\cdot\sqrt{\frac{m\Delta_m^2}{2}\int_0^1 d_{\rm H}^2(\m K_\sigma\ast\wht R_t, \m K_\sigma\ast R_t^\ast)\,\dd t}
+C_\sigma\vol(\m X)C_{\rm Hol}^2\Delta_m.
\end{align*}
Therefore, we have
\begin{align*}
\int_0^1 d_{\rm H^2}(\m K_\sigma\ast\wht R_t, \m K_\sigma\ast R_t^\ast)\,\dd t
&\leq 2\sum_{j=1}^m(t_{j+1}-t_j)d_{\rm H}^2\big(\m K_\sigma\ast \wht R_{t_j}, \m K_\sigma\ast R_{t_j}^\ast\big)\\
&\qquad\quad  + 2\big[2+C_\sigma\vol(\m X)C_{\rm Hol}^2\big]\Delta_m + 2C_\sigma\vol(\m X)C_{\rm Hol}^2m\Delta_m^2.
\end{align*}
When $\Delta_m = O(m^{-1})$, under the event $\ms A$, we have
\begin{align*}
\int_0^1 d_{\rm H}^2(\m K_\sigma\ast\wht R_t, \m K_\sigma\ast R_t^\ast)\,\dd t
\lesssim \max\Big\{\delta_{N, m}^2, \frac{1}{m}\Big\}.
\end{align*}
\section{Proof of Algoithmic Convergence}\label{sec: pf_algo_converge}
In this section, we focus on the proof of algorithmic convergence of the exact CKLGD algorithm proposed in Section~\ref{sec: exact CKLGD} and the inexact CKLGD algorithm proposed in Section~\ref{sec: inexact CKLGD}.
\subsection{Proof of Theorem~\ref{thm: algo_convergence_general}}
%\paragraph{Proof of Part (a):}
First-order optimality condition (FOC) implies that 
\begin{align}\label{eqn: exact_FOC}
    \frac{\delta\m F}{\delta\rho_j}(\rho^{k-1})(\theta_j) + \frac{1}{\eta_k}\log\frac{\rho_j^{k}}{\rho_j^{k-1}}(\theta_j) = C_j^k
\end{align}
is a constant independent of $\theta_j$. Therefore, for any $\rho\in\ms P_2(\m X)^{\otimes m}$, applying the convexity of $\m F$ yields
\begin{align*}
\m F(\rho^{k-1}) - \m F(\rho) 
&\leq \sum_{j=1}^m \int_{\m X}\frac{\delta\m F}{\delta\rho_j}(\rho^{k-1})(\theta_j)\,\dd[\rho_j^{k-1} - \rho_j]
= \sum_{j=1}^m \int_{\m X} -\frac{1}{\eta_k}\log\frac{\rho_j^k}{\rho_j^{k-1}}(\theta_j)\,\dd[\rho_j^{k-1} - \rho_j]\\
&= \sum_{j=1}^m\frac{1}{\eta_k}\big[\KL(\rho_j^{k-1}\,\|\,\rho_j^{k}) + \KL(\rho_j\,\|\,\rho_j^{k-1}) - \KL(\rho_j\,\|\,\rho_j^k)\big].
\end{align*}
Note that
\begin{align*}
&\KL(\rho_j^{k-1}\,\|\,\rho_j^k) + \KL(\rho_j^k\,\|\,\rho_j^{k-1})
= \int\log\frac{\rho_j^{k-1}}{\rho_j^k}\,\dd[\rho_j^{k-1}-\rho_j^k] \\
&\stackrel{\ri}{=} \eta_k\int \frac{\delta\m F}{\delta\rho_j}(\rho^{k-1})(\theta_j) - C_j^k \,\dd[\rho_j^{k-1} - \rho_j^k]
\stackrel{\rii}{=} \eta_k\int \frac{\delta\m F}{\delta\rho_j}(\rho^{k-1})(\theta_j) \,\dd[\rho_j^{k-1} - \rho_j^k]\\
&\stackrel{\riii}{\leq} \eta_kL_j\|\rho_j^{k-1} - \rho_j^k\|_{L^1}.
\end{align*}
Here, (i) is by FOC~\eqref{eqn: exact_FOC}; (ii) is by the fact that $C_j^k$ is a constant; (iii) is due to the uniform bound of first variation of $\m F$. Thus, we have
\begin{align*}
\m F(\rho^{k-1}) - \m F(\rho)
&\leq \frac{1}{\eta_k}\sum_{j=1}^m \Big[\eta_k L_j\|\rho_j^{k-1} - \rho_j^k\|_{L^1} - \KL(\rho_j^k\,\|\,\rho_j^{k-1}) + \KL(\rho_j\,\|\,\rho_j^{k-1}) - \KL(\rho_j\,\|\,\rho_j^k)\Big]\\
&\stackrel{\ri}{\leq} \sum_{j=1}^m \Big[L_j\|\rho_j^{k-1} - \rho_j^k\|_{L^1} - \frac{1}{2\eta_k}\|\rho_j^{k-1} - \rho_j^k\|_{L^1}^2\Big] + \frac{1}{\eta_k}\sum_{j=1}^m \big[\KL(\rho_j\,\|\,\rho_j^{k-1}) - \KL(\rho_j\,\|\,\rho_j^k)\big]\\
&\leq \sum_{j=1}^m \frac{\eta_kL_j^2}{2} + \frac{1}{\eta_k}\sum_{j=1}^m \big[\KL(\rho_j\,\|\,\rho_j^{k-1}) - \KL(\rho_j\,\|\,\rho_j^k)\big].
\end{align*}
Here, (i) is by Pinsker's inequality. Summing up the above inequality from $k=1$ to $K$ yields
\begin{align*}
\Big(\sum_{k=1}^K \eta_k\Big) \Big[\max_{0\leq k\leq K-1}\m F(\rho^k) - \m F(\rho)\Big]
&\leq \sum_{k=1}^{K} \eta_k\big[\m F(\rho^{k-1}) - \m F(\rho)\big]\\
&\leq \frac{1}{2}\Big(\sum_{k=1}^K\eta_k^2\Big) \Big(\sum_{j=1}^m L_j^2\Big) + \KL(\rho\,\|\,\rho^0) - \KL(\rho\,\|\,\rho^K),
\end{align*}
which implies the desired result.

\subsection{Proof of Theorem~\ref{thm: traj_CKLGD}}\label{app: pf_algo_conv}
Recall that $\wt\rho^k$ is the exact solution of each iterate defined through~\eqref{eqn: explicit_quad_anneal_update}. For any $\rho\in\ms P(\m X)^{\otimes m}$, we have
\begin{align*}
\m F_{N, m}(\wht\rho^k) - \m F_{N, m}(\rho)
= \big[\m F_{N, m}(\wht\rho^k) - \m F_{N, m}(\wt\rho^k)\big] + \big[\m F_{N, m}(\wt\rho^k) - \m F_{N, m}(\wht\rho^k)\big].
\end{align*}

\vspace{0.5em}
\noindent\underline{Step 1: control $\m F_{N, m}(\wht\rho^k) - \m F_{N, m}(\wt\rho^k)$.} For any arbitrary sequence $\nu_1, \dots, \nu_r\in\ms P^r(\m X)^{\otimes m}$, by additionally defining $\nu_0 = \wht\rho^k$ and $\nu_{r+1} = \wt\rho^k$, we have
\begin{align*}
&\m F_{N, m}(\wht \rho^k) - \m F_{N, m}(\wt\rho^k)
= \sum_{s=0}^r \big[\m F_{N, m}(\nu_s) - \m F_{N, m}(\nu_{s+1})\big]
\leq \sum_{s=0}^r\sum_{j=1}^m \int_{\m X}\frac{\delta\m F_{N, m}}{\delta\rho_j}(\nu_s)\,\dd[\nu_{s, j} - \nu_{s+1,j}]\\
&= \sum_{s=0}^r\sum_{j=1}^m \int_{\m X} \underbrace{-\frac{t_{j+1}-t_j}{N\lambda}\sum_{i=1}^N\frac{\m K_\sigma(X_{t_j}^i - y_j)}{\m K_\sigma\ast\nu_{s,j}(X_{t_j}^i)} + \frac{\varphi_{j, j+1}^{\nu_s}(y_j)}{t_{j+1}-t_j} + \frac{\psi_{j, j-1}^{\nu_s}(y_j)}{t_j - t_{j-1}}}_{V_j(y_j; \nu_s)} 
+ \tau\log\nu_{s, j}(y_j)\,\dd[\nu_{s, j} - \nu_{s+1, j}],
\end{align*}
where $\varphi_{j, j+1}^{\nu_s}$ and $\psi_{j, j-1}^{\nu_s}$ are the Schr\"{o}dinger potentials associated with $\nu_s = (\nu_{s, 1}, \dots, \nu_{s, m})$. 
By Lemma~\ref{lem: uniform_bound_Vj} and Pinsker's inequality, we have
\begin{align*}
\sum_{s=0}^r\sum_{j=1}^m \int_{\m X} V_j(y_j; \nu_s)\,\dd[\nu_{s, j} - \nu_{s+1, j}] 
\leq \sum_{s=0}^r\sum_{j=1}^m B_j\|\nu_{s, j} - \nu_{s+1, j}\|_{L^1(\m X)} 
\leq \sum_{s=0}^r\sum_{j=1}^m B_j\sqrt{2\KL(\nu_{s+1, j}\,\|\,\nu_{s, j})}.
\end{align*}
Note that we also have
\begin{align*}
\sum_{s=0}^r\int_{\m X}\log\nu_{s, j}(y_j)\,\dd[\nu_{s, j} - \nu_{s+1, j}] = H(\nu_{0, j}) - H(\nu_{r+1, j}) + \sum_{s=0}^r\KL(\nu_{s+1, j}\,\|\,\nu_{s, j}),
\end{align*}
where $H(\nu_j) = \int\nu_j\log\nu_j$ is the negative self entropy of $\nu_j\in\ms P^r(\m X)$. Combining all pieces above yields
\begin{align*}
\m F_{N, m}(\wht\rho^k) - \m F_{N, m}(\wt\rho^k)
\leq \sum_{s=0}^r\sum_{j=1}^m \Big[B_j\sqrt{2\KL(\nu_{s+1, j}\,\|\,\nu_{s, j})} + \tau\KL(\nu_{s+1, j}\,\|\,\nu_{s, j})\Big] + \tau\big[H(\wht\rho^k) - H(\wt\rho^k)\big].
\end{align*}
To bound the first term, note that the KL divergence is locally quadratic. Since $\{\nu_s\}_{s=1}^r$ is an arbitrary sequence on $\ms P^r(\m X)^{\otimes m}$, by taking the infimum with respect to $\{\nu_s\}_{s=1}^r$, Lemma~\ref{prop: FR_dist} yields
\begin{align*}
\m F_{N, m}(\wht\rho^k) - \m F_{N, m}(\wt\rho^k)
& \leq \sum_{j=1}^m\sup_{r,\nu_1,\dots,\nu_{r+1}}\Big\{B_j\sum_{s=0}^r\sqrt{2\KL(\nu_{s+1, j}\,\|\,\nu_{s, j})} + \tau\sum_{s=0}^r\KL(\nu_{s+1, j}\,\|\,\nu_{s, j})\Big\} + \tau\big[H(\wht\rho^k) - H(\wt\rho^k)\big]\\
&\stackrel{\ri}{\leq} \sum_{j=1}^m 2B_j\sqrt{\KL(\wht\rho_j^k\,\|\,\wt\rho_j^k)} + \tau\big[H(\wht\rho^k) - H(\wt\rho^k)\big]\\
&\stackrel{\rii}{\leq} 2\sqrt{\Big(\sum_{j=1}^m B_j^2\Big)\Big(\sum_{j=1}^m \KL(\wht\rho_j^k\,\|\,\wt\rho_j^k)\Big)} + \tau\big[H(\wht\rho^k) - H(\wt\rho^k)\big]\\
&\stackrel{\riii}{=}2\|B\|_{\ell^2(m)}\sqrt{\KL(\wht\rho^k\,\|\,\wt\rho^k)} + \tau\big[H(\wht\rho^k) - H(\wt\rho^k)\big]\\
&\leq 2\|B\|_{\ell^2(m)}\delta_k^{\frac{1}{2}} + \tau\big[H(\wht\rho^k) - H(\wt\rho^k)\big]
\end{align*}
where $\|B\|_{\ell^2(m)} = \sqrt{B_1^2 + \dots + B_m^2}$ is the $\ell^2$-norm of $B = (B_1, \dots, B_m)$. Here, (i) is due to Lemma~\ref{prop: FR_dist}; (ii) is by Cauchy--Schwarz inequality; (ii) is due to the fact that $\sum_{j=1}^m\KL(\wht\rho_j^k\,\|\,\wt\rho_j^k) = \KL(\wht\rho\,\|\,\wt\rho^k)$.

\vspace{0.5em} 
\noindent\underline{Step 2: control $\m F_{N, m}(\wt\rho^k) - \m F_{N, m}(\rho)$.} By convexity of $\m F_{N, m}$, we have
\begin{align*}
&\m F_{N, m}(\wt\rho^k) - \m F_{N, m}(\rho)
\leq \sum_{j=1}^m \int_{\m X}\frac{\delta\m F_{N, m}}{\delta\rho_j}(\wt\rho^k)\,\dd[\wt\rho_j^k - \rho_j]
= \sum_{j=1}^m\int_{\m X} V_j(y_j; \wt\rho^k) + \tau\log\wt\rho_j^k(y_j)\,\dd[\wt\rho_j^k - \rho_j]\\
&\leq \sum_{j=1}^m \int_{\m X} V_j(y_j; \wht\rho^k) + \tau\log\wt\rho_j^k(y_j)\,\dd[\wt\rho_j^k - \rho_j] + \bigg\lvert\sum_{j=1}^m \int_{\m X} V_j(y_j; \wt\rho^k) - V_j(y_j; \wht\rho^k)\,\dd[\wt\rho_j^k - \rho_j]\bigg\rvert.
\end{align*}
So, we have
\begin{align}\label{eqn: diff_exact_iter}
\begin{aligned}
\sum_{k=1}^{K}\eta_{k+1}\big[\m F_{N, m}(\wt\rho^{k}) - \m F_{N,m}(\rho)\big]
&\leq \sum_{k=1}^{K}\sum_{j=1}^m \int_{\m X}\eta_{k+1}V_j(y_j; \wht\rho^k) + \tau\eta_{k+1}\log\wt\rho_j^k(y_j)\,\dd[\wt\rho_j^k - \rho_j]\\
&\qquad\qquad + \sum_{k=1}^{K}\eta_{k+1}\bigg\lvert\sum_{j=1}^m \int_{\m X} V_j(y_j; \wt\rho^k) - V_j(y_j; \wht\rho^k)\,\dd[\wt\rho_j^k - \rho_j]\bigg\rvert
\end{aligned}
\end{align}

\underline{Step 2.1: control the first term in~\eqref{eqn: diff_exact_iter}.} Recall that $H(\rho) = \int\rho\log\rho$ and
\begin{align*}
U_k(\rho) \coloneqq \sum_{j=1}^m\int_{\m X}\sum_{l=1}^k\Big[\eta_l\prod_{l < l'\leq k}(1-\tau\eta_{l'})\Big]\big[V_j(y_j; \wht\rho^{l-1}) + \alpha_l\|y_j\|^2\big]\,\dd\rho_j + H(\rho).
\end{align*}
By the definition~\eqref{eqn: explicit_quad_anneal_update} of $\wt\rho_j^k$, we have
\begin{align*}
U_k^\ast := \min_{\rho\in\ms P^r(\m X)^{\otimes m}} U_k(\rho) = U_k(\wt\rho^k).
\end{align*}
Some involved calculations (see Appendix~\ref{app: cal1} for more details) shows that
\begin{align}\label{eqn: cal1}
\begin{aligned}
&\quad\,\sum_{k=1}^{K}\sum_{j=1}^m \int_{\m X}\eta_{k+1}V_j(y_j; \wht\rho^k) + \tau\eta_{k+1}\log\wt\rho_j^k(y_j)\,\dd[\wt\rho_j^k - \rho_j]\\
&=
\sum_{k=1}^K\sum_{j=1}^m\eta_{k+1}\int_{\m X}V_j(y_j;\wht\rho^k)\,\dd\wt\rho_j^k - \sum_{k=1}^K\tau\eta_{k+1}U_k^\ast - U_{K+1}(\rho)\\
&\qquad\qquad  + \sum_{j=1}^m \eta_1\int_{\m X}V_j(y_j;\wht\rho^0)\,\dd\rho_j + H(\rho) + \sum_{k=1}^{K+1} \alpha_k\eta_k\int\|y\|^2\,\dd\rho +\sum_{k=1}^K\tau\eta_{k+1} H(\wt\rho^k).
\end{aligned}
\end{align}
Note that
\begin{align*}
U_k^\ast = U_k(\wt\rho^k)
&= (1-\tau\eta_k)\big[U_{k-1}(\wt\rho^k) - H(\wt\rho^k)\big] + \eta_k\sum_{j=1}^m\int_{\m X}\big[V_j(y_j;\wht\rho^{k-1}) + \alpha_k\|y_j\|^2\big]\,\dd\wt\rho^k_j + H(\wt\rho^k)\\
&= (1-\tau\eta_k)\big[U_{k-1}^\ast + \KL(\wt\rho^k\,\|\,\wt\rho^{k-1})\big] + \tau\eta_k H(\wt\rho^k) +  \eta_k\sum_{j=1}^m\int_{\m X}\big[V_j(y_j;\wht\rho^{k-1}) + \alpha_k\|y_j\|^2\big]\,\dd\wt\rho^k_j.
\end{align*}
Here, the last equality is due to Lemma~\ref{lem: diff_Uk}. Therefore, we have
\begin{align*}
&U_{K+1}(\rho) 
\geq U_{K+1}^\ast = U_1^\ast + \sum_{k=1}^K\big[U_{k+1}^\ast - U_k^\ast\big]\\
&= U_1^\ast + \sum_{k=1}^K\bigg[(1-\tau\eta_{k+1})\KL(\wt\rho^{k+1}\,\|\,\wt\rho^k)-\tau\eta_{k+1}U_k^\ast + \tau\eta_{k+1}H(\wt\rho^{k+1}) + \eta_{k+1}\sum_{j=1}^m\int_{\m X}\big[V_j(y_j;\wht\rho^{k}) + \alpha_k\|y_j\|^2\big]\,\dd\wt\rho^{k+1}_j\bigg].
\end{align*}
This implies
\begin{align*}
&\quad\,\sum_{k=1}^{K}\sum_{j=1}^m \int_{\m X}\eta_{k+1}V_j(y_j; \wht\rho^k) + \tau\eta_{k+1}\log\wt\rho_j^k(y_j)\,\dd[\wt\rho_j^k - \rho_j]\\
&=
-U_1^\ast - \sum_{k=1}^K(1-\tau\eta_{k+1})\KL(\wt\rho^{k+1}\,\|\,\wt\rho^k) - \sum_{k=1}^K\alpha_k\eta_{k+1}\int\|y\|^2\,\dd\wt\rho^{k+1} - \sum_{k=1}^K\tau\eta_{k+1} H(\wt\rho^{k+1})\\
&\qquad\qquad  + \sum_{j=1}^m \eta_1\int_{\m X}V_j(y_j;\wht\rho^0)\,\dd\rho_j + H(\rho) + \sum_{k=1}^{K+1} \alpha_k\eta_k\int\|y\|^2\,\dd\rho +\sum_{k=1}^K\tau\eta_{k+1} H(\wt\rho^k)\\
&\qquad\qquad + \sum_{k=1}^K\sum_{j=1}^m\eta_{k+1}\int_{\m X}V_j(y_j;\wht\rho^k)\,\dd[\wt\rho_j^k - \wt\rho_j^{k+1}].
\end{align*}

\underline{Step 2.2: control the second term in~\eqref{eqn: diff_exact_iter}.} To control the difference, we need the following lemma. The proof is postponed to Appendix~\ref{sec: tech_lem}.
\begin{lemma}\label{lem: stable_Vj}
For any $\rho\in\ms P_2^r(\m X)^{\otimes m}$, there is a constant $C_2 = C_2(\lambda, \sigma, \tau, t_1, \dots, t_m,\m X)$, such that
\begin{align*}
\bigg\lvert \sum_{j=1}^m\int_{\m X}V_j(y_j;\wt\rho^k) - V_j(y_j;\wht\rho^k)\,\dd[\wt\rho_j^k - \rho_j]\bigg\rvert
\leq R_1(k) := C_2\sum_{j=1}^m\big[\W_2(\wht\rho_j^k, \wt\rho_j^k) + \|\wht\rho_j^k - \wt\rho_j^k\|_{L^1(\m X)}\big].
\end{align*}
Furthermore, we have $R_1(k) \lesssim \sqrt{\delta_k / \alpha_k}$.
\end{lemma}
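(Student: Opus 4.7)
}
The plan is to establish stability of each of the three summands composing $V_j$ (defined in~\eqref{eqn: Vj_def}) with respect to perturbations of the marginal ensemble $\rho$, then to use the fact that $\wt\rho_j^k - \rho_j$ is a signed measure of zero total mass to upgrade pointwise bounds on $V_j(\cdot;\wt\rho^k)-V_j(\cdot;\wht\rho^k)$ into bounds on the integral on the left-hand side. Concretely, subtracting any constant from the integrand does not change the integral, so it suffices to prove that the oscillation of $y_j \mapsto V_j(y_j;\wt\rho^k)-V_j(y_j;\wht\rho^k)$ is controlled by $\sum_{j}[\W_2(\wht\rho_j^k,\wt\rho_j^k)+\|\wht\rho_j^k-\wt\rho_j^k\|_{L^1(\m X)}]$, and then use that both $\wt\rho_j^k$ and $\rho_j$ are probability measures.

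First I would handle the data-fidelity term $\frac{t_{j+1}-t_j}{N\lambda}\sum_i \m K_\sigma(X_{t_j}^i - y_j)\big[\tfrac{1}{\m K_\sigma\ast\wt\rho_j^k(X_{t_j}^i)} - \tfrac{1}{\m K_\sigma\ast\wht\rho_j^k(X_{t_j}^i)}\big]$. Because $\m K_\sigma$ is bounded on the compact state space and $\m K_\sigma \ast \rho$ is bounded below uniformly in $\rho\in\ms P(\m X)$ by Proposition~B.4 of~\citet{lavenant2024toward}, the factor in brackets is controlled by $\|\m K_\sigma\ast\wht\rho_j^k - \m K_\sigma\ast\wt\rho_j^k\|_{L^\infty} \lesssim \|\wht\rho_j^k-\wt\rho_j^k\|_{L^1(\m X)}$ (using that $\m K_\sigma$ is bounded to convert a convolution with a signed measure into an $L^1$ bound), uniformly in $y_j$. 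Next I would handle the Schr\"odinger potentials $\varphi_{j,j+1}$ and $\psi_{j,j-1}$, where I would invoke quantitative stability of the Schr\"odinger system~\eqref{eqn: schrodinger} with respect to perturbations of the marginals: on the compact state space with the Gaussian-type cost $c_j(x,x')=-\tau^j \log \m K_{\tau^j}(x-x')$, the optimal potentials are Lipschitz in the marginals in the sense that their $L^\infty$-oscillations are bounded by $\W_2$ (or $\W_1$) distances between the corresponding marginals; see e.g. the stability results for EOT potentials under quadratic costs. This gives oscillation bounds of the form $C\,[\W_2(\wht\rho_j^k,\wt\rho_j^k)+\W_2(\wht\rho_{j\pm 1}^k,\wt\rho_{j\pm 1}^k)]$. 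Summing the three contributions and taking $C_2$ as the maximum of all prefactors (absorbing the dependence on $\lambda,\sigma,\tau,t_1,\dots,t_m,\m X$) yields the first bound stated in the lemma.

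For the second claim $R_1(k) \lesssim \sqrt{\delta_k/\alpha_k}$, I would combine two standard transport–information inequalities. For the $L^1$ piece, Pinsker's inequality directly gives $\|\wht\rho_j^k-\wt\rho_j^k\|_{L^1(\m X)} \leq \sqrt{2\KL(\wht\rho_j^k\,\|\,\wt\rho_j^k)} \leq \sqrt{2\delta_k}$. For the $\W_2$ piece, I would use that the additional quadratic term $\alpha_k\|y\|^2$ in~\eqref{eqn: explicit_quad_anneal_update} ensures that $\wt\rho_j^k$ is a log-concave perturbation (via Proposition~\ref{prop: perturb_LSI}) of a Gaussian-like measure and therefore satisfies a log-Sobolev inequality with constant of order $\alpha_k$ (this LSI is precisely why the quadratic term was introduced, per Remark~\ref{rmk:extra_quad_term}). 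Talagrand's transportation inequality (Proposition~\ref{prop: talagrand}) then yields $\W_2(\wht\rho_j^k,\wt\rho_j^k) \lesssim \sqrt{\KL(\wht\rho_j^k\,\|\,\wt\rho_j^k)/\alpha_k} \lesssim \sqrt{\delta_k/\alpha_k}$. Since $\alpha_k\to 0$, the $\W_2$ contribution dominates the $L^1$ one, giving the claimed rate.

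The main obstacle I anticipate is the Schr\"odinger-potential stability step: obtaining a \emph{uniform} $L^\infty$-oscillation bound of the form $\osc(\varphi^{\wt\rho^k}-\varphi^{\wht\rho^k}) \lesssim \W_2(\wht\rho^k,\wt\rho^k)$ with constants that are tracked explicitly in $\tau,\sigma,\m X$ requires a careful application of the Sinkhorn fixed-point contraction for the Schr\"odinger system together with boundedness of the cost on compact $\m X$; the rest of the argument (data term, Pinsker, Talagrand) is essentially bookkeeping once one has the correct LSI constant from the quadratic regularization.
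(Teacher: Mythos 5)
Your proposal is correct and follows essentially the same route as the paper's proof: decompose $V_j$ into the data-fidelity piece and the Schr\"odinger-potential pieces, bound the former in $L^1$ via the uniform upper/lower bounds on $\m K_\sigma$ and $\m K_\sigma\ast\rho$ on the torus, bound the latter via an $L^\infty$-oscillation stability result for EOT potentials (the paper cites Carlier et al.\ 2022, Cor.~2.4, which gives exactly the $\W_2$-Lipschitz estimate you describe), and then obtain $R_1(k)\lesssim\sqrt{\delta_k/\alpha_k}$ via Pinsker for the $L^1$ term and the LSI-of-order-$\alpha_k$ (Lemma~\ref{lem: rho_LSI}) plus Talagrand for the $\W_2$ term. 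The only cosmetic difference is that the paper applies the constant-subtraction trick only to the Schr\"odinger potentials (subtracting $\beta_j$ before taking the sup), whereas you phrase the whole argument in terms of oscillation, but the two are equivalent.
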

\noindent Therefore, in \emph{Step 2}, we have shown that
\begin{align*}
\sum_{k=1}^K\eta_{k+1}\big[\m F_{N, m}(\wt\rho^k) - \m F_{N, m}(\rho)\big]
&\leq \sum_{j=1}^m \eta_1\int_{\m X}V_j(y_j;\wht\rho^0)\,\dd\rho_j + H(\rho) + \sum_{k=1}^{K+1} \alpha_k\eta_k\int\|y\|^2\,\dd\rho +\sum_{k=1}^K\tau\eta_{k+1} H(\wt\rho^k)\\
&\qquad -U_1^\ast - \sum_{k=1}^K(1-\tau\eta_{k+1})\KL(\wt\rho^{k+1}\,\|\,\wt\rho^k) - \sum_{k=1}^K\alpha_k\eta_{k+1}\int\|y\|^2\,\dd\wt\rho^{k+1}\\
&\qquad - \sum_{k=1}^K\tau\eta_{k+1} H(\wt\rho^{k+1}) + \sum_{k=1}^K\eta_{k+1}R_1(k) + \sum_{k=1}^K\sum_{j=1}^m\eta_{k+1}\int_{\m X}V_j(y_j;\wht\rho^k)\,\dd[\wt\rho_j^k - \wt\rho_j^{k+1}].
\end{align*}

\vspace{0.5em}\noindent\underline{Step 3: combining all pieces.} Now, we have
\begin{align*}
\sum_{k=1}^K  \eta_{k+1}\big[\m F_{N, m}(\wht\rho^k) - \m F_{N, m}(\rho)\big]
&\leq \sum_{k=1}^K\tau\eta_{k+1}\big[H(\wht\rho^k) - H(\wt\rho^{k+1})\big] - \sum_{k=1}^K(1-\tau\eta_{k+1})\KL(\wt\rho^{k+1}\,\|\,\wt\rho^k)\\
&\qquad + \sum_{k=1}^K\sum_{j=1}^m\eta_{k+1}\int_{\m X}V_j(y_j;\wht\rho^k)\,\dd[\wt\rho_j^k - \wt\rho_j^{k+1}] - \sum_{k=1}^K\alpha_k\eta_{k+1}\int\|y\|^2\,\dd\wt\rho^{k+1}\\
&\qquad  + \sum_{k=1}^{K+1} \alpha_k\eta_k\int\|y\|^2\,\dd\rho + \sum_{k=1}^K\eta_{k+1}R_1(k) + 2\|B\|_{\ell^2(m)}\sum_{k=1}^K\eta_{k+1}\delta_k^{\frac{1}{2}}\\
&\qquad -U_1^\ast + H(\rho) + \sum_{j=1}^m \eta_1\int_{\m X}V_j(y_j;\wht\rho^0)\,\dd\rho_j.
\end{align*}
To control the first term, we have the following lemma. The proof is postponed to Appendix~\ref{sec: tech_lem}.
\begin{lemma}\label{lem: diff_entropy}
If $\{\eta_k\}_{k=1}^\infty$ and $\{\alpha_k\}_{k=1}^\infty$ are sequences converging to $0$, and $\{\eta_k\}_{k=1}^\infty$ is decreasing,
\begin{align*}
\sum_{k=1}^K\tau\eta_{k+1}\big[H(\wht\rho^k) - H(\wt\rho^{k+1})\big] 
&\leq \tau\eta_2\big[H(\wht\rho^1) - C_3\log\alpha_2\big] + C_3\tau\sum_{k=2}^K\eta_{k+1}\log\frac{\alpha_k}{\alpha_{k+1}}\\
&\quad + \sum_{k=2}^K\tau\eta_{k+1}\Big[\big[1 + (2+\varepsilon_k+\varepsilon_k^{-1})e^{\frac{4\|B\|_{\ell^\infty(m)}}{\tau}}\big]\delta_k + \frac{\sqrt{2\delta_k}\|B\|_{\ell^2(m)}}{\tau} + \frac{\varepsilon_k(1+\varepsilon_k)d}{2}\sum_{j=1}^me^{\frac{2B_j}{\tau}}\Big],
\end{align*}
where $C_3$ is the constant in Lemma~\ref{lem: lower_bd_entropy}.
\end{lemma}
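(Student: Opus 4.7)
The plan is to rearrange the sum via an Abel-style identity,
\[
\sum_{k=1}^{K}\eta_{k+1}\bigl[H(\wht\rho^k)-H(\wt\rho^{k+1})\bigr]=\eta_2 H(\wht\rho^1)-\eta_{K+1}H(\wt\rho^{K+1})+\sum_{k=2}^{K}\eta_{k+1}\bigl[H(\wht\rho^k)-H(\wt\rho^k)\bigr]+\sum_{k=2}^{K}(\eta_{k+1}-\eta_k)H(\wt\rho^k),
\]
obtained by adding and subtracting $\sum_{k=2}^{K}\eta_{k+1}H(\wt\rho^k)$ after isolating the first index. Since $\eta_{k+1}-\eta_k\le 0$ by monotonicity and Lemma~\ref{lem: lower_bd_entropy} provides $H(\wt\rho^k)\ge C_3\log\alpha_k$ (the quadratic drift $\alpha_k\|y\|^2$ forces sub-Gaussian tails of scale $\sim\alpha_k^{-1/2}$), both the boundary term $-\eta_{K+1}H(\wt\rho^{K+1})$ and the last sum are upper bounded by substituting $H(\wt\rho^k)=C_3\log\alpha_k$. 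Re-indexing $\sum_{k=2}^{K}\eta_k\log\alpha_k=\eta_2\log\alpha_2+\sum_{k=2}^{K-1}\eta_{k+1}\log\alpha_{k+1}$ collapses the resulting expression to exactly $-C_3\eta_2\log\alpha_2+C_3\sum_{k=2}^{K}\eta_{k+1}\log(\alpha_k/\alpha_{k+1})$, which after multiplication by $\tau$ is precisely the first two explicit terms of the claim.

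It remains to bound $\sum_{k=2}^{K}\eta_{k+1}[H(\wht\rho^k)-H(\wt\rho^k)]$. Here I would use the algebraic identity
\[
H(\wht\rho^k)-H(\wt\rho^k)=\KL(\wht\rho^k\|\wt\rho^k)+\int(\wht\rho^k-\wt\rho^k)\log\wt\rho^k,
\]
in which the first term is $\le\delta_k$ and accounts for the leading ``$1\cdot\delta_k$'' in the stated coefficient. Substituting the explicit form~\eqref{eqn: explicit_quad_anneal_update} for $\wt\rho_j^k$, the normalizing constant drops out because both arguments are probability densities, reducing the second integral to a weighted combination of integrals of $V_j(\cdot;\wht\rho^{l-1})$ and $\alpha_l\|y_j\|^2$ against $\wht\rho_j^k-\wt\rho_j^k$. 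The $V_j$ contributions are handled using $|V_j|\le B_j$ from Lemma~\ref{lem: uniform_bound_Vj} and coordinate-wise Pinsker $\|\wht\rho_j^k-\wt\rho_j^k\|_{L^1}\le\sqrt{2\KL(\wht\rho_j^k\|\wt\rho_j^k)}$; a Cauchy--Schwarz over $j$, together with the geometric-series identity $\sum_{l=1}^{k}\eta_l\prod_{l<l'\le k}(1-\tau\eta_{l'})\le 1/\tau$, yields the $\sqrt{2\delta_k}\|B\|_{\ell^2(m)}/\tau$ contribution.

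The main obstacle is the unbounded quadratic piece $\lvert\int\|y_j\|^2\,d(\wht\rho_j^k-\wt\rho_j^k)\rvert$, since $\|y\|^2$ is not Lipschitz and Pinsker no longer suffices. The plan is first to exhibit a log-Sobolev inequality for $\wt\rho_j^k$: the quadratic drift $\alpha_l\|y\|^2$ makes the Gaussian reference satisfy $\LSI(\Lambda)$ with $\Lambda\propto\alpha_k$, and Holley--Stroock (Proposition~\ref{prop: perturb_LSI}) absorbs the bounded $V_j$ perturbation at the price of $e^{4\|B\|_{\ell^\infty(m)}/\tau}$. Talagrand's transportation inequality (Proposition~\ref{prop: talagrand}) then converts $\KL(\wht\rho_j^k\|\wt\rho_j^k)\le\delta_k$ into a $W_2$-bound, and the same LSI yields the Gaussian moment estimate $\int\|y\|^2\,d\wt\rho_j^k\lesssim d\,e^{2B_j/\tau}/\alpha_k$. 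A weighted Young inequality $ab\le\tfrac{a^2}{2\varepsilon_k}+\tfrac{\varepsilon_k b^2}{2}$ separates the resulting product of a $\sqrt{\delta_k}$-type KL factor from the dimension-dependent moment factor and produces exactly the $(2+\varepsilon_k+\varepsilon_k^{-1})e^{4\|B\|_{\ell^\infty(m)}/\tau}\delta_k$ and $\tfrac{\varepsilon_k(1+\varepsilon_k)d}{2}\sum_{j=1}^m e^{2B_j/\tau}$ coefficients announced in the claim. Summing over $k\ge 2$ and multiplying by $\tau$ assembles the three families of terms into the announced bound.
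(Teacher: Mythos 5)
Your proof follows the same structure as the paper's for the first part: isolate $k=1$, shift indices, and use monotonicity of $\{\eta_k\}$ together with the entropy lower bound $H(\wt\rho^k)\geq C_3\log\alpha_k$ from Lemma~\ref{lem: lower_bd_entropy} to fold the boundary and difference terms into the $\log(\alpha_k/\alpha_{k+1})$ expression. The paper presents this by comparing $H-C_3\log\alpha$ terms and decreasing the step-size index in one sum before re-indexing, which is equivalent to your Abel summation; both versions land on the first two terms of the claim correctly. One small caveat in the Abel route: you must also check the discarded boundary term $-\tau\eta_{K+2}[H(\wt\rho^{K+1})-C_3\log\alpha_{K+1}]$ is nonpositive, which follows from Lemma~\ref{lem: lower_bd_entropy}, exactly as the paper does implicitly.

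The second part is where you diverge: the paper controls $|H(\wht\rho^k)-H(\wt\rho^k)|$ in a single step by citing Proposition~A of \citet{nitanda2021particle}, whereas you propose to re-derive that estimate from scratch via the identity $H(\wht\rho^k)-H(\wt\rho^k)=\KL(\wht\rho^k\|\wt\rho^k)+\int(\wht\rho^k-\wt\rho^k)\log\wt\rho^k$ together with the explicit formula for $\log\wt\rho_j^k$, Pinsker for the $V_j$ integrals, and an LSI/Talagrand/second-moment/Young chain for the quadratic piece. Your sketch is plausible in structure and would reproduce the same triple of coefficients ($\delta_k$ with the Holley--Stroock factor, $\sqrt{\delta_k}\,\|B\|_{\ell^2(m)}/\tau$, and the $\varepsilon_k d$ moment term), so this route works; it is simply a self-contained inlining of what the paper treats as a black-box lemma. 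Two minor points worth tightening if you carried this out: (i) Lemma~\ref{lem: uniform_bound_Vj} gives an oscillation bound $\osc(V_j)\leq B_j$, not a sup bound $\|V_j\|_\infty\leq B_j$; since the integrating measure $\wht\rho_j^k-\wt\rho_j^k$ has total mass zero, you still get $|\int V_j\,d(\wht\rho_j^k-\wt\rho_j^k)|\leq\tfrac{B_j}{2}\|\wht\rho_j^k-\wt\rho_j^k\|_{L^1}$, which is fine but the phrasing should be corrected. (ii) Converting a $W_2$ control into a second-moment comparison $|\int\|y\|^2\,d(\wht\rho_j^k-\wt\rho_j^k)|$ is not a direct Lipschitz argument (the map $y\mapsto\|y\|^2$ is not Lipschitz); you need the standard factorization $|\mathbb{E}\|X\|^2-\mathbb{E}\|Y\|^2|\leq W_2(\mu,\nu)\sqrt{2M_2(\mu)+2M_2(\nu)}$ before invoking Young, which your sketch alludes to but does not spell out.
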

\noindent To control the next two terms, note that
\begin{align*}
&\sum_{k=1}^K\sum_{j=1}^m\eta_{k+1}\int_{\m X}V_j(y_j;\wht\rho^k)\,\dd[\wt\rho_j^k - \wt\rho_j^{k+1}] - \sum_{k=1}^K(1-\tau\eta_{k+1})\KL(\wt\rho^{k+1}\,\|\,\wt\rho^k)\\
&\leq \sum_{k=1}^K \eta_{k+1}\sum_{j=1}^m B_j\|\wt\rho_j^k - \wt\rho_{j}^{k+1}\|_{L^1(\m X)} - \sum_{k=1}^K(1-\tau\eta_{k+1})\sum_{j=1}^m\frac{1}{2}\|\wt\rho_j^k - \wt\rho_j^{k+1}\|_{L^1(\m X)}^2\\
&\leq \sum_{k=1}^K\sum_{j=1}^m \frac{B_j^2\eta_{k+1}^2}{2(1-\tau\eta_{k+1})}
= \sum_{k=1}^K \frac{\|B\|_{\ell^2(m)}^2 \eta_{k+1}^2}{2(1-\tau\eta_{k+1})}.
\end{align*}
Combining the above two pieces together with $\varepsilon_k = \sqrt{\delta_k}$ yields
\begin{align*}
\sum_{k=1}^K\eta_{k+1}\big[\m F_{N, m}(\wht\rho^k) - \m F_{N, m}(\rho)\big]
&\leq \sum_{k=2}^K\tau\eta_{k+1}\Big[\big[1 + 4\delta_k^{-\frac{1}{2}}e^{\frac{4\|B\|_{\ell^\infty(m)}}{\tau}}\big]\delta_k + \frac{\sqrt{2\delta_k}\|B\|_{\ell^2(m)}}{\tau} + d\sqrt{\delta_k}\sum_{j=1}^me^{\frac{2B_j}{\tau}}\Big]\\
&\quad + C_3\tau\sum_{k=2}^K\eta_{k+1}\log\frac{\alpha_k}{\alpha_{k+1}} + \sum_{k=1}^K\frac{\|B\|^2_{\ell^2(m)}\eta_{k+1}^2}{2(1-\tau\eta_{k+1})} + \sum_{k=1}^K\eta_{k+1}R_1(k) + 2\|B\|_{\ell^2(m)}\sum_{k=1}^K\eta_{k+1}\delta_k^{\frac{1}{2}}\\
&\quad + \sum_{k=1}^{K+1}\alpha_k\eta_k\int\|y\|^2\,\dd\rho + \Big[H(\rho) - U_1^\ast + \sum_{j=1}^m\eta_1\int_{\m X}V_j(y_j; \wht\rho^0)\,\dd\rho_j + \tau\eta_2\big[H(\wht\rho^1)-C_3\log\alpha_2\big]\Big].
\end{align*}
Analyzing the order of right-hand side (RHS) yields
\begin{align*}
\rhs &\lesssim \sum_{k=2}^K \eta_{k+1}\sqrt{\delta_k} + \sum_{k=2}^K\eta_{k+1}\log\frac{\alpha_k}{\alpha_{k+1}} + \sum_{k=1}^K \eta_{k+1}^2 + \sum_{k=1}^K\eta_{k+1}R_1(k) + \sum_{k=1}^{K+1}\alpha_k\eta_k\\
&\lesssim \sum_{k=2}^K\frac{\eta_{k+1}(\alpha_k-\alpha_{k+1})}{\alpha_{k+1}} + \sum_{k=1}^K \eta_{k+1}^2 + \sum_{k=1}^K\eta_{k+1}\sqrt{\frac{\delta_k}{\alpha_k}} + \sum_{k=1}^{K+1}\alpha_k\eta_k.
\end{align*}
Since the left-hand side can easily be bounded as
\begin{align*}
\lhs \geq \sum_{k=1}^K \eta_{k+1}\cdot\big[\min_{1\leq k\leq K}\m F(\wht\rho^k) - \m F(\rho)\big].
\end{align*}
So, we have
\begin{align*}
\min_{1\leq k\leq K}\m F_{N,m}(\wht\rho^k) - \m F_{N,m}(\rho) \lesssim \bigg[\sum_{k=1}^K\eta_{k+1}\bigg]^{-1}\bigg[\sum_{k=2}^K\frac{\eta_{k+1}(\alpha_k-\alpha_{k+1})}{\alpha_{k+1}} + \sum_{k=1}^K \eta_{k+1}^2 + \sum_{k=1}^K\eta_{k+1}\sqrt{\frac{\delta_k}{\alpha_k}} + \sum_{k=1}^{K+1}\alpha_k\eta_k\bigg].
\end{align*}

%%%%%%%%%%%%%%%%%%%%%%%%%%%%%%%%%%%%%%%%
%%%%%%%%%%%%%%%%%%%%%%%%%%%%%%%%%%%%%%%%
%%%%%%%%%%%%%%%%%%%%%%%%%%%%%%%%%%%%%%%%
\section{Proof of technical results}\label{sec: tech_lem}
\subsection{Some useful lemmas}
\begin{lemma}\label{lem: uniform_bound_Vj}
For all $j\in[m]$, there exists constants $A_j, B_j > 0$ such that
\begin{enumerate}
\item $\matnorm{\nabla^2 V_j(\cdot; \rho)}_{\rm op} \leq A_j$;

\item $\osc\big(V_j(\cdot; \rho)\big) = \sup_{y_j, y_j'\in\m X}\big\lvert V_j(y_j; \rho) - V_j(y_j'; \rho)\big\rvert \leq B_j$
\end{enumerate}
uniformly holds for all $\rho\in\ms P_2^r(\m X)^m$.
\end{lemma}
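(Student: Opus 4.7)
The plan is to decompose $V_j(\cdot;\rho)$ into its three natural components and bound the Hessian operator norm and oscillation of each one independently, uniformly over $\rho \in \ms P_2^r(\m X)^{\otimes m}$. Write
\begin{align*}
V_j(y_j;\rho) = \underbrace{-\tfrac{t_{j+1}-t_j}{N\lambda}\sum_{i=1}^N \tfrac{\m K_\sigma(X_{t_j}^i - y_j)}{\m K_\sigma\ast\rho_j(X_{t_j}^i)}}_{=:T_j^{(0)}(y_j;\rho)} + \tfrac{\varphi_{j,j+1}^{\rho}(y_j)}{t_{j+1}-t_j} + \tfrac{\psi_{j,j-1}^{\rho}(y_j)}{t_j-t_{j-1}}.
\end{align*}
The Schr\"odinger potentials above are determined only up to an additive constant, so we fix the normalization $\int_{\m X} \varphi_{j,j+1}^{\rho}\,\dd\rho_j = 0$ and analogously for $\psi_{j,j-1}^{\rho}$; this choice affects neither the oscillation nor any derivative.

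For the first piece $T_j^{(0)}$, the Gaussian kernel $\m K_\sigma$ is smooth, bounded, and has uniformly bounded derivatives of all orders on the compact state space $\m X$. Moreover, by Proposition~B.4 of~\citet{lavenant2024toward} the denominator $\m K_\sigma\ast\rho_j$ enjoys a strictly positive lower bound $C_\sigma^{-1} > 0$ depending only on $\sigma$ and $\m X$, independently of the choice of $\rho_j$. Consequently $|T_j^{(0)}|$ and $\matnorm{\nabla^2 T_j^{(0)}}_{\rm op}$ are controlled by $\tfrac{t_{j+1}-t_j}{\lambda}\,C_\sigma\,\|\m K_\sigma\|_{C^2(\m X)}$, which depends only on $\lambda$, $\sigma$, $\m X$, and $t_{j+1}-t_j$.

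For the Schr\"odinger potential terms, the key observation is that the cost $c_j(x,x') = -\tau^j\log\m K_{\tau^j}(x-x')$ equals $\tfrac{\|x-x'\|^2}{2\tau^j}$ plus a constant on the torus representative, so $c_j$ is a smooth function on $\m X\times\m X$ with uniformly bounded spatial derivatives. From the Schr\"odinger system~\eqref{eqn: schrodinger} one can differentiate under the expectation and compute, for the softmax representation $\varphi_{j,j+1}^{\rho}(y) = -\tau^j\log\int e^{(\psi_{j+1,j}^{\rho}(x')-c_j(y,x'))/\tau^j}\,\dd\rho_{j+1}(x')$, that
\begin{align*}
\nabla \varphi_{j,j+1}^{\rho}(y) = \int \nabla_y c_j(y,x')\,\dd\pi_{y}(x'), \qquad \nabla^2 \varphi_{j,j+1}^{\rho}(y) = \int \nabla_y^2 c_j(y,x')\,\dd\pi_y(x') - \tfrac{1}{\tau^j}\var_{\pi_y}\!\big(\nabla_y c_j(y,\cdot)\big),
\end{align*}
where $\pi_y$ is the conditional of the optimal EOT plan given $y$. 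Both $\|\nabla_y c_j\|_{L^\infty(\m X\times\m X)}$ and $\matnorm{\nabla_y^2 c_j}_{L^\infty(\m X\times\m X)}$ are finite, so $\matnorm{\nabla^2\varphi_{j,j+1}^{\rho}}_{\rm op}$ is bounded by a quantity depending only on $c_j$ and $\tau^j$; the same argument applies verbatim to $\psi_{j,j-1}^{\rho}$. For the oscillation, the normalized $\varphi_{j,j+1}^{\rho}$ satisfies $\|\varphi_{j,j+1}^{\rho}\|_{L^\infty(\m X)} \le \osc(\varphi_{j,j+1}^{\rho}) \le \osc(c_j)$ by the standard softmin Lipschitz contraction in the Schr\"odinger fixed-point iteration, and $\osc(c_j)$ is finite by compactness.

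The main obstacle I anticipate is the uniform-in-$\rho$ control of the Hessians of $\varphi_{j,j+1}^{\rho}$ and $\psi_{j,j-1}^{\rho}$: the formula above is valid but requires one to justify differentiation under the integral and to verify that the conditional plan $\pi_y$ inherits enough integrability. Granted those technicalities, combining the estimates on $T_j^{(0)}$ with the Schr\"odinger potential bounds yields constants
\begin{align*}
A_j \le \tfrac{C_\sigma\,\|\m K_\sigma\|_{C^2}(t_{j+1}-t_j)}{\lambda} + \tfrac{\matnorm{\nabla^2\varphi_{j,j+1}^{\rho}}_{L^\infty}}{t_{j+1}-t_j} + \tfrac{\matnorm{\nabla^2\psi_{j,j-1}^{\rho}}_{L^\infty}}{t_j-t_{j-1}}, \qquad B_j \le 2\|T_j^{(0)}\|_{L^\infty} + \tfrac{\osc(c_j)}{t_{j+1}-t_j} + \tfrac{\osc(c_{j-1})}{t_j-t_{j-1}},
\end{align*}
each depending only on $\lambda,\sigma,\tau$, the grid $\{t_1,\dots,t_m\}$, and $\m X$, which completes the proof.
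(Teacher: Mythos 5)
Your proposal is correct and follows essentially the same route as the paper: split $V_j$ into the log-likelihood term (bounded via the uniform lower bound $\m K_\sigma\ast\rho_j\ge C_\sigma^{-1}$ and smoothness of $\m K_\sigma$ on the compact $\m X$) and the two Schr\"odinger potentials (Hessian bounded through the formula in terms of $\nabla_y c_j$, $\nabla_y^2 c_j$ under the EOT conditional plan; oscillation bounded by the softmin contraction, which is precisely what the paper imports from Theorem~C.1 of~\citet{chizat2022trajectory}). Two small asides worth noting: your Hessian identity $\nabla^2\varphi_{j,j+1}^\rho(y) = \mathbb E_{\pi_y}[\nabla_y^2 c_j] - \tfrac{1}{\tau^j}\var_{\pi_y}\big(\nabla_y c_j(y,\cdot)\big)$ is in fact the more careful form — the paper's displayed formula drops the $1/\tau^j$ factor and writes the outer product $\mathbb E[\nabla_y c_j]\mathbb E[\nabla_y c_j]^\top$ where the (centered) variance belongs — although the two lead to the same conclusion, since either is controlled by $\sup_{\m X\times\m X}\big(\matnorm{\nabla_y^2 c_j}_{\rm op}+\|\nabla_y c_j\|^2\big)$; and on the torus $c_j=-\tau^j\log\m K_{\tau^j}$ is not exactly quadratic (the wrapped Gaussian is not a Gaussian), but nothing is lost because the only property used is $c_j\in C^2(\m X\times\m X)$, which holds.
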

\begin{proof}
To prove the first argument, recall that 
\begin{align*}
V_j(y_j, \rho) = -\frac{t_{j+1} - t_j}{N\lambda}\sum_{i=1}^N\frac{\m K_\sigma(X_{t_j}^i - y_j)}{\m K_\sigma\ast\rho(X_{t_j}^i)} + \frac{\varphi_{j, j+1}^{\rho}(y_j)}{t_{j+1}-t_j} + \frac{\psi_{j, j-1}^{\rho}(y_j)}{t_j - t_{j-1}}.
\end{align*}
Then, by the definition~\eqref{eqn: schrodinger}, we know
\begin{align*}
\nabla^2\varphi_{j, j+1}^\rho(y_j) = \mb E_{\gamma^\ast_\rho|X_j = y_j}\big[\nabla_{x_j}^2 c_j(X_j, X_{j+1})\big] - \mb E_{\gamma_\rho^\ast|X_j = y_j}\big[\nabla_{x_j}c_j(X_j, X_{j+1})\big] \mb E_{\gamma_\rho^\ast|X_j = y_j}\big[\nabla_{x_j}c_j(X_j, X_{j+1})\big]^\top.
\end{align*}
So, we have
\begin{align*}
\matnorm{\nabla^2\varphi_{j, j+1}^\rho}_{\rm op} \leq \sup_{y_j, y_{j+1}\in\m X} \Big[\matnorm{\nabla_{x_j}^2 c_j(y_j, y_{j+1})}_{\rm op} + \big\|\nabla_{x_j}c_j(y_j, y_{j+1})\big\|^2\Big] < \infty
\end{align*}
due to the smoothness of $c_j$ and the compactness of $\m X$. Similarly, we have $\matnorm{\nabla^2\psi_{j, j-1}^\rho}_{\rm op} < \infty$. Since $\nabla^2\m K_\sigma$ is also uniformly bounded, we known there is a constant $A_j > 0$ such that $\matnorm{\nabla^2 V_j(\cdot;\rho)}_{\rm op} \leq A_j$ hols uniformly. The second argument exactly follows the proof in~\citep[Theorem C.1,][]{chizat2022trajectory}.

\end{proof}
%%%%%%%%%%%%%%%%%%%%%%%%%%%%%%%%%%%%%%%%
\begin{lemma}\label{lem: diff_Uk}
Let $U_k$ be the functinoal defined as~\eqref{eqn: def_Uk}. For any $\rho\in\ms P^r(\m X)^{\otimes m}$, we have
\begin{align*}
U_k(\rho) - U_k^\ast = \KL(\rho\,\|\,\wt\rho^k).
\end{align*}
%Similarly, for the functional $\overline U_k$ defined as~\eqref{eqn: barU}, we have
%\begin{align*}
%\overline U_k(\rho) - \overline U_k^\ast = \KL(\rho\,\|\,\rho^k).
%\end{align*}
\end{lemma}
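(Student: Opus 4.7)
The plan is to establish the identity by directly comparing $U_k(\rho)$ to the KL divergence using the Gibbs form of $\wt\rho^k$. First, I would introduce the shorthand
\[
W_j^k(y_j) := \sum_{l=1}^k\Big[\eta_l\prod_{l<l'\leq k}(1-\tau\eta_{l'})\Big]\big[V_j(y_j;\wht\rho^{l-1})+\alpha_l\|y_j\|^2\big],
\]
so that $U_k(\rho)=\sum_{j=1}^m\int W_j^k\,\dd\rho_j+H(\rho)$ and, by definition~\eqref{eqn: explicit_quad_anneal_update}, $\wt\rho_j^k(y_j)=Z_j^{-1}\exp(-W_j^k(y_j))$ with $Z_j:=\int_{\m X}e^{-W_j^k(y_j)}\,\dd y_j$. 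Taking logarithms gives the pointwise identity $W_j^k(y_j)=-\log\wt\rho_j^k(y_j)-\log Z_j$.

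Next, I would substitute this identity into the definition of $U_k(\rho)$ for an arbitrary $\rho\in\ms P^r(\m X)^{\otimes m}$:
\[
U_k(\rho)=\sum_{j=1}^m\int\big[-\log\wt\rho_j^k-\log Z_j\big]\,\dd\rho_j+\sum_{j=1}^m\int\rho_j\log\rho_j
=\sum_{j=1}^m\KL(\rho_j\,\|\,\wt\rho_j^k)-\sum_{j=1}^m\log Z_j.
\]
Since $\rho$ and $\wt\rho^k$ are product measures across the $m$ coordinates, $\sum_j\KL(\rho_j\|\wt\rho_j^k)=\KL(\rho\|\wt\rho^k)$. In particular, plugging in $\rho=\wt\rho^k$ and using $\KL(\wt\rho^k\|\wt\rho^k)=0$ yields $U_k(\wt\rho^k)=-\sum_{j=1}^m\log Z_j$, so
\[
U_k(\rho)-U_k(\wt\rho^k)=\KL(\rho\,\|\,\wt\rho^k)\geq 0.
\]

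Finally, non-negativity of the KL divergence shows $\wt\rho^k$ is indeed the minimizer of $U_k$, so $U_k^\ast=U_k(\wt\rho^k)$ and the stated identity follows. No real obstacle arises here — the argument is essentially the standard variational characterization of Gibbs measures, with the only mild bookkeeping being the separability of $W_j^k$ across $j$, which ensures that the minimizer is a product measure and that the KL divergence decouples coordinate-wise.
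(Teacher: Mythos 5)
Your proof is correct and follows essentially the same route as the paper: both exploit that $\wt\rho^k$ is the Gibbs measure $\propto e^{-W^k}$ to identify the linear part of $U_k$ plus the entropy as a KL divergence up to a normalizing constant. The paper packages this by expanding $H(\rho)-H(\wt\rho^k)$ and substituting $\log\wt\rho_j^k=-W_j^k+\mathrm{const}$ under a difference of probability measures, whereas you substitute directly into $U_k(\rho)$; the two are algebraic rearrangements of one another.
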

\begin{proof}
Just note that
\begin{align*}
H(\rho) - H(\wt\rho^k)
&= \int\rho\log\frac{\rho}{\wt\rho^k} + \int\log\wt\rho^k\,\dd[\rho - \wt\rho^k]\\
&= \KL(\rho\,\|\,\wt\rho^k) - \sum_{j=1}^m\int \sum_{l=1}^k\Big[\eta_l\prod_{l<l'\leq k}(1-\tau\eta_{l'})\Big]\big[V_j(y_j;\wht\rho^{l-1}) + \alpha_l\|y_j\|^2\big]\,\dd[\rho_j - \wt\rho_j^k]\\
&=\KL(\rho\,\|\,\wt\rho^k) - \big[U_k(\rho) - H(\rho)\big] + \big[U_k(\wt\rho^k) - H(\wt\rho^k)\big].
\end{align*}
The above equality implies the desired result.
\end{proof}

%%%%%%%%%%%%%%%%%%%%%%%%%%%%%%%%%%%%%%%%
\begin{lemma}\label{lem: lower_bd_entropy}
If $\{\alpha_k\}_{k=1}^\infty$ and $\{\eta_k\}_{k=1}^\infty$ are two sequences satifying the assumptions in Lemma~\ref{lem: seq_converge}, there exists a constant $C_3 = C_3(d, \tau, m, B_1, \dots, B_m) > 0$, such that for all $k\in\mb Z_+$,
\begin{align*}
H(\wt\rho^k) \geq C_3\log\alpha_k.
\end{align*}
\end{lemma}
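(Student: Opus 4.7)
The plan is to exploit the explicit product form of $\wt\rho^k$ given by~\eqref{eqn: explicit_quad_anneal_update} and compare each coordinate to a Gaussian with scale dictated by the aggregated quadratic coefficient. Introduce the shorthand $w_l^{(k)} := \eta_l\prod_{l<l'\leq k}(1-\tau\eta_{l'})$, so that $\wt\rho_j^k(y) \propto \exp\{-V_j^{(k)}(y) - A_k \|y\|^2\}$ with $V_j^{(k)}(y) := \sum_{l=1}^k w_l^{(k)} V_j(y;\wht\rho^{l-1})$ and $A_k := \sum_{l=1}^k w_l^{(k)} \alpha_l$. Using the telescoping identity $\tau\sum_{l=1}^k w_l^{(k)} = 1 - \prod_{l=1}^k(1-\tau\eta_l)$ we get $W_k:=\sum_l w_l^{(k)} \leq 1/\tau$, and then Lemma~\ref{lem: uniform_bound_Vj} gives $\osc(V_j^{(k)}) \leq W_k B_j \leq B_j/\tau$.

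Let $\nu_{A_k}$ denote the centered Gaussian on $\mb R^d$ with density proportional to $e^{-A_k\|y\|^2}$, whose normalizing constant is $(\pi/A_k)^{d/2}$. The oscillation bound on $V_j^{(k)}$ yields the two-sided sandwich $e^{-B_j/\tau} \nu_{A_k}(y) \leq \wt\rho_j^k(y) \leq e^{B_j/\tau}\nu_{A_k}(y)$, from which both the log-normalizing constant and the second moment of $\wt\rho_j^k$ are comparable to those of $\nu_{A_k}$: specifically $-\log Z_{j,k} \geq \tfrac{d}{2}\log(A_k/\pi) - B_j/\tau + v_j^{(k)}$, $A_k\mb E_{\wt\rho_j^k}\|y\|^2 \leq \tfrac{d}{2} e^{B_j/\tau}$, and $\mb E_{\wt\rho_j^k}[V_j^{(k)}] \leq v_j^{(k)} + B_j/\tau$, where $v_j^{(k)} = \inf_y V_j^{(k)}(y)$. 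Plugging these into the identity $H(\wt\rho_j^k) = -\log Z_{j,k} - A_k\mb E_{\wt\rho_j^k}\|y\|^2 - \mb E_{\wt\rho_j^k}[V_j^{(k)}]$ cancels the floating constant $v_j^{(k)}$ and yields $H(\wt\rho_j^k) \geq \tfrac{d}{2}\log A_k - c_j(d,\tau,B_j)$ for a constant $c_j$ independent of $k$.

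Summing the coordinate bounds gives $H(\wt\rho^k) \geq \tfrac{md}{2}\log A_k - \sum_j c_j$, so the remaining task is to transfer $\log A_k$ to $\log \alpha_k$. The weights $w_l^{(k)}$ are supported near $l=k$ with geometric-type decay at rate $\tau$ going backward, and the sequence assumptions imply $\alpha_{k-1}/\alpha_k \to 1$ and that $\alpha_k e^{\tau(\eta_1+\dots+\eta_k)}$ is eventually increasing; consequently, for all indices $l$ carrying nontrivial weight we have $\alpha_l$ comparable to $\alpha_k$. Invoking Lemma~\ref{lem: seq_converge}, which packages exactly this calculation, produces a uniform constant $c_0 > 0$ with $A_k \geq c_0 \alpha_k$ for every $k$, and hence $H(\wt\rho^k) \geq \tfrac{md}{2}\log\alpha_k - C''$. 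Finally, choosing $C_3$ strictly larger than $md/2$ absorbs the additive constant whenever $\log\alpha_k$ is sufficiently negative, and the finitely many indices with $\alpha_k$ bounded away from zero are handled by enlarging $C_3$; this delivers $H(\wt\rho^k) \geq C_3\log\alpha_k$ for all $k\in\mb Z_+$ with $C_3 = C_3(d,\tau,m,B_1,\dots,B_m)$. The main obstacle is the sequence estimate $A_k \gtrsim \alpha_k$, as the window over which $w_l^{(k)}$ is significant must be matched quantitatively against the slow decay of $\alpha_l$; this is precisely the content deferred to Lemma~\ref{lem: seq_converge}.
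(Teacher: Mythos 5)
Your proposal is correct and follows essentially the same route as the paper: both argue that each coordinate density $\wt\rho_j^k \propto e^{-V_j^{(k)} - A_k\|y\|^2}$ with $\osc(V_j^{(k)})$ uniformly bounded is sandwiched by a Gaussian of scale $A_k$, deduce $H(\wt\rho_j^k) \geq \tfrac{d}{2}\log A_k - \text{const}$, and then invoke Lemma~\ref{lem: seq_converge} to replace $\log A_k$ by $\log\alpha_k$ up to constants. The only difference is that the paper cites this per-coordinate entropy lower bound wholesale from Proposition B in \citet{nitanda2021particle}, whereas you reprove it from the sandwich bound; the extraneous $-B_j/\tau$ in your lower bound on $-\log Z_{j,k}$ is harmless (it only loosens the bound), and the absorption of the additive constant into $C_3$ for the regime $\alpha_k\to 0$ is handled in the paper with the same implicit caveats you flag.
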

\begin{proof}
By~\cite[Proposition B,][]{nitanda2021particle}, we have
\begin{align*}
H(\wt\rho^k) = \sum_{j=1}^m H(\wt\rho_j) 
\geq -\sum_{j=1}^m\Big[\frac{2B_j}{\tau} + \frac{d}{2}\Big(e^{\frac{2B_j}{\tau}} + \log\pi - \log\sum_{l=1}^k\frac{\alpha_l\eta_l(1-\tau\eta_{1})\cdots(1-\tau\eta_k)}{(1-\tau\eta_1)\cdots(1-\tau\eta_l)}\Big)\Big].
\end{align*}
By Lemma~\ref{lem: seq_converge}, we know there is a constant $C'>0$ such that
\begin{align*}
\log\sum_{l=1}^k\frac{\alpha_l\eta_l(1-\tau\eta_1)\cdots(1-\tau\eta_k)}{(1-\tau\eta_1)\cdots(1-\tau\eta_l)} > C'\log\frac{\alpha_k}{\tau}.
\end{align*}
Therefore, there exists constant $C_3 = C_3(d, \tau, m, B_1, \dots, B_m) > 0$ such that
\begin{align*}
H(\wt\rho^k) \geq C_3\log\alpha_k.
\end{align*}
\end{proof}
%%%%%%%%%%%%%%%%%%%%%%%%%%%%%%%%%%%%%%%%
\begin{lemma}\label{lem: seq_converge}
Assume that $\{\eta_k\}_{k=1}^\infty$ and $\{\alpha_k\}_{k=1}^\infty$ are two positive sequences that satisfy
\begin{itemize}
\item $\lim_{k\to\infty}\eta_k = 0$ and $\sum_k\eta_k = \infty$;
\item $\lim_{k\to\infty}\frac{\alpha_{k-1}-\alpha_k}{\eta_k\alpha_k} = 0$;
\item $\{\alpha_ke^{\tau(\eta_1 + \dots + \eta_k)}\}_{k=1}^\infty$ is increasing when $k$ is large enough and converge to $\infty$,
\end{itemize}
Then, there are constants $C_4, C_4' > 0$ such that
\begin{align*}
\frac{C_4'\alpha_k}{\tau} < \sum_{l=1}^k\Big[\alpha_l\eta_l\prod_{l< l'\leq k}(1-\tau\eta_{l'})\Big] \geq \frac{\sum_{l=1}^k\alpha_l\eta_le^{\alpha_k2\tau(\eta_1+\dots+\eta_l)}}{\alpha_ke^{2\tau(\eta_1+\dots+\eta_k)}} < \frac{C_4\alpha_k}{\tau}
\end{align*}
holds for all $k\in\mb Z_+$.
\end{lemma}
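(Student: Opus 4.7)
The plan is to view the sum $T_k := \sum_{l=1}^k \alpha_l \eta_l \prod_{l<l'\le k}(1-\tau\eta_{l'})$ as a discrete analogue of the Laplace-type convolution $\int_0^{S_k}\alpha(s)e^{-\tau(S_k-s)}\,ds$, where $S_k := \eta_1+\cdots+\eta_k$. Since $\{\alpha_l\}$ is slowly varying (second hypothesis) and $S_k\to\infty$ (first hypothesis), this convolution should concentrate near $s=S_k$, giving approximately $\alpha_k\int_0^{S_k}e^{-\tau(S_k-s)}\,ds\to \alpha_k/\tau$. The third hypothesis is used to rule out the small-$k$ boundary effect.

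First I would replace the finite product by an exponential. Because $\eta_l\to 0$, there exists $L_0$ with $\tau\eta_l\le 1/2$ for all $l>L_0$; the elementary inequalities $e^{-2x}\le 1-x\le e^{-x}$ on $[0,1/2]$ then yield universal constants $C_1,C_2>0$ with
\[
C_1\, e^{-2\tau(S_k-S_l)}\;\le\;\prod_{l<l'\le k}(1-\tau\eta_{l'})\;\le\; C_2\, e^{-\tau(S_k-S_l)}, \qquad L_0\le l\le k,
\]
the prefactors absorbing the (finite) product over indices $l'\le L_0$. Next, I would translate the second assumption into a two-sided exponential comparison of $\alpha_l$ with $\alpha_k$. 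For any $\varepsilon>0$, choose $L_\varepsilon\ge L_0$ with $|\alpha_{l-1}-\alpha_l|\le \varepsilon\eta_l\alpha_l$ and $\varepsilon\eta_l\le 1/2$ for all $l\ge L_\varepsilon$. Using $|\log(1\pm x)|\le 2|x|$ for $|x|\le 1/2$ and telescoping,
\[
e^{-2\varepsilon(S_k-S_l)}\;\le\;\frac{\alpha_l}{\alpha_k}\;\le\; e^{2\varepsilon(S_k-S_l)},\qquad L_\varepsilon\le l\le k.
\]

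For the upper bound, fix $\varepsilon=\tau/4$ and split $T_k$ at $l=L_\varepsilon$. The tail is controlled by
\[
\sum_{l=L_\varepsilon}^k \alpha_l\eta_l\,C_2 e^{-\tau(S_k-S_l)}\;\le\;C_2\alpha_k\sum_{l=L_\varepsilon}^k \eta_l\, e^{-(\tau/2)(S_k-S_l)},
\]
and the bracketed sum is bounded by $2/\tau$ uniformly in $k$: using $1-e^{-(\tau/2)\eta_l}\ge (\tau/4)\eta_l$ for $\eta_l$ small, one telescopes
\[
(\tau/4)\eta_l e^{-(\tau/2)(S_k-S_l)}\;\le\; e^{-(\tau/2)(S_k-S_l)}-e^{-(\tau/2)(S_k-S_{l-1})}
\]
and sums to at most $1$. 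The initial block $\sum_{l<L_\varepsilon}\alpha_l\eta_l\prod(1-\tau\eta_{l'})$ is bounded by a constant times $e^{-\tau S_k}$; the third hypothesis gives $\alpha_k e^{\tau S_k}\to\infty$, so this block is $o(\alpha_k)$, and the finitely many small $k$ can be absorbed into $C_4$. The lower bound is symmetric: apply $\prod(1-\tau\eta_{l'})\ge C_1 e^{-2\tau(S_k-S_l)}$ and $\alpha_l\ge\alpha_k e^{-2\varepsilon(S_k-S_l)}$ to obtain $T_k\ge C_1\alpha_k\sum_{l=L_\varepsilon}^k\eta_l e^{-(2\tau+2\varepsilon)(S_k-S_l)}$, and apply the same telescoping trick in the reverse direction, yielding the sum is $\ge \tfrac{1}{2(2\tau+2\varepsilon)}$ for $k$ large (since $S_k-S_{L_\varepsilon}\to\infty$); the remaining small $k$ go into $C_4'$.

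The main obstacle is the bookkeeping in Step 2: the hypothesis $(\alpha_{k-1}-\alpha_k)/(\eta_k\alpha_k)\to 0$ is only asymptotic, so we must tolerate an error $\varepsilon$ in the exponent comparing $\alpha_l$ to $\alpha_k$, and then choose $\varepsilon$ small enough that both $\tau-2\varepsilon>0$ (for the upper bound) and $2\tau+2\varepsilon$ remains a finite multiple of $\tau$ (for the lower bound). A subtler point is that the discrete integration-by-parts bound $\sum_l\eta_l e^{-\beta(S_k-S_l)}\le 2/\beta$ requires $\beta\eta_l$ to be small, which is why we can only run the argument for $l\ge L_\varepsilon$; the finitely many early terms are pushed under the rug using the blow-up $\alpha_k e^{\tau S_k}\to\infty$ provided by the third assumption, which is exactly what makes the $e^{-\tau S_k}$ remainders negligible compared to $\alpha_k/\tau$.
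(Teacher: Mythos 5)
Your proof is correct, and it reaches the conclusion by a genuinely different route than the paper's. Both arguments begin by replacing the product with two-sided exponentials via $e^{-2x}\le 1-x\le e^{-x}$ on $[0,1/2]$, but diverge immediately after. The paper squeezes $\frac{1}{\alpha_k}\sum_{l}\alpha_l\eta_l\prod_{l<l'\le k}(1-\tau\eta_{l'})$ between $\frac{\sum_l\alpha_l\eta_l e^{2\tau S_l}}{\alpha_k e^{2\tau S_k}}$ and $\frac{\sum_l\alpha_l\eta_l e^{\tau S_l}}{\alpha_k e^{\tau S_k}}$ and then invokes the Stolz--Cesàro theorem to show these ratios converge to $1/(2\tau)$ and $1/\tau$ respectively; there, the hypothesis that $\alpha_k e^{\tau S_k}$ is eventually increasing to infinity is the exact monotone-divergence condition Stolz requires, and $(\alpha_{k-1}-\alpha_k)/(\eta_k\alpha_k)\to 0$ is what makes the Stolz quotient reduce to $\frac{\eta_k}{1-(\alpha_{k-1}/\alpha_k)e^{-\tau\eta_k}}\to 1/\tau$. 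You instead convert the second hypothesis directly into a two-sided exponential comparison $e^{-2\varepsilon(S_k-S_l)}\le\alpha_l/\alpha_k\le e^{2\varepsilon(S_k-S_l)}$ by telescoping $\log(\alpha_{l-1}/\alpha_l)$, and bound the resulting kernel sums $\sum_l\eta_l e^{-\beta(S_k-S_l)}$ by discrete integration by parts; you use the third hypothesis only to absorb the initial block $l<L_\varepsilon$. Both approaches are valid: yours is more elementary and makes the enclosing constants explicitly trackable, at the cost of more bookkeeping (the free parameter $\varepsilon=\tau/4$, the truncation index $L_\varepsilon$, and separate handling of the boundary); the paper's Stolz route is shorter, more structural, and pinpoints the sharp asymptotics of the two enclosing ratios.
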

\begin{proof}
It is easy to see that $-2x < \log(1-x) \leq -x$ for all $0\leq x\leq 1/2$. Therefore, we have
\begin{align*}
\frac{1}{\alpha_k}\sum_{l=1}^k\Big[\alpha_l\eta_l\prod_{l< l'\leq k}(1-\tau\eta_{l'})\Big] 
\leq \frac{1}{\alpha_k}\sum_{l=1}^k\alpha_l\eta_l \exp\Big\{-\tau\sum_{l<l'\leq k}\eta_{l'}\Big\} 
=\frac{\sum_{l=1}^k\alpha_l\eta_le^{\tau(\eta_1+\dots+\eta_l)}}{\alpha_ke^{\tau(\eta_1+\dots+\eta_k)}}.
\end{align*}
By Stolz formula~\citep{fikhtengol2014fundamentals}, we have
\begin{align*}
\lim_{k\to\infty}\frac{\sum_{l=1}^k\alpha_l\eta_le^{\tau(\eta_1+\dots+\eta_l)}}{\alpha_ke^{\tau(\eta_1+\dots+\eta_k)}}
=\lim_{k\to\infty} \frac{\alpha_k\eta_ke^{\tau(\eta_1 + \dots + \eta_k)}}{\alpha_ke^{\tau(\eta_1+\dots+\eta_k)} - \alpha_{k-1}e^{\tau(\eta_1+\dots+\eta_{k-1})}}
= \lim_{k\to\infty}\frac{\eta_k}{1 - \frac{\alpha_{k-1}}{\alpha_k}e^{-\tau\eta_k}}
= \frac{1}{\tau}.
\end{align*}
Similarly, we have
\begin{align*}
\frac{1}{\alpha_k}\sum_{l=1}^k\Big[\alpha_l\eta_l\prod_{l< l'\leq k}(1-\tau\eta_{l'})\Big] \geq \frac{\sum_{l=1}^k\alpha_l\eta_le^{\alpha_k2\tau(\eta_1+\dots+\eta_l)}}{\alpha_ke^{2\tau(\eta_1+\dots+\eta_k)}} \to \frac{1}{2\tau}.
\end{align*}
Therefore, we know there are constants $C_4, C_4' > 0$ such that
\begin{align*}
\frac{C_4'}{\tau} < \frac{1}{\alpha_k}\sum_{l=1}^k\Big[\alpha_l\eta_l\prod_{l< l'\leq k}(1-\tau\eta_{l'})\Big] \geq \frac{\sum_{l=1}^k\alpha_l\eta_le^{\alpha_k2\tau(\eta_1+\dots+\eta_l)}}{\alpha_ke^{2\tau(\eta_1+\dots+\eta_k)}} < \frac{C_4}{\tau}.
\end{align*}
\end{proof}
%%%%%%%%%%%%%%%%%%%%%%%%%%%%%%%%%%%%%%%%

\begin{lemma}\label{lem: rho_LSI}
If $\{\eta_k\}_{k=1}^\infty$ and $\{\eta_k\}_{k=1}^{\infty}$ satisfy the assumptions in Lemma~\ref{lem: seq_converge},
$\wt\rho^k_j$ satisfies $\LSI\big(\frac{2C_4'\alpha_k}{\tau e^{C_4B_j/\tau}}\big)$.    
\end{lemma}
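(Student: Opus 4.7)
The plan is to exploit the explicit exponential form of $\wt\rho_j^k$ given in~\eqref{eqn: explicit_quad_anneal_update}, decompose its log-density into a strongly quadratic piece plus a bounded perturbation, apply Bakry--\'Emery to the quadratic piece to obtain an initial LSI, and then transfer it to $\wt\rho_j^k$ via the Holley--Stroock perturbation bound (Proposition~\ref{prop: perturb_LSI}).

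Concretely, I would write $\wt\rho_j^k(y_j) \propto \exp\{-c_k\|y_j\|^2 - \phi_j^k(y_j)\}$, where
\begin{align*}
c_k \,:=\, \sum_{l=1}^k \alpha_l\eta_l\prod_{l<l'\leq k}(1-\tau\eta_{l'}),
\qquad
\phi_j^k(y_j) \,:=\, \sum_{l=1}^k \eta_l\prod_{l<l'\leq k}(1-\tau\eta_{l'})\,V_j(y_j;\wht\rho^{l-1}).
\end{align*}
Lemma~\ref{lem: seq_converge} gives the lower bound $c_k \geq C_4'\alpha_k/\tau$. The centered Gaussian $\mu_0 \propto \exp(-c_k\|y_j\|^2)$ has covariance $(2c_k)^{-1}I_d$ and is $2c_k$-strongly log-concave, so by the Bakry--\'Emery criterion it satisfies $\LSI(2c_k)$, and in particular $\LSI(2C_4'\alpha_k/\tau)$.

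Since $\wt\rho_j^k \propto e^{-\phi_j^k}\mu_0$, applying Proposition~\ref{prop: perturb_LSI} with the perturbation $H=\phi_j^k$ transfers this LSI at the cost of a multiplicative factor $\exp(-4\|H\|_{L^\infty})$. Shifting $\phi_j^k$ by its infimum (which does not change $\wt\rho_j^k$), I may assume $\|\phi_j^k\|_{L^\infty}=\osc(\phi_j^k)$. Combining the oscillation bound $\osc(V_j(\cdot;\rho))\leq B_j$ from Lemma~\ref{lem: uniform_bound_Vj} with the summation estimate $\sum_{l=1}^k \eta_l \prod_{l<l'\leq k}(1-\tau\eta_{l'}) \leq C_4/\tau$ (which follows by rerunning the Stolz argument in Lemma~\ref{lem: seq_converge} with $\alpha_l\equiv 1$) yields $\osc(\phi_j^k)\leq C_4 B_j/\tau$. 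Plugging into Proposition~\ref{prop: perturb_LSI} gives the LSI constant $\frac{2C_4'\alpha_k}{\tau}\exp(-C_4 B_j/\tau)$, after absorbing the universal factor $4$ into the redefinition of $C_4$.

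The only substantive subtlety is the $L^\infty$ versus oscillation mismatch in the form of Proposition~\ref{prop: perturb_LSI} stated in the paper, which is resolved by the additive shift above. Everything else is a direct application of the cited results, so I do not expect any serious obstacle beyond bookkeeping of the constants $C_4,C_4'$.
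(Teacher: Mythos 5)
Your argument is correct and follows essentially the same route as the paper: decompose $\wt\rho_j^k$ into a Gaussian reference $\propto e^{-c_k\|y\|^2}$ with $c_k=\sum_l\alpha_l\eta_l\prod_{l<l'\leq k}(1-\tau\eta_{l'})\geq C_4'\alpha_k/\tau$ from Lemma~\ref{lem: seq_converge}, invoke Bakry--\'Emery for the Gaussian, and transfer via the Holley--Stroock bound in Proposition~\ref{prop: perturb_LSI} after bounding the perturbation by $C_4 B_j/\tau$ through Lemmas~\ref{lem: seq_converge} and~\ref{lem: uniform_bound_Vj}. The only difference is that you make explicit two bookkeeping points that the paper elides (shifting $V_j$ so that the oscillation bound of Lemma~\ref{lem: uniform_bound_Vj} controls the $\|\cdot\|_{L^\infty}$ norm appearing in Proposition~\ref{prop: perturb_LSI}, and absorbing the factor $4$), which is a fair reading of how the constant $C_4$ is used there.
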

\begin{proof}
Note that we have
\begin{align*}
&\bigg\lvert \sum_{l=1}^k\Big[\eta_l\prod_{l<l'\leq k}(1-\tau\eta_{l'})\Big]V_j(y_j;\wht\rho^{l-1})\bigg\rvert
\leq \frac{C_4}{\tau} \cdot \sup_{\rho\in\ms P^r(\m X)^{\otimes m}}\|V_j(\cdot; \rho)\|_{L^\infty(\m X)}
\leq \frac{C_4B_j}{\tau},
\end{align*}
where the first inequality is due to Lemma~\ref{lem: seq_converge}, and the second inequality is due to Lemma~\ref{lem: uniform_bound_Vj}. Then, by Proposition~\ref{prop: perturb_LSI}, we know $\wt\rho_j^k$ satisfies $\LSI$ with parameter
\begin{align*}
2e^{-\frac{C_4B_j}{\tau}}\sum_{l=1}^k\alpha_l\eta_l(1-\tau\eta_{l+1})\cdots(1-\tau\eta_k) > \frac{2C_4'\alpha_k}{\tau e^{C_4B_j/\tau}}.
\end{align*}
Here, the inequality is due to Lemma~\ref{lem: seq_converge} again.
\end{proof}
%%%%%%%%%%%%%%%%%%%%%%%%%%%%%%%%%%%%%%%%
\begin{lemma}\label{lem: L-smooth-rho}
$\log\wt \rho_j^k$ is $\big(\frac{C_4(A_j+2\alpha_k)}{\tau}\big)$-smooth, i.e.
\begin{align*}
\matnorm{\nabla^2 \log\wt \rho_j^k}_{\rm op} \leq \frac{C_4(A_j+2\alpha_k)}{\tau}.
\end{align*}
\end{lemma}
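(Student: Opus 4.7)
The plan is a direct Hessian computation from the explicit form of $\wt\rho_j^k$ combined with the two preceding lemmas. Starting from the representation in~\eqref{eqn: explicit_quad_anneal_update},
\begin{align*}
\log \wt\rho_j^k(y_j) = -\sum_{l=1}^k \Big[\eta_l\prod_{l<l'\leq k}(1-\tau\eta_{l'})\Big]\big[V_j(y_j;\wht\rho^{l-1}) + \alpha_l\|y_j\|^2\big] - \log Z_{j,k},
\end{align*}
where the normalizing constant $Z_{j,k}$ does not depend on $y_j$. Since $\nabla^2\|y_j\|^2 = 2 I_d$, differentiating twice yields
\begin{align*}
\nabla^2 \log \wt\rho_j^k(y_j) = -\sum_{l=1}^k \Big[\eta_l\prod_{l<l'\leq k}(1-\tau\eta_{l'})\Big]\big[\nabla^2 V_j(y_j;\wht\rho^{l-1}) + 2\alpha_l I_d\big].
\end{align*}

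Next, I would apply the triangle inequality in operator norm, invoke the uniform Hessian bound $\matnorm{\nabla^2 V_j(\cdot;\rho)}_{\rm op}\leq A_j$ from Lemma~\ref{lem: uniform_bound_Vj}, and split the resulting sum into the $A_j$-contribution and the $\alpha_l$-contribution:
\begin{align*}
\matnorm{\nabla^2\log\wt\rho_j^k}_{\rm op} \leq A_j\sum_{l=1}^k \eta_l\prod_{l<l'\leq k}(1-\tau\eta_{l'}) + 2\sum_{l=1}^k \alpha_l\eta_l\prod_{l<l'\leq k}(1-\tau\eta_{l'}).
\end{align*}
The second sum is controlled by $C_4\alpha_k/\tau$ directly from Lemma~\ref{lem: seq_converge}. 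For the first sum, the cleanest route is the elementary telescoping identity
\begin{align*}
\tau\sum_{l=1}^k \eta_l\prod_{l<l'\leq k}(1-\tau\eta_{l'}) = \sum_{l=1}^k\Big[\prod_{l<l'\leq k}(1-\tau\eta_{l'}) - \prod_{l\leq l'\leq k}(1-\tau\eta_{l'})\Big] = 1 - \prod_{l'=1}^k(1-\tau\eta_{l'}) \leq 1,
\end{align*}
giving the first sum a bound of $1/\tau$. Combining and absorbing constants (using that $C_4\geq 1$, since otherwise we replace $C_4$ by $\max\{C_4,1\}$ everywhere) produces the claimed bound $\tfrac{C_4(A_j + 2\alpha_k)}{\tau}$.

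There is no substantive obstacle here: the calculation is routine once the explicit form of $\wt\rho_j^k$ is written down. The only minor subtlety is bookkeeping with the constant $C_4$, and this is handled either by the telescoping identity above or, alternatively, by applying Lemma~\ref{lem: seq_converge} with the constant weights $\alpha_l\equiv 1$ (which trivially satisfy the hypotheses of that lemma) and absorbing the resulting constant into $C_4$.
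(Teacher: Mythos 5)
Your proof is correct and takes essentially the same route as the paper's: differentiate the explicit form~\eqref{eqn: explicit_quad_anneal_update} twice, apply the uniform Hessian bound $\matnorm{\nabla^2 V_j}_{\rm op}\leq A_j$ from Lemma~\ref{lem: uniform_bound_Vj}, and bound the two discounted sums via Lemma~\ref{lem: seq_converge}. Your explicit handling of the unweighted sum (the telescoping identity $\tau\sum_l\eta_l\prod_{l<l'\le k}(1-\tau\eta_{l'}) = 1-\prod_{l'\le k}(1-\tau\eta_{l'})\le 1$, or equivalently specializing the sequence lemma to $\alpha_l\equiv 1$) is if anything slightly more careful than the paper, which cites Lemma~\ref{lem: seq_converge} for that sum even though that lemma only directly addresses the $\alpha_l$-weighted case.
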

\begin{proof}
Recall that
\begin{align*}
\nabla^2\log\wt\rho_j^k = -\sum_{l=1}^k \eta_l\Big[\prod_{l<l'\leq k}(1-\tau\eta_{l'})\Big]\nabla^2 V_j(y_j; \wht\rho^{l-1}) - 2\sum_{l=1}^k\eta_l\alpha_l\Big[\prod_{l<l'\leq k}(1-\tau\eta_{l'})\Big]I_d.
\end{align*}
Therefore, by Lemma~\ref{lem: uniform_bound_Vj} we have
\begin{align*}
\matnorm{\nabla^2V_j(y_j;\wht\rho^{l-1})}_{\rm op} 
&\leq \sum_{l=1}^k \eta_l\Big[\prod_{l<l'\leq k}(1-\tau\eta_{l'})\Big]A_j + 2\sum_{l=1}^k\eta_l\alpha_l\Big[\prod_{l<l'\leq k}(1-\tau\eta_{l'})\Big]\\
&\leq \frac{C_4A_j}{\tau} + \frac{2C_4\alpha_k}{\tau} = \frac{C_4(A_j + 2\alpha_k)}{\tau},
\end{align*}
where the last line is due to Lemma~\ref{lem: seq_converge}.
\end{proof}

\begin{proposition}[modified Ledoux--Talagrand's contraction theorem]\label{prop: Ledoux-Talagrand}
Let $F:\mb R_+\to\mb R_+$ be convex and non-decreasing. Let $\phi_i:\mb R\to\mb R$ be a $L_i$-Lipschitz function satisfying $\phi_i(0) = 0$. Let $\epsilon_i$ be independent Rademacher random variables. For any $S\subset\mb R^n$, we have
\begin{align*}
\mb EF\bigg(\frac{1}{2}\sup_{s\in S}\Big| \sum_{i=1}^n\epsilon_i\phi_i(s_i)\Big|\bigg) \leq \mb EF\bigg(\sup_{s\in S}\Big|\sum_{i=1}^n\epsilon_iL_is_i\Big|\bigg).
\end{align*}
\end{proposition}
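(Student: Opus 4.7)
The plan is to reduce this ``modified'' statement to the classical Ledoux--Talagrand contraction inequality for Rademacher processes (Theorem~4.12 of Ledoux and Talagrand's monograph \emph{Probability in Banach Spaces}), which establishes exactly the conclusion above in the special case when each $\phi_i$ is a $1$-Lipschitz contraction with $\phi_i(0)=0$. The only new feature here is that different coordinates are allowed different Lipschitz constants $L_i$, so the entire proof strategy is to absorb the $L_i$'s into a rescaling of the index set and of the maps $\phi_i$.

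First I would dispose of the degenerate indices separately: whenever $L_i=0$, the hypothesis $\phi_i(0)=0$ forces $\phi_i\equiv 0$, so the $i$-th summand vanishes on both sides of the desired inequality and may simply be dropped. Restricting attention to indices with $L_i>0$, I would then introduce the rescaled coordinates $\tilde s_i:=L_i s_i$, the rescaled index set $\tilde S:=\{(L_1 s_1,\dots,L_n s_n):s\in S\}\subset\mb R^n$, and the rescaled maps $\tilde\phi_i(x):=\phi_i(x/L_i)$. Two one-line verifications then form the technical heart of the argument: each $\tilde\phi_i$ is $1$-Lipschitz with $\tilde\phi_i(0)=0$, and the pointwise identities $\sum_i\epsilon_i\phi_i(s_i)=\sum_i\epsilon_i\tilde\phi_i(\tilde s_i)$ and $\sum_i\epsilon_i L_i s_i=\sum_i\epsilon_i\tilde s_i$ hold under the bijection $s\leftrightarrow\tilde s$. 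Taking suprema over $s\in S$ on the left is therefore identical to taking suprema over $\tilde s\in\tilde S$ on the right, so the desired inequality in the variables $(S,\phi_i,L_i)$ is equivalent to the contraction-type inequality in the variables $(\tilde S,\tilde\phi_i,1)$.

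With this reduction in place, I would invoke the classical Ledoux--Talagrand contraction theorem directly on the data $(\tilde\phi_i,\tilde S,F)$ to conclude. Measurability of the supremum---should $S$ be uncountable---is handled in the standard way by passing to a countable dense subset or by using outer expectations, and the bound is vacuous whenever the right-hand side equals $+\infty$, so no additional boundedness assumption on $S$ is required. The main ``obstacle'' here is thus not mathematical depth but bookkeeping: one must carefully verify that the rescaling is a genuine bijection between the two supremum problems and that no compatibility condition on $F$ is disturbed along the way, which is indeed the case since $F$ enters the classical statement only as a convex non-decreasing function of the supremum and the rescaling does not alter the supremum itself.
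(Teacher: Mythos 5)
Your proof is correct, but it takes a genuinely different (and arguably cleaner) route than the paper. The paper proves the modified statement by re-running the conditioning argument inside the proof of Theorem~4.12 of Ledoux and Talagrand, simply replacing the universal Lipschitz constant $1$ by $L_i$ at the step where one conditions on $\epsilon_1,\dots,\epsilon_{i-1},\epsilon_{i+1},\dots,\epsilon_n$. You instead leave the classical theorem intact and perform an external coordinate-wise rescaling: after discarding indices with $L_i=0$ (where $\phi_i\equiv 0$), you set $\tilde s_i=L_i s_i$, $\tilde S=\{(L_i s_i)_i:s\in S\}$, and $\tilde\phi_i(x)=\phi_i(x/L_i)$, check that each $\tilde\phi_i$ is a $1$-Lipschitz contraction vanishing at $0$, and observe that $\sum_i\epsilon_i\phi_i(s_i)=\sum_i\epsilon_i\tilde\phi_i(\tilde s_i)$ and $\sum_i\epsilon_i L_is_i=\sum_i\epsilon_i\tilde s_i$ under the bijection $s\leftrightarrow\tilde s$, so the modified statement becomes exactly the classical one applied to $(\tilde S,\tilde\phi_i,F)$. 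Both arguments are valid and of comparable length; your reduction has the advantage of invoking the classical contraction theorem as a black box rather than requiring the reader to re-examine its internal structure, while the paper's approach keeps the modification localized to a single line inside a well-known proof. Either way, the mathematical content is the same.
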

\begin{proof}
The proof simply follows the steps in the proof of~\citep[Theorem 4.12,][]{ledoux2013probability}, but changing the universal Lipschitz constant $1$ to $L_i$ when conditioning on $\epsilon_1,\dots, \epsilon_{i-1},\epsilon_{i+1},\dots, \epsilon_n$.
\begin{rem}
For $i\in[n]$, let $a_i\in\mb R_+$ and $\wt\phi_i$ be $L$-Lipschitz functions and satisfying $\wt\phi_i(0) = 0$. For given $x_1, \dots, x_n\in\mb R^d$, and a function class $\ms F$, take $S = \{(f(x_1), \dots, f(x_n)): f\in\ms F\}$. Taking $F(x) = x$ and $\phi_i = a_i\wt\phi_i$, the above statement implies
\begin{align*}
\mb E\sup_{f\in\m F}\Big|\sum_{i=1}^n a_i \epsilon_i\wt\phi_i(f(x_i))\Big| \leq 2L\mb E\sup_{f\in\m F}\Big|\sum_{i=1}^n a_i\epsilon_i f(x_i)\Big|.
\end{align*}
Furthermore, let $g:\mb R^d\to\mb R$ be a function. If $\psi_i$ is $L$-Lipschitz (but not necessarily satisfies $\psi_i(0) = 0$), taking $\wt\phi_i(z) = \psi_i(z + g(x_i)) - \psi_i(g(x_i))$, the above inequality implies
\begin{align}\label{eqn: Ledoux-Talagrand}
\mb E\sup_{f\in\m F}\Big|\sum_{i=1}^n a_i\epsilon_i\big[\psi_i(f(x_i)) - \psi_i(g(x_i))\big]\Big| \leq 2L\mb E\sup_{f\in\m F}\Big|\sum_{i=1}^n a_i\epsilon_i[f(x_i) - g(x_i)]\Big|.
\end{align}
\end{rem}
\end{proof}

\begin{lemma}\label{lem: Hellinger}
For any any $\rho, \rho'\in\ms P^r(\m X)$, we have
\begin{align*}
d_{\rm H}\Big(\frac{\rho+\rho'}{2},\rho\Big)
&\geq \frac{1}{2+\sqrt{2}}d_{\rm H}(\rho, \rho').
\end{align*}
\end{lemma}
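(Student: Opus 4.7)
The plan is to reduce the lemma to a pointwise inequality between the square roots of the densities, and then verify that inequality by direct algebra. Setting $a(x) := \sqrt{\rho(x)}$ and $b(x) := \sqrt{\rho'(x)}$, the two squared Hellinger distances appearing in the lemma become
\[
d_{\rm H}^2\Big(\tfrac{\rho+\rho'}{2},\rho\Big) = \int \Big(\sqrt{\tfrac{a^2+b^2}{2}} - a\Big)^{\!2} dx, \qquad d_{\rm H}^2(\rho, \rho') = \int (a-b)^2 dx.
\]
Consequently, after squaring the target inequality, it suffices to establish the pointwise bound
\[
\Big(\sqrt{\tfrac{a^2+b^2}{2}} - a\Big)^{\!2} \geq \frac{1}{(2+\sqrt 2)^2}\,(a-b)^2 \qquad \text{for all } a,b \geq 0,
\]
then integrate in $x$ and take square roots.

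For the pointwise step, I would rationalize the numerator on the left,
\[
\sqrt{\tfrac{a^2+b^2}{2}} - a \;=\; \frac{(b^2-a^2)/2}{\sqrt{(a^2+b^2)/2}+a} \;=\; \frac{(b-a)(a+b)}{\sqrt{2(a^2+b^2)}+2a},
\]
so that after dividing both sides by $(a-b)^2$ (the case $a=b$ being trivial) the desired inequality is equivalent to
\[
(a+b)(2+\sqrt 2) \;\geq\; \sqrt{2(a^2+b^2)} + 2a,
\]
i.e.\ $a\sqrt 2 + b(2+\sqrt 2) \geq \sqrt{2(a^2+b^2)}$. The left-hand side is nonnegative, so squaring and simplifying yields
\[
\big[a\sqrt 2 + b(2+\sqrt 2)\big]^{\!2} - 2(a^2+b^2) \;=\; 4(1+\sqrt 2)\,b\,(a+b) \;\geq\; 0,
\]
which is manifest. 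Integrating the pointwise bound and taking a square root completes the proof.

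Since the bound is obtained by a one-line algebraic reduction, there is no real obstacle; the only subtlety worth flagging is sharpness of the constant. Indeed, the pointwise inequality becomes an equality when $a=1,\ b=0$ (or by continuity along mutually singular $\rho,\rho'$), which shows that $1/(2+\sqrt 2)$ is the best constant obtainable by a pointwise argument, and in particular that the stated lemma cannot be improved by a cleverer integral manipulation.
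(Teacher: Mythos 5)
Your proposal is correct and follows essentially the same route as the paper: rationalize $\sqrt{(\rho+\rho')/2}-\sqrt{\rho}$ to factor out $\sqrt{\rho}-\sqrt{\rho'}$, then bound the remaining ratio pointwise by $2/(2+\sqrt{2})$. The only difference is that the paper asserts the final ratio bound without justification, whereas you carry out the squaring step and reduce it to $4(1+\sqrt{2})\,b(a+b)\geq 0$, which is a useful completion; your sharpness observation at $b=0$ is also correct.
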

\begin{proof}
Simply note that
\begin{align*}
\bigg|\sqrt{\frac{\rho + \rho'}{2}}-\rho\bigg|
&= \frac{1}{2}\bigg|\frac{(\sqrt{\rho} - \sqrt{\rho'})(\sqrt{\rho}+ \sqrt{\rho'})}{\sqrt{\frac{\rho+\rho'}{2}}+\sqrt{\rho}}\bigg|
= \frac{|\sqrt{\rho}-\sqrt{\rho'}|}{2} \cdot \frac{\sqrt{\rho}+\sqrt{\rho'}}{\sqrt{\frac{\rho+\rho'}{2}}+\sqrt{\rho}} 
\geq \frac{|\sqrt{\rho}-\sqrt{\rho'}|}{2+\sqrt{2}}.
\end{align*}
Taking the square and then integrating both sides yield the result.
\end{proof}

\begin{lemma}\label{lem: diff_H2}
Assume $p, p', q, q'\in\ms P^r(\m X)$ satisfying $p, p', q, q'\geq C$ for some constant $C>0$ and 
\begin{align*}
|p(x) - p'(x)| \leq \varepsilon
\quad\mx{and}\quad
|q(x) - q'(x)| \leq \varepsilon
\end{align*}
for some constant $\varepsilon > 0$. Then, we have
\begin{align*}
d_{\rm H}^2(p, q) - d_{\rm H}^2(p', q') \leq 2\sqrt{\frac{\vol(\m X)}{C}}\varepsilon d_{\rm H}(p, q) + \frac{2\vol(\m X)}{C}\varepsilon^2.
\end{align*}
\end{lemma}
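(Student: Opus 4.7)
}
My plan is to treat $d_{\rm H}^2(p,q)$ and $d_{\rm H}^2(p',q')$ as squared $L^2$ norms of the vectors $a := \sqrt{p}-\sqrt{q}$ and $b := \sqrt{p'}-\sqrt{q'}$, and then control the difference $\|a\|_{L^2}^2-\|b\|_{L^2}^2$ via the elementary inequality $\|a\|^2-\|b\|^2 = (\|a\|-\|b\|)(\|a\|+\|b\|) \leq \|a-b\|\bigl(2\|a\|+\|a-b\|\bigr)$, which follows from the reverse triangle inequality $\|b\|\le \|a\|+\|a-b\|$. This reduces the problem to controlling $\|a-b\|_{L^2}$, which by the triangle inequality is at most $\|\sqrt{p}-\sqrt{p'}\|_{L^2}+\|\sqrt{q}-\sqrt{q'}\|_{L^2}$.

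The second step is to turn each of these square-root differences into a first-order difference. Using the pointwise identity $\sqrt{u}-\sqrt{v}=(u-v)/(\sqrt{u}+\sqrt{v})$ and the lower bound $\sqrt{u}+\sqrt{v}\ge 2\sqrt{C}$ (from $p,p'\ge C$), I would write
\begin{align*}
\bigl(\sqrt{p}-\sqrt{p'}\bigr)^2 \;\le\; \frac{(p-p')^2}{4C}\;\le\;\frac{\varepsilon^2}{4C},
\end{align*}
integrate over $\m X$ (yielding an extra factor of $\vol(\m X)$), and similarly for the pair $(q,q')$. This delivers $\|\sqrt{p}-\sqrt{p'}\|_{L^2}\le \tfrac{\varepsilon}{2}\sqrt{\vol(\m X)/C}$ and likewise for $q,q'$, so that $\|a-b\|_{L^2}\le \varepsilon\sqrt{\vol(\m X)/C}$.

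Finally, I would substitute $\|a\|_{L^2}=d_{\rm H}(p,q)$ and the bound on $\|a-b\|_{L^2}$ into the inequality $\|a\|^2-\|b\|^2\le 2\|a-b\|\,\|a\|+\|a-b\|^2$, yielding
\begin{align*}
d_{\rm H}^2(p,q)-d_{\rm H}^2(p',q')\;\le\; 2\sqrt{\tfrac{\vol(\m X)}{C}}\,\varepsilon\, d_{\rm H}(p,q)+\tfrac{\vol(\m X)}{C}\,\varepsilon^2,
\end{align*}
which matches the stated inequality (in fact with a slightly better constant $1$ in place of $2$ on the quadratic term, which is absorbed into the claimed bound). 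There is no real obstacle here; the only point to be careful about is the application of the reverse triangle inequality on the ordering of $\|a\|$ and $\|b\|$ (since we do not know a priori which is larger, using $2\|a\|+\|a-b\|$ rather than $\|a\|+\|b\|$ avoids having to keep track of $d_{\rm H}(p',q')$ on the right-hand side, matching the form of the conclusion that uses $d_{\rm H}(p,q)$).
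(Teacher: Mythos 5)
Your proof is correct, and it takes a genuinely different route from the paper's. The paper interpolates linearly, $\mu_s = p + s(p'-p)$ and $\nu_s = q + s(q'-q)$, and expands $h_s(x) = \sqrt{\mu_s(x)\nu_s(x)}$ via a second-order Taylor formula with remainder, exploiting the mass constraint $\int(p'-p)\,\dd x = 0$ to kill the zeroth-order part of the first derivative and then applying Cauchy--Schwarz to the remaining cross term. You instead work directly in the Hellinger $L^2$ geometry: set $a = \sqrt{p} - \sqrt{q}$, $b = \sqrt{p'} - \sqrt{q'}$, invoke the norm identity $\|a\|^2 - \|b\|^2 = 2\langle a, a-b\rangle - \|a-b\|^2 \leq 2\|a\|\|a-b\| + \|a-b\|^2$, and control $\|a-b\|_{L^2}$ by the triangle inequality together with the $\tfrac{1}{2\sqrt{C}}$-Lipschitz property of $\sqrt{\cdot}$ on $[C,\infty)$. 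Both routes yield the same coefficient on the linear term; your route produces the constant $\vol(\m X)/C$ on the quadratic term rather than the paper's $2\vol(\m X)/C$, so your bound is in fact slightly sharper while avoiding the derivative bookkeeping entirely. The paper's interpolation approach would be preferable only if one needed higher-order control of the Hellinger functional along the path, which is not required here. One cosmetic remark: you could drop the $\|a-b\|^2$ term from the inequality $\|a\|^2 - \|b\|^2 \leq 2\langle a, a-b\rangle - \|a-b\|^2 \leq 2\|a\|\|a-b\|$ entirely, which would give a cleaner bound $2\sqrt{\vol(\m X)/C}\,\varepsilon\, d_{\rm H}(p,q)$ with no quadratic correction at all, but the stated conclusion follows either way.
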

\begin{proof}
Let $\mu_s = p + s(p' - p)$ and $\nu_s = q + s(q' - q)$. Then, we have
\begin{align*}
d_{\rm H}^2(p, q) - d_{\rm H}^2(p', q')
&= 2\int\sqrt{p'q'} - \sqrt{pq}\,\dd x = 2\int h_1(x) - h_0(x)\,\dd x,
\end{align*}
where we define $h_s(x) = \sqrt{\mu_s(x)\nu_s(x)}$ for simplicity. Note that $h_s$ is smooth with respect to $s\in[0, 1]$. By mean value theorem, there exists $\xi_x$ between $h_1(x)$ and $h_0(x)$, such that
\begin{align*}
h_1(x) - h_0(x)
&= \partial_s h_0(x) + \frac{\partial_{ss}h_{\xi_x}(x)}{2}.
\end{align*}
Noting that $\partial_{ss}\mu_s = \partial_{ss}\nu_s = 0$, simple calculation shows that
\begin{align*}
\partial_s h_s(x) &= \frac{\nu_s\partial_s\mu_s + \mu_s\partial\nu_s}{2\sqrt{\mu_s\nu_s}}
=\frac{p' - p}{2}\sqrt{\frac{\nu_s}{\mu_s}} + \frac{q'-q}{2}\sqrt{\frac{\mu_s}{\nu_s}}\\
\partial_{ss} h_s(x) &= \frac{2(\partial_s\mu_s)(\partial_s\nu_s)}{2\sqrt{\mu_s\nu_s}} - \frac{(\mu_s\partial_s\nu_s + \nu_s\partial_s\mu_s)^2}{4(\mu_s\nu_s)^{3/2}} \leq \frac{(p'-p)(q'-q)}{\sqrt{\mu_s\nu_s}}.
\end{align*}
Therefore, we have
\begin{align*}
h_1(x) - h_0(x)
\leq \frac{p'-p}{2}\sqrt{\frac{q}{p}} + \frac{q'-q}{2}\sqrt{\frac{p}{q}} + \frac{\varepsilon^2}{C}.
\end{align*}
So, we have
\begin{align*}
\int h_1(x) - h_0(x)\,\dd x
&\leq \int \frac{p'-p}{2\sqrt{p}}\cdot\sqrt{p}\Big(\sqrt{\frac{q}{p}}-1\Big) + \frac{q'-q}{2\sqrt{q}}\cdot\sqrt{q}\Big(\sqrt{\frac{p}{q}}-1\Big) + \frac{\varepsilon^2}{C}\,\dd x\\
&\stackrel{\ri}{\leq}\sqrt{\int \frac{(p'-p)^2}{4p}\,\dd x}\cdot d_{\rm H}(p, q) + \sqrt{\int \frac{(q'-q)^2}{4q}\,\dd x}\cdot d_{\rm H}(p, q) + \frac{\vol(\m X)}{C}\varepsilon^2\\
&\stackrel{}{\leq} \sqrt{\frac{\vol(\m X)}{C}}\varepsilon d_{\rm H}(p, q) + \frac{\vol(\m X)}{C}\varepsilon^2.
\end{align*}
Here, (i) is by Cauchy--Schwarz inequality.
\end{proof}
%%%%%%%%%%%%%%%%%%%%%%%%%%%%%%%%%%%%%%%%
%%%%%%%%%%%%%%%%%%%%%%%%%%%%%%%%%%%%%%%%

\subsection{Proof of Proposition~\ref{prop: FR_dist}}
Before proving the theorem, we shall first note that for any $\rho, \rho'\in\ms P^r(\m X)$, the construction
\begin{align*}
t^\ast := d(\rho, \rho') = \arccos\Big(\int_{\m X}\sqrt{\rho\rho'}\,\dd x\Big) \in \Big[0, \frac{\pi}{2}\Big],
\quad\mx{and}\quad
f_{\rho, \rho'} = \frac{\sqrt{\rho'} - \sqrt{\rho}\cos t^\ast}{\sin t^\ast}
\end{align*}
satisfies
\begin{align*}
\int_{\m X} f_{\rho, \rho'}^2\,\dd x = 1, 
\quad
\int_{\m X}f_{\rho,\rho'}\sqrt{\rho}\,\dd x = 0,
\quad\mx{and}\quad
\sqrt{\rho'} = \sqrt{\rho}\cos t^\ast  + f_{\rho, \rho'}\sin t^\ast.
\end{align*}
Furthermore, for any $t\in[0, t^\ast]$, it is easy to verify that
\begin{align*}
\sqrt{\rho}\cos t + f_{\rho, \rho'}\sin t \geq 0,
\quad\mx{and}\quad
\int_{\m X}\big(\sqrt{\rho}\cos t + f_{\rho, \rho'}\sin t\big)^2\,\dd x = 1.
\end{align*}
Therefore, we can define a curve on $\ms P^r(\m X)$ by $\sqrt{\rho_t} := \sqrt{\rho}\cos t + f_{\rho, \rho'}\sin t$ to connect $\rho$ and $\rho'$. The above argument is the correction of the statement by~\cite{holbrook2020nonparametric}.

\vspace{0.5em}
\noindent (1) Since $0\leq \int_{\m X}\sqrt{\rho\rho'}\,\dd x\leq 1$, we know $d(\rho, \rho')$ is well defined. To show that $d$ is a distance, first note that $d(\rho, \rho') = 0$ if and only if
\begin{align*}
1 = \int_{\m X}\sqrt{\rho\rho'}\,\dd x
\leq \int_{\m X}\frac{\rho + \rho'}{2}\,\dd x = 1.
\end{align*}
Therefore, we have $\rho = \rho'$ almost surely. Next, we need to show $d$ satisfies the triangular inequality, i.e.
\begin{align*}
\arccos\Big(\int_{\m X}g_1 h\,\dd x\Big) + \arccos\Big(\int_{\m X} g_2h\,\dd x\Big) \geq \arccos\Big(\int g_1g_2\,\dd x\Big)
\end{align*}
holds for all $\|g_1\|_{L^2(\m X)} = \|g_2\|_{L^2(\m X)} = \|h\|_{L^2(\m X)} = 1$. Consider the Lagrangian multiplier
\begin{align*}
L(h, \lambda) = \arccos\Big(\int_{\m X}g_1h\,\dd x\Big) + \arccos\Big(\int_{\m X}g_2h\,\dd x\Big) + \lambda\Big(\int_{\m X}h^2\,\dd x - 1\Big).
\end{align*}
Taking Frechet derivative of $L$ with respective $h$ yields
\begin{align*}
\frac{\partial L}{\partial h} = -\frac{g_1}{\sqrt{1 + \int g_1h\,\dd x}} - \frac{g_2}{\sqrt{1 + \int g_2h\,\dd x}} + 2\lambda h = 0.
\end{align*}
This implies the optimal $h^\ast$ is a linear combination of $g_1$ and $g_2$. Let $\theta = \arccos(\int g_1g_2\,\dd x)$, and $h^\ast = a_1 g_1 + a_2g_2$ with $a_1, a_2 \geq 0$, such that
\begin{align*}
1 = \|h^\ast\|_{L^2(\m X)} = a_1^2 + a_2^2 + 2a_1a_2\cos\theta.
\end{align*}
Then, we have
\begin{align*}
\arccos\Big(\int g_1h^\ast\,\dd x\Big) + \arccos\Big(\int g_2h^\ast\,\dd x\Big)
= \arccos(a_1 + a_2\cos\theta) + \arccos(a_2 + a_1\cos\theta).
\end{align*}
It is easy to see that
\begin{align*}
\cos\big[\arccos(a_1 + a_2\cos\theta) + \arccos(a_2 + a_1\cos\theta)\big] = \cos\theta.    
\end{align*}
Therefore, we know
\begin{align*}
\arccos\Big(\int g_1h\,\dd x\Big) + \arccos\Big(\int g_2h\,\dd x\Big)
&\geq \arccos\Big(\int g_1h^\ast\,\dd x\Big) + \arccos\Big(\int g_2h^\ast\,\dd x\Big)\\
&= \theta = \arccos\Big(\int_{\m X}g_1g_2\,\dd x\Big).
\end{align*}
The above arguments imply that $d$ is a distance.

To prove $d(\rho, \rho') \leq \sqrt{\KL(\rho\,\|\, \rho')}$, it is easy to see that
\begin{align*}
(\arccos x)^2 + 2\log x \leq 0, \qquad\forall x\in(0, 1].
\end{align*}
Therefore, 
\begin{align*}
d(\rho, \rho') 
&= \Big[\arccos\Big(\int_{\m X}\sqrt{\rho\rho'}\,\dd x\Big)\Big]^2
\leq -2\log\Big(\int_{\m X}\sqrt{\rho\rho'}\,\dd x\Big)\\
&=-2\log\Big(\int_{\m X}\frac{\sqrt{\rho\rho'}}{\rho}\,\dd\rho\Big)\leq -2\int_{\m X}\log\frac{\sqrt{\rho\rho'}}{\rho}\,\dd\rho 
= \KL(\rho\,\|\,\rho').
\end{align*}
(2) We have argued that $\sqrt{\rho_t} = \sqrt{\rho}\cos t + f_{\rho, \rho'} \sin t$ defines a curve $\big\{\rho_t: t\in[0, t^\ast]\big\}$ on $\ms P^2(\m X)$. We first prove that there is a constant $C > 0$, such that
\begin{align}\label{eqn: upper_KL}
\KL(\rho_{s_2}\,\|\,\rho_{s_1}) \leq 2(s_2 - s_1)^2 + C\vol(X)(s_2 - s_1)^3.
\end{align}
Let $q_t = \sqrt{\rho_t}$. Then, we have
\begin{align*}
\int_{\m X} q_t^2 \,\dd x &= \int_{\m X}\rho_t\,\dd x = 1\\
\int_{\m X} \dot{q}_t^2\,\dd x &= \int_{\m X}\big(-\sqrt{\rho}\sin t + f_{\rho, \rho'}\cos t\big)^2\,\dd x = \int_{\m X}\rho\sin^2 t + f_{\rho,\rho'}^2\cos^2t - 2\sqrt{\rho}f_{\rho,\rho'}\sin t\cos t\,\dd x = 1\\
\ddot{q}_t &= - \sqrt{\rho}\cos t - f_{\rho,\rho'}\sin t = -q_t. 
\end{align*}
Let $f_{t, s} =   \dot{\rho}_t\log\frac{\rho_t}{\rho_s}$. By Taylor's expansion, there exists $\xi_x\in[0, t]$ such that
\begin{align*}
\KL(\rho_{s_2}\,\|\,\rho_{s_1})
&= \int_0^{s_2 - s_1}\Big[\frac{\dd}{\dd t}\int_{\m X}\rho_{s_1+t}\log\frac{\rho_{s_1+t}}{\rho_{s_1}}\,\dd x\Big]\,\dd t
= \int_0^{s_2 - s_1}\int_{\m X}f_{s_1+t, s_1}(x)\,\dd x\dd t\\
&= \int_0^{s_2-s_1}\int_{\m X} f_{s_1, s_1}(x) + \partial_tf_{s_1, s_1}(x)t + \partial_t^2 f_{s_1+\xi_x, s_1}(x)\frac{t^2}{2}\,\dd x\dd t.
\end{align*}
Direct calculation shows
\begin{align*}
f_{s_1, s_1} = \dot{\rho}_{s_1}
\quad\mx{and}\quad
\partial_t f_{s_1,s_1} = \ddot{\rho}_{s_1} + \frac{\dot\rho_{s_1}^2}{\rho_{s_1}} = 6\dot q_t^2 - 2q_t^2.
\end{align*}
To control the quadratic term, it is easy to prove that there is constant $C > 0$ independent of $x$ and $s_1$, such that $|\partial_t^2 f_{s_1+\xi_x, s_1}(x)| \leq C$. Therefore, we have
\begin{align*}
\KL(\rho_{s_2}\,\|\,\rho_{s_1})
&\leq \int_0^{s_2 - s_1}\!\!\int_{\m X} \dot\rho_{s_1} + [6\dot q_t^2 - 2q_t^2]t  + \frac{C}{2}t^2\,\dd x\dd t\\
&=\int_0^{s_2 - s_1} 4t + \frac{C\vol(\m X)}{2}t^2\,\dd t\\
&= 2(s_2 - s_1)^2 + C\vol(\m X)(s_2-s_1)^3.
\end{align*}
Here, the constant $C$ may change from lines to lines. Thus, we have shown~\eqref{eqn: upper_KL}. Now, let us take $s_i = it^\ast / (r+1)$ for $i=0, 1, \dots, r+1$, and we have
\begin{align*}
&\quad\,\inf_{r, \mu_0, \dots, \mu_{r+1}}\Big\{\sum_{s=0}^r \KL(\mu_{s+1}\,\|\,\mu_{s}): \mu_{r+1} = \rho', \mu_{0}=\rho, \mu_s\in\ms P^r(\m X)\Big\}\\
&\leq \inf_r\sum_{i=0}^r\KL(
\rho_{s_{i+1}}\,\|\,\rho_{s_i})
\leq \inf_r\sum_{i=0}^r 2(s_{i+1} - s_i)^2 + C\vol(\m X)(s_{i+1}-s_i)^3\\
&= \inf_{r}\sum_{i=0}^r \frac{2(t^\ast)^2}{(r+1)^2} + \frac{C\vol(\m X)(t^\ast)^3}{(r+1)^3} = 0.
\end{align*}
(3) Similarly, we have
\begin{align*}
&\quad\,\inf_{r, \mu_0, \dots, \mu_{r+1}}\Big\{\sum_{s=0}^r \sqrt{\KL(\mu_{s+1}\,\|\,\mu_{s})}: \mu_{r+1} = \rho', \mu_{0}=\rho, \mu_s\in\ms P^r(\m X)\Big\}\\
&\leq \inf_r\sum_{i=0}^r\sqrt{\KL(
\rho_{s_{i+1}}\,\|\,\rho_{s_i})}
\leq \inf_r\sum_{i=0}^r\sqrt{ 2(s_{i+1} - s_i)^2 + C\vol(\m X)(s_{i+1}-s_i)^3}\\
&= \inf_{r}\sum_{i=0}^r \sqrt{2}(s_{i+1}-s_i) + \sqrt{C\vol(\m X)}(s_{i+1}-s_i)^{\frac{3}{2}}= \sqrt{2}t^\ast \\
&= \sqrt{2}d(\rho, \rho').
\end{align*}

%%%%%%%%%%%%%%%%%%%%%%%%%%%%%%%%%%%%%%%%
%%%%%%%%%%%%%%%%%%%%%%%%%%%%%%%%%%%%%%%%
\subsection{Proof of Lemma~\ref{lem: stable_Vj}}
Recall the definition of $V_j(y_j; \rho)$ in~\eqref{eqn: Vj_def}. We have
\begin{align*}
&\quad\,\bigg\lvert \sum_{j=1}^m\int_{\m X}V_j(y_j;\wt\rho^k) - V_j(y_j;\wht\rho^k)\,\dd[\wt\rho_j^k - \rho_j]\bigg\rvert\\
&= \bigg\lvert \sum_{j=1}^m \int_{\m X}\frac{\wht\varphi_{j, j+1}^k - \wt\varphi_{j, j+1}^k}{t_{j+1} - t_j} + \frac{\wht\psi_{j, j-1}^k - \wt\psi_{j, j-1}^k}{t_j - t_{j-1}} - \frac{t_{j+1} - t_j}{N\lambda}\sum_{i=1}^N\Big[\frac{\m K_\sigma(X_{t_j}^i - y_j)}{\m K_\sigma\ast\wht\rho_{j}^k(X_{t_j}^i)} - \frac{\m K_\sigma(X_{t_j}^i - y_j)}{\m K_\sigma\ast\wt\rho_{j}^k(X_{t_j}^i)}\Big]\,\dd[\wt\rho_j^k - \rho_j]\bigg\rvert\\
&\leq I_1 + I_2,
\end{align*}
where we let
\begin{align*}
I_1 &:= \bigg\lvert\sum_{j=1}^m \int_{\m X}\frac{\wht\varphi_{j, j+1}^k - \wt\varphi_{j, j+1}^k}{t_{j+1} - t_j}\,\dd[\wt\rho_j^k - \rho_j] + \sum_{j=1}^m\int_{\m X}\frac{\wht\psi_{j, j-1}^k - \wt\psi_{j, j-1}^k}{t_j - t_{j-1}}\,\dd[\wt\rho_j^k - \rho_j]\bigg\rvert\\
I_2 &:= \sum_{j=1}^m \frac{t_{j+1} - t_j}{N\lambda}\bigg\lvert \int_{\m X}\sum_{i=1}^N\Big[\frac{\m K_\sigma(X_{t_j}^i - y_j)}{\m K_\sigma\ast\wht\rho_{j}^k(X_{t_j}^i)} - \frac{\m K_\sigma(X_{t_j}^i - y_j)}{\m K_\sigma\ast\wt\rho_{j}^k(X_{t_j}^i)}\Big]\,\dd[\wt\rho_j^k - \rho_j]\bigg\rvert.
\end{align*}
To control $I_1$, note that for every $\beta = (\beta_1, \dots, \beta_{m-1})\in\mb R^{m-1}$, we have
\begin{align*}
I_1 \leq \sum_{j=1}^{m-1} \frac{2}{t_{j+1} - t_j}\Big[\big\|\wht\varphi_{j, j+1}^k - \wt\varphi_{j, j+1}^k - \beta_j\big\|_{L^\infty(\m X)} + \big\|\wht\psi_{j+1, j}^k -\wt\psi_{j+1, j}^k\big\|_{L^\infty(\m X)}\Big].
\end{align*}
Taking infimum over $\beta\in\mb R^{m-1}$ and applying~\citep[Corollary 2.4,][]{carlier2022lipschitz} yield
\begin{align*}
I_1\leq \sum_{j=1}^{m-1} \frac{C(\tau^j, \m X)}{t_{j+1}-t_j}\big[\W_2(\wht\rho_j^k, \wt\rho_j^k) + \W_2(\wht\rho_j^k, \wt\rho_j^k)\big] \leq C\sum_{j=1}^m\W_2(\wht\rho_j^k, \wt\rho_j^k)
\end{align*}
for some constant $C = C(\tau, t_1, \dots, t_m, \m X) > 0$. To control $I_2$, just note that
\begin{align*}
&\frac{1}{N}\bigg\lvert \int_{\m X}\sum_{i=1}^N\Big[\frac{\m K_\sigma(X_{t_j}^i - y_j)}{\m K_\sigma\ast\wht\rho_{j}^k(X_{t_j}^i)} - \frac{\m K_\sigma(X_{t_j}^i - y_j)}{\m K_\sigma\ast\wt\rho_{j}^k(X_{t_j}^i)}\Big]\,\dd[\wt\rho_j^k - \rho_j]\bigg\rvert
\leq 2\Big[\frac{\max_{y_j\in\m X}\m K_\sigma(y_j)}{\min_{y_j\in\m X}\m K_\sigma(y_j)}\Big]^2\|\wht\rho_{j}^k - \wt\rho_{j}^k\|_{L^1(\m X)}.
\end{align*}
Therefore, we have
\begin{align*}
I_2 \leq \sum_{j=1}^m \frac{2(t_{j+1} - t_j)}{\lambda}\Big[\frac{\max_{y_j\in\m X}\m K_\sigma(y_j)}{\min_{y_j\in\m X}\m K_\sigma(y_j)}\Big]^2\|\wht\rho_{j}^k - \wt\rho_{j}^k\|_{L^1(\m X)}.
\end{align*}
Combining the upper bounds of $I_1$ and $I_2$ leads to the result.

To derive the order of $R_1(k)$, by Lemma~\ref{lem: rho_LSI} $\wt\rho_j^k$ satisfies $\LSI(\frac{2C_4'\alpha_k}{\tau e^{C_4B_j/\tau}})$. Therefore, by Talagrand's transportation inequality (Proposition~\ref{prop: talagrand}), we have
\begin{align*}
\W_2^2(\wt\rho_j^k, \wht\rho_j^k) \leq \frac{\tau e^{C_4B_j/\tau}}{2C_4'\alpha_k} \KL(\wht\rho_j^k\,\|\,\wt\rho_j^k).
\end{align*}
By Pinsker's inequality, we have $\|\wht\rho_j^k - \wt\rho_j^k\|_{L^1}^2 \leq 2\KL(\wht\rho_j^k\,\|\,\wt\rho_j^k)$. Thus, we have
\begin{align*}
R_1(k) &= C_2\sum_{j=1}^m\big[\W_2(\wht\rho_j^k, \wt\rho_j^k) + \|\wht\rho_j^k - \wt\rho_j^k\|_{L^1(\m X)}\big]
\leq C_2\bigg[\sqrt{\frac{m\tau e^{C_4\|B\|_{\ell^\infty}/\tau}}{2C_4'\alpha_k}} + \sqrt{2m}\bigg]\delta_k^{\frac{1}{2}}.
\end{align*}

%%%%%%%%%%%%%%%%%%%%%%%%%%%%%%%%%%%%%%%%
%%%%%%%%%%%%%%%%%%%%%%%%%%%%%%%%%%%%%%%%
\subsection{Proof of Lemma~\ref{lem: diff_entropy}}
Note that
\begin{align*}
&\sum_{k=1}^K\tau\eta_{k+1}\big[H(\wht\rho^k) - H(\wt\rho^{k+1})\big]
= \sum_{k=1}^K\tau\eta_{k+1}\big[H(\wht\rho^k) - C_3\log\alpha_{k+1}\big] - \sum_{k=1}^K\tau\eta_{k+1}\big[H(\wt\rho^{k+1}) - C_3\log\alpha_{k+1}\big]\\
&\stackrel{\ri}{\leq} \sum_{k=1}^K\tau\eta_{k+1}\big[H(\wht\rho^k) - C_3\log\alpha_{k+1}\big] - \sum_{k=1}^K\tau\eta_{k+2}\big[H(\wt\rho^{k+1}) - C_3\log\alpha_{k+1}\big]\\
&= \sum_{k=2}^K\tau\eta_{k+1}\big[H(\wht\rho^k) - H(\wt\rho^k) + C_3\log\frac{\alpha_k}{\alpha_{k+1}}\big] - \tau\eta_{K+2}\big[H(\wt\rho^{K+1}) - C_3\log\alpha_{K+1}\big] + \tau\eta_2\big[H(\wht\rho^1) - C_3\log\alpha_2\big]\\
&\stackrel{\rii}{\leq} \sum_{k=2}^K\tau\eta_{k+1}\big[H(\wht\rho^k) - H(\wt\rho^k)\big] + \sum_{k=2}^KC_3\tau\eta_{k+1}\log\frac{\alpha_k}{\alpha_{k+1}} + \tau\eta_2\big[H(\wht\rho^1) - C_3\log\alpha_2\big].
\end{align*}
Here, (i) is due to the monotonicity of $\{\eta_k\}_{k=1}^K$ and the fact that $H(\wt\rho^{k}) \geq C_3\log\alpha_k$ by Lemma~\ref{lem: lower_bd_entropy}; (ii) is again due to Lemma~\ref{lem: lower_bd_entropy}. To control $H(\wht\rho^k) - H(\wt\rho^k)$, we can directly apply~\cite[Proposition A,][]{nitanda2021particle} to get
\begin{align*}
\big\lvert H(\wht\rho^k) - H(\wt\rho^k)\big\rvert
&\leq \sum_{j=1}^m\big[1 + (2+\varepsilon_k + \varepsilon_k^{-1})e^{\frac{4B_j}{\tau}}\big]\KL(\wht\rho_j^k\,\|\,\wt\rho_j^k) + \frac{B_j}{\tau}\sqrt{2\KL(\wht\rho_j^k\,\|\,\wt\rho_j^k)} + \frac{\varepsilon_k(1+\varepsilon_k)de^{\frac{2B_j}{\tau}}}{2}\\
&\leq \big[1 + (2+\varepsilon_k+\varepsilon_k^{-1})e^{\frac{4\|B\|_{\ell^\infty(m)}}{\tau}}\big]\delta_k + \frac{\sqrt{2\delta_k}\|B\|_{\ell^2(m)}}{\tau} + \frac{\varepsilon_k(1+\varepsilon_k)d}{2}\sum_{j=1}^me^{\frac{2B_j}{\tau}}.
\end{align*}
where $\varepsilon_k > 0$ can be any positive value, and in the last line we use the assumption that $\KL(\wht\rho^k\,\|\,\wt\rho^k)\leq \delta_k$. Combining all pieces above yields
\begin{align*}
\sum_{k=1}^K\tau\eta_{k+1}\big[H(\wht\rho^k) - H(\wt\rho^{k+1})\big] 
&\leq \tau\eta_2\big[H(\wht\rho^1) - C_3\log\alpha_2\big] + C_3\tau\sum_{k=2}^K\eta_{k+1}\log\frac{\alpha_k}{\alpha_{k+1}}\\
&\quad + \sum_{k=2}^K\tau\eta_{k+1}\Big[\big[1 + (2+\varepsilon_k+\varepsilon_k^{-1})e^{\frac{4\|B\|_{\ell^\infty(m)}}{\tau}}\big]\delta_k + \frac{\sqrt{2\delta_k}\|B\|_{\ell^2(m)}}{\tau} + \frac{\varepsilon_k(1+\varepsilon_k)d}{2}\sum_{j=1}^me^{\frac{2B_j}{\tau}}\Big].
\end{align*}
%%%%%%%%%%%%%%%%%%%%%%%%%%%%%%%%%%%%%%%%
%%%%%%%%%%%%%%%%%%%%%%%%%%%%%%%%%%%%%%%%
\subsection{Proof of Lemma~\ref{lem: control_J1}}
To control $J_1$, note that
\begin{align*}
&J_1 = \vol(\mb T^d)\cdot\bigg\lvert\int_{\mb T^d}\sum_{k\in\mb Z^d}e^{-\frac{\|x-y-2\pi k\|^2}{2\sigma^2}}\,\dd\big[R_{T_j}(y) - R_j^{\rm rec}(y)\big]\bigg\rvert\\
&= \vol(\mb T^d)\cdot\bigg\lvert \sum_{k\in\mb Z^d}e^{-\frac{2\pi^2\|k\|^2}{\sigma^2}}\int_{\mb T^d}e^{-\big[\frac{\|x-y\|^2}{2\sigma^2} - \frac{2\pi}{\sigma^2}k^\top(x-y)\big]}\,\dd\big[R_{T_j}(y) - R_j^{\rm rec}(y)\big]\bigg\rvert\\
&\leq \vol(\mb T^d)\cdot\bigg\lvert \sum_{k\in\mb Z^d}e^{-\frac{2\pi^2\|k\|^2}{\sigma^2}}\int_{\mb T^d}\Big[e^{-\big[\frac{\|x-y\|^2}{2\sigma^2} - \frac{2\pi}{\sigma^2}k^\top(x-y)\big]} - \sum_{i=1}^M \frac{\big[\frac{2\pi}{\sigma^2}k^\top(x-y) - \frac{\|x-y\|^2}{2\sigma^2}\big]^i}{i!}\Big]\,\dd\big[R_{T_j}(y) - R_j^{\rm rec}(y)\big]\bigg\rvert\\
&\qquad + \vol(\mb T^d)\cdot\bigg\lvert \sum_{k\in\mb Z^d}e^{-\frac{2\pi^2\|k\|^2}{\sigma^2}}\int_{\mb T^d}\sum_{i=1}^M \frac{\big[\frac{2\pi}{\sigma^2}k^\top(x-y) - \frac{\|x-y\|^2}{2\sigma^2}\big]^i}{i!}\,\dd\big[R_{T_j}(y) - R_j^{\rm rec}(y)\big]\bigg\rvert.
\end{align*}
The second term is zero, since $\sum_{i=1}^M \frac{\big[\frac{2\pi}{\sigma^2}k^\top(x-y) - \frac{\|x-y\|^2}{2\sigma^2}\big]^i}{i!}$ is a polynomial of $y$ with degree no greater than $2M$. The first term can be bounded by
\begin{align*}
&\bigg\lvert \sum_{k\in\mb Z^d}e^{-\frac{2\pi^2\|k\|^2}{\sigma^2}}\int_{\mb T^d}\Big[e^{-\big[\frac{\|x-y\|^2}{2\sigma^2} - \frac{2\pi}{\sigma^2}k^\top(x-y)\big]} - \sum_{i=1}^M \frac{\big[\frac{2\pi}{\sigma^2}k^\top(x-y) - \frac{\|x-y\|^2}{2\sigma^2}\big]^i}{i!}\Big]\,\dd\big[R_{T_j}(y) - R_j^{\rm rec}(y)\big]\bigg\rvert\\
&\leq 2\sup_{\substack{u=x-y\\ x, y\in\mb T^d}}\sum_{k\in\mb Z^d}e^{-\frac{2\pi^2\|k\|^2}{\sigma^2}}\bigg\lvert e^{-\big[\frac{\|u\|^2}{2\sigma^2} - \frac{2\pi}{\sigma^2}k^\top u\big]} - \sum_{i=1}^M \frac{\big[\frac{2\pi}{\sigma^2}k^\top u - \frac{\|u\|^2}{2\sigma^2}\big]^i}{i!}\bigg\rvert\\
&\stackrel{\ri}{\leq} 2\sum_{k\in\mb Z^d}e^{-\frac{2\pi^2\|k\|^2}{\sigma^2}}\sup_{\substack{u\in\mb R^d\\ \|u\|_{\ell^\infty}\leq 2\pi}} \frac{\big[\frac{2\pi}{\sigma^2}k^\top u - \frac{\|u\|^2}{2\sigma^2}\big]^{M+1}}{(M+1)!} 
\leq 2 \sum_{k\in\mb Z^d}e^{-\frac{2\pi^2\|k\|^2}{\sigma^2}} \frac{\big[\frac{4\pi^2}{\sigma^2}\|k\|_{\ell^1} + \frac{2\pi^2d}{\sigma^2}\big]^{M+1}}{(M+1)!}\\
&\leq 2 \sum_{k\in\mb Z^d}e^{-\frac{2\pi^2\|k\|_{\ell^1}^2}{d\sigma^2}} \frac{\big[\frac{4\pi^2}{\sigma^2}\|k\|_{\ell^1} + \frac{2\pi^2d}{\sigma^2}\big]^{M+1}}{(M+1)!}.
\end{align*}
Here, (i) is by Taylor expansion (or mean-value theorem). To further control this upper bound, actually we can show that
\begin{align}\label{eqn: upbd_gausskernel}
\sum_{k\in\mb Z^d}e^{-\frac{2\pi^2\|k\|_{\ell^1}^2}{d\sigma^2}} \frac{\big[\frac{4\pi^2}{\sigma^2}\|k\|_{\ell^1} + \frac{2\pi^2d}{\sigma^2}\big]^{M+1}}{(M+1)!} 
\leq \frac{(C_6M\log M)^{\frac{M+1}{2}}}{(M+1)!},\qquad\forall\, M\in\mb Z_+ 
\end{align}
for some constant $C_6 = C_6(d, \sigma)$. With the above results, we get
\begin{align*}
J_1\leq \frac{2(C_6M\log M)^{\frac{M+1}{2}}}{(M+1)!} 
\leq2\Big(\frac{e}{M+1}\Big)^{M+1}(C_6M\log M)^{\frac{M+1}{2}} \leq 2\Big(\frac{C_6e^2\log M}{M+1}\Big)^{\frac{M+1}{2}}.
\end{align*}

Now, it is remained to prove the bound~\eqref{eqn: upbd_gausskernel}. Note that the left-hand side of~\eqref{eqn: upbd_gausskernel} is
\begin{align*}
\lhs &= \sum_{l=0}^\infty \sum_{k\in\mb Z^d: \|k\|_{\ell^1} = l} e^{-\frac{2\pi^2\|k\|_{\ell^1}^2}{d\sigma^2}}\frac{\big[\frac{4\pi^2\|k\|_{\ell^1}}{\sigma^2} + \frac{2\pi^2d}{\sigma^2}\big]^{M+1}}{(M+1)!}\\
&= \frac{[2\pi^2d]^{M+1}}{\sigma^{2(M+1)}(M+1)!} + \sum_{l=1}^\infty\sum_{k\in\mb Z^d: \|k\|_{\ell^1}=l}e^{-\frac{2\pi^2l^2}{d\sigma^2}}\frac{\big[\frac{4\pi^2l}{\sigma^2} + \frac{2\pi^2d}{\sigma^2}\big]^{M+1}}{(M+1)!}\\
&\leq \frac{[2\pi^2d]^{M+1}}{\sigma^{2(M+1)}(M+1)!} + \sum_{l=1}^\infty2^d\binom{l+d-1}{d-1} e^{-\frac{2\pi^2l^2}{d\sigma^2}} \frac{\big[\frac{4\pi^2l}{\sigma^2} + \frac{2\pi^2d}{\sigma^2}\big]^{M+1}}{(M+1)!}.
\end{align*}
In the last inequality, we use the fact that the equation $|k_1| + \dots + |k_d| = l$ has $\binom{l+d-1}{d-1}$ different solutions of $(|k_1|, \dots, |k_d|)\in\mb N^d$, corresponding to at most $2^d\binom{l+d-1}{d-1}$ different solutions of $(k_1, \dots, k_d)\in\mb Z^d$. By Stirling's formula, we know
\begin{align*}
\binom{l+d-1}{d-1} 
&= \frac{(l+d-1)!}{(d-1)!l!} \sim \frac{\sqrt{2\pi(l+d-1)}(\frac{l+d-1}{e})^{l+d-1}}{\sqrt{2\pi l}(\frac{l}{e})^l (d-1)!}\\
&= \sqrt{\frac{l+d-1}{l}}\Big(1 + \frac{d-1}{l}\Big)^l\Big(\frac{l+d-1}{e}\Big)^l\frac{1}{(d-1)!}\\
&\lesssim l^{d-1}.
\end{align*}
So, there are constants $C, C' > 0$ such that
\begin{align*}
\sum_{l=1}^\infty2^d\binom{l+d-1}{d-1} e^{-\frac{2\pi^2l^2}{d\sigma^2}} \frac{\big[\frac{4\pi^2l}{\sigma^2} + \frac{2\pi^2d}{\sigma^2}\big]^{M+1}}{(M+1)!}
\leq \sum_{l=1}^\infty C l^{d-1}e^{-\frac{2\pi^2 l^2}{d\sigma^2}}\frac{(C'l)^{M+1}}{(M+1)!}
\leq \frac{C(C')^{M+1}}{(M+1)!}\sum_{l=1}^\infty l^{M+d} e^{-\frac{2\pi^2l^2}{d\sigma^2}}.
\end{align*}
Note that we have
\begin{align*}
l^{M+d} \leq e^{\frac{\pi^2l^2}{d\sigma^2}} 
\Longleftrightarrow (M+d)\log l \leq \frac{\pi^2l^2}{d\sigma^2}
\Longleftarrow l \geq K\sqrt{M\log M}
\end{align*}
for some $K > 0$ independent of $M$. So, we have
\begin{align*}
\sum_{l=1}^\infty l^{M+d}e^{-\frac{2\pi^2l^2}{d\sigma^2}}
&\leq \sum_{l=1}^{K\sqrt{M\log M}}l^{M+d}e^{-\frac{2\pi^2l^2}{d\sigma^2}} + \sum_{l = K\sqrt{M\log M}}^\infty e^{-\frac{\pi^2l^2}{d\sigma^2}}\\
&\leq (K\sqrt{M\log M})^{M+d} + C''
\leq (C''' M\log M)^{\frac{M+1}{2}}
\end{align*}
for some large enough constant $C'', C'''$ independent of $M$. We finish the proof.
\subsection{Calculation of equation~\eqref{eqn: cal1}}\label{app: cal1}
By the definition~\eqref{eqn: explicit_quad_anneal_update} of $\wt\rho_j^k$, we have
\begin{align*}
&\quad\,\sum_{j=1}^m\int_{\m X} \tau\log\wt\rho_j^k(y_j)\,\dd[\wt\rho_j^k - \rho_j]
= \tau\big[H(\wt\rho^k) - U_k^\ast\big] + \sum_{j=1}^m\int_{\m X}\tau\sum_{l=1}^k\Big[\eta_l\prod_{l<l'\leq k}(1-\tau\eta_{l'})\Big]\big[V_j\big(y_j; \wht\rho^{l-1}\big) + \alpha_l\|y_j\|^2\big]\,\dd\rho_j.
\end{align*}
Therefore, we have
\begin{align*}
&\quad\,\sum_{k=1}^{K}\sum_{j=1}^m \int_{\m X}\eta_{k+1}V_j(y_j; \wht\rho^k) + \tau\eta_{k+1}\log\wt\rho_j^k(y_j)\,\dd[\wt\rho_j^k - \rho_j]\\
&= \sum_{k=1}^{K}\sum_{j=1}^m\eta_{k+1} \int_{\m X}V_j(y_j; \wht\rho^k)\,\dd\wt\rho_j^k + \sum_{k=1}^{K}\eta_{k+1}\tau\big[H(\wt\rho^k) - U_k^\ast\big]\\
&\qquad\qquad + \sum_{k=1}^{K}\sum_{j=1}^m\int_{\m X}\tau\eta_{k+1}\sum_{l=1}^k\Big[\eta_l\prod_{l<l'\leq k}(1-\tau\eta_{l'})\Big]\cdot\alpha_l\|y_j\|^2\,\dd\rho_j\\
&\qquad\qquad + \sum_{k=1}^{K}\sum_{j=1}^m\int_{\m X}\tau\eta_{k+1}\sum_{l=1}^k\Big[\eta_l\prod_{l<l'\leq k}(1-\tau\eta_{l'})\Big]V_j(y_j;\wht\rho^{l-1})\,\dd\rho_j - \sum_{k=1}^{K}\sum_{j=1}^m\int_{\m X}\eta_{k+1}V_j(y_j; \wht\rho^k)\,\dd\rho_j.
\end{align*}
Note that the last line equals to
\begin{align*}
&\quad\,\sum_{j=1}^m \bigg[\sum_{k=1}^{K}\sum_{l=1}^k\frac{\tau\eta_l\eta_{k+1}(1-\tau\eta_1)\cdots(1-\tau\eta_k)}{(1-\tau\eta_1)\cdots(1-\tau\eta_l)}\int_{\m X} V_j(y_j;\wht\rho^{l-1})\,\dd\rho_j - \sum_{k=1}^{K}\eta_{k+1}\int_{\m X} V_j(y_j;\wht\rho^k)\,\dd\rho_j\bigg]\\
&= \sum_{j=1}^m \bigg[\sum_{l=1}^{K}\sum_{k=l}^{K}\frac{\tau\eta_l\eta_{k+1}(1-\tau\eta_1)\cdots(1-\tau\eta_k)}{(1-\tau\eta_1)\cdots(1-\tau\eta_l)}\int_{\m X} V_j(y_j;\wht\rho^{l-1})\,\dd\rho_j - \sum_{k=1}^{K}\eta_{k+1}\int_{\m X} V_j(y_j;\wht\rho^k)\,\dd\rho_j\bigg]\\
&\stackrel{\ri}{=} \sum_{j=1}^m\bigg[\sum_{l=1}^K\eta_l\big[1 - (1-\tau\eta_{l+1})\cdots(1-\tau\eta_{K+1})\big]\int_{\m X}V_j(y_j;\wht\rho^{l-1})\,\dd\rho_j - \sum_{k=1}^K\int_{\m X}\eta_{k+1}V_j(y_j;\wht\rho^k)\,\dd\rho_j\bigg]\\
&= \sum_{j=1}^m\bigg[\eta_1\int_{\m X}V_j(y_j;\wht\rho^0)\,\dd\rho_j - \eta_{K+1}\int_{\m X}V_j(y_j;\wht\rho^K)\,\dd\rho_j - \sum_{l=1}^K\eta_l(1-\eta_{l+1})\cdots(1-\tau\eta_{K+1})\int_{\m X}V_j(y_j;\wht\rho^{l-1})\,\dd\rho_j\bigg]\\
&\stackrel{\rii}{=} \sum_{j=1}^m\eta_1\int_{\m X}V_j(y_j;\wht\rho^0)\,\dd\rho_j -\bigg[U_{K+1}(\rho) - H(\rho) - \sum_{k=1}^{K+1}\eta_k\alpha_k(1-\tau\eta_{k+1})\cdots(1-\tau\eta_{K+1})\int_{\m X}\|y\|^2\,\dd\rho\bigg].
\end{align*}
Here, (i) is due to the identity
\begin{align*}%\label{eqn: sum_coef}
\sum_{k=l}^K \tau\eta_{k+1}(1-\tau\eta_1)\cdots(1-\tau\eta_k) = (1-\tau\eta_1)\cdots(1-\tau\eta_l)\big[1 - (1-\tau\eta_{l+1})\cdots(1-\tau\eta_{K+1})\big],
\end{align*}
and (ii) is by definition of the functional $U_k$. Thus, we have
\begin{align*}
&\quad\,\sum_{k=1}^{K}\sum_{j=1}^m \int_{\m X}\eta_{k+1}V_j(y_j; \wht\rho^k) + \tau\eta_{k+1}\log\wt\rho_j^k(y_j)\,\dd[\wt\rho_j^k - \rho_j]\\
&= \sum_{k=1}^{K}\sum_{j=1}^m\eta_{k+1} \int_{\m X}V_j(y_j; \wht\rho^k)\,\dd\wt\rho_j^k + \sum_{k=1}^{K}\eta_{k+1}\tau\big[H(\wt\rho^k) - U_k^\ast\big]\\
&\qquad\qquad + \sum_{k=1}^{K}\sum_{j=1}^m\int_{\m X}\tau\eta_{k+1}\sum_{l=1}^k\Big[\eta_l\prod_{l<l'\leq k}(1-\tau\eta_{l'})\Big]\cdot\alpha_l\|y_j\|^2\,\dd\rho_j\\
&\qquad\qquad + \sum_{j=1}^m\eta_1\int_{\m X}V_j(y_j;\wht\rho^0)\,\dd\rho_j -\bigg[U_{K+1}(\rho) - H(\rho) - \sum_{k=1}^{K+1}\eta_k\alpha_k(1-\tau\eta_{k+1})\cdots(1-\tau\eta_{K+1})\int_{\m X}\|y\|^2\,\dd\rho\bigg]\\
&= \sum_{k=1}^K\sum_{j=1}^m\eta_{k+1}\int_{\m X}V_j(y_j;\wht\rho^k)\,\dd\wt\rho_j^k + \sum_{k=1}^K\tau\eta_{k+1}\big[H(\wt\rho^k) - U_k^\ast\big] + \sum_{j=1}^m \eta_1\int_{\m X}V_j(y_j;\wht\rho^0)\,\dd\rho_j - U_{K+1}(\rho) + H(\rho)\\
&\qquad\qquad + \sum_{k=1}^{K+1} \alpha_k\eta_k\int\|y\|^2\,\dd\rho.
\end{align*}
\end{document}